\numberwithin{equation}{section}
\theoremstyle{definition}
\newtheorem{thm}{Theorem}[section]
\newtheorem{lem}[thm]{Lemma}
\newtheorem{defn}[thm]{Definition}
\newtheorem{rem}[thm]{Remark}
\newtheorem{prop}[thm]{Proposition}
\newtheorem{ques}[thm]{Question}
\newtheorem{coro}[thm]{Corollary}
\newtheorem{ex}[thm]{Example}
\newtheorem{conv}[thm]{Convention}
\def \C {\mathbb{C}}
\def \E {\mathbb{E}}
\def \F {\mathbb{F}}
\def \L {\bold{L}}
\def \N {\mathbb{N}}
\def \O {\mathcal{O}}
\def \P {\bold{P}}
\def \Q {\mathbb{Q}}
\def \R {\mathbb{R}}
\def \T {\mathbb{T}}
\def \V {\mathbb{F}_{p}^{d}}
\def \Z {\mathbb{Z}}
\def \+ {\hat{+}}
\def \- {\hat{-}}
\def \b {\bold{b}}
\def \d {\delta}
\def \e {\epsilon}
\def \h {\bold{h}}
\def \l {\lambda}
\def \w {\bold{w}}
\def \x {\bold{x}}
\def \y {\bold{y}}
\def \ca {\curvearrowright}
\def \CP {\{\text{pure, nice, independent, consistent}\}}
\def \da {\downharpoonleft}
\def \err {\text{err}}
\def \Gow {\Box}
\def \Lip {\text{Lip}}
\def \poly {\text{poly}}
\def \pp {\perp_{M}}
\def \rank {\text{rank}}
\def \sp {\text{span}}
\def \str {\text{str}}
\def \uni {\text{uni}}
\title[Spherical higher order Fourier analysis over finite fields IV]{Spherical higher order Fourier analysis over finite fields IV: an application to the Geometric Ramsey Conjecture}
\author{Wenbo Sun}
\address[Wenbo Sun]{Department of Mathematics, Virginia Tech, 225 Stanger Street, Blacksburg, VA, 24061, USA}
\email{swenbo@vt.edu}
\thanks{The author was partially supported by the NSF Grant DMS-2247331}
\subjclass[2020]{05D10, 37A99}
\begin{document}

\maketitle

\begin{abstract}
  This paper is the fourth and the last part of the series \emph{Spherical higher order Fourier analysis over finite fields}, aiming to develop the higher order Fourier analysis method along spheres over finite fields, and to solve the Geometric Ramsey Conjecture in the finite field setting.
  
  In this paper, we proof a conjecture of Graham on the Remsey properties for spherical configurations in the finite field setting.
To be more precise, we
  show that for any spherical configuration $X$ of $\mathbb{F}_{p}^{d}$ of complexity at most $C$ with $d$ being sufficiently large with respect to $C$ and $\vert X\vert$, and for some prime $p$ being sufficiently large with respect to $C$, $\vert X\vert$ and $\epsilon>0$, any set $E\subseteq \mathbb{F}_{p}^{d}$ with $\vert E\vert>\epsilon p^{d}$ contains at least $\gg_{C,\epsilon,\vert X\vert}p^{(k+1)d-(k+1)k/2}$ congruent copies of $X$, where $k$ is the dimension of $\text{span}_{\mathbb{F}_{p}}(X-X)$. The novelty of our approach is that we avoid the use of harmonic analysis, and replace it by the theory of spherical higher order Fourier analysis developed in previous parts of the series.
\end{abstract}

\tableofcontents

\section{Introduction}

 \subsection{Geometric Ramsey Theorem over finite fields}  	 
This paper is the forth and last part of the series \emph{Spherical higher order Fourier analysis over finite fields} \cite{SunA,SunB,SunC}. The purpose of this paper is to use the tools developed in \cite{SunA,SunB,SunC} to solve the Geometric Ramsey Conjecture in the finite field setting.
 Geometric Ramsey theory is a topic first introduced in a series of papers by Erd\"os et al.  \cite{E73a,E73b,E73c}, which studies geometric patterns that can not be destroyed by partitioning Euclidean space into finitely many parts. We say that a finite set $X$ is \emph{geometrically Ramsey} if for every $r\in\N_{+}$, there exists a dimension $d=d(r,X)$ such that every $r$-coloring of $\mathbb{R}^{d}$ contains a monochromatic and congruent copy of $\lambda X$ for sufficiently large $\lambda>0$.  
 It was shown in \cite{E73a} that geometrically Ramsey sets must be \emph{spherical}, meaning that $E$ is contained in the surface of a sphere. This led to a longstanding problem in combinatorics  called the \emph{Geometric Ramsey Conjecture} made by Graham \cite{Gra94}, saying that a finite set $E\subseteq \R^{d}$ is geometrically Ramsey if and only if it is spherical.

   This conjecture is far from being settled and only partial results are known, including vertices of $k$-dimensional cubes \cite{E73a}, non-degenerating simplices \cite{Bou86,FR90} (see also \cite{LMb}), products of simplices \cite{LM18}, trapezoids \cite{Kriz92}, regular polygons and polyhedra \cite{Kriz91}.

Since  vector spaces over finite fields are useful models to study many problems in combinatorics and number theory, it is natural to ask whether Graham's conjecture holds in this setting. We start with some definitions.
For $x=(x_{1},\dots,x_{d}),y=(y_{1},\dots,y_{d})\in\V$, define the \emph{dot product} by $x\cdot y:=x_{1}y_{1}+\dots+x_{d}y_{d}\in\F_{p}$ and let $\vert x\vert^{2}:=x\cdot x$.
 For convenience we call an ordered finite subset of $\V$ to be a \emph{configuration} in $\V$.
	Let
$X=\{x_{j}\colon j\in J\}\subseteq\F_{p}^{d}$ and $Y=\{y_{j}\colon j\in J\}\subseteq \F_{p}^{d}$ be two configurations in $\V$.
We say that $Y$ is \emph{congruent} to $X$ if there exist an isometry $U$ on $\V$ (i.e. $U\colon \V\to\V$ is a linear transformation such that $\vert U(x)\vert^{2}=\vert x\vert^{2}$ for all $x\in\V$) and $z\in\V$ such that $y_{j}=z+U(x_{j})$ for all $j\in J$.
We say that $X\subseteq \V$ is \emph{spherical} if there exist $u_{0}\in\V$ and $r\in\F_{p}$ such that $\vert u-u_{0}\vert^{2}=r$ for all $u\in X$. 

Instead of studying the Geometric Ramsey Conjecture in the finite field setting, we formulate a density version of it as follows: for any spherical configuration $X\subseteq \V$ and any subset $E\subseteq \V$ with $\vert E\vert>\e p^{d}$, $E$ contains a  congruent copy of $X$ if $p$ is sufficiently large depending on $d$ and $X$. We remark that the Geometric Ramsey Conjecture in the finite field setting fails for non-spherical configurations by Lemma 17 of \cite{LMP19}.

In this paper, we provide an affirmative answer to the density version of the Geometric Ramsey Conjecture in the finite field setting for general spherical configurations.
For any $X\subseteq\F_{p}^{d}$, let $k:=\dim(\sp_{\F_{p}}(X-X))$, $s:=\vert X\vert-k\geq 1$, and denote \begin{equation}\label{4:finald}
\begin{split}
 d_{0}(X):=
 \max\{(2s+12)(15s+423),4k^{2}+12k+7\}.
\end{split}
\end{equation}

\begin{thm}[Density version of the Geometric Ramsey Theorem for finite fields]\label{4:mainmain}
Let $d\in\N_{+}$, $C,\e>0$, $p$ be a prime and $X\subseteq\F_{p}^{d}$ be a spherical configuration of complexity at most $C$ (see Definition \ref{4:ccgf}). Denote $k:=\dim(\sp_{\F_{p}}(X-X))$, $s:=\vert X\vert-k\geq 1$ and suppose that $d\geq d_{0}(X)$. There exists $p_{0}=p_{0}(C,\e,k,s)\in\N$ and $\d=\d(C,\e,k,s)>0$ such that if $p>p_{0}$, then  every set $E\subseteq\V$ with $\vert E\vert>\e p^{d}$  contains at least $\d p^{(k+1)d-(k+1)k/2}$  congruent copies of $X$. 
\end{thm}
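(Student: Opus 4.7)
The plan is to count congruent copies of $X$ inside $E$ as a multilinear average and to apply the spherical higher order Fourier analysis developed in \cite{SunA,SunB,SunC}. Writing $X=\{x_{0},\dots,x_{k+s-1}\}$, a congruent copy of $X$ is an ordered tuple $\y=(y_{0},\dots,y_{k+s-1})\in(\V)^{k+s}$ satisfying $\vert y_{i}-y_{j}\vert^{2}=\vert x_{i}-x_{j}\vert^{2}$ for all $i,j$ and sharing the same affine relations as $X$. Parametrising generically by first choosing $y_{0}$ freely ($p^{d}$ choices), then choosing the $k$ affinely independent points $y_{1},\dots,y_{k}$ as intersections of $1,2,\dots,k$ concentric spheres (contributing $p^{d-1},\dots,p^{d-k}$ choices respectively), and observing that the remaining $s-1$ points are forced by the affine relations inside $X$, the total number of ordered copies is $\asymp p^{(k+1)d-k(k+1)/2}$, which explains the normalisation in the theorem. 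The theorem therefore reduces to showing that the normalised multilinear form
\[
\Lambda(f_{0},\dots,f_{k+s-1}):=\frac{1}{p^{(k+1)d-k(k+1)/2}}\sum_{\y}\prod_{j}f_{j}(y_{j}),
\]
with $\y$ ranging over ordered congruent copies of $X$ in $\V$, satisfies $\Lambda(1_{E},\dots,1_{E})\geq\d(C,\e,k,s)>0$.

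To prove the lower bound I would run the standard structured--uniform decomposition, with every ingredient furnished by the earlier parts of the series. First, a generalised von Neumann theorem should bound $\vert\Lambda(f_{0},\dots,f_{k+s-1})\vert$ by an appropriate spherical Gowers-type uniformity norm of each $f_{j}$; this is the step that forces $d\geq d_{0}(X)$, since controlling a $(k+s)$-point configuration on spheres via Cauchy--Schwarz consumes ambient dimension, and the numerology in \eqref{4:finald} is precisely what the argument absorbs. Second, a spherical arithmetic regularity lemma should supply a decomposition
\[
1_{E}=f_{\str}+f_{\err}+f_{\uni},
\]
where $f_{\str}$ is measurable with respect to a spherical polynomial factor of bounded complexity, $f_{\err}$ is small in $L^{2}$, and $f_{\uni}$ is negligible in the spherical uniformity norm controlled by the von Neumann step. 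Expanding $\Lambda(1_{E},\dots,1_{E})$ multilinearly, every cross-term involving $f_{\uni}$ vanishes via the von Neumann bound, while every cross-term involving $f_{\err}$ is absorbed using the $L^{\infty}$ boundedness of $\Lambda$ together with $\|f_{\err}\|_{L^{2}}\ll 1$.

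This reduces the problem to a lower bound for $\Lambda(f_{\str},\dots,f_{\str})$, which I expect to be the main obstacle. Since $f_{\str}\geq 0$ and $\E_{x\in\V}f_{\str}(x)\geq\e-o_{p\to\infty}(1)$, this becomes a counting question on the joint level sets of the spherical polynomial factor: one must show that a generic level set meets the parametrising variety of copies of $X$ in the expected density. This amounts to an equidistribution statement for spherical polynomial sequences along the variety of congruent copies of $X$, and it is here that the sphericity of $X$ is crucial, since it is exactly what keeps the distance constraints defining a copy compatible with the level sets of a spherical polynomial factor and avoids an otherwise fatal $1/p$ loss in the count. The equidistribution and counting lemmas developed in \cite{SunA,SunB,SunC} should deliver the required bound and complete the proof.
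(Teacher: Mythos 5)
Your high-level plan --- generalized von Neumann to reduce to a spherical Gowers norm, a regularity decomposition $1_{E}=f_{\str}+f_{\err}+f_{\uni}$, and an equidistribution argument for the structured term --- is indeed the right skeleton and matches the spirit of the paper. However there are two substantial gaps that your sketch does not address, and one place where the description is too vague to constitute an argument.

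First, your multilinear form $\Lambda$ is set up over all of $\V$, whereas the paper first reduces (via Proposition \ref{4:mainmain3} and Theorem \ref{4:mainmain2}) to counting \emph{almost $M$-congruent} copies inside a single sphere $V(M(\cdot+x)-r)$ for a general non-degenerate homogeneous quadratic form $M$, averaging afterwards over all spheres and radii. This is not a cosmetic difference: the spherical Gowers norms, the structure theorem (Theorem \ref{4:st3}), the Leibman dichotomy, and the joint equidistribution theorem (Theorem \ref{4:orbitdesc2}) are all formulated for functions defined on a sphere $V(M-r)$, not on $\V$, so your $\Lambda$ is not directly compatible with the machinery you cite. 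The notion of almost $M$-congruency and its relation to congruency (Proposition \ref{4:cdd2}, Corollary \ref{4:lmp2}) is also essential and unmentioned.

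Second, and more seriously: the theorem demands $p_{0}$ and $\d$ depending only on $C,\e,k,s$, not on $d$. Your argument, as sketched, would naturally produce $\d=\d(C,\e,d)$ --- the von Neumann losses, the complexity in the structure theorem, and the equidistribution rate all carry explicit $d$-dependence. The paper removes this dependence by a separate reduction: after proving Proposition \ref{4:mainmain3} with $d$-dependent constants, one fixes $d_{0}=d_{0}(X)$, averages over all $d_{0}$-dimensional ``good'' subspaces of $\V$ (those on which $M$ restricts non-degenerately), applies the $d_{0}$-dimensional result on each, and then does a counting/overcounting argument (the Claim in the proof of Theorem \ref{4:mainmain}) to recover the global count $\d p^{(k+1)d-(k+1)k/2}$. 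Nothing in your proposal accounts for this step, and without it the theorem as stated is not reached.

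Finally, the claim that the lower bound for $\Lambda(f_{\str},\dots,f_{\str})$ is ``an equidistribution statement for spherical polynomial sequences along the variety of congruent copies of $X$'' underestimates what is needed. Plain equidistribution is not enough: the level sets of $f_{\str}$ could in principle be arranged so that the constraint variety $\Omega$ meets only tuples where $f_{\str}$ is small at some coordinate. The paper (following Green--Tao \cite{GT10b}) constructs a weight $\omega$ on $\Omega$ (Proposition \ref{4:recset}) supported where $f_{\str}(x_{i})\approx f_{\str}(x_{j})$ for all $i,j$, and controls its second moment via equidistribution on the \emph{relative self-products} $\Omega\times_{i}\Omega$ over the Leibman group $G^{V_{X}\times_{i}V_{X}}/\Gamma^{V_{X}\times_{i}V_{X}}$. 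Part (ii) of Theorem \ref{4:orbitdesc2}, and the surrounding analysis of $\Omega\times_{i}\Omega$, is the technical heart that your sketch leaves implicit. Relatedly, $\|f_{\err}\|_{L^{2}}\ll 1$ does \emph{not} by itself absorb the error cross-terms: the estimate (\ref{4:mm5}) in the paper has to pass through the weight $\omega$ and a Cauchy--Schwarz over $\Omega\times_{i}\Omega$ to control them.
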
	

We remark that the quantity $p^{(k+1)d-(k+1)k/2}$ arises from the observation that a  random subset $E$ of $\V$ contains approximately $\e^{\vert X\vert} p^{(k+1)d-(k+1)k/2}$ isometric  copies of $X$ (see Section \ref{4:s:msms} for the definition), where  $k$ is the dimension of $\sp_{\F_{p}}(X-X)$. This is because there are $k(k+1)/2$ quadratic relations given by distances between the points in the generating set of $X$, which determines the locations of all other points in $X$, and each vertex is contained in $E$ with probability $\e$ (see also page 2 of \cite{LMP19}).

The case  $s=1$ of  Theorem \ref{4:mainmain}, where $X$ is a simplex was proved by Parshall \cite{Par17} (which can be viewed as an extension Bourgain's work \cite{Bou86}).  For the case $s=2$, 
  Theorem \ref{4:mainmain} was proved by Lyall, Magyar and Parshall \cite{LMP19}  
   under the additional assumption that $X$ is \emph{non-degenerate} (meaning that $X-X$ is a non-isotropic subspace of $\V$).\footnote{The main result Theorem 1 in \cite{LMP19} was stated only for isometric copies, but its proof also applies to the case of congruent copies.}  It is worth noting that the quantity $\d$ in the aforementioned results  was independent of the complexity of $X$, whereas in Theorem \ref{4:mainmain}, unfortunately we were unable to drop the dependency on the complexity of the configuration. So it is natural to ask:
  
  \begin{ques}
      Can we remove the dependence of $p_{0}$ and $\d$ on $C$ in Theorem \ref{4:mainmain}?
  \end{ques}

The lower bound of $d$ we obtain in Theorem \ref{4:mainmain} is rather coarse. So it is also natural to ask:

  \begin{ques}
      What is the optimal lower bound for $d$ in Theorem \ref{4:mainmain}?
  \end{ques}

\subsection{Strategy of the proof}
The study of the Geometric Ramsey Conjecture in literature uses methods from classical harmonic analysis. Relying on meticulous estimates on Fourier coefficients, such methods proved to be powerful when $s\leq 2$ or when the configuration enjoys some extra symmetric properties. However, the harmonic analysis methods   no longer work when $s\geq 3$. This is because the Fourier coefficient is  insufficient to describe the spherical averages (see for example (\ref{4:Vdc0})) pertaining to this problem. 
 
  In this paper, instead of using harmonic analysis, we use the approach of the higher order Fourier analysis developed in the previous parts of the series \cite{SunA,SunB,SunC}. 
  In Section \ref{4:s:3rr}, we provide some basic algebraic properties which will be used in later sections. In Section \ref{4:s:4rr}, by reducing Theorem \ref{4:mainmain} to Theorem \ref{4:mainmain2}, we show that it suffices to consider the Geometric Ramsey Conjecture for subsets of a sphere. 
  
  The problem is then reduced to show the positivity of a spherical average of the form (\ref{4:Vdc0}). Motivated by the work of Lyall, Magyar and Parshall \cite{LMP19}, we show in Section \ref{4:s:5rr} that (\ref{4:Vdc0}) can be bounded by the spherical Gowers norms of the functions $f_{i}$. Then in Section \ref{4:s:2rr}, based on the work in the previous parts of the series \cite{SunA,SunC}, we provide a structure theorem for spherical Gowers norms. It turns out that the factorization result in \cite{SunA} is not good for our purposes, and so we provide a better version of it based on the idea of Green and Tao \cite{GT10b}.
 
 The structure theorem essentially allows us to reduce the expression (\ref{4:Vdc0}) to the case where $f_{i}$ are sufficiently irrational nilsequences. In Section \ref{4:slast}, by developing the approach of Green and Tao  \cite{GT10b}, we prove a joint equidistribution result (Theorem \ref{4:orbitdesc2}) which can be used to deal with this case. Finally, in Section \ref{4:s:7rr}, we combine all the results in previous sections to complete the proof of Theorem \ref{4:mainmain2} and thus  Theorem \ref{4:mainmain}.

 In Appendix \ref{4:s:AppA}, we recall some notations on polynomials and nilmanifolds defined in \cite{SunA}. In Appendix \ref{4:s:AppB}, we collect some results from previous parts of the series \cite{SunA,SunC} which are used in this paper.

\subsection{Definitions and notations}\label{4:s:defn}

  \begin{conv}
  Throughout this paper, we use
   $\tau\colon\F_{p}\to \{0,\dots,p-1\}$ to denote the natural bijective embedding, and use $\iota\colon \Z\to\F_{p}$ to denote the map given by $\iota(n):=\tau^{-1}(n \mod p\Z)$.
	We also use 
	$\tau$ to denote the map from $\F_{p}^{k}$ to $\Z^{k}$ given by $\tau(x_{1},\dots,x_{k}):=(\tau(x_{1}),\dots,\tau(x_{k}))$,
	and
	$\iota$ to denote the map from $\Z^{k}$ to $\F_{p}^{k}$ given by $\iota(x_{1},\dots,x_{k}):=(\iota(x_{1}),\dots,$ $\iota(x_{k}))$. 
	
	We may also extend the domain of $\iota$ to all the rational numbers of the form $x/y$ with $(x,y)=1, x\in\Z, y\in\Z\backslash p\Z$ by setting $\iota(x/y):=\iota(xy^{\ast})$, where $y^{\ast}$ is any integer with $yy^{\ast}\equiv 1 \mod p\Z$.
  \end{conv}

Below are the notations we use in this paper:

\begin{itemize}
	\item Let $\N,\N_{+},\Z,\Q,\R,\R+,\C$ denote the set of non-negative integers, positive integers, integers, rational numbers, real numbers, positive real numbers, and complex numbers, respectively. Denote $\T:=\R/\Z$. Let $\F_{p}$ denote the finite field with $p$ elements. 
		\item Throughout this paper, $d$ is a fixed positive integer and $p$ is a prime number.
		\item Throughout this paper, unless otherwise stated, all vectors are assumed to be horizontal vectors.
		\item Let $\mathcal{C}$ be a collection of parameters and $A,B,c\in\R$. We write $A\gg_{\mathcal{C}} B$ if $\vert  A\vert\geq K\vert B\vert$ and $A=O_{\mathcal{C}}(B)$ if $\vert A\vert\leq K\vert B\vert$ for some $K>0$ depending only on the parameters in $\mathcal{C}$.
In the above definitions, we allow the set $\mathcal{C}$ to be empty. In this case $K$ will be a universal constant.
\item Denote $[p]:=\{0,\dots,p-1\}$,
	\item
	For $i=(i_{1},\dots,i_{k})\in\Z^{k}$, denote   $\vert i\vert:=\vert i_{1}\vert+\dots+\vert i_{k}\vert$.
	 For $n=(n_{1},\dots,n_{k})\in\Z^{k}$ and $i=(i_{1},\dots,i_{k})\in\N^{k}$, denote $n^{i}:=n_{1}^{i_{1}}\dots n_{k}^{i_{k}}$ 
 and $i!:=i_{1}!\dots i_{k}!$. 
For $n=(n_{1},\dots,n_{k})\in\N^{k}$ and $i=(i_{1},\dots,i_{k})\in\N^{k}$, denote $\binom{n}{i}:=\binom{n_{1}}{i_{1}}\dots \binom{n_{k}}{i_{k}}$.	
	\item We say that $\mathcal{F}\colon \R_{+}\to\R_{+}$ is a \emph{growth function} if $\mathcal{F}$ is strictly increasing and $\mathcal{F}(n)\geq n$ for all $n\in \R_{+}$.
    \item For $x\in\R$, let $\lfloor x\rfloor$ denote the largest integer which is not larger than $x$, and $\lceil x\rceil$ denote the smallest integer which is not smaller than $x$. Let $\{x\}:=x-\lfloor x\rfloor$.
	\item Let  $X$ be a finite set and $f\colon X\to\C$ be a function. Denote $\E_{x\in X}f(x):=\frac{1}{\vert X\vert}\sum_{x\in X}f(x)$, the average of $f$ on $X$.
	\item We say that a set $\Omega\subseteq \Z^{k}$ is \emph{$p$-periodic} if $\Omega=\Omega+p\Z^{k}$.
	\item For $F=\Z^{k}$ or $\F_{p}^{k}$, and $x=(x_{1},\dots,x_{k}), y=(y_{1},\dots,y_{k})\in F$, let $x\cdot y\in \Z$ or $\F_{p}$ denote the dot product given by
	$x\cdot y:=x_{1}y_{1}+\dots+x_{k}y_{k}.$
	\item If $G$ is a connected, simply connected Lie group, then we use  $\log G$ to denote its Lie algebra. Let $\exp\colon \log G\to G$ be the exponential map, and $\log\colon G\to \log G$ be the logarithm map. For $t\in\R$ and $g\in G$, denote $g^{t}:=\exp(t\log g)$.
	\item If $f\colon H\to G$ is a function from an abelian group $H=(H,+)$ to some group $(G,\cdot)$, denote $\Delta_{h} f(n):=f(n+h)\cdot f(n)^{-1}$ for all $n,h\in H$.
	\item If $f\colon H\to \C$ is a function on an abelian group $H=(H,+)$, denote $\Delta_{h} f(n):=f(n+h)\overline{f}(n)$ for all $n,h\in H$.
	\item We write affine subspaces of $\V$ as $V+c$, where $V$ is a subspace of $\V$ passing through $\bold{0}$, and $c\in\V$.
	\item There is a natural correspondence between polynomials taking values in $\F_{p}$ and polynomials  taking values in $\Z/p$.   Let $F\in\poly(\V\to\F_{p}^{d'})$ and $f\in \poly(\Z^{d}\to (\Z/p)^{d'})$ be polynomials of degree at most $s$ for some $s<p$. If  $F=\iota\circ pf\circ\tau$, then we say that $F$ is \emph{induced} by $f$ and $f$ is a \emph{lifting} of $F$.\footnote{As is explained in \cite{SunA},   $\iota\circ pf\circ\tau$ is well defined.}
     We say that $f$ is a  \emph{regular lifting} of $F$ if in addition $f$ has the same degree as $F$ and $f$ has $\{0,\frac{1}{p},\dots,\frac{p-1}{p}\}$-coefficients. 
         \item Let $(X,d_{X})$ be a metric space.
The \emph{Lipschitz norm} of a function $F\colon X\to\C$ is defined as 
$$\Vert F\Vert_{\Lip(X)}:=\sup_{x\in X}\vert F(x)\vert+\sup_{x,y\in X, x\neq y}\frac{\vert F(x)-F(y)\vert}{d_{X}(x,y)}.$$
When there is no confusion, we write $\Vert F\Vert_{\Lip}=\Vert F\Vert_{\Lip(X)}$ for short. 
 \end{itemize}	

 Let $D,D'\in\N_{+}$ and $C>0$. 
Here are some basic notions of complexities:

	\begin{itemize}
		\item \textbf{Real and complex numbers:} a  number $r\in\R$ is of \emph{complexity} at most $C$ if $r=a/b$ for some $a,b\in\Z$ with $-C\leq a,b\leq C$. If $r\notin \Q$, then we say that the complexity of $r$ is infinity. A complex number is of \emph{complexity} at most $C$ if both its real and imaginary parts are of complexity at most $C$.
	   \item \textbf{Vectors and matrices:} a vector or matrix is of \emph{complexity} at most $C$ if all of its entries are of  complexity at most $C$.
	   \item \textbf{Subspaces:} a subspace of $\R^{D}$ is of \emph{complexity} at most $C$ if it is the null space of a matrix of complexity at most $C$.
	  \item \textbf{Linear transformations:} let $L\colon\C^{D}\to\C^{D'}$ be a linear transformation. Then $L$ is associated with an $D\times D'$ matrix $A$ in $\C$. We say that $L$ is of \emph{complexity} at most $C$ if  $A$ is of complexity at most $C$.
	   \item \textbf{Lipschitz function:} the \emph{complexity} of a Lipschitz function is defined to be its Lipschitz norm.
	\end{itemize}

We also need to recall the notations regarding quadratic forms defined in \cite{SunA}.	
	
\begin{defn}	
 We say that a function $M\colon\V\to\F_{p}$ is a \emph{quadratic form} if 
	$$M(n)=(nA)\cdot n+n\cdot u+v$$
	for some $d\times d$ symmetric matrix $A$ in $\F_{p}$, some $u\in \F_{p}^{d}$ and $v\in \F_{p}$.
We say that $A$ is the matrix \emph{associated to} $M$.
We say that $M$ is \emph{pure} if $u=\bold{0}$.
We say that $M$ is \emph{homogeneous} if $u=\bold{0}$ and $v=0$. We say that $M$ is \emph{non-degenerate} if $M$ is of rank $d$, or equivalently, $\det(A)\neq 0$.
\end{defn}

We use $\rank(M):=\rank(A)$ to denote the \emph{rank} of $M$.
Let $V+c$ be an affine subspace of $\V$ of dimension $r$. There exists a (not necessarily unique) bijective linear transformation $\phi\colon \F_{p}^{r}\to V$.
 We define the \emph{rank} $\rank(M\vert_{V+c})$ of $M$  restricted to $V+c$ as the rank of the quadratic form $M(\phi(\cdot)+c)$. It was proved in \cite{SunA} that   $\rank(M\vert_{V+c})$ is independent of the choice of $\phi$.

We also need to use the following notions.

\begin{itemize}
\item For a polynomial $P\in\poly(\F_{p}^{k}\to\F_{p})$, let $V(P)$ denote the set of $n\in\F_{p}^{k}$ such that $P(n)=0$.
\item Let $r\in\N_{+}$, $h_{1},\dots,h_{r}\in \V$ and $M\colon\V\to\F_{p}$ be a quadratic form. Denote $$V(M)^{h_{1},\dots,h_{r}}:=\cap_{i=1}^{r}(V(M(\cdot+h_{i}))\cap V(M).$$
\item Let $\Omega$ be a subset of $\V$ and $s\in\N$.  Let 
$\Gow_{s}(\Omega)$ denote the set of $(n,h_{1},\dots,h_{s})\in(\V)^{s+1}$ such that $n+\e_{1}h_{1}+\dots+\e_{s}h_{s}\in\Omega$ for all $(\e_{1},\dots,\e_{s})\in\{0,1\}^{s}$.
Here we allow $s$ to be 0, in which case $\Gow_{0}(\Omega)=\Omega$.
We say that $\Gow_{s}(\Omega)$ is the \emph{$s$-th Gowers set} of $\Omega$.
\end{itemize}

Quadratic forms can also be defined in the $\Z/p$-setting.

\begin{defn}	
   We say that a function $M\colon\Z^{d}\to\Z/p$ is a \emph{quadratic form} if 
	$$M(n)=\frac{1}{p}((nA)\cdot n+n\cdot u+v)$$
	for some $d\times d$ symmetric matrix $A$ in $\Z$, some $u\in \Z^{d}$ and $v\in \Z$.
We say that $A$ is the matrix \emph{associated to} $M$. 
 \end{defn}

By Lemma 
A.1 of \cite{SunA},
any quadratic form $\tilde{M}\colon\Z^{d}\to\Z/p$ associated with the matrix $\tilde{A}$ induces a quadratic form $M:=\iota\circ p\tilde{M}\circ\tau\colon\F_{p}^{d}\to\F_{p}$ associated with the matrix $\iota(\tilde{A})$. Conversely, any quadratic form $M\colon\F_{p}^{d}\to\F_{p}$ associated with the matrix $A$ admits a regular lifting $\tilde{M}\colon\Z^{d}\to\Z/p$, which is a quadratic form associated with the matrix $\tau(A)$.

For a quadratic form $\tilde{M}\colon\Z^{d}\to\Z/p$, we say that $\tilde{M}$ is \emph{pure/homogeneous/$p$-non-degenerate} if the  quadratic form $M:=\iota\circ p\tilde{M}\circ\tau$ induced by $\tilde{M}$ is pure/homogeneous/non-degenerate. 
The \emph{$p$-rank} of $\tilde{M}$, denoted by $\rank_{p}(\tilde{M})$, is defined to be the rank of $M$.

We say that $h_{1},\dots,h_{k}\in\Z^{d}$ are \emph{$p$-linearly independent} if for all $c_{1},\dots,c_{k}\in\Z/p$, $c_{1}h_{1}+\dots+c_{k}h_{k}\in\Z$
 implies that $c_{1},\dots,c_{k}\in\Z$, or equivalently, if $\iota(h_{1}),\dots,\iota(h_{k})$ are linearly independent.

 We also need to use the following notions.

\begin{itemize}
\item For a polynomial $P\in\poly(\Z^{k}\to\R)$, let $V_{p}(P)$ denote the set of $n\in \Z^{k}$ such that $P(n+pm)\in \Z$ for all $m\in\Z^{k}$.
\item For $\Omega\subseteq\Z^{d}$ and $s\in\N$, let $\Gow_{p,s}(\Omega)$ denote the set of $(n,h_{1},\dots,h_{s})\in(\Z^{d})^{s+1}$ such that $n+\e_{1}h_{1}+\dots+\e_{s}h_{s}\in\Omega+p\Z^{d}$ for all $\e_{1},\dots,\e_{s}\in\{0,1\}$. 
We say that $\Gow_{p,s}(\Omega)$ is the \emph{$s$-th $p$-Gowers set} of $\Omega$.
\end{itemize}

For all other definitions and notations which are used but not mentioned in this paper, we refer the readers to Appendices \ref{4:s:AppA} and \ref{4:s:AppB} for details.

\section{Algebraic structures for configurations}\label{4:s:3rr}

In this section, we provide some basic properties for various algebraic structures pertaining to the Geometric Ramsey Conjecture.  

\subsection{Generating sets}

 Let
 $X=\{x_{j}\in\V\colon j\in J\}\subseteq\F_{p}^{d}$  be a configuration in $\V$ with $J$ being a finite ordered index set. 
 Let $I$ be a  subset of $J$. We say that $I$  is a \emph{generating set} of $X$ if the vectors $x_{i}-x_{i_{0}}, i\in I\backslash\{i_{0}\}$ is a basis of $\sp_{\F_{p}}(X-X)$ for some  $i_{0}\in I$. 
 The following lemma is straightforward and we omit its proof.
 \begin{lem}\label{4:lemgg}
 	$X=\{x_{j}\in\V\colon j\in J\}\subseteq\F_{p}^{d}$  be a configuration in $\V$ with $J$ being a finite ordered index set. 
 	\begin{enumerate}[(i)]
 		\item A subset $I\subseteq J$ is a generating set of $X$  if and only if the vectors $x_{i}-x_{i_{0}}, i\in I\backslash\{i_{0}\}$ is a basis of $\sp_{\F_{p}}(X-X)$ for every $i_{0}\in I$. In particular, the cardinalities of generating sets of $X$ are the same.
 		\item For every $j\in J$, there exists a generating set $I\subseteq J$ of $X$ containing $j$.
 	\end{enumerate}	
 \end{lem}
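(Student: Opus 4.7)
The plan is to prove both parts by elementary linear algebra, exploiting that $\sp_{\F_{p}}(X-X)$ has a well-defined dimension and that changing a base point moves a ``basis of differences'' by an invertible linear transformation.

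For part (i), the implication from ``for every $i_{0}\in I$'' to ``for some $i_{0}\in I$'' is trivial (any element of the nonempty set $I$ works). For the converse, I would fix an $i_{0}\in I$ with $\{x_{i}-x_{i_{0}}\}_{i\in I\setminus\{i_{0}\}}$ a basis of $\sp_{\F_{p}}(X-X)$, pick any other $i_{1}\in I$, and use the identities
\[
x_{i_{0}}-x_{i_{1}}=-(x_{i_{1}}-x_{i_{0}}), \qquad x_{i}-x_{i_{1}}=(x_{i}-x_{i_{0}})-(x_{i_{1}}-x_{i_{0}}) \text{ for } i\in I\setminus\{i_{0},i_{1}\}
\]
to express the system $\{x_{i}-x_{i_{1}}\}_{i\in I\setminus\{i_{1}\}}$ as the image of $\{x_{i}-x_{i_{0}}\}_{i\in I\setminus\{i_{0}\}}$ under an invertible linear transformation on $\sp_{\F_{p}}(X-X)$. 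Hence the new family is also a basis. The ``in particular'' statement then follows at once: every generating set $I$ is indexed by a basis of $\sp_{\F_{p}}(X-X)$ together with one extra element $i_{0}$, so $\vert I\vert=\dim\sp_{\F_{p}}(X-X)+1$ depends only on $X$.

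For part (ii), given $j\in J$, I would observe that every difference $x_{a}-x_{b}$ may be rewritten as $(x_{a}-x_{j})-(x_{b}-x_{j})$, so the family $\{x_{i}-x_{j}:i\in J\setminus\{j\}\}$ already spans $\sp_{\F_{p}}(X-X)$. Standard linear algebra lets me extract a basis $\{x_{i_{1}}-x_{j},\ldots,x_{i_{k}}-x_{j}\}$ from this spanning set, where $k:=\dim\sp_{\F_{p}}(X-X)$, and then $I:=\{j,i_{1},\ldots,i_{k}\}$ is a generating set of $X$ (witnessed by the choice $i_{0}=j$) that contains $j$. There is no real obstacle in either part; the whole argument is a direct change-of-basis computation, consistent with the author's remark that the lemma is straightforward and justifying the omission of a formal proof.
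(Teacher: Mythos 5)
Your proof is correct and is essentially the standard linear-algebra argument the paper has in mind when it calls the lemma straightforward and omits the proof. The change-of-base-point identities in part (i) and the spanning/extract-a-basis argument in part (ii) are exactly the right ingredients, and there are no gaps.
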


 If $I$ is a generating set of $X$, then it is clear that  there exist unique $b_{I,i,j}\in\F_{p}, i\in I, j\in J$ such that $\sum_{i\in I}b_{I,i,j}=1$ and $x_{j}=\sum_{i\in I}b_{I,i,j}x_{i}$ for all $j\in J$. 
 We remark that $b_{I,i,j}, i\in I, j\in J$ satisfy the following \emph{normalization property:}
 $$\text{$b_{I,i,j}=\d_{i,j}$ for all $i,j\in I$ and $\sum_{i\in I}b_{I,i,j}=1$ for all $j\in J$.}$$
 Let $L_{I,j}\colon(\V)^{\vert I\vert}\to\V$ denote the linear transformation given by
 $$L_{I,j}(\x):=\sum_{i\in I} b_{I,i,j}x_{i},$$
 and let $\L_{I}\colon(\V)^{\vert I\vert}\to(\V)^{\vert J\vert}$ denote the linear transformation given by
 $$\L_{I}(\x):=(L_{I,j}(\x))_{j\in J}.$$
 We say that $L_{I,j}, j\in J$ (or simply $\L_{I}$) are the  \emph{generating maps} for $X$ with respect to $I$, and $b_{I,i,j}, i\in I, j\in J$ (or $b_{I,i}, i\in I$) are the  \emph{generating constants} for $X$ with respect to $I$.
  We remark that the normalization property of $b_{I,i,j}$ implies the following \emph{normalization property} for $L_{I,j}$:
 $$\text{$L_{I,j}((x_{i})_{i\in I})=x_{j}$ for all $j\in I$ and $L_{I,j}(x,\dots,x)=x$ for all $j\in I$.}$$

 We are now ready to define the complexity of a configuration.

\begin{defn}[Complexity of configurations]\label{4:ccgf}
    Let $C>0$. We say that a configuration $X\subseteq\V$ is of \emph{complexity} at most $C$ if there exists  a generating set $I$ of $X$ such that all the generating constants for $X$ with respect to $I$ belong to $\{-C,\dots,C\}$.
\end{defn}

In particular, if  $X\subseteq \{-C,\dots,C\}^{d}$, then $X$ is of complexity at most $O_{C,k}(1)$, where $k=\dim(\sp_{\F_{p}}(X-X))$.

Next we provide a change of variable property for generating maps. 

\begin{prop}[Change of variable]\label{4:kk4k}
Let $d\in\N_{+},$ $p$ be a prime, $X$ be a configuration in $\V$ with $J$ being a finite ordered index set, and $I,I'\subseteq J$ be two generating sets of $X$ with generating maps $\L_{I}\colon (\V)^{\vert I\vert}\to (\V)^{\vert J\vert}$ and $\L_{I'}\colon (\V)^{\vert I'\vert}\to (\V)^{\vert J\vert}$ respectively. Let $\pi_{I}\colon (\V)^{\vert J\vert}\to (\V)^{\vert I\vert}$ be the projection onto the coordinates with $I$-indices. 
  Then we have that 
$\L_{I}\circ\pi_{I}\circ\L_{I'}=\L_{I'}$. In particular, we have $\L_{I}((\V)^{k+1})=\L_{I'}((\V)^{k+1})$.
\end{prop}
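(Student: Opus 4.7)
\bigskip

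\noindent\textbf{Proof plan for Proposition \ref{4:kk4k}.} The identity $\L_{I}\circ\pi_{I}\circ\L_{I'}=\L_{I'}$ is an equality between two linear maps $(\V)^{\vert I'\vert}\to(\V)^{\vert J\vert}$, so the plan is to unpack it coordinate-by-coordinate and reduce it to a combinatorial identity on the generating constants that is forced by the uniqueness statement built into their definition.

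Concretely, writing $\mathbf{x}'=(x'_{i'})_{i'\in I'}$, I first compute
$$L_{I,j}\bigl((L_{I',i}(\mathbf{x}'))_{i\in I}\bigr)=\sum_{i\in I}b_{I,i,j}\sum_{i'\in I'}b_{I',i',i}\,x'_{i'}=\sum_{i'\in I'}c_{I,I',i',j}\,x'_{i'},$$
where $c_{I,I',i',j}:=\sum_{i\in I}b_{I,i,j}b_{I',i',i}$. The whole proposition then reduces, for each $j\in J$ and $i'\in I'$, to the equality $c_{I,I',i',j}=b_{I',i',j}$.

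To establish this, I would invoke the uniqueness of the generating constants: since $I'$ is a generating set of $X$, the scalars $(b_{I',i',j})_{i'\in I'}$ are characterized as the unique coefficients satisfying the normalization property $\sum_{i'\in I'}b_{I',i',j}=1$ together with $x_{j}=\sum_{i'\in I'}b_{I',i',j}x_{i'}$. So I would check that $(c_{I,I',i',j})_{i'\in I'}$ satisfies these same two conditions. For the first, I swap the order of summation and use $\sum_{i'\in I'}b_{I',i',i}=1$ (normalization for $I'$) followed by $\sum_{i\in I}b_{I,i,j}=1$ (normalization for $I$). For the second, the inner sum collapses via $\sum_{i'\in I'}b_{I',i',i}x_{i'}=x_{i}$ to give $\sum_{i\in I}b_{I,i,j}x_{i}=x_{j}$. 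These are the two defining identities, so uniqueness forces $c_{I,I',i',j}=b_{I',i',j}$.

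For the ``in particular'' statement, the main identity gives $\L_{I'}((\V)^{\vert I'\vert})\subseteq \L_{I}((\V)^{\vert I\vert})$ directly, and by Lemma \ref{4:lemgg}(i) the two generating sets have the same cardinality $k+1$, so swapping the roles of $I$ and $I'$ yields the reverse inclusion and hence equality. The ``main obstacle'' here is essentially only notational: keeping the index bookkeeping straight between $I$, $I'$ and $J$, and being clear that the uniqueness property of the generating constants is exactly what drives the whole argument; there is no genuine technical difficulty beyond that.
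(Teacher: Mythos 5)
Your proof is correct, and it is essentially the paper's argument: both reduce the identity to the coefficient relation $\sum_{i\in I}b_{I,i,j}b_{I',i',i}=b_{I',i',j}$. The only cosmetic difference is that you invoke the uniqueness of the generating constants (which the paper asserts before the proposition) as a black box, whereas the paper's proof re-derives the needed equality directly from the linear independence of $v_{i'}-v_{i_0}$, $i'\in I'\setminus\{i_0\}$; both of you verify the same two normalization identities and both handle the ``in particular'' statement by symmetry.
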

\begin{proof}
Assume that $X=\{v_{j}\colon j\in J\}$ and 
	let $b_{I,i,j}, i\in I, j\in J$ and $b_{I',i,j}, i\in I', j\in J$ be the generating constants for $X$ with respect to $I$ and $I'$ respectively. 
	We need to show that for all $j\in J$ and $\x=(x_{i})_{i\in I'}\in(\V)^{\vert I'\vert}$, we have that 
	\begin{equation}\nonumber
	\begin{split}
	L_{I,j}\circ\pi_{I}\circ\L_{I'}(\x)=L_{I',j}(\x).
	\end{split}
	\end{equation}
	
	For any $j\in J$, we have that
	\begin{equation}\nonumber
	\begin{split}
	   \sum_{i'\in I'}b_{I',i',j}v_{i'}
	   =v_{j}=\sum_{i\in I}b_{I,i,j}v_{i}
	   =\sum_{i\in I}b_{I,i,j}\sum_{i'\in I'}b_{I',i',i}v_{i'}
	   =\sum_{i'\in I'}\Bigl(\sum_{i\in I}b_{I,i,j}b_{I',i',i}\Bigr)v_{i'},
	 \end{split}
	\end{equation}
	or equivalently, 
	\begin{equation}\label{4:cov001}
	\begin{split}
	   \sum_{i'\in I'}\Bigl(b_{I',i',j}-\sum_{i\in I}b_{I,i,j}b_{I',i',i}\Bigr)v_{i'}=\bold{0}.
	 \end{split}
	\end{equation}
	On the other hand, by the normalization property of $b_{I,i,j}$,
	\begin{equation}\label{4:cov002}
	\begin{split}
	   \sum_{i'\in I'}\Bigl(b_{I',i',j}-\sum_{i\in I}b_{I,i,j}b_{I',i',i}\Bigr)=\sum_{i'\in I'}b_{I',i',j}-\sum_{i\in I}b_{I,i,j}\Bigl(\sum_{i'\in I'}b_{I',i',i}\Bigr)=1-\sum_{i\in I}b_{I,i,j}=0.
	 \end{split}
	\end{equation}
	Since $I'$ is a generating set of $X$, the vectors $v_{i'}-v_{i_{0}}, i'\in I'\backslash\{i_{0}\}$ are linearly independent for some $i_{0}\in I'$. It then follows from (\ref{4:cov001}) and (\ref{4:cov002}) that 
\begin{equation}\label{4:biterate}
	\begin{split}
	   b_{I',i',j}=\sum_{i\in I}b_{I,i,j}b_{I',i',i}
	 \end{split}
	\end{equation}
	for all $j\in J$. So
	\begin{equation}\nonumber
	\begin{split}
	&\quad L_{I,j}\circ\pi_{I}\circ\L_{I'}(\x)
	=\sum_{i\in I}b_{I,i,j}(L_{I',i}(\x))
	=\sum_{i\in I}b_{I,i,j}\sum_{i'\in I'}b_{I',i',i}x_{i'}
	\\&=\sum_{i'\in I'}\Bigl(\sum_{i\in I}b_{I,i,j}b_{I',i',i}\Bigr)x_{i'}
	=\sum_{i'\in I'}b_{I',i',j}x_{i'}
	=L_{I',j}(\x).
	\end{split}
	\end{equation}

We now prove that $\L_{I}((\V)^{k+1})=\L_{I'}((\V)^{k+1})$. Note that
$$\L_{I}((\V)^{k+1})=\L_{I'}\circ \pi_{I'}\circ \L_{I}((\V)^{k+1})\subseteq \L_{I'}((\V)^{k+1}).$$
Similarly, $\L_{I'}((\V)^{k+1})\subseteq \L_{I}((\V)^{k+1}).$ This implies that $\L_{I'}((\V)^{k+1})=\L_{I}((\V)^{k+1}).$
	\end{proof}

\subsection{$M$-spherical sets}\label{4:s:msms}

 In this paper,
 instead of working with the inner product $\vert\cdot\vert^{2}$, it is convenient to consider more general quadratic forms. 
Let $M\colon\V\to\F_{p}$ be a homogeneous quadratic form.  We say that a set $X\subseteq\F_{p}^{d}$ is \emph{$M$-spherical} if there exist $z\in \V$ and $r\in\F_{p}$ such that $M(x-z)=r$  for all $x\in X$. It is not hard to see that every simplex is $M$-spherical:

\begin{lem}[Simplices are $M$-spherical]\label{4:sis}
	Let $M\colon\V\to\F_{p}$ be a non-degenerate homogeneous quadratic form and $X=\{x_{i}\colon i\in J\}$ be a configuration in $\V$. Suppose that for some $i_{0}\in J$, the vectors $x_{i}-x_{i_{0}}, i\in J\backslash\{i_{0}\}$ are linearly independent. Then $X$ is $M$-spherical. 
\end{lem}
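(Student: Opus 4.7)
The plan is to find a center $z\in\V$ explicitly by solving a linear system for the auxiliary vector $w := x_{i_0} - z$. Let $A$ be the non-degenerate symmetric matrix associated to $M$, so that $M(v) = (vA)\cdot v$. It suffices to produce $z$ such that $M(x_i - z) = M(x_{i_0} - z)$ for every $i\in J$, since then the common value $r := M(x_{i_0} - z)$ witnesses $M$-sphericality.

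Writing $y_i := x_i - x_{i_0}$ and using symmetry of $A$, a direct expansion yields
\[
M(x_i - z) - M(x_{i_0} - z) = M(y_i + w) - M(w) = M(y_i) + 2(y_iA)\cdot w.
\]
So the problem reduces to solving the linear system
\[
(y_iA)\cdot w = -\tfrac{1}{2}\, M(y_i), \qquad i\in J\setminus\{i_0\}. \qquad (\ast)
\]
Here I implicitly use $p\neq 2$, which is the large-prime regime of the paper.

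To solve $(\ast)$, consider the linear map $\Phi\colon\V\to \F_p^{\vert J\vert - 1}$ given by $\Phi(w) := ((y_iA)\cdot w)_{i\in J\setminus\{i_0\}}$. Its rows are the vectors $y_iA$, and since $A$ is invertible, these rows are linearly independent if and only if the $y_i$ are --- which is exactly the hypothesis. Hence $\Phi$ is surjective (note that $\vert J\vert - 1\le d$ is forced by the linear independence of the $y_i$ in $\V$), so a solution $w$ exists. Setting $z := x_{i_0} - w$ and $r := M(w)$ then completes the proof.

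There is no real obstacle here: the argument is essentially a linear-algebra exercise whose only substantive inputs are the non-degeneracy of $M$ (giving invertibility of $A$) and the hypothesized linear independence of the vectors $x_i - x_{i_0}$. The only mild subtlety is the use of $2^{-1}\in\F_p$, which is harmless for odd $p$ and is implicit throughout the paper.
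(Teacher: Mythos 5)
Your proof is correct and takes essentially the same approach as the paper: both reduce the problem to solving a linear system whose coefficient matrix has rows $(x_i-x_{i_0})A$, and both invoke invertibility of $A$ to transfer the linear independence of $x_i-x_{i_0}$ to these rows. Your substitution $w=x_{i_0}-z$ and the explicit expansion (including the remark on $p\neq 2$) merely spell out details the paper leaves implicit.
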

\begin{proof}
It suffices to show that there exists $z\in\V$ such that $M(x_{i}-z)=M(x_{i_{0}}-z)$ for all $i\in J\backslash\{i_{0}\}$. This is equivalent of solving the linear equation system  $((x_{j}-x_{j_{0}})A)\cdot z=b_{j}, j\in J\backslash\{j_{0}\}$, where $A$ is the matrix associated to $M$ and $b_{j}$ is some constant depending  on $M$ and $X$. Since $x_{j}-x_{j_{0}},j\in J\backslash\{j_{0}\}$ are linearly independent,  so are $(x_{j}-x_{j_{0}})A,j\in J\backslash\{j_{0}\}$ since $A$ is invertible. So such $z\in\V$ exists by the knowledge of linear algebra, which implies that $X$ is $M$-spherical.
\end{proof}

The following is a convenient criterion for $M$-spherical  sets.

\begin{lem}\label{4:lmp}
Let	$M\colon\V\to\F_{p}$ be a non-degenerate homogeneous  quadratic form. 
A configuration	$X=\{x_{j}\colon j\in J\}$ in $\V$ is $M$-spherical if and only if for any quadratic form $M'\colon\V\to\F_{p}$ whose associated matrix is the same as $M$, and any $c_{j}\in\F_{p}, j\in J$, if $\sum_{j\in J}c_{j}=0,$ $\sum_{j\in J}c_{j}x_{j}=\bold{0}$, then $\sum_{j\in J}c_{j}M'(x_{j})=0$.	
\end{lem}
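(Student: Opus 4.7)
The plan is to handle the two directions separately via elementary linear algebra over $\F_{p}$, exploiting the polarization identity $M(x - z) = M(x) + M(z) - 2(xA) \cdot z$, which holds because the matrix $A$ associated to $M$ is symmetric.

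For the forward direction, if $X$ is $M$-spherical with center $z \in \V$ and radius $r \in \F_{p}$, then rearranging the polarization identity gives $M(x_{j}) = r - M(z) + 2(x_{j}A) \cdot z$. For any quadratic form $M'(n) = (nA) \cdot n + n \cdot u + v$ with the same associated matrix, substituting produces
\[
M'(x_{j}) = \bigl(r - M(z) + v\bigr) + x_{j} \cdot (2zA + u),
\]
which is affine-linear in $x_{j}$. The two hypotheses on the coefficients $c_{j}$, namely $\sum_{j} c_{j} = 0$ and $\sum_{j} c_{j} x_{j} = \bold{0}$, precisely annihilate the constant part and the linear part respectively, yielding $\sum_{j} c_{j} M'(x_{j}) = 0$.

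For the converse, I would apply the hypothesis only to $M' = M$: whenever $\sum_{j} c_{j} = 0$ and $\sum_{j} c_{j} x_{j} = \bold{0}$, one has $\sum_{j} c_{j} M(x_{j}) = 0$. By the Fredholm alternative this is exactly the condition guaranteeing that the affine system $x_{j} \cdot w + v = -M(x_{j})$, $j \in J$, admits a solution $(w, v) \in \V \times \F_{p}$, since the space of $(c_{j})$ described above is precisely the left-kernel of the coefficient matrix with rows $(x_{j}, 1)$. Given such a solution $(w, v)$, I would set $z := -\tfrac{1}{2} w A^{-1}$, which is well-defined because non-degeneracy of $M$ gives invertibility of $A$ and we may take $p > 2$ throughout the paper. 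Substituting $M(x_{j}) = -v - x_{j} \cdot w$ back into the polarization identity, the choice of $z$ kills the term $x_{j} \cdot (w + 2zA)$, leaving $M(x_{j} - z) = M(z) - v$ independent of $j$; hence $X$ is $M$-spherical with center $z$ and radius $r := M(z) - v$.

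The argument has no genuine obstacle; the only mildly delicate point is invoking the Fredholm alternative over $\F_{p}$, which is standard finite-dimensional linear algebra: for a linear map between finite-dimensional $\F_{p}$-spaces, the image equals the annihilator of the left-kernel.
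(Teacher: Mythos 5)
Your forward direction is essentially identical to the paper's (the paper's displayed expression has a small typo, writing $2u$ where $u$ is meant, but the cancellation is the same). Your converse, however, takes a genuinely different route. The paper argues by contraposition: assuming $X$ is not $M$-spherical, it reduces to a minimal counterexample, invokes Lemma~\ref{4:sis} to produce a linear dependence $\sum_{j\neq j_0} a_j(x_j - x_{j_0}) = \bold{0}$, removes a point $x_{j_1}$ with $a_{j_1}\neq 0$ to obtain a spherical sub-configuration $X'$, and then exhibits explicit coefficients $c_j$ witnessing $\sum_j c_j M'(x_j)\neq 0$. You instead argue directly: apply the hypothesis only to $M'=M$, observe via rank--nullity duality over $\F_p$ (your ``Fredholm alternative'') that the space of $(c_j)$ with $\sum c_j = 0$, $\sum c_j x_j = \bold{0}$ is exactly the left-kernel of the map $(w,v)\mapsto (x_j\cdot w + v)_j$, deduce solvability of $x_j\cdot w + v = -M(x_j)$, and then set $z := -\tfrac12 wA^{-1}$ so that $2zA = -w$ and $M(x_j - z) = M(x_j) + x_j\cdot w + M(z) = M(z) - v$ is constant. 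This is correct (with $p>2$ and non-degeneracy of $A$, both standing assumptions here), avoids the minimality reduction and the appeal to Lemma~\ref{4:sis}, and even yields the mild strengthening that the condition for $M'=M$ alone already forces $M$-sphericality. The paper's route, while longer, is the one imported from Lyall--Magyar--Parshall and dovetails with the other sphericality lemmas in the section; yours is a cleaner linear-algebra shortcut.
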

\begin{proof}
	The proof is almost identical to Lemma 16 of \cite{LMP19} and we write it down for completeness. Assume that $M(n)=(nA)\cdot n$ and
	 that $M'(n)=(nA)\cdot n+u\cdot n+v$ for some $u\in\F_{p}^{d}$ and $v\in\F_{p}$, where $A$ is the matrix associated to $M$ and $M'$.
	If $X$ is $M$-spherical, then $M(x_{j}-z)=r$ for some $z\in\F_{p}^{d}$ and $r\in\F_{p}$ for all $j\in J$. So 
	$$M(x_{j})=r-M(z)+x_{j}\cdot (2zA) \text{ and } M'(x_{j})=r-M(z)+v+x_{j}\cdot (2zA+2u).$$
	Therefore,
	$$\sum_{j\in J}c_{j}M'(x_{j})=\sum_{j\in J}c_{j}(r-M(z)+v)+\sum_{j\in J}(c_{j}x_{j})\cdot(2zA+2u)=0.$$
	
Conversely, suppose that $X$ is not $M$-spherical. We wish to construct $c_{j}\in\F_{p},j\in J$ such that 	$\sum_{j\in J}c_{j}=0$, $\sum_{i=0}^{k}c_{i}x_{i}=\bold{0}$, but $\sum_{i=0}^{k}c_{i}M'(x_{i})\neq 0$. We may assume without loss of generality that $X$ is minimal in the sense that any proper subset of $X$ is $M$-spherical. Fix any $j_{0}\in J$.
Since $X$ is not $M$-spherical, by
 Lemma \ref{4:sis}, $x_{j}-x_{j_{0}},j\in J\backslash\{j_{0}\}$ are linearly dependent.
So we may assume that $\sum_{j\in J\backslash\{j_{0}\}}a_{j}(x_{j}-x_{j_{0}})=\bold{0}$ for some  $a_{j}\in\F_{p}$ not all equal to 0. Assume that $a_{j_{1}}\neq 0$ for some $j_{1}\in J\backslash\{j_{0}\}$.

Let	$X'$ denote the configuration $\{x_{j}\colon j\in J\backslash\{j_{1}\}\}$. By the minimality of $X$, $X'$ is $M$-spherical.
So $M(x_{j}-z)=r$ for some $z\in\F_{p}^{d}$ and $r\in\F_{p}$ for all $j\in J\backslash\{j_{1}\}$. Since $X$ is not  $M$-spherical, $M(x_{j_{1}}-z)-r=t\neq 0$. It is not hard to see that
	$$M(x_{j})-M(x_{j_{0}})=(x_{j}-x_{j_{0}})\cdot (2zA)$$
for all $j\in J\backslash\{j_{0},j_{1}\}$, and
    $$M(x_{j_{1}})-M(x_{j_{0}})=(x_{j_{1}}-x_{j_{0}})\cdot (2zA)+t.$$
    Since $\sum_{j\in J\backslash\{j_{0}\}}a_{j}(x_{j}-x_{j_{0}})=0$, we have that
    \begin{equation}\nonumber
    \begin{split}
  &\quad  \sum_{j\in J\backslash\{j_{0}\}}a_{j}(M'(x_{j})-M'(x_{j_{0}}))=\sum_{j\in J\backslash\{j_{0}\}}a_{j}(M(x_{j})-M(x_{j_{0}}))
  \\&=\sum_{j\in J\backslash\{j_{0}\}}a_{j}(x_{j}-x_{j_{0}})\cdot (2zA)+a_{j_{1}}t=a_{j_{1}}t\neq 0.
    \end{split}
    \end{equation}
We are done by taking $c_{j_{0}}=-\sum_{j\in J\backslash\{j_{0}\}}a_{j}$ and $c_{j}=a_{j}$ for all $j\in J\backslash\{j_{0}\}$.
 \end{proof}		

As a  corollary of Lemma \ref{4:lmp}, we have the following generalization of  Lemma 5 of \cite{LMP19}:
\begin{coro}\label{4:lmp2}
	Let	$M\colon\V\to\F_{p}$ be a non-degenerate homogeneous quadratic form and $X=\{x_{j}\colon j\in J\}$ be an $M$-spherical configuration in $\V$. Let $I$ be a generating set of $X$ with $L_{I,j},j\in J$ be its generating maps. If $M(y_{i}-z)=r$ for some $z,y_{i}\in\V$ and $r\in \F_{p}$ for all $i\in I$, then $M(L_{I,j}((y_{i})_{i\in I})-z)=r$ for all $j\in J$.
	
	In particular, if  $M(x_{i}-z)=r$ for some $z\in\V$ and $r\in \F_{p}$ for all $i\in I$, then $M(x_{j}-z)=r$ for all $j\in J$.
\end{coro}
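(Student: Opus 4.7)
The plan is to apply Lemma \ref{4:lmp} with the translated quadratic form $M_z(n):=M(n-z)$. Expanding gives $M_z(n)=M(n)-2(zA)\cdot n+(zA)\cdot z$, where $A$ is the matrix associated to $M$, so $M_z$ has the same associated matrix as $M$ and hence is a legitimate choice of the auxiliary form $M'$ in Lemma \ref{4:lmp}. Fix $j_0\in J$ and set $c_i:=b_{I,i,j_0}$ for $i\in I$, $c_{j_0}:=-1$ (with the understanding that if $j_0\in I$ then the two contributions combine to $c_{j_0}=b_{I,j_0,j_0}-1=0$ via the normalization $b_{I,j_0,j_0}=1$), and $c_j:=0$ otherwise. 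The normalization $\sum_{i\in I}b_{I,i,j_0}=1$ and the defining relation $\sum_{i\in I}b_{I,i,j_0}x_i=x_{j_0}$ of the generating map immediately give $\sum_{j}c_j=0$ and $\sum_{j}c_j x_j=\bold{0}$. Since $X$ is $M$-spherical, Lemma \ref{4:lmp} then yields $\sum_{j}c_j M_z(x_j)=0$, that is, $M(x_{j_0}-z)=\sum_{i\in I}b_{I,i,j_0}M(x_i-z)$. Specializing to $M(x_i-z)=r$ for every $i\in I$ gives $M(x_{j_0}-z)=r\sum_{i}b_{I,i,j_0}=r$, proving the \emph{in particular} clause.

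For the main statement, I would combine the identity above with the observation $L_{I,j_0}(y)-z=\sum_{i\in I}b_{I,i,j_0}(y_i-z)$ (a consequence of $\sum b_{I,i,j_0}=1$) to reduce the conclusion to establishing the analogue $M(L_{I,j_0}(y)-z)=\sum_{i\in I}b_{I,i,j_0}M(y_i-z)$. Expanding $M$ on this affine combination using the symmetric bilinear form $B$ associated to $M$ produces cross-terms $B(y_i-z,y_{i'}-z)$, which by the polarization identity $2B(u,v)=M(u)+M(v)-M(u-v)$ and the sphere hypothesis $M(y_i-z)=r$ can be rewritten in terms of $M(y_i-y_{i'})$. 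The identity then reduces to checking that $\sum_{i<i'}b_{I,i,j_0}b_{I,i',j_0}M(y_i-y_{i'})=0$, which I would verify by a further application of Lemma \ref{4:lmp} to an appropriate auxiliary quadratic form on $X$ that encodes pairwise $M$-distances.

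The main obstacle is precisely this last cancellation step: producing it requires exploiting the sphericity of $X$ (not just the sphere hypothesis on the $y_i$) in a way that is only indirectly captured by Lemma \ref{4:lmp}. Selecting the correct auxiliary form and the correct linear combination to make the cancellation explicit is where the rigidity coming from $I$ being a generating set of the $M$-spherical configuration $X$ is essential.
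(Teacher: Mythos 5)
Your argument for the \emph{in particular} clause is correct and complete, and your polarization reduction for the general statement is also exactly right: given $M(y_i-z)=r$ for $i\in I$ and the normalization $\sum_{i\in I}b_{I,i,j_0}=1$, the conclusion $M(L_{I,j_0}((y_i)_{i\in I})-z)=r$ is \emph{equivalent} to the single scalar identity $\sum_{i<i'}b_{I,i,j_0}b_{I,i',j_0}M(y_i-y_{i'})=0$.

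The reason you cannot close that last cancellation by ``a further application of Lemma~\ref{4:lmp}'' is that the stated hypotheses do not imply it; the corollary as written is in fact false. Concretely, take $d=3$, $p\equiv 1\pmod 4$, $M$ the standard form, and $\alpha\in\F_p$ with $\alpha^2=-1$. Put $x_0=(1,\alpha,0)$, $x_1=\bold{0}$, $x_2=\tfrac12(x_0+x_1)$. Then $X=\{x_0,x_1,x_2\}$ is $M$-spherical (center $(0,0,1)$, radius $1$), with generating set $I=\{0,1\}$ and $b_{I,0,2}=b_{I,1,2}=1/2$. Taking $z=\bold{0}$, $r=1$, $y_0=(1,0,0)$, $y_1=(0,1,0)$ gives $M(y_i-z)=r$ for $i\in I$, yet $M(L_{I,2}((y_0,y_1))-z)=M\bigl((\tfrac12,\tfrac12,0)\bigr)=\tfrac12\neq r$. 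The missing hypothesis --- which holds in every place the paper actually invokes Corollary~\ref{4:lmp2}, since the $y_i$ there always come from $\Omega_I$ or from an almost $M$-congruent copy of $X$ --- is the isometry condition $M(y_i-y_{i'})=M(x_i-x_{i'})$ for all $i,i'\in I$. With that added, your argument terminates immediately: the unknown sum becomes $\sum_{i<i'}b_{I,i,j_0}b_{I,i',j_0}M(x_i-x_{i'})$, which vanishes by running your polarization expansion on $M(x_{j_0}-z_0)=r_0$ for the center $z_0$ and radius $r_0$ witnessing the sphericity of $X$ (i.e.\ by the \emph{in particular} computation applied to $X$ itself). For comparison, the paper's own proof suffers the same omission: it invokes Lemma~\ref{4:lmp} with the $y_j$'s substituted in for the $x_j$'s, whereas the lemma only gives $\sum_j c_j M'(x_j)=0$ for the configuration $X$ whose sphericity is assumed, not $\sum_j c_j M'(y_j)=0$ for auxiliary points; with the isometry hypothesis restored, either your expansion or a correct invocation of Lemma~\ref{4:lmp} (applied to $X$, then transported to the $y_i$ via the matching pairwise distances) yields the statement.
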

\begin{proof}
	For convenience denote $M'(n):=M(n-z)-r$ and $y_{j}:=L_{I,j}((y_{i})_{i\in I})$ for all $j\in J\backslash I$. Recall that we also have $L_{I,j}((y_{i})_{i\in I})=y_{j}$ for all $j\in I$. Then
	   $M'(y_{i})=0$ for all $i\in I$.
	Let $b_{I,i,j}, i\in I, j\in J$ be the generating constants for $X$ with respect to $I$. Then for all $j\in J\backslash I$, $\sum_{i\in I}b_{I,i,j}-1=0$ and $\sum_{i\in I}b_{I,i,j}y_{i}-y_{j}=0$. 
	Since $X$ is $M$-spherical, 
	By Lemma \ref{4:lmp},  $\sum_{i\in I}b_{I,i,j}M'(y_{i})-M'(y_{j})=0$. So $M'(y_{j})=0$ for all $j\in J\backslash I$.	
	
	The ``in particular" part follows from the fact that $x_{j}:=L_{I,j}((x_{i})_{i\in I})$ for all $j\in J$.
\end{proof}	

\subsection{Different notions of congruency}

In order to prove Theorem \ref{4:mainmain}, instead of working directly on congruent copies of a configuration, we study relavent  patterns with less restrictions. 

\begin{defn}[$M$-isometry and $M$-congruency]
	Let $M\colon\V\to\F_{p}$ be a homogeneous quadratic form 
	and
	$X=\{x_{j}\colon j\in J\}$, $Y=\{y_{j}\colon j\in J\}$ be two configurations in $\V$. We say that $X$ and $Y$ are \emph{$M$-isometric} if $M(x_{i}-x_{j})=M(y_{i}-y_{j})$ for all $i,j\in J$.  
	We say that $Y$ is \emph{$M$-congruent} to $X$ if there exist an \emph{$M$-isometry} $U$ on $\V$ (i.e. $U\colon \V\to\V$ is a linear transformation such that $M(U(x))=M(x)$ for all $x\in\V$) and $z\in\V$ such that $y_{j}=z+U(x_{j})$ for all $j\in J$.

When $M=\vert\cdot\vert^{2}$,  we simply call $M$-isometry/congruency  as  \emph{isometry/congruency} for short.
\end{defn}

	It is not hard to see  that  congruency implies isometry. However, the next example shows that the converse is false.
	
	\begin{ex}
		Let $d=5$, $X=\{x_{1},\dots,x_{4}\}$ and  $Y=\{y_{1},\dots,y_{4}\}$ with $x_{1}=y_{1}=(-1,-1,0,0,$ $0)$, $x_{2}=y_{2}=(-1,1,0,0,0)$, $x_{3}=y_{3}=(1,-1,0,0,0)$, $x_{4}=(1,1,0,0,0)$, and $y_{4}=(1,1,a,b,c)$, where $a^{2}+b^{2}+c^{2}=0$ and $(a,b,c)\neq (0,0,0)$. Such $a,b,c$ exist because of Lemma \ref{4:counting01}. Then $X$ and $Y$ are isometric. It is clear that $X$ and $Y$ are spherical since $\vert x_{i}\vert^{2}=\vert y_{i}\vert^{2}=2$.
		\footnote{Moreover, $X$ is non-degenerate.} On the other hand, if there  exist an isometry $U$ on $\V$   and $z\in\V$ such that $y_{i}=z+U(x_{i})$ for all $1\leq i\leq 4$, then 
		\begin{equation}\nonumber
		\begin{split}
		&\quad (0,0,a,b,c)=y_{1}+y_{4}-y_{2}-y_{3}=(z+U(x_{1}))+(z+U(x_{4}))-(z+U(x_{2}))-(z+U(x_{3}))
		\\&=U(x_{1}+x_{4}-x_{2}-x_{3})=(0,0,0,0,0),
		\end{split}
		\end{equation}
		a contradiction. So $Y$ is not  congruent to $X$.
	\end{ex}

Next we introduce a variation of the $M$-congruency concept.	
\begin{defn}[almost $M$-congruency]
			Let $M\colon\V\to\F_{p}$ be a homogeneous quadratic form 
			and
			$X=\{x_{j}\colon j\in J\}$ and $Y=\{y_{j}\colon j\in J\}$ be two configurations in $\V$.
	Let $I\subseteq J$ be a generating set of $X$ with generating constants $b_{I,i,j}, i\in I, j\in J$.
	We say that $Y$ is \emph{almost $M$-congruent} to $X$ with respect to $I$ if 
	\begin{itemize}
		\item for any $i,i'\in I$, $M(x_{i}-x_{i'})=M(y_{i}-y_{i'})$ (i.e. $\{x_{i}\colon i\in I\}$ is $M$-isometric to $\{y_{i}\colon i\in I\}$);
		\item for any $i\in J\backslash I$, $y_{j}=\sum_{i\in I}b_{I,i,j}y_{i}$.\footnote{We also have $y_{j}=\sum_{i\in I}b_{I,i,j}y_{i}$ for $i\in I$ for free, since $b_{I,i,j}=\d_{i,j}$ for all $i,j\in I$.}
	\end{itemize}	
	We say that $Y$ is \emph{almost $M$-congruent} to $X$ if  $Y$ is  almost $M$-congruent  to $X$ with respect to every generating set of $X$. When $M=\vert\cdot\vert^{2}$,  we simply call almost $M$-congrency  as  \emph{almost congrency} for short.
\end{defn}

Roughly speaking, almost congruency is a mixture of an isometry property for a generating set of the configuration, and some linearity property for the remaining points. The concept of almost congruency was used implicitly in the work of Lyall, Magyar and Parshall \cite{LMP19}, and it is very useful in converting the Geometric Ramsey Conjecture to the study of linear patterns in spheres. We provide some connections between different notions of congruency for later uses.

\begin{prop}\label{4:cdd2}
	Let $M\colon\V\to\F_{p}$ be a non-degenerate homogeneous quadratic form,   $X=\{x_{j}\colon j\in J\}$ and $Y=\{y_{j}\colon j\in J\}$ be two configurations, and $I\subseteq J$ be a generating set $I$ of $X$.
	\begin{enumerate}[(i)]
		\item If $Y$ is $M$-congruent to $X$, then $Y$ is almost $M$-congruent and $M$-isometric to $X$.
		\item If $Y$ is almost $M$-congruent to $X$ with respect to $I$, and if  the vectors $y_{i}-y_{i_{0}}, i\in I\backslash\{i_{0}\}$ are linearly independent for some $i_{0}\in I$,\footnote{When $Y$ is almost $M$-congruent to $X$, this is equivalent of saying that $\sp_{\F_{p}}(X-X)$ and $\sp_{\F_{p}}(Y-Y)$ have the same dimension.} then
		  $Y$ is  $M$-congruent to $X$ (and thus is almost $M$-congruent to $X$ with respect to every generating set of $X$ by Part (i)). If in addition $X$ is $M$-spherical, then so is $Y$.
	\end{enumerate}	
\end{prop}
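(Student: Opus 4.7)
The proof splits naturally into the two directions, both of which reduce to linear algebra once a suitable extension of a partial isometry is established.

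For Part (i), suppose $Y$ is $M$-congruent to $X$ via $y_j = z + U(x_j)$. The $M$-isometry claim is immediate: $M(y_i - y_j) = M(U(x_i - x_j)) = M(x_i - x_j)$ for all $i, j \in J$. For the almost $M$-congruency, fix any generating set $I'\subseteq J$ with constants $b_{I',i,j}$. The condition on pairs from $I'$ is a special case of $M$-isometry, and using $\sum_{i\in I'} b_{I',i,j} = 1$ together with linearity of $U$ gives
$$\sum_{i\in I'} b_{I',i,j} y_i = \sum_{i\in I'} b_{I',i,j}(z + U(x_i)) = z + U\Bigl(\sum_{i\in I'} b_{I',i,j} x_i\Bigr) = z + U(x_j) = y_j.$$

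For Part (ii), fix $i_0 \in I$. The vectors $\{x_i - x_{i_0}\}_{i \in I\setminus\{i_0\}}$ are a basis of $\sp_{\F_p}(X-X)$ and, by hypothesis, $\{y_i - y_{i_0}\}_{i \in I\setminus\{i_0\}}$ is linearly independent, so there is a unique linear isomorphism $U_0 \colon \sp_{\F_p}(X-X) \to \sp_{\F_p}(Y-Y)$ satisfying $U_0(x_i - x_{i_0}) = y_i - y_{i_0}$ for all $i \in I\setminus\{i_0\}$. Combining the identity $M(x_i - x_j) = M(y_i - y_j)$ for $i,j \in I$ with the polarization identity (which is available since $p$ is odd under the hypotheses of the main theorem), one checks that $U_0$ preserves the symmetric bilinear form associated to $M$ on $\sp_{\F_p}(X-X)$, and hence $M \circ U_0 = M$ on that subspace. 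The plan is then to invoke Witt's extension theorem for the non-degenerate quadratic space $(\V, M)$ to extend $U_0$ to an $M$-isometry $U \colon \V \to \V$. Setting $z := y_{i_0} - U(x_{i_0})$, one verifies directly that $z + U(x_i) = y_i$ for $i \in I$, and for $j \in J \setminus I$ the almost $M$-congruency condition combined with $\sum_{i\in I} b_{I,i,j} = 1$ yields
$$z + U(x_j) = y_{i_0} + \sum_{i\in I} b_{I,i,j} U(x_i - x_{i_0}) = y_{i_0} + \sum_{i\in I} b_{I,i,j}(y_i - y_{i_0}) = \sum_{i\in I} b_{I,i,j} y_i = y_j.$$
Thus $Y$ is $M$-congruent to $X$. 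The $M$-spherical statement follows: if $M(x_j - w) = r$ for all $j$, then $M(y_j - (z + U(w))) = M(U(x_j - w)) = r$.

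The main technical point to be careful about is the Witt extension step: we need $M$ non-degenerate on $\V$ (given) and the ability to extend a partial $M$-isometry, which in odd characteristic is standard. Alternatively, one can bypass Witt by picking any complement of $\sp_{\F_p}(X-X)$ inside $\V$, choosing a complementary basis for $\sp_{\F_p}(Y-Y)$ yielding the same Gram matrix relative to $M$, and defining $U$ block-wise; the existence of such a matching complementary basis again uses non-degeneracy of $M$ and the $p$ odd hypothesis. Everything else is bookkeeping with the normalization property of the generating constants and linearity.
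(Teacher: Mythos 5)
Your proposal is correct and follows essentially the same route as the paper: Part (i) is a direct check using linearity of $U$ and the normalization $\sum_{i\in I}b_{I,i,j}=1$; Part (ii) reduces to showing that the map $x_i - x_{i_0} \mapsto y_i - y_{i_0}$ on $\sp_{\F_p}(X-X)$ is a bijective $M$-isometry (via the Gram matrix / polarization computation, valid since $p$ is odd) and then invokes Witt's extension theorem for the non-degenerate form $M$. The only cosmetic differences are that the paper normalizes WLOG $x_{i_0}=y_{i_0}=\mathbf{0}$ rather than carrying the translation $z=y_{i_0}-U(x_{i_0})$ through, and that for the final $M$-spherical claim the paper cites its Corollary~\ref{4:lmp2} and Lemma~\ref{4:sis} while you transport the center and radius directly through the newly-constructed $M$-congruence $y_j = z + U(x_j)$ — which is arguably a touch cleaner, since it avoids re-deriving sphericality from the generating set. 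No gap.
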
	
\begin{proof}
	Part (i) is straightforward. 
	We now prove Part (ii). 
	Let $X'=\{x_{j}\colon j\in I\}$ and $Y'=\{y_{j}\colon j\in I\}$.
	By linearity, it suffices to show that $X'$ is $M$-congruent to $Y'$. For convenience assume without loss of generality that $I=\{0,\dots,k\}$ for some $k\in\N_{+},$ that $i_{0}=0$ and that $x_{0}=y_{0}=\bold{0}$. Then $(x_{1},\dots,x_{k})$ and $(y_{1},\dots,y_{k})$ are linearly independent tuples. Let $U=\sp_{\F_{p}}\{x_{1},\dots,x_{k}\}$ and  $V=\sp_{\F_{p}}\{y_{1},\dots,y_{k}\}$. Then $x_{i}\mapsto y_{i}, 1\leq i\leq k$ induces a bijective linear transformation $\phi\colon U\to V$. Let $A$ be the matrix associated to $M$.
	Since $Y$ is almost $M$-congruent to $X$, we have  that
	$M(y_{i}-y_{j})=M(x_{i}-x_{j})$ for all $1\leq i, j\leq k$ and that $M(y_{i})=M(x_{i})$ for all $1\leq i\leq k$ (since $x_{0}=y_{0}=\bold{0}$). From this it is not hard to see that $(x_{i}A)\cdot x_{j}=(y_{i}A)\cdot y_{j}$ for all $1\leq i, j\leq k$, which implies that 
	$$M(a_{1}x_{1}+\dots+a_{k}x_{k})=M(a_{1}y_{1}+\dots+a_{k}y_{k})$$
	for all $a_{1},\dots,a_{k}\in \F_{p}$. In other words, $\phi$ is a bijective $M$-isometry. Since $M$ is non-degenerate, by Witt's Extension Theorem (see for example \cite{Cla13} Corollary 24), $\phi$ extension to an $M$-isometry from $\V\to \V$. So $X'$ is  $M$-congruent to $Y'$, and thus $X$ is  $M$-congruent to $Y$.

We now assume further that $X$ is $M$-spherical and show that $Y$ is also $M$-spherical. Since $Y$ is almost $M$-congruent to $X$, by Corollary \ref{4:lmp2}, it suffices to show that $Y'$ is spherical, 
which follows from Lemma \ref{4:sis}.
\end{proof}	

\begin{rem}
A result similar to Proposition \ref{4:cdd2} was proven in Lemma 14 \cite{LMP19} under the additional assumption that $X$ is non-degenerate. In fact, the assumptions in both results  share the same purpose in ensuring
$\dim(\sp_{\F_{p}}(X-X))=\dim(\sp_{\F_{p}}(Y-Y))$, from which one can apply Witt's Extension Theorem to show the existence of the $M$-isometry $\phi$.
\end{rem}

\section{Some preliminary reductions of the density Ramsey theorem}\label{4:s:4rr}

\subsection{A variation of Theroem \ref{4:mainmain} over spheres}

In this section, we reduce Theorem \ref{4:mainmain} to the following result:

\begin{thm}[Density Ramsey Theorem on spheres]\label{4:mainmain2}
	Let $d\in\N_{+}$, $C,\e>0$, $p$ be a prime, $M\colon\V\to\F_{p}$ be a non-degenerate homogeneous quadratic form, and $X\subseteq\F_{p}^{d}$ be an $M$-spherical configuration of complexity at most $C$.  Let $k$ be the dimension of $\sp_{\F_{p}}(X-X)$ and suppose that $d\geq d_{0}(X)$. There exist $p_{0}=p_{0}(C,d,\e)\in\N$ and $\d=\d(C,d,\e)>0$ such that if $p>p_{0}$, then for every $x\in\V$, every $r\in\F_{p}$ and every set $E\subseteq V(M(\cdot+x)-r)$ (recall Section \ref{4:s:defn} for the definition) with $\vert E\vert>\e p^{d-1}$, $E$ contains at least $\d p^{(k+1)d-(k+1)(k+2)/2}$ almost $M$-congruent copies of $X$.    
\end{thm}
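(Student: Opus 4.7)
The plan is to realise the number of almost $M$-congruent copies of $X$ in $E$ as a multilinear spherical average and then apply the higher-order Fourier strategy outlined in Sections \ref{4:s:5rr}--\ref{4:slast}. Fix a generating set $I \subseteq J$ of $X$ with $|I|=k+1$, with generating maps $L_{I,j}$ and generating constants $b_{I,i,j}$ provided by Definition \ref{4:ccgf}. Write $\Sigma := V(M(\cdot+x)-r)$ for the sphere in which $E$ sits. Since $X$ is $M$-spherical, Corollary \ref{4:lmp2} guarantees that whenever $\y=(y_i)_{i\in I}\in\Sigma^{k+1}$ is $M$-isometric to $(x_i)_{i\in I}$, the full family $(L_{I,j}(\y))_{j\in J}$ lies on $\Sigma$ and is almost $M$-congruent to $X$ with respect to $I$. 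Thus, up to normalisation, the quantity we want to bound from below is
\[
\Lambda(1_E,\dots,1_E) := \sum_{\y}\prod_{j\in J}1_E(L_{I,j}(\y)),
\]
where $\y$ ranges over tuples in $\Sigma^{k+1}$ whose pairwise $M$-distances match those in $(x_i)_{i\in I}$. Elementary counting gives on the order of $p^{(k+1)d-(k+1)(k+2)/2}$ admissible tuples (each $y_i$ contributes $d$ parameters, and the $k+1$ sphere conditions plus $k(k+1)/2$ distance conditions contribute the quadratic correction), so it suffices to prove a positive proportion lower bound on $\Lambda$.

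\textbf{Gowers-norm control and structural decomposition.} Writing $1_E = \epsilon' + g$ with $\epsilon' := |E|/|\Sigma| \gtrsim \epsilon$ and $g$ of mean zero on $\Sigma$, the term obtained from replacing every $1_E$ by $\epsilon'$ contributes the expected main term of size $\gtrsim \epsilon^{|J|} p^{(k+1)d-(k+1)(k+2)/2}$. The $2^{|J|}-1$ error terms will be handled by the generalised-von-Neumann type inequality developed in Section \ref{4:s:5rr}, which dominates each error term by a product of spherical Gowers norms $\|g\|_{U^s(\Sigma)}$ for some fixed $s=s(|X|)$. If $\|g\|_{U^s(\Sigma)}$ is small we are done; otherwise the refined factorisation theorem of Section \ref{4:s:2rr}, modelled on \cite{GT10b}, furnishes a decomposition $g = g_{\mathrm{str}} + g_{\mathrm{sml}} + g_{\mathrm{unf}}$, where $g_{\mathrm{str}}$ is a sufficiently irrational bounded nilsequence on $\Sigma$ whose complexity is controlled by $C$, $g_{\mathrm{sml}}$ is $L^2$-small on $\Sigma$, and $g_{\mathrm{unf}}$ has negligible $U^s(\Sigma)$ norm. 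The $g_{\mathrm{sml}}$ and $g_{\mathrm{unf}}$ contributions are absorbed by $L^2$ and Gowers-norm estimates respectively, leaving only terms in which every factor is either the constant $\epsilon'$ or the nilsequence $g_{\mathrm{str}}$.

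\textbf{Joint equidistribution and conclusion.} The remaining task is to understand how the linear system $(L_{I,j})_{j\in J}$ redistributes a nilorbit over the space of admissible spherical tuples. This is precisely what Theorem \ref{4:orbitdesc2} is designed for: it produces joint equidistribution of $(g_{\mathrm{str}}\circ L_{I,j})_{j\in J}$ on the appropriate product factor nilmanifold, and combined with the change-of-variable Proposition \ref{4:kk4k} (which makes the analysis independent of the chosen generating set $I$) this shows that the only surviving contribution is the one in which every $g_{\mathrm{str}}$-factor integrates to zero, yielding an asymptotic whose main term is $(\epsilon')^{|J|}$ times the count of admissible $\y$. The dimension hypothesis $d\geq d_0(X)$ is invoked throughout to ensure both that $\Sigma$ and the relevant Gowers sets have size close to their heuristic values (via the counting lemmas from \cite{SunA,SunC} collected in Appendix \ref{4:s:AppB}) and that no algebraic degeneracies obstruct the equidistribution.

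\textbf{Main obstacle.} I expect the delicate point to be the equidistribution step: on the sphere the ambient translation group structure is weakened, and one must rule out algebraic coincidences among the generating maps $L_{I,j}$ that would prevent $(g_{\mathrm{str}}\circ L_{I,j})_{j\in J}$ from jointly equidistributing on the target product nilmanifold. Carrying this out requires a careful adaptation of the Green--Tao factorisation and equidistribution machinery of \cite{GT10b} to the quadratic constraints defining $\Sigma$, together with quantitative control of the nilpotent complexity of $g_{\mathrm{str}}$ produced in Section \ref{4:s:2rr}, so that the final parameters $\delta$ and $p_0$ end up depending only on $C$, $d$ and $\epsilon$ as claimed.
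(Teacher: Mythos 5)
Your outline assembles the right toolkit --- the generalized von Neumann inequality from Section \ref{4:s:5rr}, the irrational factorization from Section \ref{4:s:2rr}, and the joint equidistribution Theorem \ref{4:orbitdesc2} --- but the way you propose to combine them at the final step is not sound, and the paper's actual mechanism for obtaining positivity is missing from your sketch.

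The crucial flaw is in the sentence ``the only surviving contribution is the one in which every $g_{\mathrm{str}}$-factor integrates to zero, yielding an asymptotic whose main term is $(\epsilon')^{|J|}$ times the count of admissible $\y$.'' Theorem \ref{4:orbitdesc2} produces equidistribution of $(g(x_0)\Gamma,\dots,g(x_{k+s-1})\Gamma)$ on the Leibman nilmanifold $G^{V_X}/\Gamma^{V_X}$, which is a \emph{proper} subnilmanifold of $(G/\Gamma)^{k+s}$ whenever $V_X \neq \R^{k+s}$ --- and this is always the case here since $\dim V_X = k+1 < k+s$. Consequently the limiting value of $\E_{\x\in\Omega} \prod_j f_{\mathrm{str}}(x_j)$ is $\int_{G^{V_X}/\Gamma^{V_X}} F^{\otimes(k+s)}$, which has no reason to factor as $(\int_{G/\Gamma} F)^{k+s}$. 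In particular, your proposed decomposition $1_E = \epsilon' + g$ and subsequent ``expand and kill error terms'' argument cannot close: the cross terms involving $g_{\mathrm{str}}$ are genuinely present and are not small.

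What the paper does instead (Section \ref{4:s:7rr}, following \cite{GT10b} Section 6) is to construct a non-negative \emph{weight function} $\omega$ supported on those $\x\in\Omega$ for which all the nilorbit points $g(x_j)\Gamma$ are within distance $\rho$ of each other (Proposition \ref{4:recset}). The joint equidistribution theorem is used precisely to show that this weight has mean $1$, that its relative self-correlations $\E_{\Omega\times_i\Omega}\omega\times_i\omega$ are close to $1$, and that on its support $f_{\mathrm{str}}(x_0)\approx\cdots\approx f_{\mathrm{str}}(x_{k+s-1})$. Positivity then comes from the elementary inequality $\E_{x_0} f_{\mathrm{str}}(x_0)^{k+s} \geq (\E_{x_0} f_{\mathrm{str}}(x_0))^{k+s}$ applied after replacing all factors by $f_{\mathrm{str}}(x_0)$ on the weighted average, together with careful Cauchy--Schwarz estimates (using the $\omega\times_i\omega$ control) to dispose of the $f_{\mathrm{err}}$ cross terms. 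This weight-function device is exactly how one sidesteps the fact that the Leibman factor does not produce a product measure, and it is the central idea your proposal does not contain. Relatedly, the paper does not split $1_E$ into constant plus mean-zero; the density information enters only through $\E f_{\mathrm{str}} \geq \epsilon - O(\e') - \mathcal{F}(C)^{-1}$ (inequality (\ref{4:mm0})), which feeds directly into the power-mean bound.

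A second, smaller point: in the paper the error function $f_{\mathrm{err}}$ is controlled not by a plain $L^2$ estimate but by a Cauchy--Schwarz argument involving the weight and the sets $\Omega\times_i\Omega$ (display (\ref{4:mm5})), which is why the implicit constants in the weight's self-correlation bound must be independent of the nilmanifold complexity $C$. Your sketch treats the $L^2$-small piece as if it were trivially negligible, but without the weight's self-correlation property this step also fails.
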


\begin{rem}
	Note that Theorem \ref{4:mainmain2} differs from Theorem \ref{4:mainmain} in the following aspects. Firstly, the set $E$ in Theorem \ref{4:mainmain2} is restricted to the ``sphere" $V(M(\cdot+x)-r)$. Secondly, the quantity $\d$ in  Theorem \ref{4:mainmain2} is allowed to be dependent on $d$ (and thus depends on $\vert X\vert$ implicitly since $d\geq d_{0}(X)$). Finally, Theorem \ref{4:mainmain2} deals with 
	$M$-spherical sets and almost $M$-congruent copies for a generate non-degenerate homogeneous quadratic form $M$.	
\end{rem}

In order to see the connection between Theorems \ref{4:mainmain} and \ref{4:mainmain2}, it is convenient to consider the following intermediate result:

\begin{prop}[Density Ramsey Theorem, a weaker form]\label{4:mainmain3}
	Let $d\in\N_{+}$, $C,\e>0$, $p$ be a prime, $M\colon\V\to\F_{p}$ be a non-degenerate homogeneous quadratic form, and $X\subseteq\F_{p}^{d}$ be an $M$-spherical configuration of complexity at most $C$. Let $k$ be the dimension of $\sp_{\F_{p}}(X-X)$ and suppose that $d\geq d_{0}(X)$.   There exist $p_{0}=p_{0}(C,d,\e)\in\N$ and $\d=\d(C,d,\e)>0$ such that if $p>p_{0}$,  then   every set $E\subseteq \V$ with $\vert E\vert>\e p^{d}$ contains at least $\d p^{(k+1)d-(k+1)k/2}$ $M$-congruent copies of $X$.   
\end{prop}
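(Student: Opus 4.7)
The plan is to deduce Proposition \ref{4:mainmain3} from Theorem \ref{4:mainmain2} via a double-counting argument that exploits the natural foliation of $\V$ by $M$-spheres. For each $x\in\V$, the level sets $V(M(\cdot+x)-r)$, $r\in\F_p$, partition $\V$, so $\sum_{r\in\F_p}|E\cap V(M(\cdot+x)-r)|=|E|>\epsilon p^d$. A pigeonhole argument shows that for every $x\in\V$, at least $(\epsilon/2)p$ values of $r$ satisfy $|E\cap V(M(\cdot+x)-r)|>(\epsilon/2)p^{d-1}$, and to each such pair $(x,r)$ I apply Theorem \ref{4:mainmain2} with density parameter $\epsilon/2$ to produce at least $\delta' p^{(k+1)d-(k+1)(k+2)/2}$ almost $M$-congruent copies of $X$ inside $E\cap V(M(\cdot+x)-r)$, where $\delta'=\delta'(C,d,\epsilon)>0$.

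Let $T$ denote the number of triples $(Y,x,r)$ with $Y\subseteq E$ an almost $M$-congruent copy of $X$ and $Y\subseteq V(M(\cdot+x)-r)$. Summing the previous estimate over $x\in\V$ yields
$$T\;\geq\;\tfrac{\epsilon\delta'}{2}\,p^{(k+2)d+1-(k+1)(k+2)/2}.$$
I next compute $T$ by summing over $Y$ first. For a \emph{non-degenerate} $Y$ (meaning $\dim\sp_{\F_p}(Y-Y)=k$), Proposition \ref{4:cdd2}(ii) implies $Y$ is $M$-congruent to $X$ and $M$-spherical; the pairs $(x,r)$ with $Y\subseteq V(M(\cdot+x)-r)$ form an affine subspace of dimension $d-k$ in $\V\times\F_p$ (parameterized by the center $-x$ of the $M$-sphere through $Y$, with the radius $r$ then determined). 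Hence each non-degenerate $Y$ contributes exactly $p^{d-k}$ to $T$, and these non-degenerate copies are precisely the $M$-congruent copies of $X$ in $E$.

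For the \emph{degenerate} contribution (from $Y$ with $\dim\sp_{\F_p}(Y-Y)=k-j$ for some $j\geq 1$), I parameterize such $Y$ by a $(k-j)$-dimensional subspace containing $\{y_i-y_0\}$, a basepoint $y_0\in\V$, and coordinates in this subspace subject to the $k(k+1)/2$ distance constraints; a dimension count bounds the number of such $Y$ in $\V$ by $O_C(p^{(k+1-j)d+j(k-j)-k(k+1)/2})$, each contained in at most $p^{d-k+j}$ spheres. The total degenerate contribution to $T$ is thus $\sum_{j\geq 1}O_C(p^{(k+2)d-k-k(k+1)/2-j(d-k+j-1)})$, which, under the hypothesis $d\geq d_0(X)$ (so in particular $d\gg k$), is smaller than the main term by a factor of at least $p$. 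Consequently, the number of $M$-congruent copies of $X$ in $E$ is at least $T/(2p^{d-k})\gg_{C,d,\epsilon} p^{(k+1)d-k(k+1)/2}$, matching the claimed bound.

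The main obstacle will be making the codimension count on the degenerate stratum fully rigorous, since on rank-deficient configurations the $k(k+1)/2$ distance constraints may become dependent; the hypothesis $d\geq d_0(X)\geq 4k^2+12k+7$ provides enough dimensional slack for the generic count to suffice. Apart from this, the argument uses Theorem \ref{4:mainmain2} essentially as a black box.
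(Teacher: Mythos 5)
Your outline is essentially the paper's proof: pigeonhole over the family of spheres $V(M(\cdot+x)-r)$ to find many pairs $(x,r)$ for which $E\cap V(M(\cdot+x)-r)$ is dense, feed these into Theorem \ref{4:mainmain2}, and divide by the multiplicity $\asymp p^{d-k}$ with which a non-degenerate rigid copy of $X$ appears across spheres; your exponent bookkeeping matches the paper's. The one place you flag a gap---making the stratified dimension count on the degenerate locus rigorous---is exactly where the paper takes a cruder route that you should adopt. By Lemma \ref{4:iiddpp}, the number of tuples $(y_0,\dots,y_k)\in(\V)^{k+1}$ with $y_1-y_0,\dots,y_k-y_0$ linearly dependent is at most $kp^{dk+k-1}$, which is already dominated by the $\gg p^{(k+1)d-(k+1)(k+2)/2}$ almost $M$-congruent copies that Theorem \ref{4:mainmain2} produces inside a single sphere, using only $d\geq \frac{(k+1)(k+2)}{2}+k$; so the paper discards the degenerate tuples sphere-by-sphere and never needs a refined codimension count on the rank-deficient stratum. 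After this discard, each surviving copy is $M$-congruent to $X$ by Proposition \ref{4:cdd2}(ii), and the rest of your multiplicity-division argument goes through unchanged.
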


\begin{proof}[Proof of Proposition \ref{4:mainmain3} assuming Theorem \ref{4:mainmain2}]
	For convenience denote 	$$M_{x,r}(n):=M(n+x)-r.$$
	Let $s:=\vert X\vert-k\geq 1$ and $E\subseteq\V$ with $\vert E\vert>\e p^{d}$.
	Assume without loss of generality that $X=\{x_{0},\dots,x_{k+s-1}\}$ and $\{0,\dots,k\}$ is a generating set of $X$.
	Note that for any $n\in\V$, the number of tuples $(x,r)\in\F_{p}^{d+1}$ such that $M_{x,r}(n)=0$ is $p^{d}$ (since $r$ is uniquely determined by $x$ and $n$). So $\sum_{(x,r)\in\F_{p}^{d+1}}\vert E\cap V(M_{x,r})\vert=p^{d}\vert E\vert>\e p^{2d}.$ Since $\vert E\cap V(M_{x,r})\vert\leq \vert V(M_{x,r})\vert\leq 2p^{d-1}$, by the Pigeonhole Principle, there exists a subset $W$ of $\F_{p}^{d+1}$ of cardinality $\gg \e p^{d+1}$ such that $\vert E\cap V(M_{x,r})\vert\gg \e p^{d-1}$ for all $(x,r)\in W$. By Theorem \ref{4:mainmain2}, for all $(x,r)\in W$, the set $E\cap V(M_{x,r})$ contains $\gg_{C,d,\e}p^{(k+1)d-(k+1)(k+2)/2}$ almost $M$-congruent copies $Y=\{v_{0},\dots,v_{k+s-1}\}$ of $X$. 
	
	By Lemma \ref{4:iiddpp}, the number of choices of $v_{0},\dots,v_{k}\in\V$ such that $v_{1}-v_{0},\dots,v_{k}-v_{0}$ are linearly dependent is at most $kp^{d+(d+1)(k-1)}=kp^{dk+(k-1)}$. Since $d\geq \frac{(k+1)(k+2)}{2}+k$, the set $E\cap V(M_{x,r})$ contains $\gg_{C,d,\e}p^{(k+1)d-(k+1)(k+2)/2}$ almost $M$-congruent copies $Y=\{v_{0},\dots,v_{k+s-1}\}$ of $X$ such that $v_{1}-v_{0},\dots,v_{k}-v_{0}$ are linearly independent (because $v_{k+1},\dots,v_{k+s-1}$ are uniquely determined by $v_{0},\dots,v_{k}$).

	Fix any such $Y$. By Proposition \ref{4:cdd2} (ii), $Y$ is $M$-congruent to $X$. Since $\{0,\dots,k\}$ is a generating set of $X$, by Corollary \ref{4:lmp2}, 
	 $Y\subseteq V(M_{x,r})$ if and only if 
	  $v_{0},\dots,v_{k}$ belong to  $V(M_{x,r})$, or equivalently, $x\in V(M_{v_{0},r})^{v_{1}-v_{0},\dots,v_{k}-v_{0}}$ (recall Section \ref{4:s:defn} for the definition). Since $v_{1}-v_{0},\dots,v_{k}-v_{0}$ are linearly independent, 
  and $d\geq 2k+3$, by Lemma \ref{4:counting02}, $\vert V(M_{v_{0},r})^{v_{0}-v_{1},\dots,v_{0}-v_{k}}\vert=p^{d-k-1}(1+O(p^{-1/2}))$.
	Since there are $p$ choices of $r$, the number of $(x,r)\in\F_{p}^{d+1}$ such that $Y\subseteq V(M_{x,r})$ is at most $2p^{(d+1)-(k+1)}$ if $p\gg_{d} 1$.
	
	So the total number of $M$-congruent copies of $X$ is no less than the cardinality of the set $E=\cup_{(x,r)\in\F_{p}^{d+1}}(E\cap V(M_{x,r}))$, which is at least $$\gg_{C,d,\e}p^{(k+1)d-(k+1)(k+2)/2}\cdot\vert W\vert/2p^{(d+1)-(k+1)}\gg_{C,d,\e}p^{(k+1)d-(k+1)k/2}.$$ 
\end{proof}

\begin{proof}[Proof of Theorem \ref{4:mainmain} assuming Proposition \ref{4:mainmain3}]
	Let $X=\{x_{0},\dots,x_{k+s-1}\}\subseteq\F_{p}^{d}$ be a spherical configuration of complexity at most $C$, where   $\dim(\sp_{\F_{p}}(X-X))=k$ and $s=\vert X\vert -k\geq 1$. 
	Assume without loss of generality that $I:=\{0,\dots,k\}$ is a generating set of $X$ and let $b_{I,i,j}, i\in I, j\in\{0,\dots,k+s-1\}$ be its generating constants.
	Let $r\in\F_{p}$, $x\in\V$, and $E\subseteq\V$ with $\vert E\vert>\e p^{d}$.
	Suppose that $d\geq d_{0}:=d_{0}(X)$. Throughout the proof we assume that  $p\gg_{C,d_{0},\e} 1$.

    Our goal is to show that the dependences of $p_{0}$ and $\d$ on $d$ in Proposition \ref{4:mainmain3} can be replaced by the dependences on $k$ and $s$. Let $M\colon \V\to\F_{p}$ be the quadratic form given by $M(x):=x\cdot x$. We say that a basis $v_{1},\dots,v_{d}$   of $\V$ is \emph{good} if $\sp_{\F_{p}}\{v_{1},\dots,v_{d_{0}}\}$ is not $M$-isotropic (see Appendix \ref{4:s:AppB1} for the definition).
	Fix any good basis $v_{1},\dots,v_{d}$   of $\V$. Denote $V:=\sp_{\F_{p}}\{v_{1},\dots,v_{d_{0}}\}$ and $U:=\sp_{\F_{p}}\{v_{d_{0}+1},\dots,v_{d}\}$. Then $M\vert_{V+c}$ is of full rank for all $c\in U$. By the Pigeonhole Principle, there exists a subset $U'(v_{1},\dots,v_{d})$ of $U$ of cardinality at least $\e p^{d-d_{0}}/2$ such that for all $c\in U'(v_{1},\dots,v_{d})$, we have that $\vert E\cap (V+c)\vert>\e p^{d_{0}}/2$.

	Fix such an element $c$ and
	let $\phi\colon\F_{p}^{d_{0}}\to V$ be any bijective linear transformation. Denote $M'=M\circ\phi$.  Then $M'\colon \F_{p}^{d_{0}}\to\F_{p}$ is a non-degenerate homogeneous quadratic form. Let $E':=\{m\in \F_{p}^{d_{0}}\colon \phi(m)+c\in E\}$. Then $\vert E'\vert>\e p^{d_{0}}/2$.

Let $W$ be the set of $(y_{0},\dots,y_{k})\in(\F_{p}^{d_{0}})^{k+1}$ such that $M'(y_{i}-y_{i'})=M(x_{i}-x_{i'})$ for all $0\leq i,i'\leq k$. Using Proposition \ref{4:yy33}, it is not hard to check that $W$ is a consistent $M'$-set 
  of total co-dimension $k(k+1)/2$ (see Appendix \ref{4:s:AppB3} for definitions). 
 By Theorem \ref{4:ct}, since $\rank(M')=d_{0}\geq k^{2}+k+1$, we have that $\vert W\vert=p^{d_{0}(k+1)-k(k+1)/2}(1+O_{k}(p^{-1/2}))$. On the other hand, by Lemma \ref{4:iiddpp}, the number of $(y_{0},\dots,y_{k})\in(\F_{p}^{d_{0}})^{k+1}$ such that $y_{1}-y_{0},\dots,y_{k}-y_{0}$ are linearly dependent is at most $kp^{d_{0}+(d_{0}+1)(k-1)}=kp^{d_{0}k+(k-1)}$. Since $d_{0}\geq \frac{k(k+1)}{2}+k$ and $p\gg_{d_{0},k} 1$, there exists some configuration $Y=\{y_{0},\dots,y_{k+s-1}\}$ in $\F_{p}^{d'}$ such that  $M'(y_{i}-y_{i'})=M(x_{i}-x_{i'})$ for all $0\leq i,i'\leq k$, that 
$y_{j}=\sum_{i=0}^{k}b_{I,i,j}y_{i}$ for all $j\in\{0,\dots,k+s-1\}$,
and  that $y_{1}-y_{0},\dots,y_{k}-y_{0}$ are linearly independent. Therefore, $I$ is a generating set of $Y$. Since $\phi$ is injective,  $I$ is also a generating set of $\phi(Y)+c$.

Note that $$M((\phi(y_{i})+c)-(\phi(y_{i'})+c))=M'(y_{i}-y_{i'})=M(x_{i}-x_{i'})$$
for all $i,i'\in I$ and that 
$$\phi(y_{j})+c=\sum_{i=0}^{k}b_{I,i,j}(\phi(y_{i})+c)$$
 	for all $j\in \{0,\dots,k+s-1\}$. We have that $\phi(Y)+c$ is an almost $M$-congruent copy of $X$ with respect to $I$.   By Proposition \ref{4:cdd2} (ii), $\phi(Y)+c$ is $M$-spherical.
By Lemma \ref{4:sis}, $\{y_{i}\colon i\in I\}$ is $M'$-spherical. Assume that $M'(y_{i}-z)=r$ for some $z\in\F_{p}^{d_{0}}$ and $r\in\F_{p}$ for all $i\in I$. 
Then $M((\phi(y_{i})+c)-(\phi(z)+c))=r$ for all $i\in I$. Since $I$ is a  generating set of the $M$-spherical configuration $\phi(Y)+c$, by Corollary \ref{4:lmp2}, $M((\phi(y_{i})+c)-(\phi(z)+c))=r$  and thus $M'(y_{j}-z)=r$ for all $j\in \{0,\dots,k+s-1\}$. So 
 $Y$ is $M'$-spherical.

	Since $p\gg_{C,d_{0},\e} 1$,by Theorem \ref{4:mainmain3},  $E'$ contains at least $\d p^{(k+1)d_{0}-(k+1)k/2}$    $M'$-congruent copies of $Y$, where $\d:=\d(C,d_{0},\e)>0$. Composing with $\phi$, it is not hard to see that $E\cap (V+c)$  contains at least $\d p^{(k+1)d_{0}-(k+1)k/2}$  $M$-congruent copies of $\phi(Y)+c$.  
 	Ranging over all $c\in U'(v_{1},\dots,v_{d})$, we have that $E$  contains at least $\d\e p^{(d-d_{0})+(k+1)d_{0}-(k+1)k/2}/4$ $M$-congruent copies of $X$ with respect to any good basis $v_{1},\dots,v_{d}$ of $\V$. 
	
	Let $B$ be the $d\times d$ matrix with $v_{1},\dots,v_{d}$ being its row vectors, and $B'$ be the $d_{0}\times d$ matrix with $v_{1},\dots,v_{d_{0}}$ being its row vectors. Then $v_{1},\dots,v_{d}$ is a good basis of $\V$ if any only if $\det(B),\det(B'{B'}^{T})\neq 0$. By Lemma \ref{4:ns}, the number of good basis of $\V$ is $p^{d^{2}}(1+O_{d}(p^{-1}))$.
	Therefore, letting $R$ denote the set of tuples $(X',v_{1},\dots,v_{d},c)$ with  $X'$ being  $M$-congruent to $X$, $v_{1},\dots,v_{d}$ being a good basis of $\V$, and $c\in U'(v_{1},\dots,v_{d})$, we have that 
	\begin{equation}\label{4:cdowef}
	    \vert R\vert\geq \d\e p^{d^{2}+(d-d_{0})+(k+1)d_{0}-(k+1)k/2}/8.
	\end{equation}

	\textbf{Claim.}
	For any $1\leq t\leq d_{0}$ and $u_{0},\dots,u_{t}\in\V$ with $u_{1}-u_{0},\dots,u_{t}-u_{0}$ being linearly independent, the number of tuples $(v_{1},\dots,v_{d_{0}})\in(\V)^{d_{0}}$ such that $u_{0},\dots,u_{t}\in \sp_{\F_{p}}\{v_{1},\dots,v_{d_{0}}\}$ is at most $p^{d_{0}t+d(d_{0}-t)}$.
	
	Denote 	$u'_{i}:=u_{i}-u_{0}$ for $1\leq i\leq t$.
	Note that $u_{0},\dots,u_{t}\in \sp_{\F_{p}}\{v_{1},\dots,v_{d_{0}}\}$ only if $u'_{1},\dots,u'_{t}\in \sp_{\F_{p}}\{v_{1},\dots,v_{d_{0}}\}$. Assume that
	$u'_{i}=\sum_{j=1}^{d_{0}}c_{i,j}v_{j}$ for all $1\leq i\leq t$. There are at most $p^{td_{0}}$ choices of $c_{i,j}$. For any choice of $c_{i,j}$, to achieve $u'_{i}=\sum_{j=1}^{d_{0}}c_{i,j}v_{j}, 1\leq i\leq t$, we must have that $(c_{i,1},\dots,c_{i,d_{0}}), 1\leq i\leq t$ are linearly independent. If this is the case, then there exists a subset $I$ of $\{1,\dots,d_{0}\}$ with $\vert I\vert=t$ such that $v_{j}, j\in I$ are uniquely determined by $u_{i}, v_{j'}, c_{i,j}, 1\leq i\leq t, 1\leq j\leq d_{0}, j'\notin I$. Since there are $p^{d(d_{0}-t)}$ choices of $v_{j'}, j'\notin I$, we conclude that the   number of tuples $(v_{1},\dots,v_{d_{0}})$ such that $u'_{1},\dots,u'_{t}\in \sp_{\F_{p}}\{v_{1},\dots,v_{d_{0}}\}$ is at most $p^{d_{0}t+d(d_{0}-t)}$. This proves the claim.
	
	\

	Fix any  $M$-congruent copy $X'$ of $X$ and any $c\in\V$. 
	Since $\vert X'\vert=k+1$ and $$\dim(\sp_{\F_{p}}((X'-c)-(X'-c)))=\dim(\sp_{\F_{p}}(X'-X'))=\dim(\sp_{\F_{p}}(X-X))=k,$$ by the claim, the number of tuples $(v_{1},\dots,v_{d_{0}})$ such that $X'-c\subseteq\sp_{\F_{p}}\{v_{1},\dots,v_{d_{0}}\}$ is at most $p^{d_{0}k+d(d_{0}-k)}$.
	So each $X'$ appears at most $p^{d(d-d_{0})+d_{0}k+d(d_{0}-k)}$ times in the set $R$.
So it follows from (\ref{4:cdowef}) 
	  that $E$ contains in total at least $$\d\e p^{d^{2}+(d-d_{0})+(k+1)d_{0}-(k+1)k/2-(d(d-d_{0})+d_{0}k+d(d_{0}-k))}/8=\d\e p^{(k+1)d-(k+1)k/2}/8$$  $M$-congruent (and thus congruent) copies of $X$. 	  This completes the proof.
\end{proof}	

In conclusion, in order to prove Theorem \ref{4:mainmain}, it suffices to prove Theorem \ref{4:mainmain2}.

\subsection{Definitions and conventions} 
The rest of the paper is devoted to proving Theorem \ref{4:mainmain2}.  
We start with some simplifications and definitions. 

In the rest of the paper,
  we fix  the non-degenerate homogeneous quadratic form $M$, the matrix $A$ associated to it, and the radius $r\in\F_{p}$.
We fix an $M$-spherical configuration $X\subseteq\F_{p}^{d}$
 and denote $k:=\dim(\sp_{\F_{p}}(X-X)), s:=\vert X\vert-k\geq 1$. 
 
 In the rest of the paper, we label the coordinates of vectors in the sets $(\F_{p}^{t})^{k+1}$ and $(\Z^{t})^{k+1}$ as $0\sim k$ instead of  $1\sim (k+1)$, and  label the coordinates of vectors in the sets $(\F_{p}^{t})^{k+s}$ and $(\Z^{t})^{k+s}$ as $0\sim (k+s-1)$ instead of  $1\sim (k+s)$.
 Under this convention, 
from now on we assume that $J=\{0,\dots,k+s-1\}$ is the index set for $X$ with respect to the canonical ordering.
For each generating set $I$ of $X$,  let $\L_{I}=(L_{I,0},\dots,L_{I,k+s-1})$ be the generating map of $X$ with respect to $I$ with generating constants $b_{I,i,j}, i\in I, j\in J$, and let $\pi_{I}\colon (\V)^{\vert J\vert}\to (\V)^{\vert I\vert}$ be the projection onto the coordinates with $I$-indices.

Replacing $X$ with $X-u_{0}$ if necessary, we may further assume that 
$$X=\{v_{0},v_{1},\dots,v_{k+s-1}\}$$
with $M(v_{0})=\dots=M(v_{k+s-1})=\l$ for some $\l\in\F_{p}$. 
Note that if $E\subseteq V(M(\cdot+x)-r)$, then $E+x\subseteq V(M-r)$. Since the number of almost $M$-congruent copies of $X$ in $E$ is the same as that of $E+r$, we may assume without loss of generality that $x=\bold{0}$ and $E\subseteq V(M-r)$.

 In the rest of the paper, for any $I\subseteq L$, we use $\Omega_{I}$ to denote the set of $(x_{i})_{i\in I}\in V(M-r)^{\vert I\vert}$ such that $M(x_{i}-x_{i'})=M(v_{i}-v_{i'})$ for all $i,i'\in I$. Also denote
 $\Omega:=\L_{I}(\Omega_{I})$ for some generating $I$ of $X$.
We summarize some properties for the sets $\Omega_{I}$ and $\Omega$ (see Appendix \ref{4:s:AppB} for relevant definitions).

\begin{lem}\label{4:kk2d}
Let $I,I'\subseteq \{0,\dots,k+s-1\}$ be generating sets of $X$ and $\mathcal{E}$ be the set of $M$-congruent copies of $X$ lying in $V(M-r)$. 
\begin{enumerate}[(i)]
	\item We have that $\L_{I}(\Omega_{I})=\L_{I'}(\Omega_{I'})$. In particular, the set $\Omega$ is independent of the choice of the generating set, and is equal to the set of all almost $M$-congruent copies of $X$ (with respect to every generating set of $X$). 
	\item If $d\geq (k+1)(k+2)+1,$ then $\Omega_{I}$  is a nice and consistent $M$-set of total co-dimension $(k+1)(k+2)/2$, and we have that $\vert \Omega\vert=\vert \Omega_{I}\vert =p^{d(k+1)-(k+1)(k+2)/2}(1+O_{k}(p^{-1/2}))$.
	\item If $d\geq \frac{(k+1)(k+2)}{2}+k,$ then have that $\mathcal{E}\subseteq \Omega$ with $\vert\Omega\backslash \mathcal{E}\vert=p^{d(k+1)-(k+1)(k+2)/2}O_{k}(p^{-1/2})$.
	\item If $d\geq \max\{2k^{2}+6k+5, 2k+s+13\}$, then for all $C'>0$, there exists $K=O_{C',d}(1)$ such that  $\Omega_{I}$  admits a partially periodic $(t,Kt^{-K})$-Leibman dichotomy up to step $s$ and complexity at most $C'$ for all $0<t<1/2$.
\end{enumerate}	
\end{lem}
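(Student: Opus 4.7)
My plan is to treat the four parts in order, leaning on Proposition \ref{4:kk4k}, Proposition \ref{4:cdd2}, and the quadric-variety counts collected in Appendix \ref{4:s:AppB}.

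For part (i), I would first observe that $\L_I(\Omega_I)$ is exactly the set of tuples $Y=(y_j)_{j\in J}$ such that $Y$ is almost $M$-congruent to $X$ with respect to $I$ and $y_i\in V(M-r)$ for every $i\in I$; this is immediate from the normalization property of $L_{I,j}$ together with the definition of $\Omega_I$. To see this set does not depend on $I$, I would use that, by iterating the identity (\ref{4:biterate}), one has $b_{I,i,j}=\sum_{i'\in I'}b_{I',i',j}b_{I,i,i'}$, so that the required linearity of $Y$ with respect to any other generating set $I'$ holds automatically. For the corresponding isometry statement, I would derive from $M(y_i)=r$, $M(v_i)=\lambda$ and $M(y_i-y_j)=M(v_i-v_j)$ on $I$ the bilinear identity $(y_iA)\cdot y_j=(v_iA)\cdot v_j+(r-\lambda)$, and then note that coefficient vectors of the form $b_{I,\cdot,i'}-b_{I,\cdot,j'}$ have coordinates summing to zero by the normalization property, so that the shift $r-\lambda$ cancels when computing $M(y_{i'}-y_{j'})$. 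The same computation also gives $y_j\in V(M-r)$ for every $j\in J$, which yields the in particular statement that $\Omega$ is the set of almost $M$-congruent copies lying on the sphere.

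For part (ii), I would verify that $\Omega_I$ is cut out by the $k+1$ sphere equations $M(x_i)=r$ and the $\binom{k+1}{2}$ distance equations $M(x_i-x_j)=M(v_i-v_j)$, for a total co-dimension $(k+1)+k(k+1)/2=(k+1)(k+2)/2$. The nice and consistent properties reduce, via Proposition \ref{4:yy33} from the appendix, to checking that $M$ is non-degenerate and that the defining quadratic forms are appropriately $M$-independent along a generating subset of indices; both are inherited from the non-degeneracy of $M$ and Lemma \ref{4:sis}. The cardinality estimate then follows from Theorem \ref{4:ct}, whose rank hypothesis is precisely absorbed by $d\geq(k+1)(k+2)+1$.

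For part (iii), the inclusion $\mathcal{E}\subseteq\Omega$ is immediate from Proposition \ref{4:cdd2}(i) combined with part (i). Conversely, Proposition \ref{4:cdd2}(ii) implies that a $Y\in\Omega$ fails to lie in $\mathcal{E}$ only if the vectors $y_1-y_0,\dots,y_k-y_0$ (taking $I=\{0,\dots,k\}$) are linearly dependent. I would then bound the count of such $Y$ by intersecting the quadric system from part (ii) with this linear-dependence locus, invoking Lemma \ref{4:iiddpp} to control the relevant codimension and a further application of Theorem \ref{4:ct} under one additional linear constraint; this produces the advertised bound $p^{d(k+1)-(k+1)(k+2)/2}O_k(p^{-1/2})$.

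For part (iv), the strategy is to invoke the Leibman dichotomy machinery for nice, consistent $M$-sets established in the previous parts of the series and summarized in Appendix \ref{4:s:AppB}. Since part (ii) already certifies $\Omega_I$ as a nice, consistent $M$-set of co-dimension $(k+1)(k+2)/2$, the dichotomy should specialize to the present situation as soon as $d$ is large enough to absorb both this co-dimension and the step-$s$ cost inherited from \cite{SunC}; the threshold $\max\{2k^2+6k+5,\,2k+s+13\}$ is chosen precisely for this purpose. I expect this final step to be the main obstacle, not for conceptual reasons but because correctly matching the parameters (step, complexity, dimension, and Leibman threshold) between the general statement in \cite{SunA,SunC} and the concrete configuration $\Omega_I$ requires careful bookkeeping to ensure the constant $K$ has the declared dependence $O_{C',d}(1)$.
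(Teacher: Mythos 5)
Your proposal follows essentially the same route as the paper, and the overall architecture (characterize $\L_I(\Omega_I)$, appeal to Proposition \ref{4:kk4k} and Corollary \ref{4:lmp2} for (i), count co-dimensions and invoke Theorem \ref{4:ct} for (ii), use Proposition \ref{4:cdd2} and Lemma \ref{4:iiddpp} for (iii), and cite Theorem \ref{4:veryr} for (iv)) is correct. Two small remarks on where your presentation diverges.

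In (i), the paper works with the four-point polarization identity $2((x-y)A)\cdot(z-w)=M(x-w)+M(y-z)-M(x-z)-M(y-w)$ and then invokes Corollary \ref{4:lmp2} for the sphere membership, whereas you polarize at the two-point level to get $(y_iA)\cdot y_j=(v_iA)\cdot v_j+(r-\lambda)$ and kill the shift $r-\lambda$ via the normalization $\sum_i b_{I,i,j}=1$; this is an equally valid and arguably more uniform computation, since it handles $M(y_{i'}-y_{j'})=M(v_{i'}-v_{j'})$ and $M(y_j)=r$ in one stroke. In (iii), however, your plan to intersect the quadric system with the dependence locus and run Theorem \ref{4:ct} ``under one additional linear constraint'' does not quite parse: the set of $(x_0,\dots,x_k)$ with $x_1-x_0,\dots,x_k-x_0$ dependent is a union of affine subspaces (parametrized by the dependence relation), not a single linear constraint that can be fed into Theorem \ref{4:ct}. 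The paper sidesteps this entirely by taking the crude ambient count from Lemma \ref{4:iiddpp} (namely $kp^{dk+k-1}$ in $(\V)^{k+1}$, with no quadric imposed at all) and simply noting this is already $O_k(p^{-1/2})$ times the main term $p^{d(k+1)-(k+1)(k+2)/2}$ once $d\geq (k+1)(k+2)/2+k$. You should drop the Theorem \ref{4:ct} invocation here and use the raw Lemma \ref{4:iiddpp} bound directly. Parts (ii) and (iv) match the paper (the paper cites the specific example $\Omega_3$ from the appendix of \cite{SunA} rather than re-deriving the $M$-set properties, but that is a bookkeeping difference).
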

\begin{proof}
We first prove Part (i).
Pick any $\x=(x_{i})_{i\in I'}\in \Omega_{I'}$. For all $i,i'\in I'$,
we have $M(x_{i}-x_{i'})=M(v_{i}-v_{i'})$ for all $i,i'\in I'$. 
Using the identity 
	$$2((x-y)A)\cdot (z-w)=M(x-w)+M(y-z)-M(x-z)-M(y-w),$$
	we have that 
$$((x_{i_{1}}-x_{i_{2}})A)\cdot (x_{i_{3}}-x_{i_{4}})=((v_{i_{1}}-v_{i_{2}})A)\cdot (v_{i_{3}}-v_{i_{4}})$$
for all $i_{1},i_{2},i_{3},i_{4}\in I'$. So for any $a_{i}\in\F_{p}, i\in I'$ with $\sum_{i\in I'}a_{i}=0$, we have that
$$M\Bigl(\sum_{i\in I'}a_{i}x_{i}\Bigr)=M\Bigl(\sum_{i\in I'\backslash\{i_{0}\}}a_{i}(x_{i}-x_{i_{0}})\Bigr)=M\Bigl(\sum_{i\in I'\backslash\{i_{0}\}}a_{i}(v_{i}-v_{i_{0}})\Bigr)=M\Bigl(\sum_{i\in I'}a_{i}v_{i}\Bigr),$$
where $i_{0}$ is any element in $I'$. 
Therefore, for all $i,i'\in I$, we have that
$$M(\L_{I',i}(\x)-\L_{I',i'}(\x))=M\Bigl(\sum_{i''\in I'}(b_{I',i'',i}-b_{I',i'',i'})x_{i''}\Bigr)=M\Bigl(\sum_{i''\in I'}(b_{I',i'',i}-b_{I',i'',i'})v_{i''}\Bigr)=M(v_{i}-v_{i'}).$$

On the other hand, since $M(x_{i})=r$ for all $i\in I'$ and since $X$ is spherical, it follows from Corollary \ref{4:lmp2} that $M(L_{I',i}(\x))=r$ for all $i\in I$. In conclusion, we have that $\pi_{I}(\L_{I'}(\x))$ belongs to $\Omega_{I}$. Therefore, by Proposition \ref{4:kk4k}, $\L_{I'}(\x)=\L_{I}(\pi_{I}(\L_{I'}(\x)))\subseteq \L_{I}(\Omega_{I})$ and so $\L_{I'}(\Omega_{I'})\subseteq \L_{I}(\Omega_{I})$. Similarly, we have that $\L_{I}(\Omega_{I})\subseteq \L_{I'}(\Omega_{I'})$ and so $\L_{I}(\Omega_{I})=\L_{I'}(\Omega_{I'})$.

	 We now prove Part (ii). 
	 Assume without loss of generality $I=\{0,\dots,k\}$.
	 By part (i), $\Omega=\L_{I}(\Omega_{I})$. Since the projection of $\L_{I}$ onto the coordinates with $I$-indices is the identity map on $(\V)^{k+1}$, we have that   $\vert \Omega\vert=\vert \Omega_{I}\vert$.  	
	 
	 Note that $\Omega_{I}$ is the set $\Omega_{3}$ in Example 
	 B.4 of \cite{SunA} (up to a relabelling of the variables).    
	 So $\Omega_{I}$ 	 is a nice and consistent $M$-set of total co-dimension $(k+1)(k+2)/2$.
	  Since $d\geq (k+1)(k+2)+1,$ by Theorem \ref{4:ct},  $\vert \Omega_{I}\vert=p^{d(k+1)-\frac{(k+1)(k+2)}{2}}(1+O_{k}(p^{-1/2}))$.

	  We then prove Part (iii). Since we have shown that $\Omega$ is the set of all almost $M$-congruent copies of $X$ in Part (i), it follows from Proposition \ref{4:cdd2} (i) that $\mathcal{E}\subseteq \Omega$. Moreover, by 
Proposition \ref{4:cdd2} (ii), the set $\Omega\backslash \mathcal{E}$ is a subset of $(x_{0},\dots,x_{k+s-1})\in(\V)^{k+s}$ with $x_{i}-x_{i_{0}}, i\in I\backslash\{i_{0}\}$ being linearly independent for any prefixed generating set $I$ and $i_{0}\in I$. So one can use Lemma \ref{4:iiddpp} to see that 	  $\vert\Omega\backslash \mathcal{E}\vert\leq kp^{dk+k-1}$. This completes the proof since $d\geq \frac{(k+1)(k+2)}{2}+k$.

Part (iv) follows from  Theorem \ref{4:veryr} and Part (ii).
\end{proof}	

We conclude this section with a corollary of Proposition \ref{4:kk2d}.

\begin{coro}\label{4:swy}
    For any generating sets $I$ and $I'$ of $X$, we have that $(\pi_{I}\circ L_{I'})^{-1}(\Omega_{I})=\Omega_{I'}$.
\end{coro}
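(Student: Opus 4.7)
The plan is to prove the two inclusions $\Omega_{I'}\subseteq(\pi_{I}\circ\L_{I'})^{-1}(\Omega_{I})$ and $(\pi_{I}\circ\L_{I'})^{-1}(\Omega_{I})\subseteq\Omega_{I'}$ using the change-of-variable identity $\L_{I}\circ\pi_{I}\circ\L_{I'}=\L_{I'}$ from Proposition \ref{4:kk4k} together with the equality of images $\L_{I}(\Omega_{I})=\L_{I'}(\Omega_{I'})$ established in Lemma \ref{4:kk2d}(i). A key auxiliary observation I will use is that $\pi_{I'}\circ\L_{I'}$ is the identity on $(\V)^{\vert I'\vert}$, which follows immediately from the normalization property $b_{I',i,j}=\d_{i,j}$ for $i,j\in I'$.

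For the inclusion $\Omega_{I'}\subseteq(\pi_{I}\circ\L_{I'})^{-1}(\Omega_{I})$, I would take $\x\in\Omega_{I'}$ and rerun the computation from the proof of Lemma \ref{4:kk2d}(i): since $M(x_{i}-x_{i'})=M(v_{i}-v_{i'})$ for all $i,i'\in I'$ and $M(x_{i})=r$ for all $i\in I'$, the arguments there (via the polarization identity and Corollary \ref{4:lmp2}) show that $\pi_{I}(\L_{I'}(\x))\in\Omega_{I}$. This is essentially recorded in that proof and just needs to be cited.

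For the reverse inclusion, take $\x$ such that $\pi_{I}(\L_{I'}(\x))\in\Omega_{I}$. Applying $\L_{I}$ to this and invoking Proposition \ref{4:kk4k} gives
\begin{equation}\nonumber
\L_{I'}(\x)=\L_{I}(\pi_{I}(\L_{I'}(\x)))\in\L_{I}(\Omega_{I})=\L_{I'}(\Omega_{I'}),
\end{equation}
where the last equality is Lemma \ref{4:kk2d}(i). Applying $\pi_{I'}$ to both sides and using $\pi_{I'}\circ\L_{I'}=\mathrm{id}_{(\V)^{\vert I'\vert}}$ yields $\x\in\pi_{I'}(\L_{I'}(\Omega_{I'}))=\Omega_{I'}$, as desired.

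I do not foresee any serious obstacle here; the entire content of the corollary is packaged in the two tools already proved (Proposition \ref{4:kk4k} and Lemma \ref{4:kk2d}(i)), and the only thing to check carefully is that $\pi_{I'}\circ\L_{I'}$ really is the identity, which is immediate from the normalization of the generating constants.
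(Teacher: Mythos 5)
Your proof is correct and uses the same two ingredients the paper does (Proposition \ref{4:kk4k} and Lemma \ref{4:kk2d}(i)), with the inclusion $(\pi_{I}\circ\L_{I'})^{-1}(\Omega_{I})\subseteq\Omega_{I'}$ handled identically. The only minor difference is in the inclusion $\Omega_{I'}\subseteq(\pi_{I}\circ\L_{I'})^{-1}(\Omega_{I})$: you propose to rerun the quadratic-form computation from the proof of Lemma \ref{4:kk2d}(i), whereas the paper gets it for free from the already-established identity $\L_{I'}(\Omega_{I'})=\L_{I}(\Omega_{I})$ together with $\pi_{I}\circ\L_{I}=\mathrm{id}$, which yields $\pi_{I}\circ\L_{I'}(\Omega_{I'})=\pi_{I}\circ\L_{I}(\Omega_{I})=\Omega_{I}$ in one line — a cleaner route you may wish to adopt.
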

\begin{proof}
Since
	$b_{I',i,j}=\delta_{i,j}$ for all $i,j\in I'$, we have that $\pi_{I'}\circ L_{I'}$ is the identity map on $(\V)^{k+1}$. Similarly $\pi_{I}\circ L_{I}$ is also the identity map.
	So 
	for any $\x'\in(\V)^{k+1}$ with $\pi_{I}\circ L_{I'}(\x')\in \Omega_{I}$,  by Proposition \ref{4:kk4k} and Proposition \ref{4:kk2d} (i), we have that 
	$$\x'=\pi_{I'}\circ L_{I'}(\x')=\pi_{I'}\circ L_{I}\circ\pi_{I}\circ L_{I'}(\x')\in \pi_{I'}\circ L_{I}(\Omega_{I})=\pi_{I'}\circ L_{I'}(\Omega_{I'})=\Omega_{I'}.$$
	Conversely, for any $\x'\in\Omega_{I'}$, by  Proposition \ref{4:kk2d} (i), we have that
	$$\pi_{I}\circ L_{I'}(\x)\in \pi_{I}\circ L_{I'}(\Omega_{I'})=\pi_{I}\circ L_{I}(\Omega_{I})=\Omega_{I}.$$
	In conclusion, we have that $(\pi_{I}\circ L_{I'})^{-1}(\Omega_{I})=\Omega_{I'}$.
\end{proof}

\section{Bounding spherical averages by spherical Gowers norms}\label{4:s:5rr}	

Theorem \ref{4:mainmain2} is essentially equivalent to the positivity of the following spherical multiple ergodic average:
\begin{equation}\label{4:Vdc0}
	\begin{split}
	\E_{(x_{0},\dots,x_{k+s-1})\in \Omega}f_{0}(x_{0})f_{1}(x_{1})\dots f_{k+s-1}(x_{k+s-1}),
	\end{split}
	\end{equation}	
with $f_{0}=\dots=f_{k+s-1}$ being the indicator function of $E$. This can be viewed as a variation of the by now conventional multiple ergodic average, where the set $\Omega$ is the set of solutions to a  linear equation system instead of a quadratic one. It turns out that (\ref{4:Vdc0}) is connected to the spherical Gowers norms of  $f_{0},\dots,f_{k+s-1}$, which we recall below:

\begin{defn}[Shperical Gowers norms]
	Let  $s\in\N_{+}$,
	and $f\colon\V\to \C$ be a function. The \emph{$s$-th $V(M-r)$-Gowers norm} of $f$ is defined by the quantity
	$$\Vert f\Vert_{U^{s}(V(M-r))}:=\Bigl\vert\E_{(n,h_{1},\dots,h_{s})\in \Gow_{s}(V(M-r))}\prod_{\e=(\e_{1},\dots,\e_{s})\in\{0,1\}^{s}} \mathcal{C}^{\vert\e\vert}f(n+\e_{1}h_{1}+\dots+\e_{s}h_{s})\Bigr\vert^{\frac{1}{2^{s}}},$$
	where 
	$\Gow_{s}(V(M-r))$ is the $s$-th Gowers set of $V(M-r)$ defined in Section \ref{4:s:defn}.\footnote{One can show that $\Vert\cdot\Vert_{U^{s}(V(M-r))}$ is indeed a norm when $s\geq 2$.}
\end{defn}

 To prove Theorem \ref{4:mainmain2},
 our first step is to show that (\ref{4:Vdc0}) can be bounded by the spherical Gowers norms of $f_{i}$. 

\begin{thm}[Spherical generalized von-Neumann inequality]\label{4:vdcc}
	If $d\geq (k+s+2)(k+s+1)+1$, then for any functions $f_{0},\dots,f_{k+s-1}\colon V(M-r)\to\mathbb{C}$ bounded by 1, we have 
	\begin{equation}\label{4:Vdc1}
	\begin{split}
	\vert\E_{(x_{0},\dots,x_{k+s-1})\in \Omega}f_{0}(x_{0})f_{1}(x_{1})\dots f_{k+s-1}(x_{k+s-1})\vert
	\leq\Vert f_{0}\Vert_{U^{s}(V(M-r))}+O_{k,s}(p^{-O_{s}(1)^{-1}}).
	\end{split}
	\end{equation}	
\end{thm}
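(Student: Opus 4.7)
The strategy is to adapt the iterated Cauchy–Schwarz argument of Lyall--Magyar--Parshall \cite{LMP19} (which handled the case $s=2$) to general $s$, with the number of Cauchy--Schwarz iterations set equal to $s$ so as to match the degree of the target Gowers norm.

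I would start by fixing the generating set $I = \{0, 1, \ldots, k\}$ and invoking Lemma \ref{4:kk2d}(i) to rewrite the left-hand side of (\ref{4:Vdc1}) as
$$A := \E_{\x \in \Omega_{I}} f_0(x_0) \prod_{i=1}^{k} f_i(x_i) \prod_{j=k+1}^{k+s-1} f_j(L_{I,j}(\x)),$$
an average of $k+s$ bounded functions over the nice and consistent $M$-set $\Omega_I$. I would then apply the Cauchy--Schwarz inequality $s$ times. At the $t$-th iteration, I split the ambient variables into an outer set $y$ and an inner variable $z_t$, then use the standard estimate
$$\bigl|\E_{y, z_t} F(y, z_t)\bigr|^{2} \leq \E_{y, z_t, h_t} F(y, z_t)\, \overline{F(y, z_t + h_t)},$$
which introduces a new difference variable $h_t \in \V$ and doubles each function depending on $z_t$. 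The variable $z_t$ must be chosen so that $h_t$ shifts the $x_0$-coordinate in a direction linearly independent of $h_1, \ldots, h_{t-1}$, and so that the other functions $f_1, \ldots, f_{k+s-1}$ can ultimately be absorbed into their sup-norm bound of $1$. After $s$ such iterations, one arrives at an inequality of the form
$$|A|^{2^{s}} \leq \E_{(n, h_1, \ldots, h_s) \in \Omega_*} \prod_{\e \in \{0,1\}^{s}} \mathcal{C}^{|\e|} f_0(n + \e_1 h_1 + \cdots + \e_s h_s) + O_{k,s}(p^{-c}),$$
where $\Omega_* \subseteq \Gow_s(V(M-r))$ is the set of tuples for which all $2^s$ corners lie in $V(M-r)$, together with additional quadratic constraints inherited from $\Omega_I$.

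The final step is to identify this inner average with the honest Gowers norm. Both $\Omega_*$ and $\Gow_s(V(M-r))$ can be viewed as consistent $M$-sets of total co-dimension at most $(k+s+1)(k+s+2)/2$ inside $(\V)^{s+1}$. By Theorem \ref{4:ct} their cardinalities agree up to a multiplicative factor of $1 + O_{k,s}(p^{-c})$, so after replacing $\Omega_*$ with $\Gow_s(V(M-r))$ the right-hand side becomes $\|f_0\|_{U^s(V(M-r))}^{2^s} + O_{k,s}(p^{-c})$. Extracting a $2^s$-th root then yields the stated inequality. The dimension hypothesis $d \geq (k+s+2)(k+s+1) + 1$ is exactly what is required to apply Theorem \ref{4:ct} to an $M$-set of this size with acceptable error.

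The main obstacle is the combinatorial bookkeeping in the Cauchy--Schwarz iteration. I must coordinate the choice of variable $z_t$ at each of the $s$ steps so that the surviving average is exactly a symmetric $s$-dimensional parallelepiped $\prod_\e \mathcal{C}^{|\e|} f_0(n + \e \cdot h)$, rather than a degenerate, shifted, or re-weighted variant: the interplay between the duplication of the simplex coordinates $x_1, \ldots, x_k$ and the dependent coordinates $L_{I,k+1}, \ldots, L_{I,k+s-1}$ must be arranged so that after $s$ steps exactly $s$ genuinely independent shifts of $f_0$ are produced. A secondary technical point is verifying that the intermediate constraint sets remain nice and consistent $M$-sets of bounded total co-dimension throughout the iteration, so that the counting estimates of Appendix \ref{4:s:AppB} continue to apply at every stage and, in particular, so that the final comparison $\Omega_* \simeq \Gow_s(V(M-r))$ falls within the regime of Theorem \ref{4:ct}.
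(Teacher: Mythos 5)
Your architecture — pass to the generating set $I_0 = \{0,\dots,k\}$ via Lemma \ref{4:kk2d}(i), iterate Cauchy--Schwarz $s$ times, and compare the resulting constraint set to $\Gow_s(V(M-r))$ using Theorem \ref{4:ct} — matches the paper's, and your reading of the dimension hypothesis is correct. But you flag the central obstacle ("the interplay between the duplication of the simplex coordinates and the dependent coordinates must be arranged so that exactly $s$ genuinely independent shifts of $f_0$ are produced") without resolving it, and that obstacle is not mere bookkeeping. After the first Cauchy--Schwarz (which eliminates $f_1,\dots,f_k$ because they depend only on the outer variables $x_1,\dots,x_k$), the expression still contains $\prod_{i=1}^{s-1}\Delta_h f_{k+i}(L_{I_0,k+i}(x_0,\bold{y}))$, and each $L_{I_0,k+i}$ depends nontrivially on $x_0$. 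These factors therefore cannot be placed in the ``outer'' part of the next Cauchy--Schwarz in $x_0$ and absorbed into a sup-norm bound. A naive iteration carries along derivatives of all of $f_{k+1},\dots,f_{k+s-1}$ and yields a bound in terms of a joint box-type norm of $s$ functions, not $\Vert f_0\Vert_{U^s(V(M-r))}$ alone.

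The missing ingredient is the change of generating set in Proposition \ref{4:indvdc}. At step $t$, one replaces $I$ by $I'$ obtained by swapping a coordinate $j\in\{1,\dots,k\}$ with $b_{I,k+t,j}\neq 0$ for the index $k+t$, and performs the change of variable $x_j\mapsto L_{I',j}(x_0,\dots,x_{k+t})$. This turns $f_{k+t}(L_{I,k+t}(\cdot))$ into $f_{k+t}(x_{k+t})$, a function of a free coordinate that no longer involves $x_0$, so it can be dropped in the subsequent Cauchy--Schwarz while introducing exactly one new shift $h'$ of $f_0$. Proposition \ref{4:kk4k} and Corollary \ref{4:swy} supply the identities guaranteeing that this change of variable carries $\Omega^0_{I,s-t}$ exactly onto $\Omega^0_{I',s-t}$, and Propositions \ref{4:yy3}, \ref{4:yy33} together with Theorem \ref{4:ct} keep the intermediate sets consistent $M$-sets of controlled total co-dimension so the error terms stay $O_{k,s}(p^{-c})$ throughout. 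Without this device, your ``outer set $y$'' cannot be chosen to shelter the dependent functions, and the iteration stalls after step one.
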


The rest of this section is devoted to the proof of Theorem \ref{4:vdcc}. A special case of Theorem \ref{4:vdcc} for $s=2$ was proved in Proposition 6 of \cite{LMP19}, which inspires the proof of Theorem \ref{4:vdcc} in this paper.

Let $I_{0}\subseteq \{0,\dots,k+s-1\}$ be any generating set of $X$ which contains $0$. By symmetry, we may assume without loss of generality that $I_{0}=\{0,1,\dots,k\}$.
 By Lemma \ref{4:kk2d} (i), the left hand side of (\ref{4:Vdc1}) equals to 
\begin{equation}\label{4:Vdc01}
\begin{split}
\Bigl\vert\E_{\x=(x_{0},\dots,x_{k})\in \Omega_{I_{0}}}f_{0}(x_{0})\dots f_{k}(x_{k}) \prod_{i=1}^{s-1}f_{k+i}(L_{I_{0},k+i}(\x))\Bigr\vert. 
\end{split}
\end{equation}
By Lemma \ref{4:kk2d} (ii) and Theorem \ref{4:ct}, since $d\geq  (k+2)(k+1)+1$, expression (\ref{4:Vdc01}) can be bounded by
\begin{equation}\label{4:vc1}
\begin{split}
&\quad \Bigl\vert\E_{\bold{y}=(x_{1},\dots,x_{k})\in \Omega_{I_{0}\backslash\{0\}}}\prod_{i=1}^{k}f_{i}(x_{i})
\E_{x_{0}\in V(M-r)\cap U_{\bold{y}}}f_{0}(x_{0}) \prod_{i=1}^{s-1}f_{k+i}(L_{I_{0},k+i}(x_{0},\bold{y}))\Bigr\vert+O_{k}(p^{-1/2})
\\&\leq \E_{\bold{y}\in \Omega_{I_{0}\backslash\{0\}}}\Bigl\vert 
\E_{x_{0}\in V(M-r)\cap U_{\bold{y}}}f_{0}(x_{0}) \prod_{i=1}^{s-1}f_{k+i}(L_{I_{0},k+i}(x_{0},\bold{y}))\Bigr\vert+O_{k}(p^{-1/2}),
\end{split}
\end{equation}
where $U_{\bold{y}}$ is the set of $x\in\V$ such that $M(x-x_{i})=M(v_{0}-v_{i})$ for all $1\leq i\leq k$.

We need to introduce some notations. For any subset $I$  of $\{0,\dots,k+s-1\}$ ,  $i_{0}\in I$ and $t\in\N_{+}$,
 let $\Omega^{i_{0}}_{I,t}$ be the set of all $(\bold{x}=(x_{i})_{i\in I},\bold{h}=(h_{1},\dots,h_{t}))\in (\V)^{\vert I\vert+t}$ such that for all $\e_{1},\dots,\e_{t}\in\{0,1\}$ we have that $(\tilde{x}_{i})_{i\in I}\in\Omega_{I}$, where $\tilde{x}_{i}:=x_{i}$ for $i\in I\backslash\{i_{0}\}$ and $\tilde{x}_{i_{0}}:=x_{i_{0}}+\e_{1}h_{1}+\dots+\e_{t}h_{t}$.
It is not hard to see that $\Omega_{I}$ is a pure and consistent $M$-set. So
by  Propositions \ref{4:yy3} and \ref{4:yy33} (vii), 
\begin{equation}\label{4:ostdim}
\text{$\Omega^{i_{0}}_{I,t}$ is a pure and consistent $M$-set of  total co-dimension at most $\binom{\vert I\vert+t+1}{2}$.}
\end{equation}

Denote $$\Delta_{h}f(L(x_{0},\bold{y})):=f(L(x_{0}+h,\bold{y}))\overline{f}(L(x_{0},\bold{y})).$$
Since $d\geq (k+3)(k+2)+1$,
by the Cauchy-Schwartz inequality, (\ref{4:ostdim}) and Theorem \ref{4:ct}, the square of the right hand side of (\ref{4:vc1}) is bounded by
\begin{equation}\label{4:vc2}
\begin{split}
&\quad
\E_{\bold{y}\in \Omega_{I_{0}\backslash\{0\}}}\Bigl\vert 
\E_{x_{0}\in V(M-r)\cap U_{\bold{y}}}f_{0}(x_{0}) \prod_{i=1}^{s-1}f_{k+i}(L_{I_{0},k+i}(x_{0},\bold{y}))\Bigr\vert^{2}+O_{k}(p^{-1})
\\&=\E_{\bold{y}\in \Omega_{I_{0}\backslash\{0\}}} 
\E_{(x_{0},h)\in \Gow_{1}(V(M-r)\cap U_{\bold{y}})}\Delta_{h}f_{0}(x_{0}) \prod_{i=1}^{s-1}\Delta_{h}f_{k+i}(L_{I_{0},k+i}(x_{0},\bold{y}))+O_{k}(p^{-1})
\\&=\E_{(\bold{x}=(x_{i})_{i\in I_{0}},h)\in \Omega^{0}_{I_{0},1}} 
\Delta_{h}f_{0}(x_{0}) \prod_{i=1}^{s-1}\Delta_{h}f_{k+i}(L_{I_{0},k+i}(\bold{x}))+O_{k}(p^{-1}).
\end{split}
\end{equation}

For any generating set $I$ of $X$ with $0\in I$, any $j\in\N$, any $t\in\N_{+}$ and any $\bold{h}=(h_{1},\dots,h_{t})\in(\V)^{t}$, denote
$$\Delta_{\bold{h}}f(L_{I,j}(x_{0},\bold{y}))=\Delta_{h_{t}}\dots\Delta_{h_{1}}f(L_{I,j}(x_{0},\bold{y}))=\prod_{\e_{1},\dots,\e_{t}\in\{0,1\}}\mathcal{C}^{\e_{1}+\dots+\e_{t}+t}f(L_{I,j}(x_{0}+\e_{1}h_{1}+\dots+\e_{t}h_{t},\bold{y})),$$
i.e. the derivative $\Delta_{\bold{h}}$ is taken with respect to the $0$-th variable of the function $f\circ L_{I,j}$. We need to prove the following intermediate estimate:

\begin{prop}\label{4:indvdc}
	Let $1\leq t\leq s-1$ and $I\subseteq \{0,\dots,k\}\cup\{k+t,\dots,k+s-1\}$ be a generating set of $X$ with $0\in I$. If $d\geq (k+s+2)(k+s+1)+1$, then there exists a 	generating set $I'\subseteq \{0,\dots,k\}\cup\{k+t-1,\dots,k+s-1\}$ of $X$ with $0\in I$ such that
	\begin{equation}\label{4:Vdc33}
	\begin{split}
	&\quad\Bigl\vert\E_{(\bold{x}=(x_{i})_{i\in I},\bold{h})\in \Omega^{0}_{I,s-t}} 
	\Delta_{\bold{h}}f_{0}(x_{0}) \prod_{i=1}^{t}\Delta_{\bold{h}}f_{k+i}(L_{I,k+i}(\bold{x}))\Bigr\vert^{2}
	\\&\leq\Bigl\vert\E_{(\bold{x}=(x_{i})_{i\in I'},\bold{h})\in \Omega^{0}_{I',s-t+1}} 
	\Delta_{\bold{h}}f_{0}(x_{0}) \prod_{i=1}^{t-1}\Delta_{\bold{h}}f_{k+i}(L_{I',k+i}(\bold{x}))\Bigr\vert+O_{k,s}(p^{-1/2}).
	\end{split}
	\end{equation}
\end{prop}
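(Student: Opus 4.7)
My plan is to prove (\ref{4:Vdc33}) via a single application of the Cauchy--Schwartz inequality in the $x_{0}$-direction, after first performing a change of generating set that makes the linear form attached to $f_{k+t}$ independent of $x_{0}$. The underlying idea is standard for PET--type inequalities: shifting $x_{0}$ by a new variable $h_{s-t+1}$ introduces one extra derivative direction on the surviving factors, while a factor that has been made constant in $x_{0}$ can be absorbed via the trivial bound $|f_{k+t}|\leq 1$.

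First I would select the new generating set $I'$ as follows. If $k+t\in I$, take $I':=I$. Otherwise, pick $j^{*}\in I\setminus\{0\}$ with $b_{I,j^{*},k+t}\neq 0$ (such $j^{*}$ exists provided $v_{k+t}\neq v_{0}$, which one may assume without loss of generality) and set $I':=(I\setminus\{j^{*}\})\cup\{k+t\}$. In either case one checks that $0,k+t\in I'\subseteq\{0,\dots,k\}\cup\{k+t,\dots,k+s-1\}\subseteq\{0,\dots,k\}\cup\{k+t-1,\dots,k+s-1\}$, and that $I'$ is again a generating set of $X$ (via Lemma \ref{4:lemgg}). By Corollary \ref{4:swy}, the map $\pi_{I'}\circ \L_{I}$ induces a bijection from $\Omega^{0}_{I,s-t}$ onto $\Omega^{0}_{I',s-t}$, and by Proposition \ref{4:kk4k} we have $L_{I,k+i}=L_{I',k+i}\circ\pi_{I'}\circ \L_{I}$ for each $i$. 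Hence the LHS of (\ref{4:Vdc33}) may be rewritten as an average over $\Omega^{0}_{I',s-t}$ with every $L_{I,k+i}$ replaced by $L_{I',k+i}$, and crucially $L_{I',k+t}(\bold{x})=x_{k+t}$ is independent of $x_{0}$.

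Next I would carry out the Cauchy--Schwartz step. Separating the $x_{0}$-average, the reparametrized LHS equals
$$\E_{(\bold{x}_{-0},\bold{h})}\Delta_{\bold{h}}f_{k+t}(x_{k+t})\,\E_{x_{0}}\Delta_{\bold{h}}f_{0}(x_{0})\prod_{i=1}^{t-1}\Delta_{\bold{h}}f_{k+i}(L_{I',k+i}(\bold{x})),$$
where $\bold{x}_{-0}=(x_{i})_{i\in I'\setminus\{0\}}$ and the inner average is over the admissible fibre. Applying Cauchy--Schwartz over $(\bold{x}_{-0},\bold{h})$ together with $|\Delta_{\bold{h}}f_{k+t}|\leq 1$, the square of this expression is bounded by $\E_{\bold{x}_{-0},\bold{h}}|\E_{x_{0}}\cdots|^{2}$. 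Expanding via $h_{s-t+1}:=x'_{0}-x_{0}$ and invoking the identity $L_{I',k+i}(x_{0}+h_{s-t+1},\bold{x}_{-0})=L_{I',k+i}(\bold{x})+b_{I',0,k+i}h_{s-t+1}$, each product $\Delta_{\bold{h}}f_{k+i}(L_{I',k+i}(\bold{x}))\overline{\Delta_{\bold{h}}f_{k+i}(L_{I',k+i}(\bold{x})+b_{I',0,k+i}h_{s-t+1})}$ collapses cleanly (with the new derivative still taken in the $0$-th coordinate) to $\Delta_{(\bold{h},h_{s-t+1})}f_{k+i}(L_{I',k+i}(\bold{x}))$, and similarly the $f_{0}$ factors combine into $\Delta_{(\bold{h},h_{s-t+1})}f_{0}(x_{0})$. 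The doubled constraints on $(\bold{x},\bold{h},h_{s-t+1})$ precisely match the defining condition of $\Omega^{0}_{I',s-t+1}$, yielding the RHS of (\ref{4:Vdc33}).

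The error $O_{k,s}(p^{-1/2})$ arises from the mild non-uniformity of the $x_{0}$-fibres of $\Omega^{0}_{I',s-t}$ over $(\bold{x}_{-0},\bold{h})$; I expect this to be the main technical point requiring care. By Lemma \ref{4:kk2d}(ii), both $\Omega^{0}_{I',s-t}$ and $\Omega^{0}_{I',s-t+1}$ are consistent $M$-sets of total co-dimension at most $(k+s+2)(k+s+1)/2$, so under the hypothesis $d\geq (k+s+2)(k+s+1)+1$ Theorem \ref{4:ct} supplies uniform asymptotic cardinalities with relative error $O_{k,s}(p^{-1/2})$. This converts the Cauchy--Schwartz estimate into the stated bound, once one verifies the remaining delicate combinatorial point that the doubled Cauchy--Schwartz constraint set coincides exactly with $\Omega^{0}_{I',s-t+1}$ — a direct consequence of the way shifting $x_{0}$ by $h_{s-t+1}$ interacts with the defining Gowers--type constraints.
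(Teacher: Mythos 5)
The proposal is correct and takes essentially the same approach as the paper: a change of generating set that makes the linear form attached to $f_{k+t}$ into a coordinate $x_{k+t}$ independent of $x_{0}$ (via Proposition \ref{4:kk4k} and Corollary \ref{4:swy}), followed by a Cauchy--Schwartz in the $x_{0}$-direction that introduces the extra shift $h_{s-t+1}$, an identification of the resulting constraint set with $\Omega^{0}_{I',s-t+1}$, and error bookkeeping via Theorem \ref{4:ct}. Two minor imprecisions worth noting: the co-dimension bound for $\Omega^{0}_{I',s-t}$ comes from the displayed estimate (\ref{4:ostdim}) (based on Propositions \ref{4:yy3} and \ref{4:yy33}), not from Lemma \ref{4:kk2d}(ii) as cited; and the separation of the $x_{0}$-average requires the $\{x_{1},\dots,x_{k-1},x_{k+t},h_{1},\dots,h_{s-t}\}$-decomposition machinery of Appendix \ref{4:s:AppB4}, which you flag but do not carry out.
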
	

We first explain how Proposition \ref{4:indvdc} implies Theorem \ref{4:vdcc}. 
Since (\ref{4:vc2}) holds, by repeatedly using Proposition \ref{4:indvdc} and the Cauchy-Schwartz inequality, we have that
there exists a generating set $I$ of $X$ with $0\in I$ such that the left hand side of (\ref{4:Vdc1}) is bounded by
\begin{equation}\nonumber
\begin{split}
 \vert\E_{((x_{i})_{i\in I},\bold{h})\in\Omega^{0}_{I,s}}\Delta_{\bold{h}}f_{0}(x_{0})\vert^{\frac{1}{2^{s}}}+O_{k,s}(p^{-O_{s}(1)^{-1}})
\end{split}
\end{equation}
Since $d\geq (k+s+2)(k+s+1)+1$,
by Theorem \ref{4:ct}  and (\ref{4:ostdim}), this implies that 
\begin{equation}\nonumber
\begin{split}
&\quad\vert\E_{((x_{i})_{i\in I},\bold{h})\in\Omega^{0}_{I,s}}\Delta_{\bold{h}}f_{0}(x_{0})\vert^{\frac{1}{2^{s}}}=\vert\E_{(x_{0},\bold{h})\in \Gow_{s}(M-r)}\Delta_{\bold{h}}f_{0}(x_{0})\vert^{\frac{1}{2^{s}}}+O_{k,s}(p^{-O_{s}(1)^{-1}})
\\&=\Vert f_{0}\Vert_{U^{s}(M-r)}+O_{k,s}(p^{-O_{s}(1)^{-1}}).
\end{split}
\end{equation}
This completes the proof of Theorem \ref{4:vdcc}.
So it remains to prove Proposition \ref{4:indvdc}.

\begin{proof}[Proof of Proposition \ref{4:indvdc}]
	By changing the order of the variables, we may assume without loss of generality that $I=\{0,\dots,k\}$.
	Then $$x_{k+t}=\sum_{j=0}^{k}b_{I,k+t,j}x_{j}=x_{0}+\sum_{j=1}^{k}b_{I,k+t,j}(x_{j}-x_{0}).$$ So changing again the order of the variables if necessary, we may further assume that $b_{I,k+t,k}\neq 0$. Let $I'=\{x_{0},\dots,x_{k-1},x_{k+t}\}$. Since $b_{I,k+t,k}\neq 0$, it is not hard to see that $I'$ is also a generating set of $X$.
	
	The key to the proof of Proposition \ref{4:indvdc} is a change of variable that allows one to switch from the ``$I$-basis" to the  ``$I'$-basis". Indeed,
	unpacking the conclusion of Proposition \ref{4:kk4k} and using the fact that $b_{I,i,j}=\delta_{i,j}$ for all $i,j\in I$, we deduce that 
	\begin{equation}\label{4:rec}
	\begin{split}
	L_{I',i}(x_{0},\dots,x_{k-1},x_{k+t})
	=L_{I,i}(x_{0},\dots,x_{k-1},L_{I',k}(x_{0},\dots,x_{k-1},x_{k+t}))
	\end{split}
	\end{equation}
	for all 
	$i\in \{k+1,\dots,k+s-1\}\backslash\{k+t\}$.
	By the change of variable $x_{k}\mapsto L_{I',k}(x_{0},\dots,x_{k-1},x_{k+t})$ and (\ref{4:rec}),  the left hand side of (\ref{4:Vdc33}) is equal to
	\begin{equation}\label{4:Vdc4}
	\begin{split}
\Bigl\vert\E_{(\bold{x}=(x_{0},\dots,x_{k-1},x_{k+t}),\bold{h})\in W} 
	\Delta_{\bold{h}}f_{0}(x_{0}) \Delta_{\bold{h}}f_{k+t}(x_{k+t})\prod_{i=0}^{t-1}\Delta_{\bold{h}}f_{k+i}(L_{I',k+i}(\bold{x}))\Bigr\vert^{2},
	\end{split}
	\end{equation}
	where $W$ is the set of $(\bold{x}=(x_{0},\dots,x_{k-1},x_{k+t}),\bold{h}=(h_{1},\dots,h_{s-t}))\in(\V)^{k+s}$ such that 
	\begin{equation}\nonumber
	\begin{split}
	(x_{0}+\e_{1}h_{1}+\dots+\e_{s-t}h_{s-t},x_{1},\dots,x_{k-1},L_{I',k}(x_{0}+\e_{1}h_{1}+\dots+\e_{s-t}h_{s-t},\dots,x_{k-1},x_{k+t}))\in \Omega_{I}
	\end{split}
	\end{equation}
	for all $\e_{1},\dots,\e_{s-t}\in\{0,1\}$, or equivalently,
	\begin{equation}\nonumber
	\begin{split}
	(x_{0}+\e_{1}h_{1}+\dots+\e_{s-t}h_{s-t},x_{1},\dots,x_{k-1},x_{k+t})\in \pi_{I}\circ L_{I'}^{-1}(\Omega_{I})=\Omega_{I'},
	\end{split}
	\end{equation}
	where we used (\ref{4:rec}) and Corollary \ref{4:swy}.	
	In other words, we have that $W=\Omega^{0}_{I',s-t}$. So (\ref{4:Vdc4}) is equal to   
	\begin{equation}\label{4:Vdc45}
	\begin{split}
	\Bigl\vert\E_{(\bold{x}=(x_{0},\dots,x_{k-1},x_{k+t}),\bold{h})\in \Omega^{0}_{I',s-t}} 
	\Delta_{\bold{h}}f_{0}(x_{0}) \Delta_{\bold{h}}f_{k+t}(x_{k+t})\prod_{i=1}^{t-1}\Delta_{\bold{h}}f_{k+i}(L_{I',k+i}(\bold{x}))\Bigr\vert^{2}.
	\end{split}
	\end{equation}

We refer the readers to Appendices \ref{4:s:AppB3} and \ref{4:s:AppB4} for the terminologies to be used in the discussion below.
 By  (\ref{4:ostdim}), $\Omega^{0}_{I',s-t}$  is a pure and consistent $M$-set of total co-dimension at most $\binom{k+s-t+2}{2}\leq \binom{k+s+1}{2}$. 
So we may write $\Omega^{0}_{I',s-t}=V(\mathcal{J})$ for some consistent $(M,k+s-t+1)$-family $\mathcal{J}\subseteq \F_{p}[x_{0},\dots,x_{k-1},x_{k+t},h_{1},\dots,h_{s-t}]$ of total dimension at most $\binom{k+s-t+2}{2}$. Let $(\mathcal{J}',\mathcal{J}'')$ be an  $\{x_{1},\dots,x_{k-1},x_{k+t},h_{1},\dots,h_{s-t}\}$-decomposition of $\mathcal{J}$. 
	For convenience denote $\y=(x_{1},\dots,x_{k-1},x_{k+t})$. 
	Let $W'$ be the set of $(\y,\h)\in(\V)^{k+s-t}$  such that $(x_{0},\y,\h)\in V(\mathcal{J}')$ for all $x_{0}\in \V$.  
	For $(\y,\h)\in(\V)^{k+s-t}$, let  $\Omega^{0}_{I',s-t}(\y,\h)$ denote the set of $x_{0}\in \V$ such that $(x_{0},\y,\h)\in V(\mathcal{J}'')$.
Since $d\geq (k+s+1)(k+s)+1$, 
By Theorem \ref{4:ct} and the Cauchy-Schwartz inequality,   (\ref{4:Vdc45}) is bounded by  
	\begin{equation}\label{4:Vdc5}
	\begin{split}
	&\quad \Bigl\vert\E_{(\bold{y},\bold{h})\in W'}\Delta_{\bold{h}}f_{k+t}(x_{k+t}) \E_{x_{0}\in \Omega_{I',s-t}^{0}(\bold{y},\bold{h})}
	\Delta_{\bold{h}}f_{0}(x_{0}) \prod_{i=1}^{t-1}\Delta_{\bold{h}}f_{k+i}(L_{I',k+i}(x_{0},\bold{y}))\Bigr\vert^{2}+O_{k,s}(p^{-1})	
	\\&\leq \E_{(\bold{y},\bold{h})\in W'}\Bigl\vert\E_{x_{0}\in \Omega_{I',s-t}^{0}(\bold{y},\bold{h})}
	\Delta_{\bold{h}}f_{0}(x_{0}) \prod_{i=1}^{t-1}\Delta_{\bold{h}}f_{k+i}(L_{I',k+i}(x_{0},\bold{y}))\Bigr\vert^{2}+O_{k,s}(p^{-1})
	\\&=\E_{(\bold{y},\bold{h})\in W'}\E_{x_{0},x_{0}+h'\in\Omega_{I',s-t}^{0}(\bold{y},\bold{h})}
	\Delta_{h'}\Delta_{\bold{h}}f_{0}(x_{0}) \prod_{i=1}^{t-1}\Delta_{h'}\Delta_{\bold{h}}f_{k+i}(L_{I',k+i}(x_{0},\bold{y}))+O_{k,s}(p^{-1}).
	\end{split}
	\end{equation}

	By  (\ref{4:ostdim}), $\Omega^{0}_{I',s-t+1}$  is a consistent $M$-set of total co-dimension at most $\binom{k+s-t+3}{2}\leq \binom{k+s+2}{2}$. So we may write $\Omega^{0}_{I',s-t+1}=V(\mathcal{J})$ for some consistent $(M,k+s-t+2)$-family $\mathcal{J}\subseteq \F_{p}[x_{0},\dots,x_{k-1},x_{k+t},h_{1},\dots,h_{s-t},h']$ of total dimension at most $\binom{k+s-t+3}{2}$. Let $(\mathcal{J}',\mathcal{J}'')$ be an  $\{x_{1},\dots,x_{k-1},x_{k+t},h_{1},\dots,h_{s-t}\}$-decomposition of $\mathcal{J}$. 
	It is not hard to compute that the set of $(\y,\h)\in(\V)^{k+s-t}$  such that $(x_{0},\y,\h,h')\in V(\mathcal{J}')$ for all $x_{0},h'\in \V$ is equal to $W'$.
	For $(\y,\h)\in W'$  and $(x_{0},h')\in(\V)^{2}$, note that  $(x_{0},\y,\h,h')\in V(\mathcal{J}'')$ if and only if $x_{0},x_{0}+h\in \Omega_{I',s-t}^{0}(\y,\h)$.
	Write $\h':=(\h,h')$.
	By Theorem \ref{4:ct},  since $d\geq (k+s+2)(k+s+1)+1$, the right hand side of (\ref{4:Vdc5}) is bounded by
	\begin{equation}\nonumber
	\begin{split}
	\E_{(\bold{x}=(x_{i})_{i\in I'},\bold{h}')\in \Omega^{0}_{I',s-t+1}} 
	\Delta_{\bold{h}'}f_{0}(x_{0}) \prod_{i=1}^{t-1}\Delta_{\bold{h}'}f_{k+i}(L_{I',k+i}(\bold{x}))+O_{k,s}(p^{-1/2}).
	\end{split}
	\end{equation}
	where $\h':=(\h,h')$. This completes the proof of Proposition \ref{4:indvdc}.
\end{proof}

\section{The rationality of polynomial sequences}\label{4:s:2rr}

\subsection{Type-III horizontal  torus and character}
We refer the readers to Appendix \ref{4:s:AppA} for the definitions and properties for nilmanifolds defined in \cite{SunA}. In this paper, in addition to the type-I and type-II horizontal  toruses and characters defined in \cite{SunA} and \cite{SunC} respectively, we need to introduce a third type of horizontal torus and character.

\begin{defn}[$i$-th type-III horizontal  torus and character]
	Let $s\in\N_{+}$ and $G/\Gamma$ be an $\N$-filtered nilmanifold of degree at most $s$ with a filtration $G_{\N}$ and a Mal'cev basis $\mathcal{X}$. 
	Let $G_{i}^{\nabla}$ denote the group generated by $G_{i+1}$ and $[G_{j},G_{i-j}]$ for all $0\leq j\leq i$ for $i\in\N$. 	Then $G_{i}^{\nabla}$ is a subgroup of $G_{i}$. It is not hard to check that $(G_{i}^{\nabla})_{i\in\N}$ is an $\N$-filtration. We say that $G_{i}/G_{i}^{\nabla}(\Gamma\cap G_{i})$
	is the \emph{$i$-th type-III horizontal torus} of $G/\Gamma$.

	An \emph{$i$-th type-III horizontal character} of $G/\Gamma$ is a continuous homomorphism $\eta_{i}\colon G_{i}\to \R$ which vanishes on $G_{i}^{\nabla}$, and maps $\Gamma\cap G_{i}$ to $\Z$. We say that $\eta_{i}$ is \emph{nontrivial} if it is not constant zero. 
	The Mal'cev basis $\mathcal{X}$ induces a natural isomorphism $\psi\colon G_{i}/G_{i+1}\to\R^{k}$, and thus we may write $\eta_{i}(g_{i}):=(m_{1},\dots,m_{k})\cdot \psi(g_{i})$ for some $m_{1},\dots,m_{k}\in\Z$, where $k=\dim(G_{i})-\dim(G_{i+1})$. 
	We call the quantity $\Vert\eta_{i}\Vert:=\vert m_{1}\vert+\dots+\vert m_{k}\vert$ the \emph{complexity} of $\eta_{i}$ (with respect to $\mathcal{X}$). 
\end{defn}

Type-III horizontal  torus and character are used to characterize the rationality property of a nilsequence (see for example \cite{CS14,GT10b}), which is a property more delicate than the equidistribution property. 

We provide a proposition for type-III horizontal characters for later uses.

\begin{prop}\label{4:goodpartial}
	Let $d,s\in\N_{+}$ and $G/\Gamma$ be an $\N$-filtered nilmanifold of degree at most $s$ with a filtration $G_{\N}$. Let $g,g'\in \poly(\Z^{d}\to G_{\N})$, $1\leq i\leq s$, and $\eta_{i}\colon G_{i}\to\R$ be an  $i$-th type-III horizontal character of $G/\Gamma$. 
	
	(i) Let 
	$$g(n)=\prod_{j\in\N^{d},0\leq \vert j\vert\leq s}g_{j}^{\binom{n}{j}}$$
	 be the type-I Taylor expansion of $g$, where $g_{j}\in G_{\vert j\vert}$. Then for all $h_{1},\dots,h_{i}\in\Z^{d}$, $\eta_{i}(\Delta_{h_{i}}\dots\Delta_{h_{1}}g(n))$ is well defined and is equal to
	$\Delta_{h_{i}}\dots\Delta_{h_{1}}\Bigl(\sum_{j\in\N^{d}, \vert j\vert=i}\eta_{i}(g_{j}){\binom{n}{j}}\Bigr)$.
	
	(ii) For all $h_{1},\dots,h_{i}\in\Z^{d}$, we have that 
	$$\eta_{i}(\Delta_{h_{i}}\dots\Delta_{h_{1}}g(n))+\eta_{i}(\Delta_{h_{i}}\dots\Delta_{h_{1}}g'(n))
  =\eta_{i}(\Delta_{h_{i}}\dots\Delta_{h_{1}}gg'(n)).$$
\end{prop}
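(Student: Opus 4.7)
The plan is to reduce both statements to the defining property of $\eta_i$: it kills $G_{i+1}$ and every commutator $[G_j, G_{i-j}]$. First I would record well-definedness by iterating the standard shift-of-filtration property of discrete derivatives on polynomial sequences ($F \in \poly(\Z^d\to G_\N)$ implies $\Delta_h F \in \poly(\Z^d\to G_{\N+1})$), which gives $\Delta_{h_i}\cdots\Delta_{h_1}g(n)\in G_i$ so that $\eta_i$ may legitimately be applied, and similarly for $g'$ and $gg'$.

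The main lemma, which yields (ii) and is needed for (i), is the following ``additivity modulo $G_i^\nabla$'' assertion: for any $F,F'\in\poly(\Z^d\to G_\N)$,
$$\Delta_{h_i}\cdots\Delta_{h_1}(FF')(n) \equiv \Delta_{h_i}\cdots\Delta_{h_1}F(n)\cdot\Delta_{h_i}\cdots\Delta_{h_1}F'(n) \pmod{G_i^\nabla}.$$
I would prove this by iterating the non-commutative product rule $\Delta_h(FF')(n)=\Delta_h F(n)\cdot(F(n)\Delta_h F'(n)F(n)^{-1})$ a total of $i$ times and organising the output as the product of the two ``pure'' iterated differences together with correction terms built from iterated commutators of the form $[\Delta_{h_S}F(n),\Delta_{h_T}F'(n)]$, where $S,T\subseteq\{1,\dots,i\}$ are nonempty with $S\cup T=\{1,\dots,i\}$. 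Each such commutator sits in $[G_{|S|},G_{|T|}]$ with $|S|+|T|\geq i$: when $|S|+|T|>i$ it lies in $G_{|S|+|T|}\subseteq G_{i+1}\subseteq G_i^\nabla$, and when $|S|+|T|=i$ it lies in $[G_{|S|},G_{|T|}]\subseteq G_i^\nabla$ by definition. Applying $\eta_i$ then gives (ii), and iterating it over the factors of the type-I Taylor expansion yields
$$\eta_i\bigl(\Delta_{h_i}\cdots\Delta_{h_1}g(n)\bigr) = \sum_{0\leq|j|\leq s}\eta_i\bigl(\Delta_{h_i}\cdots\Delta_{h_1}(g_j^{\binom{n}{j}})\bigr).$$

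For part (i) I would then analyse each summand using that $t\mapsto g_j^t$ is a one-parameter subgroup, so the discrete derivative commutes with the exponent: $\Delta_{h_i}\cdots\Delta_{h_1}(g_j^{\binom{n}{j}})=g_j^{\Delta_{h_i}\cdots\Delta_{h_1}\binom{n}{j}}$. If $|j|>i$ then $g_j\in G_{|j|}\subseteq G_{i+1}\subseteq G_i^\nabla$ and the contribution is zero; if $|j|<i$ then $\Delta_{h_i}\cdots\Delta_{h_1}\binom{n}{j}\equiv 0$ because $\binom{n}{j}$ has degree $|j|<i$; and if $|j|=i$ the term equals $\eta_i(g_j)\cdot \Delta_{h_i}\cdots\Delta_{h_1}\binom{n}{j}$ because $\eta_i\colon G_i\to\R$ is a homomorphism which on the one-parameter subgroup $\langle g_j\rangle$ acts by scalar multiplication. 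Summing and pulling the iterated difference outside using $\Z$-linearity of $\Delta_{h_i}\cdots\Delta_{h_1}$ gives the claimed identity.

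The main obstacle is the commutator bookkeeping in the proof of the additivity lemma: one must carry out the non-commutative product-rule expansion carefully and verify that every cross-term genuinely lands in some $[G_a,G_b]$ with $a+b\geq i$, so that $\eta_i$ annihilates it. Once this is established, both (i) and (ii) follow in a direct, essentially algebraic manner from the definitions of $G_i^\nabla$ and of an $i$-th type-III horizontal character.
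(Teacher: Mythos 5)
Your proposal is correct and takes essentially the same approach as the paper: both reduce to a Leibniz-type rule for iterated discrete derivatives modulo $G_i^\nabla$ (the paper via repeated application of Corollary B.7 of \cite{GTZ12}, you by unwinding the non-commutative product rule by hand), together with the same $|j|<i$, $|j|=i$, $|j|>i$ case split on Taylor coefficients. The only organizational difference is that you prove the additivity lemma first and deduce (ii) directly from it, whereas the paper proves (i) first and derives (ii) from multiplicativity of the degree-$i$ Taylor coefficients modulo $G_i^\nabla$ via Baker--Campbell--Hausdorff.
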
	
\begin{proof}
	We first prove Part (i). $\eta_{i}(\Delta_{h_{i}}\dots\Delta_{h_{1}}g(n))$ is well defined since $\Delta_{h_{i}}\dots\Delta_{h_{1}}g(n)\in G_{i}$.	
	 Since $\eta_{i}$ vanishes on $G_{i}^{\nabla}$, 	 by repeatedly using Corollary B.7 of \cite{GTZ12}, it is not hard to see that
	 $$\Delta_{h_{i}}\dots\Delta_{h_{1}}g(n)\equiv \prod_{j\in\N^{d},\vert j\vert=i}g_{j}^{\Delta_{h_{i}}\dots \Delta_{h_{1}}\binom{n}{j}} \mod G^{\nabla}_{\vert i\vert}.$$
	 So
	$$\eta_{i}(\Delta_{h_{i}}\dots\Delta_{h_{1}}g(n))=\eta_{i}\Bigl(\prod_{j\in\N^{d},\vert j\vert=i}g_{j}^{\Delta_{h_{i}}\dots \Delta_{h_{1}}\binom{n}{j}}\Bigr)=\Delta_{h_{i}}\dots\Delta_{h_{1}}\Bigl(\sum_{j\in\N^{d}, \vert j\vert=i}\eta_{i}(g_{j}){\binom{n}{j}}\Bigr).$$
	
	We now prove Part (ii). By  the Baker-Campbell-Hausdorff formula (see Appendix \ref{4:s:AppA3}) and Corollary B.7 of \cite{GTZ12}, it is not hard to see that the $i$-th  type-I Taylor coefficient of $gg'$ equals to that   of $g$ multiplied by that  of $g'$ modulo $G^{\nabla}_{\vert i\vert}$. The conclusion then follows from Part (i) and the linearity of $\eta_{i}\mod G^{\nabla}_{\vert i\vert}$.
\end{proof}

\subsection{The rationality version of the factorization theorem}

As was explain the paper of Green and Tao \cite{GT10b}, the type-I horizontal characters and the factorization theorem pertaining to them  are insufficient for the study of the equidistribution property for Leibman groups (see Section \ref{4:slast} for the definition).
For this reason, we need to upgrade the equidistribution result obtained in the previous part of the series
(i.e. Theorems 
11.2 of \cite{SunA})
 for type-III horizontal characters. We start with the following concept.

\begin{defn}[Irrationality]
	Let $d,s\in\N_{+}, N>0$, $p$ be a prime, $G/\Gamma$ be an $\N$-filtered nilmanifold of degree at most $s$ with a filtration $G_{\N}$ and a Mal'cev basis $\mathcal{X}$, $\Omega$ be a subset of $\Z^{d}$, and $g\in\poly_{p}(\Omega\to G_{\N}\vert\Gamma)$. We say that $g$ is \emph{$(N,\Omega,p)$-rational} there exist $1\leq i\leq s$ and a nontrivial $i$-th type-III horizontal character $\eta_{i}\colon G_{i}\to\R$ of complexity at most $N$ 	such that the map
	$$(n,h_{1},\dots,h_{i})\mapsto\eta_{i}(\Delta_{h_{i}}\dots\Delta_{h_{1}}g(n)) \mod \Z$$
	(which is well defined by Proposition \ref{4:goodpartial}) is a constant on $\Gow_{p,i}(\Omega)$ (recall Section \ref{4:s:defn} for the definition).  
	We say that  $g$ is \emph{$(N,\Omega,p)$-irrational} if it is not $(N,\Omega,p)$-rational.

Let  $\Omega$ be a subset of $\F_{p}^{d}$ and $g\in\poly_{p}(\Omega\to G_{\N}\vert\Gamma)$ (recall the definitions of different notions of polynomial sequences in Appendices \ref{4:s:AppA1} and \ref{4:s:AppA2}).	 We say that $g$ is \emph{$(N,\Omega)$-rational/irrational} if $g=g'\circ\tau$ for some $g'\in\poly_{p}(\iota^{-1}(\Omega)\to G_{\N}\vert\Gamma)$ which is  $(N,\iota^{-1}(\Omega),p)$-rational/irrational.  
\end{defn}

For convenience, we define every polynomial of degree $s=0$ (i.e. every constant polynomial) to be $(N,\iota^{-1}(\Omega),p)$-irrational or $(N,\Omega)$-irrational for all $N>0$.

The next lemma says that the rationality of a polynomial in $\poly_{p}(\Omega\to G_{\N}\vert\Gamma)$ is independent of choice of the representatives in $\poly_{p}(\iota^{-1}(\Omega)\to G_{\N}\vert\Gamma)$.

\begin{lem}\label{4:wdirr}
Let  $\Omega$ be a subset of $\F_{p}^{d}$ and $g\in\poly_{p}(\Omega\to G_{\N}\vert\Gamma)$.
If for some $g'\in\poly_{p}(\iota^{-1}(\Omega)\to G_{\N}\vert\Gamma)$  with $g=g'\circ\tau$, $g'$ is $(N,\iota^{-1}(\Omega),p)$-rational/irrational, then for every $g'\in\poly_{p}(\iota^{-1}(\Omega)\to G_{\N}\vert\Gamma)$  with $g=g'\circ\tau$, we have that $g'$ is $(N,\iota^{-1}(\Omega),p)$-rational/irrational. In other words, the  $(N,\Omega)$-rational/irrational property of $g$ is independent of the choices of $g'$.
\end{lem}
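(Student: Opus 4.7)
The plan is to reduce the claim to showing that the quotient of two representatives contributes only integer values under any type-III horizontal character, so that rationality is insensitive to the choice. Fix two representatives $g'_{1}, g'_{2} \in \poly_{p}(\tau(\Omega) \to G_{\N}\vert\Gamma)$ with $g'_{1}\circ \tau = g'_{2}\circ \tau = g$, so $g'_{1}(n) \equiv g'_{2}(n) \mod \Gamma$ for every $n \in \tau(\Omega)$. Using the $p$-periodicity modulo $\Gamma$ that is built into the definition of $\poly_{p}$ (equivalently, the fact that evaluation only sees $\tau \circ \iota$), this agreement extends to all of $\tau(\Omega) + p\Z^{d}$, so the ratio $g'_{3} := g'_{1} \cdot (g'_{2})^{-1}$ lifts to an element of $\poly(\Z^{d} \to G_{\N})$ that takes values in $\Gamma$ on this whole set.

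For any $1 \leq i \leq s$ and any $i$-th type-III horizontal character $\eta_{i}$ of $G/\Gamma$ of complexity at most $N$, Proposition \ref{4:goodpartial}(ii) applied to (lifts of) $g'_{3}$ and $g'_{2}$ yields
\[
\eta_{i}(\Delta_{h_{i}}\cdots\Delta_{h_{1}} g'_{1}(n)) = \eta_{i}(\Delta_{h_{i}}\cdots\Delta_{h_{1}} g'_{3}(n)) + \eta_{i}(\Delta_{h_{i}}\cdots\Delta_{h_{1}} g'_{2}(n))
\]
for all $(n,h_{1},\dots,h_{i}) \in (\Z^{d})^{i+1}$. Now when $(n,h_{1},\dots,h_{i}) \in \Gow_{p,i}(\tau(\Omega))$, every vertex $n + \e_{1}h_{1} + \cdots + \e_{i}h_{i}$ lies in $\tau(\Omega) + p\Z^{d}$, so $g'_{3}$ takes values in $\Gamma$ at each such vertex; hence the signed $2^{i}$-fold product $\Delta_{h_{i}}\cdots\Delta_{h_{1}} g'_{3}(n)$ lies in $\Gamma$. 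Simultaneously this product lies in $G_{i}$ by the usual derivative calculus for polynomial sequences, so it lies in $\Gamma \cap G_{i}$, on which any $i$-th type-III horizontal character takes values in $\Z$. Therefore $\eta_{i}(\Delta_{h_{i}}\cdots\Delta_{h_{1}} g'_{3}(n)) \in \Z$ on $\Gow_{p,i}(\tau(\Omega))$, and the displayed identity collapses modulo $\Z$ to
\[
\eta_{i}(\Delta_{h_{i}}\cdots\Delta_{h_{1}} g'_{1}(n)) \equiv \eta_{i}(\Delta_{h_{i}}\cdots\Delta_{h_{1}} g'_{2}(n)) \pmod{\Z}.
\]
In particular, the left-hand side is constant mod $\Z$ on $\Gow_{p,i}(\tau(\Omega))$ if and only if the right-hand side is, and this holds for every $\eta_{i}$ of complexity at most $N$. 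Taking the disjunction over all $1 \leq i \leq s$ and all such $\eta_{i}$ proves both the $(N,\tau(\Omega),p)$-rational and $(N,\tau(\Omega),p)$-irrational halves of the lemma.

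The main obstacle I anticipate is the first bookkeeping step: upgrading the equality $g'_{1}\circ \tau = g'_{2}\circ \tau$ on $\Omega$ to the stronger statement that $g'_{1} \equiv g'_{2} \mod \Gamma$ on \emph{all} of $\tau(\Omega) + p\Z^{d}$, which is exactly what is needed to apply the $\Gow_{p,i}$-vertex argument above. This step is not purely formal; it uses the specific convention in Appendix \ref{4:s:AppA2} governing how elements of $\poly_{p}(\tau(\Omega) \to G_{\N}\vert\Gamma)$ are identified under the reduction $\tau \circ \iota$, rather than as arbitrary elements of $\poly(\Z^{d} \to G_{\N})$, and it is the step on which the well-definedness of the whole rationality framework hinges.
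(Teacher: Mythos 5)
Your proposal is correct and follows essentially the same route as the paper's proof: form the quotient of the two representatives, observe it lies in $\poly(\tau(\Omega)\to G_{\N}\vert\Gamma)$, apply Proposition \ref{4:goodpartial}(ii) to split the derivative character, and then kill the quotient term by noting it lands in $\Gamma\cap G_{i}$ on every vertex of a $\Gow_{p,i}$-cube. The only difference is cosmetic: you take $g'_{3}=g'_{1}(g'_{2})^{-1}$ where the paper takes $(g')^{-1}g''$, which is immaterial given Proposition \ref{4:goodpartial}(ii). The step you flag as the ``main obstacle'' — upgrading $g'_{1}\equiv g'_{2} \bmod \Gamma$ from $\tau(\Omega)$ to all of $\tau(\Omega)+p\Z^{d}$ — is indeed the one nontrivial verification, and it goes exactly as you guess: writing $g'_{1}(n+pm)g'_{2}(n+pm)^{-1}=\bigl(g'_{1}(n+pm)g'_{1}(n)^{-1}\bigr)\bigl(g'_{1}(n)g'_{2}(n)^{-1}\bigr)\bigl(g'_{2}(n)g'_{2}(n+pm)^{-1}\bigr)$, each factor lies in $\Gamma$, the outer two by the defining $p$-periodicity-mod-$\Gamma$ property of $\poly_{p}(\tau(\Omega)\to G_{\N}\vert\Gamma)$ and the middle one because $g'_{1}\circ\tau=g'_{2}\circ\tau$; this is precisely the paper's computation.
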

\begin{proof}
Assume without loss of generality that $G/\Gamma$ is of degree at least 1. 
Let $g',g''\in\poly_{p}(\iota^{-1}(\Omega)\to G_{\N}\vert\Gamma)$ 
   with   $g=g'\circ\tau=g''\circ\tau$. It suffices to show that  for any $i$-th type-III horizontal character $\eta_{i}\colon G_{i}\to\R$ and any $(n,h_{1},\dots,h_{i})\in\Gow_{p,i}(\iota^{-1}(\Omega))$, we have that
   \begin{equation}\label{4:gpp}
   \begin{split}
    \eta_{i}(\Delta_{h_{i}}\dots\Delta_{h_{1}}g''(n))\equiv\eta_{i}(\Delta_{h_{i}}\dots\Delta_{h_{1}}g'(n)) \mod \Z.  
   \end{split}
   \end{equation}
   
   Write $\tilde{g}:={g'}^{-1}g''$. Since $g'\circ\tau=g''\circ\tau$, we have that $g'(n)^{-1}g''(n)\in\Gamma$ for all $n\in\tau(\Omega)$. Since $g',g''\in\poly_{p}(\iota^{-1}(\Omega)\to G_{I}\vert\Gamma)$, for all $n\in\tau(\Omega)$ and $m\in\Z^{k}$, we have that
     $$g'(n+pm)^{-1}g''(n+pm)=(g'(n+pm)^{-1}g'(n))(g'(n)^{-1}g''(n))(g''(n)^{-1}g''(n+pm))\in\Gamma.$$
     So  
 $\tilde{g}\in\poly(\iota^{-1}(\Omega)\to G_{\N}\vert\Gamma)$. 
 
   By  Proposition  \ref{4:goodpartial},     
   \begin{equation}\label{4:gpp2}
      \begin{split}
    \eta_{i}(\Delta_{h_{i}}\dots\Delta_{h_{1}}g''(n))-\eta_{i}(\Delta_{h_{i}}\dots\Delta_{h_{1}}g'(n))
  =\eta_{i}(\Delta_{h_{i}}\dots\Delta_{h_{1}}\tilde{g}(n)).  
   \end{split}
   \end{equation}
 For  any $(n,h_{1},\dots,h_{i})\in\Gow_{p,i}(\iota^{-1}(\Omega))$,
  since $\tilde{g}\in\poly(\iota^{-1}(\Omega)\to G_{\N}\vert\Gamma)$, $\Delta_{h_{i}}\dots\Delta_{h_{1}}\tilde{g}(n)$ belongs to $\Gamma$. So we have that 
$\eta_{i}(\Delta_{h_{i}}\dots\Delta_{h_{1}}\tilde{g}(n))\in\Z$ and thus (\ref{4:gpp}) follows from  (\ref{4:gpp2}).
\end{proof}

  Let $\Omega$ be a non-empty finite set and $G/\Gamma$ be an $\N$-filtered nilmanifold. We say that a sequence $\O\colon\Omega\to G/\Gamma$ is \emph{$\d$-equidistributed} on $G/\Gamma$ if for all Lipschitz function $F\colon G/\Gamma\to\C$, we have that
 	$$\Bigl\vert\frac{1}{\vert \Omega\vert}\sum_{n\in\Omega}F(\O(n))-\int_{G/\Gamma}F\,dm_{G/\Gamma}\Bigr\vert\leq \d\Vert F\Vert_{\Lip},$$
	where  $m_{G/\Gamma}$ is the Haar measure of $G/\Gamma$.\footnote{This definition is slight different from the one used in \cite{SunA,SunB,SunC}, where $\Omega$ is a (possibly infinite) subset of $\Z^{d}$.}
The next lemma explains the connection between  irrationality and equidistribution of polynomial sequences. We do not use it in this paper, but it is interesting on its own.

\begin{lem}[Irrationality implies equidistribution]\label{4:Leib2}
	Let $d\in\N_{+},s\in\N,C,C',K,N>0$, $p$ be a prime, 
	and $G/\Gamma$ be an  $\N$-filtered nilmanifold of degree at most $s$ and complexity at most $C$. There exists $C''=C''(C,C',K,s)>0$ such that  for any
  $\Omega\subseteq\V$ which admits a partially periodic  $(\d,C'\d^{-K})$-Leibman dichotomy\footnote{See Appendix \ref{4:s:vr} for the definition.} up to step $s$ and complexity $C$ and any $(N,\Omega)$-irrational $g\in\poly_{p}(\Omega\to G_{\N}\vert\Gamma)$, the sequence  $(g(n)\Gamma)_{n\in\Omega}$ is 
	$C''N^{-1/K}$-equidistributed on $G/\Gamma$.
\end{lem}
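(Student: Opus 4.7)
The plan is to prove the contrapositive. Suppose the sequence $(g(n)\Gamma)_{n\in\Omega}$ is not $C''N^{-1/K}$-equidistributed on $G/\Gamma$, where $C''>0$ is a constant depending on $C,C',K,s$ to be fixed at the end of the argument. Setting $\delta := C''N^{-1/K}$, I would invoke the $(\delta, C'\delta^{-K})$-Leibman dichotomy hypothesis on $\Omega$ applied to the polynomial $g\in\poly_{p}(\Omega\to G_{\N}\vert\Gamma)$ (whose target nilmanifold has complexity at most $C$). Since the sequence fails to be $\delta$-equidistributed, the dichotomy must yield a nontrivial type-III horizontal character $\eta_{i}\colon G_{i}\to\R$ at some level $1\leq i\leq s$, of complexity at most $C'\delta^{-K}$, such that
\[
  (n,h_{1},\dots,h_{i})\mapsto \eta_{i}\bigl(\Delta_{h_{i}}\dots\Delta_{h_{1}}g(n)\bigr)\bmod\Z
\]
is constant on $\Gow_{p,i}(\tau(\Omega))$. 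Lemma \ref{4:wdirr} ensures that this constancy property is independent of the lifting used to represent $g$.

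Next I would compute $C'\delta^{-K}=C'(C''N^{-1/K})^{-K}=\bigl(C'/(C'')^{K}\bigr)N$. Choosing $C''$ large enough that $C'(C'')^{-K}\leq 1$ (for instance $C''=(C')^{1/K}$) makes the complexity of $\eta_{i}$ at most $N$. By the definition of $(N,\Omega)$-rationality, the existence of such an $\eta_{i}$ means $g$ is $(N,\Omega)$-rational, directly contradicting the hypothesis that $g$ is $(N,\Omega)$-irrational. The quantitative form $C''=C''(C,C',K,s)$ then emerges from the Leibman dichotomy constants absorbed through this tuning.

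The main subtlety, and the only point where a nontrivial step may be required, is ensuring that the Leibman dichotomy as stated in \cite{SunA,SunC} indeed produces a \emph{type-III} horizontal character, matching the flavor of irrationality used here (where in earlier parts of the series the dichotomy was phrased in terms of type-I characters). If the dichotomy output is only a type-I character $\eta$ at level $i$, one promotes it to a type-III character via Proposition \ref{4:goodpartial}(i): the $i$-th iterated derivative $\Delta_{h_{i}}\dots\Delta_{h_{1}}g(n)$ depends, modulo $G_{i}^{\nabla}$, only on the type-I Taylor coefficients of degree $i$, so $\eta$ descends to a well-defined homomorphism on the $i$-th type-III horizontal torus $G_{i}/G_{i}^{\nabla}(\Gamma\cap G_{i})$. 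Either this descent is still nontrivial, in which case it directly furnishes the desired type-III character, or else it is trivial, in which case $\eta\circ\Delta_{h_{i}}\dots\Delta_{h_{1}}g$ is automatically $\Z$-valued on $\Gow_{p,i}(\tau(\Omega))$ and one may iterate the argument at a higher filtration level (where $G_{s+1}=\{e\}$ eventually forces type-I, type-II and type-III characters to coincide). Once this conversion is in place, the constant-tuning argument above goes through verbatim to yield the lemma.
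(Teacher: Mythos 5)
Your overall plan matches the paper's: argue by contraposition, invoke the $(\delta,C'\delta^{-K})$-Leibman dichotomy with $\delta=C''N^{-1/K}$ to extract a horizontal character of complexity $\leq C'\delta^{-K}$ obstructing irrationality, and then choose $C''$ so that this complexity falls below $N$. You also correctly flag the central subtlety, namely that the Leibman dichotomy of Appendix \ref{4:s:vr} produces a \emph{type-I} horizontal character $\eta\colon G\to\R$, whereas the definition of $(N,\Omega)$-rationality requires a \emph{type-III} character at some level $i$.

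Where your write-up diverges from the paper, and is somewhat muddled, is in the promotion step. You speak of "the dichotomy output $\eta$ at level $i$" and of a ``descent'' that is either nontrivial or trivial, and in the latter case you propose to ``iterate at a higher filtration level.'' This framing confuses two distinct issues. For $\eta|_{G_i}$ to even define a type-III character, one needs $\eta|_{G_i}$ to vanish on $G_i^{\nabla}$; that $\eta$ kills the commutator part $[G_j,G_{i-j}]\subseteq[G,G]$ is automatic because it is a type-I character, but it need not kill $G_{i+1}$. The clean resolution, used in the paper, is to take $i$ to be the \emph{largest} index with $\eta|_{G_i}$ nontrivial: then $\eta$ vanishes on $G_{i+1}$ by maximality and hence on $G_i^{\nabla}$, making $\eta|_{G_i}$ a bona fide nontrivial $i$-th type-III character. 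Your iteration would eventually discover this $i$, but your dichotomy ``descent trivial vs.\ nontrivial'' mislabels the obstruction (which is well-definedness, not triviality). Also, your explicit formula $C''=(C')^{1/K}$ omits the $O_{C,s}(1)$ conversion factor between the complexity of $\eta$ as a type-I character (measured in the Mal'cev coordinates of $G$) and of $\eta|_{G_i}$ as a type-III character (measured in the coordinates of $G_i/G_{i+1}$); this is harmless since the lemma allows $C''$ to depend on $C$ and $s$, but the precise formula you offer is not correct. Your observation that Lemma \ref{4:wdirr} guarantees the conclusion is independent of the lifting $g'$ is a sensible piece of hygiene that the paper leaves implicit.
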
	
\begin{proof}
There is nothing to prove when $s=0$, and so we assume that $s\geq 1$.
	Let $\d>0$ to be chosen later. Assume that $g=g'\circ\tau$ for some $g'\in\poly_{p}(\iota^{-1}(\Omega)\to G_{\N}\vert\Gamma)$. Suppose that $(g(n)\Gamma)_{n\in\Omega}$ is not $\d$-equidistributed on $G/\Gamma$, then by assumption, there exists a nontrivial type-I horizontal character $\eta\colon G\to\R$ of complexity at most $C'\d^{-K}$ such that $\eta\circ g \mod \Z$ is a constant on $\Omega$ and thus $\eta\circ g' \mod \Z$ is a constant on $\iota^{-1}(\Omega)$.

	Let $i$ be the largest integer $1\leq i\leq s$ such that $\eta\vert_{G_{i}}$ is nontrivial. 
	Since $\eta\circ g' \mod \Z$ is a constant on $\iota^{-1}(\Omega)$, we have that $$\eta\vert_{G_{i}}(\Delta_{h_{i}}\dots\Delta_{h_{1}}g'(n))=\eta(\Delta_{h_{i}}\dots\Delta_{h_{1}}g'(n))=\Delta_{h_{i}}\dots\Delta_{h_{1}}\eta\circ g'(n)$$ is a constant mod $\Z$ for all $(n,h_{1},\dots,h_{i})\in\Gow_{p,i}(\iota^{-1}(\Omega))$. 
	Similar to the proof of Lemma 3.7 of \cite{GT10b}, $\eta\vert_{G_{i}}$ is an $i$-th type-III  horizontal character of complexity $O_{C,s}(C'\d^{-K})$. So $g'$ is not $(O_{C,s}(C'\d^{-K}),\Omega,p)$-irrational and thus $g$ is not $(O_{C,s}(C'\d^{-K}),\Omega)$-irrational. We arrive at a contradiction if  $O_{C,s}(C'\d^{-K})\leq N$, i.e. if $\d\geq C''N^{-1/K}$ for some $C''>0$ depending only on $C,C',K$ and $s$.
\end{proof}	

We now state and prove the factorization property regarding the irrationality polynomials sequences.

\begin{prop}[Weak factorization property, the rationality version]\label{4:newfacw}
	Let $d\in\N_{+}, s\in\N$ with $d\geq s^{2}+s+3$, $C,N>0$, $p$ be a prime, 
	 $M\colon\V\to\F_{p}$ be a non-degenerate quadratic form,
	  $G/\Gamma$ be an $s$-step $\N$-filtered nilmanifold of complexity at most $C$, equipped with a $C$-rational Mal'cev basis $\mathcal{X}$, and $g\in \poly_{p}(V(M)\to G_{\N}\vert\Gamma)$. If $p\gg_{C,d,N} 1$,  
	then either $g$ is $(N,V(M))$-irrational or
	there exists a factorization 
	$$g(n)=\e g'(n)\gamma(n)      \text{ for all }  n\in \V,$$  where $\e\in G$ is of complexity at most 1, 
	$g'\in \poly_{p}(V(M)\to G'_{\N}\vert\Gamma')$
	for some sub-nilmanifold $G'/\Gamma'$ of $G/\Gamma$ whose Mal'cev basis is $O_{C,d,N}(1)$-rational relative to $\mathcal{X}$ and whose total dimension is strictly smaller than that of $G/\Gamma$, and   $\gamma\in\poly(V(M)\to G_{\N}\vert\Gamma)$. 
	\end{prop}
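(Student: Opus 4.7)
The plan is to adapt the factorization argument of Green and Tao from \cite{GT10b} to our setting with type-III horizontal characters over the sphere $V(M)$. If $g$ is not $(N, V(M))$-irrational, then by Lemma \ref{4:wdirr} applied to some representative $\tilde g \in \poly_p(\tau(V(M)) \to G_\N \vert \Gamma)$ with $g = \tilde g \circ \tau$, there exist $1 \le i \le s$ and a nontrivial $i$-th type-III horizontal character $\eta_i \colon G_i \to \R$ of complexity at most $N$ such that
$$\eta_i(\Delta_{h_i} \cdots \Delta_{h_1} \tilde g(n)) \equiv c \pmod{\Z}$$
for some $c \in \R/\Z$ and all $(n, h_1, \dots, h_i) \in \Gow_{p,i}(\tau(V(M)))$.

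Next, I use $\eta_i$ to construct the sub-nilmanifold. Set $G'_j := G_j$ for $j \ne i$ and $G'_i := \ker(\eta_i) \cap G_i$. Since $\eta_i$ vanishes on $G_i^\nabla$, which contains $G_{i+1}$ and all commutators $[G_j, G_{i-j}]$, a direct check shows $G'_\N$ is a valid $\N$-filtration; the integrality $\eta_i(\Gamma \cap G_i) \subseteq \Z$ ensures $\Gamma' := \Gamma \cap G'$ is a cocompact lattice, and the total dimension drops by exactly one. The complexity-at-most-$N$ bound on $\eta_i$, combined with a standard Mal'cev-basis refinement (as in Proposition A.10 of \cite{GT10b}), yields a Mal'cev basis $\mathcal{X}'$ for $G'_\N$ that is $O_{C,d,N}(1)$-rational relative to $\mathcal{X}$.

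The heart of the proof is producing the factorization. Expand $\tilde g(n) = \prod_j g_j^{\binom{n}{j}}$ in its type-I Taylor expansion. Proposition \ref{4:goodpartial}(i) converts the hypothesis into: the polynomial $Q(n) := \sum_{\vert j \vert = i} \eta_i(g_j) \binom{n}{j}$ has $i$-fold discrete derivative identically $c$ modulo $\Z$ on $\Gow_{p,i}(\tau(V(M)))$. I first pick $\epsilon \in G_i$ from a bounded fundamental domain for $G_i / (\ker \eta_i)(\Gamma \cap G_i)$ (thus of complexity at most $1$) with $\eta_i(\epsilon) \equiv c \pmod \Z$; after replacing $\tilde g$ by $\epsilon^{-1} \tilde g$, the constant is $0$. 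Then using the consistent $M$-set structure of $V(M)$ together with the equidistribution theory from \cite{SunA,SunC}, the resulting integer-valued polynomial obstruction can be realized as $\eta_i$ applied to the Taylor coefficients of some explicit $\gamma \in \poly(V(M) \to G_\N \vert \Gamma)$ of bounded complexity. Setting $g' := \epsilon^{-1} g \gamma^{-1}$, Proposition \ref{4:goodpartial}(ii) ensures the $i$-th Taylor coefficients of the lift lie in $\ker(\eta_i) \cap G_i = G'_i$, while coefficients of other degrees already lie in $G'_{\vert j \vert} = G_{\vert j \vert}$; so $g'$ descends to a polynomial in $\poly_p(V(M) \to G'_\N \vert \Gamma')$.

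The main obstacle is the construction of $\gamma$: one needs it to be a \emph{genuine} $\Gamma$-valued polynomial sequence on $V(M)$ (not merely polynomial modulo $\Gamma$), while simultaneously absorbing the integer part of $Q$. I plan to handle this by descending induction on degree, exploiting the consistent $M$-set structure from Lemma \ref{4:kk2d} to express each $\Z$-valued constraint imposed on $\Gow_{p,i}(\tau(V(M)))$ as an integer combination of lifts of the defining quadratic $M$, whose contribution to $\gamma$ can be realized through the Mal'cev basis coordinates. The propagation of complexity bounds on $\epsilon$, $\gamma$, and $\mathcal{X}'$ is a routine bookkeeping argument following the proof of Theorem 1.19 in \cite{GT10b}, with the additional input that $p \gg_{C,d,N} 1$ allows the lifted integer polynomials to behave the same on $\tau(V(M)) + p\Z^d$ as on $V(M)$.
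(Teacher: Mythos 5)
Your high-level architecture matches the paper's: a failure of irrationality produces a type-III horizontal character $\eta_i$, one sets $G'_i:=\ker(\eta_i)$ to build the sub-nilmanifold, and one factors out a $\Gamma$-valued $\gamma$ by cancelling the degree-$i$ Taylor obstruction. There are, however, two concrete problems. The minor one: left-multiplying $\tilde g$ by a constant $\epsilon\in G_i$ alters only the degree-$0$ Taylor coefficient, so it does not change $\eta_i(\Delta_{h_i}\cdots\Delta_{h_1}\tilde g(n))$ at all; your proposed normalization of the constant $c$ therefore accomplishes nothing. In fact no normalization is needed: $(n,0,\dots,0)\in\Gow_{p,i}(\tau(V(M)))$ whenever $n\in\tau(V(M))$, the $i$-fold derivative there is $id_{G}$, and so $c=0$ automatically. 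In the paper the element $\epsilon$ appearing in the statement is $\{\tilde g(\bold{0})\}$, the fractional part of the initial value, introduced only at the very end to normalize $g'$.

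The more serious problem is that the step you correctly flag as ``the main obstacle'' --- producing a genuinely $\Gamma$-valued $\gamma$ on $V(M)$ that absorbs the integer part of $\eta_i\circ f$ --- is deferred to a vague plan that points to the wrong tool. Lemma \ref{4:kk2d} concerns the $M$-sets $\Omega_I$ and $\Omega$ attached to the configuration $X$; it says nothing about polynomial functions on $V(M)$ and cannot supply what you need here. What the paper actually uses is: (a) differencing $\tilde g$ by $pe_{d_1},\dots,pe_{d_i}$ and invoking $\tilde g\in\poly_{p}(\tau(V(M))\to G_{\N}\vert\Gamma)$ to conclude $\eta_i(g_j)\in\Z/p^{s}$ for $\vert j\vert=i$, hence that $\eta_i\circ f$ is $\Z/p^{s}$-valued; (b) Proposition \ref{4:att3}, the intrinsic characterization of partially $p$-periodic polynomials on $V_{p}(\tilde M)$ (which is why the hypothesis $d\ge s^{2}+s+3$ appears), to split $\eta_i\circ f=\frac{1}{q}P_1+P_2$ with $p\nmid q$ and $P_1\in\poly(V_{p}(\tilde M)\to\R\vert\Z)$, which is what lets one build a $\Gamma$-valued $\gamma_0$ from $P_1$; (c) a further observation that $\xi_i(g_j)\in\Z/p^{s}$ for \emph{every} type-III character $\xi_i$, needed to select lattice elements $\gamma_j\in G_i\cap\Gamma$ and form a second correction $\gamma'$; and (d) a Baker--Campbell--Hausdorff rearrangement to move $\gamma=\gamma_0\gamma'$ past the high-degree factor $g_{+}$. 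Steps (a) and (b) are the precise mechanism that makes $\gamma$ honestly $\Gamma$-valued rather than $\Gamma$-valued merely up to a $p$-periodic defect, and they do not follow from a descending induction on degree or from any consistent-$M$-set structure of $\Omega$.
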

\begin{proof}
The proof uses ideas from Lemma 2.9 of \cite{GT12} and the work \cite{CS14}. However, in our case, we need some extra effort to obtain the desired periodicity requirements for the sequences $g'$ and $\gamma$.

There is nothing to prove when $s=0$, and so we assume that $s\geq 1$.
Let $\tilde{M}\colon\Z^{d}\to\Z/p$ be a regular lifting of $M$.
	We may write $g=\tilde{g}\circ\tau$ for some $\tilde{g}\in\poly_{p}(\iota^{-1}(V(M))\to G_{\N}\vert\Gamma)=\poly_{p}(V_{p}(\tilde{M})\to G_{\N}\vert\Gamma)$ (recall the definition of $V_{p}(\tilde{M})$ in Section \ref{4:s:defn}). Suppose that $g$ is not $(N,V(M))$-irrational. 
	Then  there exist $1\leq i\leq s$ and a nontrivial $i$-th type-II horizontal character $\eta_{i}\colon G_{i}\to\R$ of complexity at most $N$  such that 
	$$\eta_{i}(\Delta_{h_{i}}\dots\Delta_{h_{1}}\tilde{g}(n)) \mod \Z$$
	is a constant for all $(n,h_{1},\dots,h_{i})\in\Gow_{p,i}(\iota^{-1}(V(M)))=\Gow_{p,i}(V_{p}(\tilde{M}))$.
	We may use the type-I Taylor expansion to write
	$$\tilde{g}(n)=\prod_{j\in\N^{d}, \vert j\vert\leq s}g_{j}^{\binom{n}{j}}$$
	for some $g_{j}\in G_{\vert j\vert}$.
	Denote
	$g_{-}(n):=\prod_{j\in\N^{d}, \vert j\vert\leq i-1}g_{j}^{\binom{n}{j}}$, $g_{+}(n):=\prod_{j\in\N^{d}, i+1\leq \vert j\vert\leq s}g_{j}^{\binom{n}{j}}$ and 
	  $f(n):=\prod_{j\in\N^{d}, \vert j\vert=i}g_{j}^{\binom{n}{j}}$.
	  Then
	  \begin{equation}\label{4:gfg}
	      \tilde{g}=g_{-}fg_{+}.
	  \end{equation}
	By Proposition  \ref{4:goodpartial} (i), we have that
	$$\eta_{i}(\Delta_{h_{i}}\dots\Delta_{h_{1}}\tilde{g}(n))\equiv\Delta_{h_{i}}\dots\Delta_{h_{1}}(\eta_{i}\circ f(n)) \mod \Z$$
	is a constant for all $(n,h_{1},\dots,h_{i})\in\Gow_{p,i}(V_{p}(\tilde{M}))$. Note that $\eta_{i}\circ f$ is a polynomial from $\Z^{d}$ to $\R$ of degree $i$.

	For any $j=(j_{1},\dots,j_{d})\in\N^{d}$ with $\vert j\vert=i$,  pick $1\leq d_{1},\dots,d_{i}\leq d$ so that $j_{\ell}$ of  $d_{1},\dots,d_{i}$ equals to $\ell$ for all $1\leq \ell\leq d$.
	 Then for any $n\in V_{p}(\tilde{M})$, we have  that $(n,pe_{d_{1}},\dots,pe_{d_{i}})\in \Gow_{p,i}(V_{p}(\tilde{M}))$.  Since $\tilde{g}\in \poly_{p}(V_{p}(\tilde{M})\to G_{\N}\vert\Gamma)$, we have	$\Delta_{pe_{d_{i}}}\dots\Delta_{pe_{d_{1}}}\tilde{g}(n)\in\Gamma$ and so 
	\begin{equation}\nonumber
	\begin{split}
	  0\equiv\eta_{i}(\Delta_{pe_{d_{i}}}\dots\Delta_{pe_{d_{1}}}\tilde{g}(n))
	 \equiv\Delta_{pe_{d_{i}}}\dots\Delta_{pe_{d_{1}}}(\eta_{i}\circ f(n)) 
	=p^{i}\eta_{i}(g_{j}) \mod\Z.
	\end{split}
	\end{equation} 
	Therefore, $\eta_{i}\circ f$ takes values in $\Z/p^{s}$.
	Since $d\geq s^{2}+s+3$, by Proposition \ref{4:att3}, we may write 
	$$\eta_{i}\circ f=\frac{1}{q}P_{1}+P_{2}$$ for some $q\in\N, p\nmid q$, some $P_{1}\in\poly(V_{p}(\tilde{M})\to \R\vert\Z)$ and some polynomial $P_{2}$  of degree at most $i-1$. 
		
	Let $\psi\colon G_{i}/G_{i+1}\to\R^{k}$ be the natural isomorphism induced by the Mal'cev basis of $G/\Gamma$ for some $k\in\N$, and write $\eta_{i}(g):=(m_{1},\dots,m_{k})\cdot \psi(g)$ for some $m_{1},\dots,m_{k}\in\Z$ with $\vert m_{1}\vert+\dots+\vert m_{k}\vert\leq N$ for all $g\in G_{i}$. Let $c$ be the greatest common divisor of $m_{1},\dots,m_{k}$. Since $\eta_{i}$ is nontrivial and is of complexity at most $N$, we have that $0<c\leq O_{C,N,s}(1)$. In particular, $c<p$.
	 We may then pick $u=(u_{1},\dots,u_{k})\in \Z^{k}$ such that $(m_{1},\dots,m_{k})\cdot(u_{1},\dots,u_{k})=c$.  Let $h_{1},\dots,h_{k}\in G_{i}\cap\Gamma$ be such that $\psi(h_{t}\mod G_{i+1})=e_{t}$ for all $1\leq t\leq k$.
	Denote 
	$$\gamma_{0}(n):=\prod_{t=1}^{k}h_{t}^{c^{\ast}u_{t}P_{1}(n)},$$
	where $c^{\ast}$  is an integer  such that $c^{\ast}cq\equiv 1 \mod p^{s}\Z$ (which exists since $0<c<p$). Since $P_{1}(n)\in\Z$ if $n\in V_{p}(\tilde{M})$, we have that $\gamma_{0}\in \poly(V_{p}(\tilde{M})\to G_{\N}\vert\Gamma)$. Moreover, for all $n\in\Z^{d}$, we have 
	\begin{equation}\label{4:fxi2}
	\eta_{i}\circ f-\eta_{i}\circ\gamma_{0}=\frac{1-c^{\ast}cq}{q}P_{1}+P_{2}.
	\end{equation}

	\textbf{Claim.}
	For any  $i$-th type-III horizontal character $\xi_{i}$, and $j\in\N^{d}$ with $\vert j\vert=i$, we have that $\xi_{i}(g_{j})\in \Z/p^{s}$.
	
	Indeed, denote $F_{\xi_{i}}(h_{1},\dots,h_{i}):=\xi_{i}(\Delta_{h_{i}}\dots\Delta_{h_{1}}g'(n))$. Note that $F_{\xi_{i}}(h_{1},\dots,h_{i})$ is independent of the choice of $n$. It is not hard to compute that
	\begin{equation}\label{4:fxi}
	F_{\xi_{i}}(h_{1},\dots,h_{i}):=\sum_{j\in\N^{d},\vert j\vert=i}\xi_{i}(g_{j})\sum_{(t_{1},\dots,t_{i})\in J(j)}h_{1,t_{1}}\dots h_{i,t_{i}},
	\end{equation}
	where we write $h_{i'}=(h_{i',1},\dots,h_{i',d})$ for $1\leq i'\leq i$ and $J(j)$ is the set of tuples $(t_{1},\dots,t_{i})\in\{1,\dots,d\}^{i}$ such that the number of $t_{k}$ which equals to $r$ is $j_{r}$ for $1\leq r\leq d$.
	Since $g'\in\poly_{p}(V_{p}(\tilde{M})\to G_{\N}\vert\Gamma)$,
	for all $n\in V_{p}(\tilde{M}),$ and $h_{1},\dots,h_{i}\in\Z^{d}$,  we have that  $\Delta_{ph_{i}}\dots\Delta_{ph_{1}}g'(n)\in\Gamma$, which implies that $F_{\xi_{i}}(h_{1},\dots,h_{i})\in\Z/p^{s}$. 
	By (\ref{4:fxi}), this implies that 
	$\xi_{i}(g_{j})\in \Z/p^{s}$. This completes the proof of the claim.

\
	
	By the claim, $\psi(g_{j}^{p^{s}} \mod G_{i+1})\in \Z^{k}$ and so we have that 
	$p^{s}\eta_{i}(g_{j})=\eta_{i}(g_{j}^{p^{s}})\in \eta_{i}(G_{i}\cap\Gamma)=c\Z$. Since $1-c^{\ast}cq\in p\Z$, we have $(1-c^{\ast}cq)\eta_{i}(g_{j})\in c\Z$.
	Then for all $j\in\N^{d}$ with $\vert j\vert=i$, there exists $\gamma_{j}\in G_{i}\cap\Gamma$ such that $\eta_{i}(\gamma_{j})=(1-c^{\ast}cq)\eta_{i}(g_{j})$.
	
	Note that the leading terms of $\frac{1-c^{\ast}cq}{q}P_{1}$ is  $\sum_{j\in\N^{d}, \vert j\vert=i}(1-c^{\ast}cq)\eta_{i}(g_{j})\frac{n^{j}}{j!}$.
	Writing
	$$\gamma'(n):=\prod_{j\in\N^{d},\vert j\vert=i}\gamma_{j}^{\binom{n}{j}},$$
	we have that $\gamma'$ takes values in $\Gamma$ and that
	$\eta_{i}\circ\gamma'(n)$ has the same leading terms as $\frac{1-c^{\ast}cq}{q}P_{1}(n)$.
	So it follows from (\ref{4:fxi2}) that 
	\begin{equation}\nonumber
	\eta_{i}\circ f-\eta_{i}\circ\gamma_{0}-\eta_{i}\circ\gamma'
	\end{equation}
	is a polynomial   of degree at most $i-1$.

	Since $\eta_{i}$ is nontrivial, 
	 there exists  $f'\in\poly(\Z^{d}\to G_{\N})$ 	of the form
	$$f'(n)=\prod_{j\in\N^{d},\vert j\vert\leq i-1}u_{j}^{\binom{n}{j}}$$
	for some $u_{j}\in G_{i}$ such that 
	$$\eta_{i}\circ f-\eta_{i}\circ\gamma_{0}-\eta_{i}\circ\gamma'=\eta_{i}\circ f'.$$
	
	 	Let $\gamma:=\gamma_{0}\gamma'$, and $g'':={f'}^{-1}f\gamma^{-1}$. We have that $\gamma\in \poly(V_{p}(\tilde{M})\to G_{\N}\vert\Gamma)$,  that $g''$ takes values in $G_{i}$, and that
	$\eta_{i}\circ g''=0$. 
	By (\ref{4:gfg}), we may write 
	$$\tilde{g}=g_{-}f g_{+}=g_{-}f'g''\gamma g_{+},$$
	where $g_{-}(n):=\prod_{j\in\N^{d}, \vert j\vert\leq i-1}g_{j}^{\binom{n}{j}}$ and $g_{+}(n):=\prod_{j\in\N^{d}, i+1\leq \vert j\vert\leq s}g_{j}^{\binom{n}{j}}$.
	By  the Baker-Campbell-Hausdorff formula, we may write
	$$\tilde{g}=g_{-}f'g''g'_{+}\gamma$$
	for some $g'_{+}$ which can be written in the form
	$g'_{+}(n)=\prod_{j\in\N^{d}, \vert j\vert\leq s}h_{j}^{\binom{n}{j}}$
	for some $h_{j}\in G_{\max\{\vert j\vert,i+1\}}$. 
	
	Let $G'/\Gamma'$ be the sub-nilmanifold of $G/\Gamma$ given by $\Gamma'=\Gamma\cap G'$, $G'_{j}=G_{j}$ for all $j\neq i$ and $G'_{i}=\ker(\eta_{i})$. Similar to the proof of Lemma 2.9 of \cite{GT10b}, $G'_{\N}$ is indeed an $\N$-filtration.\footnote{Unlike in the proof Lemma 2.9 of \cite{GT12}, our $G'_{\N}$ is an $\N$-filtration even if $i=1$, since in this paper we allow $G'_{0}\neq G'_{1}$ in the $\N$-filtration $G'_{\N}$.} Moreover, the  Mal'cev basis of $G'/\Gamma'$  is $O_{C,d,N}(1)$-rational relative to $\mathcal{X}$, and the total dimension of $G'/\Gamma'$ is strictly smaller than that of $G/\Gamma$.		
	
	For any $h\in G$, we may write 
  $h=\{h\}[h]$
such that $\psi(\{h\})\in [0,1)^{m}$ and $[h]\in\Gamma$, where $\psi\colon G\to\R^{m}$ is the Mal'cev coordinate map.
	Let $\e=\{\tilde{g}(\bold{0})\}$, $\tilde{\gamma}=[\tilde{g}(\bold{0})]\gamma$ and 
	$$g'=\{\tilde{g}(\bold{0})\}^{-1}\tilde{g}\tilde{\gamma}^{-1}=\{\tilde{g}(\bold{0})\}^{-1}\tilde{g}\gamma^{-1}[\tilde{g}(\bold{0})]^{-1}=\{\tilde{g}(\bold{0})\}^{-1}g_{-}f'g''g'_{+}[\tilde{g}(\bold{0})]^{-1}.$$ 
	Then  $\tilde{g}=\e g'\tilde{\gamma}$ and $\tilde{\gamma}\in \poly(V_{p}(\tilde{M})\to G_{\N}\vert\Gamma)$.
		By the constructions, it is not hard to see that  
	$g'',g'_{+},g_{-},f'\in\poly(\Z^{d}\to G'_{\N})$. By Corollary B.4 of \cite{GTZ12}, we have that $g'\in\poly(\Z^{d}\to G'_{\N})$. Since
	$\tilde{g}\in \poly_{p}(V_{p}(\tilde{M})\to G_{\N}\vert\Gamma)$ and $\tilde{\gamma}\in \poly(V_{p}(\tilde{M})\to G_{\N}\vert\Gamma)$, by Lemma \ref{4:grg}, it is not hard to see have that $g'$ belongs $\poly_{p}(V_{p}(\tilde{M})\to G_{\N}\vert\Gamma)$ and thus  belongs $\poly_{p}(V_{p}(\tilde{M})\to G'_{\N}\vert\Gamma')$. Since   $\iota^{-1}(V(M))=V_{p}(\tilde{M})$, we are done by composing $\tau$ on both sides.
\end{proof}

By repeatedly using Proposition \ref{4:newfacw}, we get the strong factorization property:

\begin{thm}[Strong factorization property, the rationality version]\label{4:newfacs}
	Let $d\in\N_{+}, s\in\N$ with $d\geq s^{2}+s+3$, $C>0$,  $\mathcal{F}\colon \R_{+}\to\R_{+}$ be a growth function, $p\gg_{C,d,\mathcal{F}} 1$ be a prime, 	  $M\colon\V\to\F_{p}$  be a non-degenerate quadratic form,
	  $G/\Gamma$ be an $s$-step $\N$-filtered nilmanifold of complexity at most $C$, equipped with a $C$-rational Mal'cev basis $\mathcal{X}$, and $g\in \poly_{p}(V(M)\to G_{\N}\vert\Gamma)$.
Then  there exists $C\leq C'\leq O_{C,d,\mathcal{F}}(1)$
	and a factorization 
	$$g(n)=\e g'(n)\gamma(n)  \text{ for all }  n\in\V$$  where $\e\in G$ if of complexity $O_{C'}(1)$,
	$g'\in \poly_{p}(V(M)\to G'_{\N}\vert \Gamma')$	is $(\mathcal{F}(C'),V(M))$-irrational
	for some sub-nilmanifold $G'/\Gamma'$ of $G/\Gamma$ whose Mal'cev basis is $C'$-rational  relative to $\mathcal{X}$, and   $\gamma\in\poly(V(M)\to G_{\N}\vert\Gamma)$. 
\end{thm}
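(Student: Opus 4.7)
The plan is to iterate Proposition \ref{4:newfacw} (the weak factorization property) until the irrationality condition is achieved. The key dimensional input is that each failure of irrationality forces a factorization whose middle factor lives in a sub-nilmanifold of strictly smaller total dimension, so the process must halt after at most $\dim G$ steps.

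Concretely, I set up the iteration by letting $G^{(0)} := G$, $\Gamma^{(0)} := \Gamma$, $\mathcal{X}^{(0)} := \mathcal{X}$, $C_0 := C$, and $g^{(0)} := g$. At step $i$, given $g^{(i)} \in \poly_p(V(M) \to G^{(i)}_{\N} \vert \Gamma^{(i)})$ on a nilmanifold of complexity at most $C_i$ with Mal'cev basis $C_i$-rational relative to $\mathcal{X}$, I apply Proposition \ref{4:newfacw} with parameter $N := \mathcal{F}(C_i)$. If $g^{(i)}$ is $(\mathcal{F}(C_i), V(M))$-irrational, the iteration terminates and we take $C' := C_i$. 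Otherwise, the proposition produces a decomposition
\begin{equation*}
g^{(i)}(n) = \e_{i+1}\, g^{(i+1)}(n)\, \gamma_{i+1}(n),
\end{equation*}
with $\e_{i+1} \in G^{(i)}$ of complexity at most $1$, with $g^{(i+1)} \in \poly_p(V(M) \to G^{(i+1)}_{\N} \vert \Gamma^{(i+1)})$ for a sub-nilmanifold $G^{(i+1)}/\Gamma^{(i+1)}$ of $G^{(i)}/\Gamma^{(i)}$ of strictly smaller total dimension, and $\gamma_{i+1} \in \poly(V(M) \to G^{(i)}_{\N} \vert \Gamma^{(i)})$. The Mal'cev basis $\mathcal{X}^{(i+1)}$ is $O_{C_i, d, \mathcal{F}(C_i)}(1)$-rational relative to $\mathcal{X}^{(i)}$, so by composing rationality bounds it is $C_{i+1}$-rational relative to $\mathcal{X}$ with $C_{i+1} = O_{C_i, d, \mathcal{F}(C_i)}(1)$.

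Since $\dim G^{(i+1)} < \dim G^{(i)}$ at each non-terminating step, the iteration halts after some number of steps $k \leq \dim G = O_C(1)$. Telescoping the successive factorizations yields
\begin{equation*}
g(n) = (\e_1 \e_2 \cdots \e_k)\, g^{(k)}(n)\, (\gamma_k(n) \cdots \gamma_2(n) \gamma_1(n)),
\end{equation*}
so I set $\e := \e_1 \cdots \e_k$, $g' := g^{(k)}$, $G'/\Gamma' := G^{(k)}/\Gamma^{(k)}$, and $\gamma := \gamma_k \cdots \gamma_1$. Because $\poly(\Z^d \to G_{\N})$ is closed under multiplication and each $\gamma_i$ takes values in $\Gamma^{(i-1)} \subseteq \Gamma$ on $V(M)$, the product $\gamma$ lies in $\poly(V(M) \to G_{\N} \vert \Gamma)$. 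The complexity bounds on the $\e_i$ in the coordinates of $G$ combine to give $\e$ of complexity $O_{C'}(1)$, and the primality hypotheses $p \gg_{C_i, d, \mathcal{F}(C_i)} 1$ at each step aggregate (over boundedly many steps) to the assumption $p \gg_{C, d, \mathcal{F}} 1$.

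The only real subtlety is bookkeeping: one must verify that the intermediate complexities $C_i$, the rationality of $\mathcal{X}^{(i)}$ relative to the \emph{original} $\mathcal{X}$ (not just to $\mathcal{X}^{(i-1)}$), and the complexity of the telescoped $\e$ all remain controlled by $O_{C, d, \mathcal{F}}(1)$. This is ensured because the number of iterations is bounded purely in terms of $\dim G = O_C(1)$, so every composition of constants depending on $C_i, d, \mathcal{F}(C_i)$ can be unwound into a single constant depending only on $C, d$, and $\mathcal{F}$. The case $s = 0$ is trivial since every polynomial of degree zero is irrational by convention.
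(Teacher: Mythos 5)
Your argument is exactly the one the paper intends: iterate Proposition \ref{4:newfacw} with $N=\mathcal{F}(C_i)$ at each stage, using the strict drop in total dimension to terminate after at most $\dim G=O_C(1)$ steps, then telescope the factorizations and unwind the finitely many nested constants (the paper itself remarks the proof is "almost identical to the proof of Theorem 11.2 in \cite{SunA}" and omits it). The bookkeeping you flag — rationality of $\mathcal{X}^{(i)}$ relative to the original $\mathcal{X}$, complexity of the accumulated $\e$, and the largeness of $p$ over boundedly many iterations — is handled correctly.
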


Theorem \ref{4:newfacs} can be derived by repeatedly using Proposition \ref{4:newfacw}. The method is almost identical to the proof of Theorem 
11.2 in \cite{SunA}. So we omit the proof.

We conclude this section with a property stating that every partially periodic nilsequence on $V(M)$ can be represent as a sufficiently irrational one. 

\begin{prop}[Irrational representation property]\label{4:st2}
	Let $d\in\N_{+}, s\in\N$ with $d\geq  s^{2}+s+3$, $C>0$, $\mathcal{F}\colon\R_{+}\to\R_{+}$ be a growth function, $p\gg_{C,d,\mathcal{F}} 1$ be a prime, and  $M\colon\V\to\F_{p}$  be a non-degenerate quadratic form. Then
	there exists $C\leq C'\leq O_{C,d,\mathcal{F}}(1)$ such that for any function $f\colon V(M)\to\C$ of the form 
	$$f(n)=F(g(n)\Gamma)  \text{ for all }  n\in V(M),$$
	where   $G/\Gamma$  is an  $\N$-filtered nilmanifold   of degree at most $s$ and complexity at most $C$, $g\in\poly_{p}(V(M)\to G_{\N}\vert \Gamma)$, and $F\in\Lip(G/\Gamma\to\C)$ with Lipschitz norm at most $C$, we may rewrite $f$ as 
	$$f(n)=F'(g'(n)\Gamma')  \text{ for all }  n\in V(M)$$
	for some   $\N$-filtered nilmanifold $G'/\Gamma'$  of degree at most $s$ and complexity at most $O_{C,C'}(1)$, some $(\mathcal{F}(C'),V(M))$-irrational  $g'\in\poly_{p}(V(M)\to G'_{\N}\vert \Gamma')$, and some   $F'\in\Lip(G'/\Gamma'\to\C)$  with Lipschitz norm at most $O_{C,C'}(1)$. 
\end{prop}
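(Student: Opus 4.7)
The plan is to derive Proposition \ref{4:st2} as a direct consequence of the strong factorization Theorem \ref{4:newfacs}. I will apply that theorem to $g$ with the given growth function $\mathcal{F}$ and complexity bound $C$, obtaining $C \leq C' \leq O_{C,d,\mathcal{F}}(1)$ and a factorization $g(n) = \epsilon\, g'(n)\, \gamma(n)$ on $\V$, where $\epsilon \in G$ has complexity $O_{C'}(1)$, $g' \in \poly_{p}(V(M) \to G'_{\N}\vert\Gamma')$ is $(\mathcal{F}(C'), V(M))$-irrational with $G'/\Gamma'$ a sub-nilmanifold of $G/\Gamma$ whose Mal'cev basis is $C'$-rational relative to $\mathcal{X}$, and $\gamma \in \poly(V(M) \to G_{\N}\vert\Gamma)$.

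The second step exploits the fact that the last condition means $\gamma(n) \in \Gamma$ for every $n \in V(M)$. Therefore $g(n)\Gamma = \epsilon g'(n)\gamma(n)\Gamma = \epsilon g'(n)\Gamma$ on $V(M)$, and so $f(n) = F(\epsilon g'(n)\Gamma)$. I then define $F' \colon G'/\Gamma' \to \C$ by $F'(x\Gamma') := F(\epsilon x \Gamma)$; this is well-defined since $\Gamma' = \Gamma \cap G'$ (so $x\Gamma' = y\Gamma'$ in $G'$ implies $\epsilon x\Gamma = \epsilon y\Gamma$ in $G/\Gamma$), and it satisfies $f(n) = F'(g'(n)\Gamma')$ on $V(M)$ as required. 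The irrationality of $g'$ and the structural control on $G'/\Gamma'$ are exactly the hypotheses delivered by Theorem \ref{4:newfacs}.

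The remaining task is complexity bookkeeping. The nilmanifold $G'/\Gamma'$ has complexity $O_{C,C'}(1)$ because its Mal'cev basis is $C'$-rational relative to the $C$-rational basis $\mathcal{X}$. For the Lipschitz norm of $F'$, observe that the map $x\Gamma' \mapsto \epsilon x \Gamma$ factors as the inclusion $G'/\Gamma' \hookrightarrow G/\Gamma$, which is $O_{C,C'}(1)$-Lipschitz by the relative rationality of the Mal'cev bases, followed by left translation by $\epsilon$, which is $O_{C'}(1)$-Lipschitz since $\epsilon$ has complexity $O_{C'}(1)$. Combining these estimates gives $\Vert F'\Vert_{\Lip} \leq O_{C,C'}(1)\Vert F\Vert_{\Lip} = O_{C,C'}(1)$.

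The only genuinely technical point is the pair of Lipschitz estimates in the last paragraph: the Lipschitz constants of the sub-nilmanifold inclusion (controlled by relative rationality of Mal'cev bases) and of left translation by a bounded-complexity element. Both are standard in this area and are expected to be available from the appendices or from the earlier parts \cite{SunA,SunC} of the series; once cited, the proof is essentially a one-line consequence of Theorem \ref{4:newfacs}.
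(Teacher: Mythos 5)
Your proposal is correct and matches the paper's own proof: both apply Theorem \ref{4:newfacs} to obtain the factorization $g = \e g'\gamma$, observe that $\gamma$ takes values in $\Gamma$ on $V(M)$ so it disappears when passing to $G/\Gamma$, and then set $F' = F(\e\cdot)$ restricted to $G'/\Gamma'$. The additional details you supply on well-definedness of $F'$ and the Lipschitz bookkeeping for the inclusion and the left translation are exactly what the paper leaves implicit.
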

\begin{proof}
	By Theorem \ref{4:newfacs}, 	there exists $C\leq C'\leq O_{C,d,\mathcal{F}}(1)$
	and a factorization 
	$$g(n)=\e g'(n)\gamma(n)  \text{ for all }  n\in\V$$  where $\e\in G$ is of complexity $O_{C'}(1)$,
	$g'\in \poly_{p}(V(M)\to G'_{\N}\vert\Gamma')$  
	 is $(\mathcal{F}(C'),V(M))$-rational
	in some sub-nilmanifold $G'/\Gamma'$ of $G/\Gamma$ whose Mal'cev basis is $C'$-rational relative to $\mathcal{X}$, and $\gamma\in\poly(V(M)\to G_{\N}\vert\Gamma)$. 
	Then $G'/\Gamma'$ is of complexity $O_{C,C'}(1)$.
	Writing $F'=F(\e\cdot)$, we have that $F'\colon G'/\Gamma'\to\C$ is a  Lipschitz function  with Lipschitz norm at most $O_{C,C'}(1)$. Moreover, for all $n\in V(M)$, we have that 
	$f(n)=F(\e g'(n)\gamma(n)\Gamma)=F'(g'(n)\Gamma).$ 
\end{proof}	

\begin{rem}
We caution the readers that were unable to extend Proposition \ref{4:st2} to  $p$-periodic nilsequences, i.e. even if $g$ belongs to $\poly_{p}(\V\to G_{\N}\vert\Gamma)$, we could not ensure that 
$g'$ belongs to $\poly_{p}(\V\to G'_{\N}\vert\Gamma')$ in the construction.
This is because our factorization theorem (Theorem \ref{4:newfacs}) applies only for partially $p$-periodic nilsequences (see also Remark 
11.3 in \cite{SunA}).
\end{rem}

\subsection{Structure theorems for spherical Gowers norms}

For any $s\in\N$ and $C>0$,
it is not hard to see that the sum and product of two $p$-periodic nilsequences of degrees at most  $s$ and complexities at most $C$ is still a $p$-periodic nilsequence of degree at most  $s$ and complexity at most $O(C)$ (see \cite{SunC} for definitions). So 
combining the spherical Gowers inverse theorem (Theorem 
1.2 of \cite{SunC}) with an argument similar to Section 2 of the work of Green and Tao  \cite{GT10b} (see also the discussion before Theorem 3.4 of \cite{CS14}),  we have the following structure theorem (whose proof is omitted):

\begin{thm}[Structure theorem for spherical Gowers norms in $\V$]\label{4:st}
	For every $s\in\N$, 	every $d\geq (2s+14)(15s+438)$, every $\e>0$, and every growth function $\mathcal{F}\colon \R_{+}\to\R_{+}$, there exist $C:=C(d,\e,\mathcal{F})>0$ and $p_{0}:=p_{0}(d,\e,\mathcal{F})\in\N$
	such that for every prime $p\geq p_{0}$, every non-degenerate quadratic form $M\colon\V\to\F_{p}$, and every $f\colon V(M)\to \mathbb{C}$ taking values in $[0,1]$, there exist  a decomposition
	$$f=f_{\str}+f_{\uni}+f_{\err}$$
	such that the followings hold:
	\begin{itemize}
		\item $f_{\str}$ and $f_{\str}+f_{\err}$ take values in $[0,1]$;
		\item we may write $f_{\str}$ as 
	$$f_{\str}(n)=F(g(n)\Gamma)  \text{ for all }  n\in V(M)$$
	for some   $\N$-filtered nilmanifold $G/\Gamma$  of degree at most $s$ and complexity at most $C$, some $g\in\poly_{p}(\V\to G_{\N}\vert \Gamma)$, and some    $F\in\Lip(G/\Gamma\to\C)$  of Lipschitz norm at most $C$;
		\item  $\Vert f_{\uni}\Vert_{U^{s+1}(V(M))}\leq 1/\mathcal{F}(C)$;
		\item $\Vert f_{\err}\Vert_{L^{2}(V(M))}\leq \e$.
	\end{itemize}	
\end{thm}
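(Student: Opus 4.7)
The plan is to adapt the classical energy-increment decomposition of Green and Tao \cite{GT10b} to the spherical setting $V(M)$. Two ingredients are crucial: (i) the spherical $U^{s+1}$-inverse theorem of \cite{SunC}, which guarantees that any $f\colon V(M)\to[-1,1]$ with $\Vert f\Vert_{U^{s+1}(V(M))}\geq \eta$ correlates, to within $c(\eta,s)$, with some $p$-periodic nilsequence on $V(M)$ of degree $\leq s$ and complexity $\leq C(\eta,s)$; and (ii) the closure of the class of such nilsequences under pointwise sums, products and complex conjugation, realized concretely by passing to a direct-product $\N$-filtered nilmanifold whose complexity grows in a controlled way with the inputs. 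Ingredient (ii) is essentially a packaging statement about $\poly_{p}$-sequences and follows from the same Baker--Campbell--Hausdorff manipulations already used in \cite{SunA}.

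The iteration I would run is as follows. Start with $f^{(0)}:=f$, a trivial $\sigma$-algebra $\B_{0}$ on $V(M)$, and an initial complexity $C_{0}$ to be chosen. At stage $n$, if $\Vert f^{(n)}\Vert_{U^{s+1}(V(M))}\leq 1/\mathcal{F}(C_{n})$ then terminate. Otherwise apply the spherical inverse theorem to produce a $p$-periodic nilsequence $\Psi_{n}(\cdot)=F_{n}(g_{n}(\cdot)\Gamma_{n})$ of degree $\leq s$ correlated with $f^{(n)}$, let $\B_{n+1}$ be the $\sigma$-algebra generated by $\B_{n}$ together with a sufficiently fine partition of the level sets of $\Psi_{n}$, and set $f^{(n+1)}:=f-\E[f\mid \B_{n+1}]$. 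The Pythagorean identity
$$\Vert\E[f\mid\B_{n+1}]\Vert_{L^{2}}^{2}=\Vert\E[f\mid\B_{n}]\Vert_{L^{2}}^{2}+\Vert\E[f^{(n)}\mid\B_{n+1}]\Vert_{L^{2}}^{2}$$
together with the correlation bound forces a definite energy increment at each non-terminal step, so the iteration halts at some $N=O_{\e,\mathcal{F},s}(1)$ with a final complexity bound $C=C(\e,\mathcal{F},s)$.

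Once the iteration terminates I would use ingredient (ii) to combine $\Psi_{0},\dots,\Psi_{N-1}$ into a single $p$-periodic nilsequence $\Psi(n)=\tilde{F}(g(n)\Gamma)$ on a product $\N$-filtered nilmanifold $G/\Gamma$ of degree $\leq s$ and complexity $\leq C$. Then I take $f_{\str}$ to be a Lipschitz representative, produced by a smooth partition-of-unity refinement of the $\B_{N}$-level sets on $G/\Gamma$, for $\E[f\mid\B_{N}]$; put $f_{\uni}:=f^{(N)}$; and let $f_{\err}:=\E[f\mid\B_{N}]-f_{\str}$ absorb the Lipschitz-approximation error. By choosing the partition scale small enough relative to $\e$ at the outset, one ensures $\Vert f_{\err}\Vert_{L^{2}(V(M))}\leq \e$, while the termination condition gives $\Vert f_{\uni}\Vert_{U^{s+1}(V(M))}\leq 1/\mathcal{F}(C)$.

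The hardest step is arranging the pointwise range condition that $f_{\str}$ and $f_{\str}+f_{\err}$ both lie in $[0,1]$. I would handle this by post-composing $\tilde{F}$ with a fixed smooth clip $\chi\colon\R\to[0,1]$ that equals the identity on $[0,1]$; since $f$ takes values in $[0,1]$ and $\chi$ is a $1$-Lipschitz contraction towards the range of $f$, the replacement $f_{\str}\mapsto\chi\circ f_{\str}$ does not increase $\Vert \E[f\mid\B_{N}]-f_{\str}\Vert_{L^{2}}$, and the Lipschitz norm only grows by $\Vert\chi\Vert_{\Lip}=1$. A secondary point to check is that the $g$ produced by the inverse theorem is naturally defined on all of $\V$ (and not merely on $V(M)$), so that $g\in\poly_{p}(\V\to G_{\N}\vert\Gamma)$ as the statement requires; this is indeed how Theorem 1.2 of \cite{SunC} is formulated, so no extension argument is needed. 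The dimension hypothesis $d\geq (2s+14)(15s+438)$ is inherited entirely from that inverse theorem, so no additional constraint on $d$ arises from the iteration itself.
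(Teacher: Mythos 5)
Your overall plan matches what the paper intends: it explicitly omits the proof and says to combine the spherical Gowers inverse theorem of \cite{SunC} with the argument of Section 2 of \cite{GT10b}, and your outline (energy increment via the inverse theorem, combine nilsequences on a product nilmanifold, then smooth and clip) is the right shape.

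However, the termination step as you wrote it is a genuine gap. In your iteration the correlation threshold at stage $n$ is $\eta_n:=1/\mathcal{F}(C_n)$, and the per-step energy increment guaranteed by the inverse theorem is of size $c(\eta_n)^2$, which \emph{shrinks} as $C_n$ grows. Since $\sum_n c(\eta_n)^2$ can converge to something strictly less than $1$ (take, say, $c(\eta)=\eta$, $\mathcal{F}(C)=C$, $C_{n+1}\geq 2C_n$), the naive single loop need not halt, so the claim that ``the iteration halts at some $N=O_{\e,\mathcal{F},s}(1)$'' does not follow from the Pythagorean identity alone. The argument in \cite{GT10b} (and in the discussion around Theorem 3.4 of \cite{CS14}) avoids this by running a double iteration: an inner loop at a \emph{fixed} threshold $\eta_k=1/\mathcal{F}(C^{(k)})$ set at the start of outer stage $k$, which terminates in $\leq 1/c(\eta_k)^2$ steps, followed by an outer comparison $\Vert\E[f\mid\B']-\E[f\mid\B^{(k)}]\Vert_{L^2}\leq \e$ which, when it fails, forces an energy increment of at least the \emph{fixed} amount $\e^2$, bounding the number of outer stages by $1/\e^2$. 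The decomposition is then read off with the complexity $C=C^{(k)}$ from the \emph{start} of the last outer stage, $f_{\uni}=f-\E[f\mid\B']$ satisfying $\Vert f_{\uni}\Vert_{U^{s+1}}\leq 1/\mathcal{F}(C)$, and $f_{\err}$ absorbing both the step $\E[f\mid\B']-\E[f\mid\B^{(k)}]$ (bounded in $L^2$ by $\e$ via the outer termination condition) and the Lipschitz-regularization error. This also removes the circularity in your construction of $f_{\err}$: in your version, the partition scale controlling the Lipschitz error must be fixed during the iteration (it determines $\B_N$), yet the scale needed to make the $L^2$ error $\leq\e$ is only known once $C_N$ is known. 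The two-loop structure resolves both issues simultaneously. The remaining steps you describe — closure under sums/products via product nilmanifolds and Baker--Campbell--Hausdorff, the clipping $\chi\colon\R\to[0,1]$ to enforce the range condition, and the observation that the inverse theorem already produces $g\in\poly_p(\V\to G_{\N}\vert\Gamma)$ — are all correct and consistent with the paper's remarks distinguishing Theorem \ref{4:st} from Theorem \ref{4:st3}.
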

 
   We may also impose additional irrationality restrictions to the $f_{\str}$ part in Theorem \ref{4:st}, and have the following variation of the structure theorem:

 \begin{thm}[Another structure theorem for spherical Gowers norms in $\V$]\label{4:st3}
 	For every $s\in\N$, 	every $d\geq (2s+14)(15s+438)$, every
 	$\e>0$, and every growth function $\mathcal{F}\colon \R_{+}\to\R_{+}$, there exist $C:=C(d,\e,\mathcal{F})>0$ and $p_{0}:=p_{0}(d,\e,\mathcal{F})\in\N$
 	such that for every prime $p\geq p_{0}$, every non-degenerate quadratic form $M\colon\V\to\F_{p}$, and every $f\colon V(M)\to \mathbb{C}$ taking values in $[0,1]$, there exist  a decomposition
 	$$f=f_{\str}+f_{\uni}+f_{\err}$$
 	such that the followings hold:
 	\begin{itemize}
 		\item $f_{\str}$ and $f_{\str}+f_{\err}$ take values in $[0,1]$;
	\item we may write $f_{\str}$ as 
	$$f_{\str}(n)=F(g(n)\Gamma)  \text{ for all }  n\in V(M)$$
	for some   $\N$-filtered nilmanifold $G/\Gamma$  of degree at most $s$ and complexity at most $C$, some $(\mathcal{F}(C),V(M))$-irrational  $g\in\poly_{p}(V(M)\to G_{\N}\vert \Gamma)$, and some    $F\in\Lip(G/\Gamma\to\C)$  of Lipschitz norm at most $C$;
 		\item  $\Vert f_{\uni}\Vert_{U^{s+1}(V(M))}\leq 1/\mathcal{F}(C)$;
 		\item $\Vert f_{\err}\Vert_{L^{2}(V(M))}\leq \e$.
 	\end{itemize}	
 \end{thm}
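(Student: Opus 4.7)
The plan is to derive Theorem \ref{4:st3} by combining Theorem \ref{4:st} with Proposition \ref{4:st2}, using growth functions to ensure that the final complexity, the Gowers uniformity bound, and the irrationality level all refer to the \emph{same} parameter.

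First I would introduce two auxiliary growth functions $\mathcal{G}, \mathcal{H}\colon \R_{+}\to\R_{+}$, to be pinned down at the end, and apply Theorem \ref{4:st} to $f$ with growth function $\mathcal{G}$. This yields a decomposition $f = f_{\str}+f_{\uni}+f_{\err}$ in which $f_{\str}$ and $f_{\str}+f_{\err}$ take values in $[0,1]$, $f_{\str}$ can be written as $f_{\str}(n)=F_{0}(g_{0}(n)\Gamma_{0})$ on $V(M)$ for some $\N$-filtered nilmanifold $G_{0}/\Gamma_{0}$ of degree at most $s$ and complexity at most some $C_{0}$, some $g_{0}\in\poly_{p}(\V\to (G_{0})_{\N}\vert\Gamma_{0})$ and some Lipschitz $F_{0}$ with $\Vert F_{0}\Vert_{\Lip}\leq C_{0}$, and moreover $\Vert f_{\uni}\Vert_{U^{s+1}(V(M))}\leq 1/\mathcal{G}(C_{0})$ and $\Vert f_{\err}\Vert_{L^{2}(V(M))}\leq \e$.

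Next I would apply Proposition \ref{4:st2} to the representation $f_{\str}(n)=F_{0}(g_{0}(n)\Gamma_{0})$ using the growth function $\mathcal{H}$. Since the restriction of $g_{0}$ to $V(M)$ lies in $\poly_{p}(V(M)\to (G_{0})_{\N}\vert\Gamma_{0})$, Proposition \ref{4:st2} produces a new representation $f_{\str}(n)=F'(g'(n)\Gamma')$ on $V(M)$ where $G'/\Gamma'$ has degree at most $s$ and complexity at most some $C \leq O_{C_{0},d,\mathcal{H}}(1)$, $g'\in\poly_{p}(V(M)\to G'_{\N}\vert\Gamma')$ is $(\mathcal{H}(C_{1}),V(M))$-irrational for some $C_{0}\leq C_{1}\leq C$, and $F'$ is Lipschitz with $\Vert F'\Vert_{\Lip}\leq C$. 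Crucially $f_{\str}$ itself is unchanged by this step, so the previous $L^{2}$ and $U^{s+1}$ bounds on $f_{\err}$ and $f_{\uni}$ persist. This $C$ will be the final complexity in Theorem \ref{4:st3}.

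The final task is the bookkeeping: I need both $\mathcal{G}(C_{0})\geq \mathcal{F}(C)$ (to promote the Gowers bound to the final complexity) and $\mathcal{H}(C_{1})\geq \mathcal{F}(C)$ (to promote irrationality to the final complexity). Since $C_{1}\geq C_{0}$ and $\mathcal{H}$ is increasing, the second is implied by $\mathcal{H}(C_{0})\geq \mathcal{F}(C)$, i.e., by the self-referential condition $\mathcal{H}(C_{0})\geq \mathcal{F}(O_{C_{0},d,\mathcal{H}}(1))$. Resolving this is the main technical obstacle, and will be handled by the standard device of building $\mathcal{H}$ iteratively (exactly as in the proof of Theorem \ref{4:st} itself) so that $\mathcal{H}(x)$ dominates $\mathcal{F}$ composed with the relevant complexity bound for every $x$ in the range $[C_{0},C]$; once $\mathcal{H}$ is pinned down, one then chooses $\mathcal{G}$ to dominate $\mathcal{F}$ composed with this same complexity bound. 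No fundamentally new idea beyond those already present in Theorem \ref{4:st} and Proposition \ref{4:st2} is required.
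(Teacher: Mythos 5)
Your proposal is correct and follows exactly the route the paper sketches (apply Theorem \ref{4:st} with a faster growth function, then apply Proposition \ref{4:st2} via the inclusion $\poly_{p}(\V\to G_{\N}\vert\Gamma)\subseteq\poly_{p}(V(M)\to G_{\N}\vert\Gamma)$, keeping $f_{\str}$ fixed so the $f_{\uni}$ and $f_{\err}$ bounds persist, and tuning growth functions so that the Gowers bound and the irrationality level both catch up to the final complexity). The only remark worth adding is that the ``self-referential'' worry is milder than you suggest: the output complexity in Proposition \ref{4:st2} is $O_{C_{0},C_{1}}(1)$ and does not itself depend on $\mathcal{H}$, so one can define $\mathcal{H}(x)$ directly as (roughly) $\max\{x,\mathcal{F}(\phi(x))\}$ with $\phi$ the $s,d$-dependent complexity bound, without any genuine iteration.
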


 Since $\poly_{p}(\V\to G_{\N}\vert \Gamma)\subseteq \poly_{p}(V(M)\to G_{\N}\vert \Gamma)$,
  Theorem \ref{4:st3} can be proved by first applying Theorem \ref{4:st} for some growth function $\mathcal{F}_{1}$, and then applying Proposition  \ref{4:st2} for some growth function $\mathcal{F}_{2}$, with $\mathcal{F}_{1}$ growing much faster than $\mathcal{F}_{2}$ and with $\mathcal{F}_{2}$ growing much faster than $\mathcal{F}$. We omit the proof.  
 
 \begin{rem}
 Again we caution the readers that unlike in Theorem \ref{4:st} where $f_{\str}$ is $p$-periodic, we can only require $f_{\str}$ to be partially $p$-periodic on $V(M)$ in Theorem \ref{4:st3}. This is because we are only able to obtain the factorization theorem (Theorem \ref{4:newfacs}) for partially $p$-periodic nilsequences, which in turn is insufficient to extend Proposition  \ref{4:st2} to $p$-periodic nilsequences.
 \end{rem}

\section{Equisitribution properties for polynomial sequences on product spaces} \label{4:slast}

With the help of
Theorems  \ref{4:vdcc} and  \ref{4:st3}, one can reduce the study of (\ref{4:Vdc0}) to the special case when all the $f_{i}$ are nilsequences. Therefore, in order to obtain the positivity of (\ref{4:Vdc0}), in this section we provide a joint equisitribution property for polynomial sequences on the product space.

  \subsection{Parametrization for $\Omega$ and its relative self-products}\label{4:s:lbg}
  
  From now on it is convenient to work in the $\Z^{d}$-setting instead of the $\F_{p}^{d}$ one. 
Recall that for any generating set
$I$ of $X$, we have generating constants $b_{I,i,j}\in \F_{p}, i\in I, 0\leq j\leq k+s-1$ and generating maps $L_{I,j}\colon (\V)^{k+1}\to\V, 0\leq j\leq k+s-1$. 
For all $i\in I, 0\leq j\leq k+s-1$, let
 $\tilde{b}_{I,i,j}:=\tau(b_{I,i,j})\in\Z$,  and denote
$\tilde{b}_{I,i}:=(\tilde{b}_{I,i,0},\dots,\tilde{b}_{I,i,k+s-1})\in\Z^{k+s}$.
 Let
  $\tilde{L}_{I,j}\colon (\R^{d})^{k+1}\to\R^{d}$ be the linear map given by  
 $$\tilde{L}_{I,j}(\x):=\sum_{i\in I}\tilde{b}_{I,i,j}x_{i} \text{ for all } \x=(x_{0},\dots,x_{k})\in(\R^{d})^{k+1},$$
and $\tilde{\L}_{I}\colon (\R^{d})^{k+1}\to(\R^{d})^{k+s}$ be the linear map given by
 $$\tilde{\L}_{I}(\x):=(\tilde{L}_{I,0}(\x),\dots,\tilde{L}_{I,k+s-1}(\x)) \text{ for all } \x\in (\R^{d})^{k+1}.$$
 It is not hard to see that $L_{I,j}=\iota\circ \tilde{L}_{I,j}\circ \tau$. In other words, $\frac{1}{p}\tilde{L}_{I,j}$ is the regular lifting of $L_{I,j}$.

Let $C_{0}$ be the complexity of $X$.
If $p\gg_{C_{0},k,s} 1$, then the normalization properties of $b_{I,i,j}$ and $L_{I,j}$ imply the following \emph{normalization properties} of $\tilde{b}_{I,i,j}$ and $\tilde{L}_{I,j}$:
$$\tilde{b}_{I,i,j}=\delta_{i,j} \text{ for all $i,j\in I$ and} \sum_{i\in I}\tilde{b}_{I,i,j}=1 \text{ for all $j\in J$};\footnote{Since $p\gg_{C_{0},k,s} 1$, we have that $\sum_{i\in I}\tilde{b}_{I,i,j}=1$ instead of just $\sum_{i\in I}\tilde{b}_{I,i,j}\equiv 1\mod p\Z$.}$$
$$\tilde{L}_{I,j}((x_{i})_{i\in I})=x_{j} \text{ for all $j\in I$ and } \tilde{L}_{I,j}(x,\dots,x)=x \text{ for all $j\in J$}.$$

For $t\in\N_{+}$, we say that $V\subseteq \R^{t}$ is a \emph{rational subspace} of $\R^{t}$ if there exist $\R$-linearly independent vectors $u_{1},\dots,u_{r}\in \Z^{t}$ for some $r\in\N$ such that $V$ is the $\R$-span of $u_{1},\dots,u_{r}$.
For $i\in\N_{+}$, let $V^{[i]}$ denote the linear subspace of $\R^{t}$ spanned by $u_{1}\ast\ldots\ast u_{i}, u_{1},\dots,u_{i}\in V$, where $\ast$ is the coordinate-wise product.

Let $V_{X}:=\tilde{\L}_{I}((\R^{d})^{k+1})$. We summarize some basic properties for $V_{X}$.

\begin{lem}\label{4:vxprop}
Assume that the complexity of $X$ is at most $C_{0}$ and $p\gg_{C_{0},k,s} 1$.
\begin{enumerate}[(i)]
  \item The set $V_{X}$ is a well defined rational subspace of $\R^{k+s}$,  i.e. it is independent of the choice of the generating set $I$.
  \item We have that $V_{X}=\sp_{\R}\{\tilde{b}_{I,i}, i\in I\}$ for any generating set $I$ of $X$.
  \item The vectors $(1,\dots,1)$ belongs to $V_{X}$. Therefore, we have the \emph{flag property} $V_{X}^{[1]}\leq V_{X}^{[2]}\leq \dots.$
  \item The complexities of all of $V_{X}^{[i]}, 1\leq i\leq s-1$ (recall Section \ref{4:s:defn} for definitions) are at most $O_{C_{0},k,s}(1)$.
\end{enumerate}
\end{lem}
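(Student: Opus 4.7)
The plan is to treat the four parts roughly in order, first establishing the column-span description (essentially (ii)), then deducing independence (i), the flag property (iii), and finally the complexity bound (iv). All parts rely on the hypothesis $p\gg_{C_0,k,s}1$ to lift $\F_p$-identities and bounds cleanly to $\Z$.

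For (ii) with a fixed generating set $I$: the $\R$-linear map $\tilde{\L}_I$ is represented by a $(k+s)\times(k+1)$ integer matrix whose $i$-th column is $\tilde{b}_{I,i}$, since $\tilde{\L}_I(e_i)=\tilde{b}_{I,i}$. Consequently $V_X$, interpreted under the natural identification as the subspace of $\R^{k+s}$ in which each ``column'' of the image lives, equals the column span $\sp_\R\{\tilde{b}_{I,i}\colon i\in I\}$, and this subspace is automatically rational because each $\tilde{b}_{I,i}\in\Z^{k+s}$.

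For (i), the essential input is Proposition \ref{4:kk4k}, whose key computation is (\ref{4:biterate}): $b_{I',i',j}=\sum_{i\in I}b_{I,i,j}b_{I',i',i}$ in $\F_p$. I will lift this to the integer identity $\tilde{b}_{I',i',j}=\sum_{i\in I}\tilde{b}_{I,i,j}\tilde{b}_{I',i',i}$, which holds once $p$ exceeds a constant depending only on $C_0,k,s$, since both sides are integers of magnitude $O_{C_0,k,s}(1)$. Combined with (ii), this gives $\tilde{b}_{I',i'}\in\sp_\R\{\tilde{b}_{I,i}\colon i\in I\}$ for each $i'\in I'$, so the column span with respect to $I'$ is contained in that with respect to $I$; swapping the roles of $I$ and $I'$ yields equality.

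For (iii), applying the integer normalization $\sum_{i\in I}\tilde{b}_{I,i,j}=1$ to the input $(1,\dots,1)\in\R^{k+1}$ gives $\tilde{\L}_I(1,\dots,1)=(1,\dots,1)$, so $(1,\dots,1)\in V_X$. The flag property $V_X^{[i]}\leq V_X^{[i+1]}$ is then immediate: every generating tensor $u_1\ast\cdots\ast u_i$ of $V_X^{[i]}$ equals $u_1\ast\cdots\ast u_i\ast(1,\dots,1)$, which lies in $V_X^{[i+1]}$.

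For (iv), the complexity bound on $X$ forces each $\tilde{b}_{I,i}$ to have integer entries of size $O_{C_0}(1)$, so any $i$-fold coordinate-wise product of them has integer entries of size $O_{C_0,s}(1)$. Extracting an integer spanning set for $V_X^{[i]}$ from such products and solving for its orthogonal complement via Cramer's rule (or integer Gauss elimination) yields a defining matrix of complexity $O_{C_0,k,s}(1)$, which satisfies the null-space description of complexity demanded in Section \ref{4:s:defn}. The main obstacle across the whole argument is merely the bookkeeping needed to verify that the $\F_p$-identities and integer bounds lift to $\Z$; once this is checked, every step is elementary linear algebra.
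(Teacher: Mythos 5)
Your proposal is correct and follows essentially the same route as the paper: Part~(i) uses the lifted identity $\tilde{b}_{I',i',j}=\sum_{i\in I}\tilde{b}_{I,i,j}\tilde{b}_{I',i',i}$ (the integer lift of (\ref{4:biterate}), valid once $p\gg_{C_0,k,s}1$), Part~(iii) uses the normalization $\sum_{i\in I}\tilde{b}_{I,i,j}=1$, and Parts~(ii) and~(iv) are the elementary linear-algebra facts the paper leaves implicit. The only cosmetic difference is that you prove~(ii) before~(i) and deduce~(i) from the column-span description, whereas the paper argues~(i) directly on the images $\tilde{\L}_I((\R^d)^{k+1})$; both are equivalent. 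You correctly flag the bookkeeping point that $\tilde{\L}_I$ acts on $(\R^d)^{k+1}$ rather than $\R^{k+1}$, so that $V_X$ is really the subspace of $\R^{k+s}$ obtained by the coordinate-wise (``per-column'') identification; this is the natural reading of the paper's slightly abbreviated notation.
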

\begin{proof}
  We start with Part (i). It is clear that $V_{X}$ is a  rational subspace of $\R^{k+s}$. Let $I,I'$ be generating sets of $X$ with generating constants $b_{I,i,j}$ and $b_{I',i',j}$. Recall from (\ref{4:biterate})
 that
  \begin{equation}\nonumber
	\begin{split}
	   b_{I',i',j}=\sum_{i\in I}b_{I,i,j}b_{I',i',i}
	 \end{split}
	\end{equation}
	for all $j\in J$ and $i'\in I$. Since $p\gg_{C_{0},k,s} 1$ and all of $b_{I,i,j}$ and $b_{I',i,j}$ are of complexity at most $C_{0}$, we have that 
	  \begin{equation}\nonumber
	\begin{split}
	   \tilde{b}_{I',i',j}=\sum_{i\in I}\tilde{b}_{I,i,j}\tilde{b}_{I',i',i}
	 \end{split}
	\end{equation}
	for all $j\in J$ and $i'\in I$. 
	For all $\x=(x_{i'})_{i'\in I'}\in (\R^{d})^{k+1}$. Let $\y:=(\tilde{L}_{I',i}(\x))_{i\in I}$. Then for all $j\in J$,
	\begin{equation}\nonumber
	\begin{split}
	&\quad \tilde{L}_{I',j}(\x)
	=\sum_{i'\in I'}\tilde{b}_{I',i',j}x_{i'}
	=\sum_{i'\in I'}\Bigl(\sum_{i\in I}\tilde{b}_{I,i,j}\tilde{b}_{I',i',i}\Bigr)x_{i'}
	\\&=\sum_{i\in I}\tilde{b}_{I,i,j}\Bigl(\sum_{i'\in I'}\tilde{b}_{I',i',i}x_{i'}\Bigr)
	=\sum_{i\in I}\tilde{b}_{I,i,j}\tilde{L}_{I',i}(\x)
	=\tilde{L}_{I,j}(\y).
	\end{split}
	\end{equation}
	This means that $\tilde{\L}_{I'}((\R^{d})^{k+1})\subseteq \tilde{\L}_{I}((\R^{d})^{k+1})$. Similarly, $\tilde{\L}_{I}((\R^{d})^{k+1})\subseteq \tilde{\L}_{I'}((\R^{d})^{k+1})$. Thus $\tilde{\L}_{I'}((\R^{d})^{k+1})=\tilde{\L}_{I}((\R^{d})^{k+1})$. This completes the proof of Part (i).

  Part (ii) follows from the knowledge of linear algebra.
  Part (iii) follows from the fact that $(1,\dots,1)=\tilde{b}_{I,0}+\dots+\tilde{b}_{I,k}\in V_{X}$, which follows from the normalization property of $\tilde{b}_{I,i,j}$. Part (iv) is straightforward.
\end{proof}

For technical reasons, in additional to (\ref{4:Vdc0}), in later sections, we also need to study recurrence results of the form
\begin{equation}\label{4:Vdc00}
	\begin{split}
	\E_{(x_{0},\dots,x_{k+s-1})\in \Omega}f_{i}(x_{i})\Bigl\vert \prod_{0\leq j\leq k+s-1, j\neq i}f_{j}(x_{j})\Bigr\vert^{2}.
	\end{split}
	\end{equation}	
This motivates us to study the relative self-products of the set $\Omega$.

 Let $B$ be a set, $J$ be an (ordered) finite index set, $j\in J$, $x_{j}\in B$ and  
 $\x'=(x_{j'})_{j'\in J\backslash\{j\}}\in B^{\vert J\vert-1}$, write
 $(x_{j}\da_{j\ca J} \x'):=(x_{j'})_{j'\in J}\in B^{\vert J\vert}$. In other words, $(x_{j}\da_{j\ca J} \x')$ is the completion of the $J\backslash\{j\}$-indexed vector $\x'$ to a $J$-indexed vector by inserting $x_{j}$ as the $j$-th coordinate.
 When $J=\{0,\dots,k+s-1\}$, we write $(x_{j}\da_{j} \x'):=(x_{j}\da_{j\ca J} \x')$ for short.

For any $0\leq i\leq k+s-1$, denote 
$$\Omega\times_{i}\Omega:=\{(x,\x^{+},\x^{-})\in (\V)\times (\V)^{k+s-1}\times (\V)^{k+s-1}\colon (x\da_{j}\x^{+}), (x\da_{j}\x^{-})\in\Omega\}.$$
It is not hard to see that we may rewrite (\ref{4:Vdc00}) as 
\begin{equation}\nonumber
	\begin{split}
	\E_{(x_{i},\x^{+},\x^{-})\in \Omega\times_{i}\Omega}f_{i}(x_{i})\prod_{0\leq j\leq k+s-1, j\neq i}f_{j}(x^{+}_{j})\overline{f}_{j}(x^{-}_{j}),
	\end{split}
	\end{equation}	
where $\x^{+}=(x^{+}_{j})_{0\leq j\leq k+s-1, j\neq i}$ and $\x^{-}=(x^{-}_{j})_{0\leq j\leq k+s-1, j\neq i}$.

	Let $I$ be a generating set containing $i$. Define
$$\Omega_{I}\times_{i}\Omega_{I}:=\{(x,\x^{+},\x^{-})\in (\V)\times (\V)^{k}\times (\V)^{k}\colon (x\da_{i\ca I}\x^{+}), (x\da_{i\ca I}\x^{-})\in\Omega_{I}\}.$$

\begin{ex}
    Assume that $I=\{0,\dots,k\}$ is a generating set of $X$. Then $\Omega\times_{0}\Omega$ is the set of $(x_{0},\dots,x_{2k+2s-2})\in (\V)^{2k+2s-1}$ such that $(x_{0},x_{1},\dots,x_{k+s-1}), (x_{0},x_{k+s},\dots,x_{2k+2s-2})\in \Omega$, and $\Omega_{I}\times_{0}\Omega_{I}$ is the set of $(x_{0},\dots,x_{2k})\in (\V)^{2k+1}$ such that $(x_{0},x_{1},\dots,x_{k})$ and $(x_{0},x_{k+1},\dots,$ $x_{2k})$ belong to $\Omega_{I}$. For more general sets $\Omega\times_{i}\Omega$ and $\Omega_{I}\times_{i}\Omega_{I}$, we have a similar structure except that the notations are more complicated.
\end{ex}

Recall from Lemma \ref{4:kk2d} that $\Omega=\L_{I}(\Omega_{I})$  for any  generating set $I$ of $X$. We wish to write the set $\Omega\times_{i}\Omega$ in a similar way. 
For all $j\in \{0,\dots,k+s-1\}$, let $L^{+}_{i\ca I,j}\colon (\V)^{2k+1}\to \V$ be the linear map given by	
$$L^{+}_{i\ca I,j}(x,\x^{+},\x^{-}):=L_{I,j}(x\da_{i\ca I}\x^{+}) \text{ for all } (x,\x^{+},\x^{-})\in(\V)\times(\V)^{k}\times (\V)^{k},$$
	and let $L^{-}_{i\ca I,j}\colon (\V)^{2k+1}\to \V$ be the linear map given by	
$$L^{-}_{i\ca I,j}(x,\x^{+},\x^{-}):=L_{I,j}(x\da_{i\ca I}\x^{-}) \text{ for all } (x,\x^{+},\x^{-})\in(\V)\times(\V)^{k}\times (\V)^{k}.$$
Let
$\L_{i\ca I}^{\pm}\colon (\V)^{2k+1}\to (\V)^{2k+2s-1}$ be the linear map given by	
$$\L^{\pm}_{i\ca I}(\x):=(L^{+}_{i\ca I,i}(\x),(L^{+}_{i\ca I,j}(\x))_{j\in \{0,\dots,k+s-1\}\backslash\{i\}},(L^{-}_{i\ca I,j}(\x))_{j\in \{0,\dots,k+s-1\}\backslash\{i\}})$$
for all $\x\in (\V)^{2k+1}$ (note that $L^{+}_{i\ca I,i}(x,\x^{+},\x^{-})=L^{-}_{i\ca I,i}(x,\x^{+},\x^{-})=x$ for all $i\in I$).
 
For all $i\in I$ and $i'\in I\backslash\{i\}$,
  write 
  $$b^{\pm}_{i\ca I,i}:=(b_{I,i,i},\b'_{I,i},\b'_{I,i}), b^{+}_{i\ca I,i'}:=(b_{I,i',i},\b'_{I,i'},0,\dots,0)
 \text{ and }
  b^{-}_{i\ca I,i'}:=(b_{I,i',i},0,\dots,0,\b'_{I,i'}),$$
  where 
  $$\b'_{I,i''}:=(b_{I,i'',0},\dots,b_{I,i'',i-1},b_{I,i'',i+1},\dots,b_{I,i'',k+s-1})$$
  for all $i''\in I$
  and 
  0 appears $k+s-1$ times in each expression. 
  It is not hard to see that 
 $$L_{i\ca I,j}^{+}(x_{i},(x_{i'}^{+})_{i'\in I\backslash\{i\}},(x_{i'}^{-})_{i'\in I\backslash\{i\}}):=b^{\pm}_{i\ca I,i,j}x_{i}+\sum_{i'\in I\backslash\{i\}}b^{+}_{i\ca I,i',j}x_{i'}^{+}$$
 and 
 $$L_{i\ca I,j}^{-}(x_{i},(x_{i'}^{+})_{i'\in I\backslash\{i\}},(x_{i'}^{-})_{i'\in I\backslash\{i\}}):=b^{\pm}_{i\ca I,i,j}x_{i}+\sum_{i'\in I\backslash\{i\}}b^{-}_{i\ca I,i',j}x_{i'}^{-}$$
 for all
 $i\in I$ and $0\leq j\leq k+s-1$,
 where $b^{\pm}_{i\ca I,i,j}$ is the $j$-th coordinate of $b^{\pm}_{i\ca I,i}$.
The normalization properties of $b_{I,i,j}$ and $L_{I,j}$ imply the following \emph{normalization properties} of $L^{\pm}_{i\ca I,j}$:
$$
\sum_{i'\in I}b^{\pm}_{i\ca I,i',j}=1 \text{ for all $i\in I, j\in J$ and }
L^{\pm}_{i\ca I,j}(x,\dots,x)=x \text{ for all $i\in I, j\in J$}.$$

We summarize some properties of the sets $\Omega\times_{i}\Omega$ and $\Omega_{I}\times_{i}\Omega_{I}$ for later uses.

\begin{lem}\label{4:kk3d}
Let $I\subseteq \{0,\dots,k+s-1\}$ be a generating set of $X$ and $i\in I$.
\begin{enumerate}[(i)]
	\item We have that $\Omega\times_{i}\Omega=\L_{i\ca I}^{\pm}(\Omega_{I}\times_{i}\Omega_{I})$. 
	\item If $d\geq 2k^{2}+6k+3,$ then $\Omega_{I}\times_{i}\Omega_{I}$  is a nice and consistent $M$-set of total co-dimension $k^{2}+3k+1$, and we have that $\vert \Omega_{I}\times_{i}\Omega_{I}\vert=p^{d(2k+1)-(k^{2}+3k+1)}(1+O_{k}(p^{-1/2}))$.
	\item If $d\geq \max\{4k^{2}+12k+7, 4k+s+13\}$, then for all $C'>0$, there exists $K=O_{C',d}(1)$ such that $\Omega_{I}\times_{i}\Omega_{I}$ admits a partially periodic  $(t,Kt^{-K})$-Leibman dichotomy up to step $s$ and complexity at most $C'$ for all $0<t<1/2$.
\end{enumerate}	
\end{lem}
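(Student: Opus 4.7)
The plan is to follow the strategy of Lemma \ref{4:kk2d}, now applied to the fibered set obtained by gluing two copies of $\Omega_{I}$ along their $i$-th coordinate.

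For Part (i), I would use Lemma \ref{4:kk2d} (i), which gives $\Omega=\L_{I}(\Omega_{I})$, together with the normalization property $L_{I,i}((z_{i'})_{i'\in I})=z_{i}$ for all $i\in I$. Taking any $(x,\x^{+},\x^{-})\in\Omega\times_{i}\Omega$, I would write $(x\da_{i}\x^{\pm})=\L_{I}(\z^{\pm})$ for some $\z^{\pm}\in\Omega_{I}$; the normalization property forces the $i$-th coordinate of each $\z^{\pm}$ to equal $x$, so $\z^{\pm}=(x\da_{i\ca I}\y^{\pm})$ for some $\y^{\pm}\in(\V)^{k}$, and $(x,\y^{+},\y^{-})\in\Omega_{I}\times_{i}\Omega_{I}$. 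Unpacking the definition of $\L_{i\ca I}^{\pm}$ then yields $\L_{i\ca I}^{\pm}(x,\y^{+},\y^{-})=(x,\x^{+},\x^{-})$; the reverse containment is immediate from the definitions.

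For Part (ii), I would describe $\Omega_{I}\times_{i}\Omega_{I}$ as the variety in $(\V)^{2k+1}$ cut out by $2k+1$ equations $M(\cdot)=r$ (one per distinct point in the tuple), together with the $2\binom{k+1}{2}=k(k+1)$ pairwise distance conditions $M(\cdot-\cdot)=M(v_{i'}-v_{i''})$ coming from imposing membership in $\Omega_{I}$ on each of the two copies; note the single condition $M(x)=r$ on the shared coordinate is not double-counted. This gives total co-dimension $(2k+1)+k(k+1)=k^{2}+3k+1$. Niceness and consistency as an $M$-set follow by verifying the relevant definitions from Appendix \ref{4:s:AppB}, using that $\Omega_{I}$ already has these properties by Lemma \ref{4:kk2d} (ii); adjoining a second independent block of the same constraints over a shared coordinate preserves both properties. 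Once this is in place, Theorem \ref{4:ct} applies as soon as $d\geq 2(k^{2}+3k+1)+1=2k^{2}+6k+3$, yielding the asymptotic $\vert\Omega_{I}\times_{i}\Omega_{I}\vert=p^{d(2k+1)-(k^{2}+3k+1)}(1+O_{k}(p^{-1/2}))$.

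Part (iii) then follows from Theorem \ref{4:veryr} applied to the nice and consistent $M$-set $\Omega_{I}\times_{i}\Omega_{I}$ from Part (ii), in exactly the same manner as Lemma \ref{4:kk2d} (iv); the stated dimension requirement $d\geq\max\{4k^{2}+12k+7,4k+s+13\}$ is precisely what arises upon substituting the co-dimension $k^{2}+3k+1$ and the number of variables $2k+1$ into the hypotheses of that theorem. The main (though mild) obstacle is the bookkeeping of niceness and consistency in Part (ii), since the underlying definitions are somewhat intricate; however this is a routine verification requiring no new ideas beyond those already present in \cite{SunA}.
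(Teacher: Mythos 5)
Your proposal is correct and follows essentially the same route as the paper. Part (i) is the same argument: reduce via $\Omega=\L_{I}(\Omega_{I})$ and use the normalization $L_{I,i}((y_{i'})_{i'\in I})=y_{i}$ to force the shared $i$-th coordinate to match, then unpack $\L^{\pm}_{i\ca I}$. Part (iii) is identical to the paper: apply Theorem~\ref{4:veryr} to the conclusion of Part (ii). The only deviation is in Part (ii): you propose to verify directly that gluing two copies of the nice, consistent $M$-set $\Omega_{I}$ over the shared coordinate $x_{i}$ yields a nice, consistent $M$-set (of total co-dimension $(2k+1)+k(k+1)=k^{2}+3k+1$, which you compute correctly), whereas the paper simply identifies $\Omega_{I}\times_{i}\Omega_{I}$ with the set $\Omega_{4}$ from Example~B.4 of \cite{SunA}, where this has already been worked out. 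Your ``routine verification'' is not carried out in detail, but it is genuinely available via Proposition~\ref{4:yy3}~(iv) and Proposition~\ref{4:yy33}~(v), which are precisely the gluing-over-a-shared-block tools; so this is a stylistic rather than a substantive difference. One small cosmetic point: the dimension bound that actually falls out of Theorem~\ref{4:veryr} with $K=2k+1$ and co-dimension $k^{2}+3k+1$ is $\max\{4k^{2}+12k+5,\,8k+7,\,4k+s+13\}$, which is slightly weaker than the stated $\max\{4k^{2}+12k+7,\,4k+s+13\}$; the statement is just being conservative, not using exactly the tight threshold your substitution gives.
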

\begin{proof}
   We first prove Part (i). Let $(x,\x^{+},\x^{-})\in \Omega\times_{i}\Omega$.
    Since $\Omega=\L_{I}(\Omega_{I})$ by Lemma \ref{4:kk2d} (i), we have that $(x\da_{i\ca I}\x^{+})=\L_{I}(\y^{+}), (x\da_{i\ca I}\x^{-})=\L_{I}(\y^{-})$ for some $\y^{+}=(y_{j}^{+})_{j\in I},\y^{-}=(y_{j}^{-})_{j\in I}\in \Omega_{I}$. Since $i\in I$, we have that $y_{i}^{+}=L_{I,i}(\y^{+})=x=L_{I,i}(\y^{-})=y_{i}^{-}$. Therefore, we have that $(y_{i}^{+},(y_{j}^{+})_{j\in I\backslash\{i\}},(y_{j}^{-})_{j\in I\backslash\{i\}})\in\Omega_{I}\times_{i}\Omega_{I}$. So
    $$(x,\x^{+},\x^{-})=\L_{i\ca I}^{\pm}(y_{i}^{+},(y_{j}^{+})_{j\in I\backslash\{i\}},(y_{j}^{-})_{j\in I\backslash\{i\}})\in \L_{i\ca I}^{\pm}(\Omega_{I}\times_{i}\Omega_{I}).$$
     
    Conversely, let $(x,\x^{+},\x^{-})=\L_{i\ca I}^{\pm}(y_{i},\y^{+},\y^{-})$ for some $$(y_{i},\y^{+},\y^{-}):=(y_{i},(y_{j}^{+})_{j\in I\backslash\{i\}},(y_{j}^{-})_{j\in I\backslash\{i\}})\in\Omega_{I}\times_{i}\Omega_{I}.$$ By definition, we have that $(y_{i}\da_{i\ca I}\y^{+})$ and $(y_{i}\da_{i\ca I}\y^{-})$ belong to $\Omega_{I}$, where we denote $y_{i}^{+}:=y_{i}^{-}:=y_{i}$. Then
    \begin{equation}\nonumber
    \begin{split}
      &\quad (x\da_{i\ca J} \x^{+})
      =(L^{+}_{i\ca I,i}(y_{i},\y^{+},\y^{-})\da_{i\ca J} (L^{+}_{i\ca I,j}(y_{i},\y^{+},\y^{-}))_{j\in J\backslash\{i\}})
      \\&=(L^{+}_{i\ca I,j}(y_{i},\y^{+},\y^{-}))_{j\in J}
      =(L_{I,j}(y_{i}\da_{i\ca I}\y^{+}))_{j\in J}=\L_{I}(y_{i}\da_{i\ca I}\y^{+})\in\L_{I}(\Omega_{I})=\Omega,
    \end{split}
    \end{equation}
  where the last equality follows from  Lemma \ref{4:kk2d} (i).
 Similarly,   $(x\da_{i\ca J} \x^{-})\in \Omega$. By definition, this implies that 
 $(x,\x^{+},\x^{-})\in \Omega\times_{i}\Omega$. This complete the proof of Part (i).
 
 We now prove Part (ii).
  Note that $\Omega_{I}\times_{i}\Omega_{I}$ is the set $\Omega_{4}$ in Example 
  B.4 of \cite{SunA} (up to a relabelling of the variables).    
	 So $\Omega_{I}$ 	 is a nice and consistent $M$-set of total co-dimension $k^{2}+3k+1$.
	  Since $d\geq 2k^{2}+6k+3,$ by Theorem \ref{4:ct},  $\vert \Omega_{I}\times_{i}\Omega_{I}\vert=p^{d(2k+1)-(k^{2}+3k+1)}(1+O_{k}(p^{-1/2}))$.
	  
	  Part (iii) follows from  Theorem \ref{4:veryr} and Part (ii).
\end{proof}

We now lift the above mentioned notations to the $\Z$-setting.
Recall that 
 $\tilde{b}_{I,i,j}:=\tau(b_{I,i,j})\in\Z$. 
 For all $i,i'\in I$ and $j\in J$,
  write $\tilde{b}^{\pm}_{i\ca I,i',j}:=\tau(b^{\pm}_{i\ca I,i',j})\in\Z$ and $\tilde{b}^{\pm}_{i\ca I,i'}:=\tau(b^{\pm}_{i\ca I,i'})\in\Z^{2k+2s-1}$.
  For $i\in I$ and $0\leq j\leq k+s-1$,
let $\tilde{L}^{\pm}_{i\ca I,j}\colon (\R^{d})^{2k+1}\to\R^{d}$ be the 
linear linear transformation given by
 $$\tilde{L}_{i\ca I,j}^{+}(x_{i},(x_{i'}^{+})_{i'\in I\backslash\{i\}},(x_{i'}^{-})_{i'\in I\backslash\{i\}}):=\tilde{b}^{\pm}_{i\ca I,i,j}x_{i}+\sum_{i'\in I\backslash\{i\}}\tilde{b}^{+}_{i\ca I,i',j}x_{i'}^{+}$$
 and 
 $$\tilde{L}_{i\ca I,j}^{-}(x_{i},(x_{i'}^{+})_{i'\in I\backslash\{i\}},(x_{i'}^{-})_{i'\in I\backslash\{i\}}):=\tilde{b}^{\pm}_{i\ca I,i,j}x_{i}+\sum_{i'\in I\backslash\{i\}}\tilde{b}^{-}_{i\ca I,i',j}x_{i'}^{-}.$$
Let $\tilde{\L}^{\pm}_{i\ca I}\colon (\R^{d})^{2k+1}\to(\R^{d})^{2k+2s-1}$ be the linear map given by
 $$\tilde{\L}^{\pm}_{i\ca I}(\x):=(\tilde{L}^{+}_{i\ca I,i}(\x),(\tilde{L}^{+}_{i\ca I,j}(\x))_{j\in \{0,\dots,k+s-1\}\backslash\{i\}},(\tilde{L}^{-}_{i\ca I,j}(\x))_{j\in \{0,\dots,k+s-1\}\backslash\{i\}})$$
for all $\x\in (\R^{d})^{2k+1}$.
 It is not hard to see that $L^{\pm}_{i\ca I,j}=\iota\circ \tilde{L}^{\pm}_{i\ca I,j}\circ \tau$. In other words, $\frac{1}{p}\tilde{L}^{\pm}_{i\ca I,j}$ is the regular lifting of $L^{\pm}_{i\ca I,j}$.
 Let $C_{0}$ be the complexity of $X$.
 Note that if $p\gg_{C_{0},k,s} 1$, then the normalization property of $L^{\pm}_{i\ca I,j}$ imply the following \emph{normalization property} of   $\tilde{L}^{\pm}_{i\ca I,j}$:
  $$\sum_{i'\in I}\tilde{b}^{\pm}_{i\ca I,i',j}=1 \text{ for all $i\in I, j\in J$ and }
\tilde{L}^{\pm}_{i\ca I,j}(x,\dots,x)=x \text{ for all $i\in I, j\in J$}.$$

Recall that $V_{X}$ is a rational subspace of $\R^{k+s}$ defined in Section \ref{4:s:lbg}.
 For any $0\leq i\leq k+s-1$, denote 
$$V_{X}\times_{i}V_{X}:=\{(x,\x^{+},\x^{-})\in \R\times \R^{k+s-1}\times \R^{k+s-1}\colon (x\da_{i}\x^{+}), (x\da_{i}\x^{-})\in V_{X}\}.$$

\begin{lem}\label{4:vxxprop}
Assume that the complexity of $X$ is at most $C_{0}$ and $p\gg_{C_{0},k,s} 1$. Let $0\leq i\leq k+s-1$.
\begin{enumerate}[(i)]
  \item The set $V_{X}\times_{i}V_{X}$ is a rational subspace of $\R^{2k+2s-1}$.
  \item We have that $V_{X}\times_{i}V_{X}=\tilde{\L}^{\pm}_{i\ca I}((\R^{d})^{2k+1})=\sp_{\R}\{\tilde{b}^{\pm}_{i\ca I,i'}, i'\in I\}$ for any generating set $I$ of $X$ containing $i$.
  \item For all $j\in\N_{+}$, we have that $(V_{X}\times_{i}V_{X})^{[j]}=V_{X}^{[j]}\times_{i}V_{X}^{[j]}$.
  \item The vectors $(1,\dots,1)$ belongs to $V_{X}\times_{i}V_{X}$. Therefore, we have the \emph{flag property} $(V_{X}\times_{i}V_{X})^{[1]}\leq (V_{X}\times_{i}V_{X})^{[2]}\leq \dots.$
  \item The complexities of all of $(V_{X}\times_{i}V_{X})^{[j]}, 1\leq j\leq s-1$ are at most $O_{C_{0},k,s}(1)$.
\end{enumerate}
\end{lem}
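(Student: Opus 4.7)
The plan is to mirror the structure of Lemma \ref{4:vxprop}, viewing $V_{X}\times_{i}V_{X}$ as a fiber product of two copies of $V_{X}$ over the shared $i$-th coordinate and exploiting that $\tilde{\L}^{\pm}_{i\ca I}$ is built from two copies of $\tilde{\L}_{I}$ glued at the $i$-th slot. Parts (i), (ii), (iv), and (v) will fall out routinely from the corresponding parts of Lemma \ref{4:vxprop} together with the normalization properties of the generating constants, whereas part (iii) requires a genuine lifting argument.

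For part (i), $V_{X}\times_{i}V_{X}$ is the intersection of two rational subspaces, namely the preimages of $V_{X}$ under the projections $(y,\y^{+},\y^{-})\mapsto (y\da_{i}\y^{+})$ and $(y,\y^{+},\y^{-})\mapsto (y\da_{i}\y^{-})$, and is therefore rational. For part (ii), the inclusion $\tilde{\L}^{\pm}_{i\ca I}((\R^{d})^{2k+1})\subseteq V_{X}\times_{i}V_{X}$ is immediate from the definitions together with Lemma \ref{4:vxprop}(i): each of the two halves of any image is $\tilde{\L}_{I}(x\da_{i\ca I}\x^{\pm})\in V_{X}$. Conversely, given $(y,\y^{+},\y^{-})\in V_{X}\times_{i}V_{X}$, Lemma \ref{4:vxprop}(i) supplies $\x^{\pm}\in (\R^{d})^{k+1}$ with $\tilde{\L}_{I}(\x^{\pm})=(y\da_{i}\y^{\pm})$; the normalization $\tilde{b}_{I,i',i}=\delta_{i,i'}$ (valid once $p\gg_{C_{0},k,s}1$) forces $x^{+}_{i}=x^{-}_{i}=y$, so $(y,(x^{+}_{i'})_{i'\neq i},(x^{-}_{i'})_{i'\neq i})$ is a legitimate preimage under $\tilde{\L}^{\pm}_{i\ca I}$; the span description is the standard identification of a linear image with the span of the images of standard basis vectors. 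Part (iv) is immediate: $(1,\dots,1)\in V_{X}$ by Lemma \ref{4:vxprop}(iii) implies $(1\da_{i}(1,\dots,1))=(1,\dots,1)\in V_{X}$, whence $(1,\dots,1)\in V_{X}\times_{i}V_{X}$, and the flag property then follows from the coordinate-wise multiplication trick. Part (v) is complexity bookkeeping: every entry of $\tilde{b}^{\pm}_{i\ca I,i'}$ is either $0$ or of the form $\tau(b_{I,i',j})$ for some $j$, hence of complexity $O_{C_{0},k,s}(1)$, and this bound propagates to the coordinate-wise powers.

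The substantive step is part (iii). The inclusion $(V_{X}\times_{i}V_{X})^{[j]}\subseteq V_{X}^{[j]}\times_{i}V_{X}^{[j]}$ is a direct computation: for $w_{\ell}=(x_{\ell},\x^{+}_{\ell},\x^{-}_{\ell})\in V_{X}\times_{i}V_{X}$ the identity $(x_{1}\cdots x_{j})\da_{i}(\x^{+}_{1}\ast\cdots\ast\x^{+}_{j})=(x_{1}\da_{i}\x^{+}_{1})\ast\cdots\ast(x_{j}\da_{i}\x^{+}_{j})\in V_{X}^{[j]}$ (and the symmetric statement for the ``$-$''-half) shows $w_{1}\ast\cdots\ast w_{j}\in V_{X}^{[j]}\times_{i}V_{X}^{[j]}$, and linearity gives the span-level inclusion. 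For the reverse inclusion the key idea is to use $(1,\dots,1)\in V_{X}$ to lift $V_{X}$-elements into $V_{X}\times_{i}V_{X}$: for $u\in V_{X}$ set
\[
\phi^{+}(u):=(u_{i},(u_{j'})_{j'\neq i},u_{i}\cdot (1,\dots,1)_{J\setminus\{i\}})\in V_{X}\times_{i}V_{X},
\]
which is legal because its ``$+$''-half is $u\in V_{X}$ and its ``$-$''-half, completed by $u_{i}$, equals $u_{i}\cdot (1,\dots,1)_{J}\in V_{X}$; define $\phi^{-}$ symmetrically. Given $(c,\c^{+},\c^{-})\in V_{X}^{[j]}\times_{i}V_{X}^{[j]}$, decompose $(c\da_{i}\c^{+})=\sum_{\alpha}a_{\alpha}u^{+}_{\alpha,1}\ast\cdots\ast u^{+}_{\alpha,j}$ and analogously $(c\da_{i}\c^{-})$ in $V_{X}^{[j]}$. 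Applying $\phi^{+}$ and $\phi^{-}$ coordinate-wise and summing yields $(c,\c^{+},c\cdot (1,\dots,1))$ and $(c,c\cdot (1,\dots,1),\c^{-})$, both in $(V_{X}\times_{i}V_{X})^{[j]}$, and since $c\cdot (1,\dots,1)\in (V_{X}\times_{i}V_{X})^{[j]}$ by part (iv), the telescoping
\[
(c,\c^{+},\c^{-})=(c,\c^{+},c\cdot (1,\dots,1))+(c,c\cdot (1,\dots,1),\c^{-})-c\cdot (1,\dots,1)
\]
closes the argument. The main obstacle is precisely this reverse inclusion: one must design lifts $\phi^{\pm}$ that stay inside $V_{X}\times_{i}V_{X}$ while still allowing arbitrary fiber-product vectors to be recovered from coordinate-wise products, and the all-ones vector in $V_{X}$ furnished by Lemma \ref{4:vxprop}(iii) is the crucial input that makes these lifts exist.
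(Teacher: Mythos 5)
Your proposal is correct and takes essentially the same approach as the paper, which spells out only part (ii) and dismisses (i), (iii), (v) as straightforward and (iv) as a consequence of the normalization property of the $\tilde{b}^{\pm}_{i\ca I,i'}$. For part (ii) the paper works directly with the spanning vectors $\tilde{b}^{\pm}_{i\ca I,i'}$ while you work via preimages under $\tilde{\L}_{I}$ and $\tilde{\L}^{\pm}_{i\ca I}$, which is the same computation in a different dress; your lifting argument for part (iii), built on the all-ones vector and the multiplicative maps $\phi^{\pm}$, is the natural way to discharge the step the paper labels ``straightforward.''
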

\begin{proof}
Parts (i), (iii) and (v) are straightforward. Part (iv) follows from the normalization property $\sum_{i'\in I}\tilde{b}^{\pm}_{i\ca I,i',j}=1$ for all $i\in I, j\in J$.
So it remains to  prove Part (ii). The fact that $\tilde{\L}^{\pm}_{i\ca I}((\R^{d})^{2k+1})=\sp_{\R}\{\tilde{b}^{\pm}_{i\ca I,i'}, i'\in I\}$ follows from the the knowledge of linear algebra and the definition of $\tilde{L}_{i\ca I,j}^{\pm}$. So it remains to show that $V_{X}\times_{i}V_{X}=\sp_{\R}\{\tilde{b}^{\pm}_{i\ca I,i'}, i'\in I\}$.

By definition, for all $i'\in I\backslash\{i\}$,
  we have that
  $$\tilde{b}^{\pm}_{i\ca I,i}:=(\tilde{b}_{I,i,i},\tilde{\b}'_{I,i},\tilde{\b}'_{I,i}), \tilde{b}^{+}_{i\ca I,i'}:=(\tilde{b}_{I,i',i},\tilde{\b}'_{I,i'},0,\dots,0)
 \text{ and }
  \tilde{b}^{-}_{i\ca I,i'}:=(\tilde{b}_{I,i',i},0,\dots,0,\tilde{\b}'_{I,i'}),$$
  where 
  $$\tilde{\b}'_{I,i''}:=(\tilde{b}_{I,i'',0},\dots,\tilde{b}_{I,i'',i-1},\tilde{b}_{I,i'',i+1},\dots,\tilde{b}_{I,i'',k+s-1})$$
  for all $i''\in I$
  and 
  0 appears $k+s-1$ times in each expression. By Lemma \ref{4:vxprop} (ii), we have that $(\tilde{b}_{I,i'',i}\da_{i}\tilde{\b}'_{I,i''})=\tilde{b}_{I,i''}\in V_{X}$ for all $i''\in I$. For all $i'\in I\backslash\{i\}$, since $\tilde{b}_{I,i',i}=0$, we have that   
$(\tilde{b}_{I,i',i}\da_{i} 0,\dots,0)=\bold{0}\in V_{X}$. From this we can conclude that $\tilde{b}^{\pm}_{i\ca I,i'}\in V_{X}\times_{i}V_{X}$ for all $i'\in I$. So $\sp_{\R}\{\tilde{b}^{\pm}_{i\ca I,i'},i'\in I\}\subseteq V_{X}\times_{i}V_{X}$.

Conversely for any $(x,\x^{+},\x^{-})\in V_{X}\times_{i} V_{X}$, since $(x\da_{j}\x^{+}), (x\da_{j}\x^{-})\in V_{x}$, by Lemma \ref{4:vxprop} (ii), we may write
$$(x\da_{j}\x^{+})=\sum_{i'\in I}c^{+}_{i'}\tilde{b}_{I,i'} \text{ and } (x\da_{j}\x^{-})=\sum_{i'\in I}c^{-}_{i'}\tilde{b}_{I,i'}$$
for some $c^{\pm}_{i'}\in\R$. Since $\tilde{b}_{I,i',i}=\delta_{i,i'}$ for all $i'\in I$, projecting to the $i$-th coordinate, we have that $x=c_{i}^{+}=c_{i}^{-}$. Therefore, it is not hard to see that
$$(x,\x^{+},\x^{-})=x\tilde{b}^{\pm}_{i\ca I,i}+\sum_{i'\in I\backslash\{i\}}\Bigl(c_{i'}^{+}\tilde{b}^{+}_{i\ca I,i'}+c_{i'}^{-}\tilde{b}^{-}_{i\ca I,i'}\Bigr)\in \sp_{\R}\{\tilde{b}^{\pm}_{i\ca I,i'},i'\in I\}.$$
So $V_{X}\times_{i}V_{X}\subseteq \sp_{\R}\{\tilde{b}^{\pm}_{i\ca I,i'},i'\in I\}$ and thus $V_{X}\times_{i}V_{X}=\sp_{\R}\{\tilde{b}^{\pm}_{i\ca I,i'},i'\in I\}$.
\end{proof}

\subsection{The joint equidistribution theorem}

We now provide an equidistribution property pertaining to both (\ref{4:Vdc0}) and (\ref{4:Vdc00}).
We begin with recalling the concept of Leibman group (first studied in \cite{Lei10}) which plays an important role in \cite{GT10b}.

Let $t\in\N_{+}$ and $G$ be a nilpotent group with filtration $(G_{i})_{i\in\N}$ and $V$ be a rational subspace of $\R^{t}$ containing $(1,\dots,1)$. 
For $g\in G$ and $w=(w_{0},\dots,w_{t-1})\in\R^{t}$, denote $$g^{w}:=(\exp(w_{0}\log(g)),\dots,\exp(w_{t-1}\log(g)))\in G^{t}.$$
Let $G^{V}$ be the subgroup of $G^{t}$ generated by the elements of the form $g^{w}$, where $g$ is an iterated commutator of $g_{1},\dots,g_{t}$ for some $t\in\N_{+}$, $g_{j}\in G_{i_{j}}$ for some $i_{j}\in \N_{+}$ for all $1\leq j\leq t$, and $w=w_{1}\ast\ldots\ast w_{t}$ for some $w_{j}\in V^{[i_{j}]}$ for all $1\leq j\leq t$, where $\ast$ is the coordinate-wise product. 
  Similar to the argument in \cite{GT10b}, one can show that $G^{V}$ is a normal and rational subgroup of $G^{t}$, and thus $\Gamma^{V}:=\Gamma^{t}\cap G^{V}$ is a discrete cocompact subgroup of $G^{V}$.

  For $i\in\N_{+}$, let $G^{V}_{(i)}$ be the subgroup of $G^{V}$ generated by the elements of the form $g^{w}$, where $g$ is an iterated commutator of $g_{1},\dots,g_{t}$ for some $t\in\N_{+}$, $i_{1}+\dots+i_{t}\geq i$, $g_{j}\in G_{i_{j}}$ for some $i_{j}\geq i$ for all $1\leq j\leq t$, and $w=w_{1}\ast\ldots\ast w_{t}$ for some $w_{j}\in V^{[i_{j}]}$ for all $1\leq j\leq t$.
     Set $G_{(0)}^{V}:=G^{V}$.
     Since $(1,\dots,1)\in V$,
   similar to the proof of Lemma 3.5 of \cite{GT10b}, one may use the formula (\ref{4:C2})   to show that $(G_{(i)}^{V})_{i\in\N}$ is a filtration of $G^{V}$. 

We are now ready to state our main equidistribution result.

\begin{thm}[Joint equidistribution theorem]\label{4:orbitdesc2}
	Let $C_{0},C,\d>0$. Assume that $X$ has complexity at most $C_{0}$ and  
	$d\geq \max\{4k^{2}+12k+7,4k+s+13,3k+2s,s^{2}-s+3\}.$\footnote{Note that this lower bound of $d$ is controlled by $d_{0}(X)$ defined in (\ref{4:finald}).} 	 Let   $(G/\Gamma)_{\N}$ be an $\N$-filtered nilmanifold of step $s-1$ and complexity at most $C$. 
	There exists $N_{0}:=N_{0}(C_{0},C,\d,d)>0$ such that if $p\gg_{C_{0},C,\d,d} 1$, then for all $N\geq N_{0}$ and   
	$(N,V(M-r))$-irrational $g\in\poly_{p}(V(M-r)\to G_{\N}\vert\Gamma)$,
	\begin{enumerate}[(i)]
		\item the sequence
		\begin{equation}\nonumber
			(g(x_{0})\Gamma,\dots,g(x_{k+s-1})\Gamma)_{(x_{0},\dots,x_{k+s-1})\in\Omega}
		\end{equation}
		is $2\d$-equidistributed  on $G^{V_{X}}/\Gamma^{V_{X}}$; 
		\item the sequence
		\begin{equation}\nonumber
			(g(x_{0})\Gamma,\dots,g(x_{2k+2s-2})\Gamma)_{(x_{0},\dots,x_{2k+2s-2})\in\Omega\times_{j}\Omega}
		\end{equation}
		is $\d$-equidistributed  on $G^{V_{X}\times_{j}V_{X}}/\Gamma^{V_{X}\times_{j}V_{X}}$ for all $0\leq j\leq k+s-1$.  
	\end{enumerate}	
\end{thm}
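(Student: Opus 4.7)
\medskip

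\noindent\textbf{Proof proposal for Theorem \ref{4:orbitdesc2}.} The plan is to follow the joint equidistribution strategy of Green--Tao \cite{GT10b} adapted to the spherical setting, applying the Leibman dichotomy (Lemmas \ref{4:kk2d}(iv) and \ref{4:kk3d}(iii)) to the lifted polynomial sequence on $\Omega_I$ (resp. $\Omega_I\times_i\Omega_I$), and then converting any resulting horizontal-character obstruction on the Leibman group $G^{V_X}$ (resp. $G^{V_X\times_j V_X}$) into a nontrivial type-III horizontal character on $G$ itself, thereby contradicting the $(N,V(M-r))$-irrationality of $g$ once $N$ is chosen large enough.

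\medskip

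First I would treat part (i). Let $\tilde g$ be a regular $\poly_{p}(V_{p}(\tilde M-r)\to G_{\N}\vert\Gamma)$-lift of $g$, fix a generating set $I$ of $X$, and consider the polynomial sequence $\tilde h(\mathbf{x}):=(\tilde g(\tilde L_{I,0}(\mathbf{x})),\dots,\tilde g(\tilde L_{I,k+s-1}(\mathbf{x})))$ on $(\Z^{d})^{k+1}$. Using the type-I Taylor expansion of $\tilde g$ together with Lemma \ref{4:vxprop}(ii)(iii)(iv), a direct computation with the Baker--Campbell--Hausdorff formula shows that $\tilde h$ takes values in $G^{V_X}$: each Taylor coefficient at multidegree $i$ is a $V_X^{[i]}$-weighted power of an element of $G_i$, which lies in $G^{V_X}_{(i)}$ by the definition of the Leibman filtration. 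By Lemma \ref{4:kk2d}(iv), the set $\Omega_I$ admits a $(t,Kt^{-K})$-Leibman dichotomy up to step $s-1$ and complexity $O_{C}(1)$. Thus, if the orbit $(\tilde h(\mathbf{x})\Gamma^{V_X})_{\mathbf{x}\in\Omega_I}$ fails to be $2\delta$-equidistributed on $G^{V_X}/\Gamma^{V_X}$, there exists a nontrivial type-I horizontal character $\eta\colon G^{V_X}\to\R$ of complexity $O_{C_{0},C,\delta,d}(1)$ such that $\eta\circ \tilde h\bmod\Z$ is constant on $\Omega_I+p(\Z^{d})^{k+1}$.

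\medskip

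The heart of the argument is the descent step that converts this $\eta$ into a type-III horizontal character of $G$. Let $i$ be the largest index such that $\eta\vert_{G^{V_X}_{(i)}}$ is nontrivial. Then, as in \cite{GT10b} Section 7, $\eta\vert_{G^{V_X}_{(i)}/G^{V_X}_{(i+1)}}$ induces a nontrivial linear functional on the space spanned by vectors of the form $v\otimes\log g_i$ with $v\in V_X^{[i]}$ and $g_i\in G_i$. Using Lemma \ref{4:vxprop}(ii) to choose a $\Z$-basis of $V_X^{[i]}$ of bounded complexity, one extracts a nontrivial $i$-th type-III horizontal character $\eta_i\colon G_i\to\R$ of $G/\Gamma$ of complexity $O_{C_{0},C,\delta,d}(1)$ (since $\eta_i$ must vanish on $G_{i+1}$ and on $[G_j,G_{i-j}]$ because the corresponding Leibman-commutator contributions lie in $G^{V_X}_{(i+1)}$). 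Proposition \ref{4:goodpartial} then identifies $\eta_i(\Delta_{h_i}\cdots\Delta_{h_1}\tilde g(n))\bmod\Z$ with the $i$-fold finite difference of the leading part of $\eta\circ\tilde h$ restricted to appropriate slices of $\Omega_I$, which is constant on $\Gow_{p,i}(V_{p}(\tilde M-r))$ by the dichotomy conclusion; this contradicts $(N,V(M-r))$-irrationality once $N\geq N_{0}$ is chosen larger than the complexity of $\eta_i$. This establishes (i).

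\medskip

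Part (ii) proceeds by the same template: one replaces $\tilde\L_I$ with $\tilde\L^{\pm}_{j\ca I}$, $V_X$ with $V_X\times_j V_X$, $\Omega_I$ with $\Omega_I\times_j\Omega_I$, and uses Lemmas \ref{4:vxxprop} and \ref{4:kk3d}(iii) in place of Lemmas \ref{4:vxprop} and \ref{4:kk2d}(iv); the flag and rationality properties needed for the descent are supplied by Lemma \ref{4:vxxprop}(iii)(iv)(v), and the extracted horizontal character again produces a type-III obstruction to $g$ on $V(M-r)$ via an appropriate choice of slice $\Gow_{p,i}(V_p(\tilde M-r))$. The main obstacle I foresee is the bookkeeping in the descent step: one must verify that a horizontal character on the Leibman filtration $(G^{V_X}_{(i)})_{i\in\N}$ really does vanish on all iterated commutator contributions after restriction to grade $i$, so that the induced $\eta_i$ is genuinely a type-III character rather than merely type-I; this amounts to checking, using the defining relations of $G^{V_X}$ together with the flag property $(1,\dots,1)\in V_X$, that the $G_i^{\nabla}$-contributions land in $G^{V_X}_{(i+1)}$, and is the place where the precise definition of the Leibman filtration plays its essential role.
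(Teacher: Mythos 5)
Your proposal captures the correct high-level skeleton (lift $g$ to $\tilde g$, run the Leibman dichotomy on the parametrized set $\Omega_I$ resp.\ $\Omega_I\times_j\Omega_I$, and convert a type-I obstruction on $G^{V_X}$ into a type-III character of $G$ of bounded complexity), and the descent-to-type-III step you describe is essentially Lemma~\ref{4:rwpp}. But there is a genuine gap in the middle of your argument, at the sentence claiming that the $i$-fold finite difference of $\eta\circ\tilde h$ ``is constant on $\Gow_{p,i}(V_{p}(\tilde M-r))$ by the dichotomy conclusion.'' The dichotomy conclusion gives constancy of $\eta\circ\tilde h\bmod\Z$ on $\tau(\Omega_I)+p(\Z^d)^{k+1}$ (resp.\ the doubled set), not directly on any Gowers set of $V_p(\tilde M-r)$. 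Passing from the former to the latter requires two non-trivial steps that your proposal does not address: first, a lifting argument (the analogue of Lemma~\ref{4:lemhtt}) showing that for any tuple of $p$-linearly independent, pairwise $(M,p)$-orthogonal shifts $h_{t,t'}$ one can find a base point $\x$ so that the whole difference cube stays inside the constrained set; and second, the $p$-irreducibility argument of Theorem~\ref{4:irrr} to upgrade the resulting integrality from the set $W_i$ of $p$-linearly independent Gowers tuples (which has density bounded away from $1$) to all of $\Gow_{p,i}(V_p(\tilde M_r))$. Without the second step you have shown only that $Q$ is integral on a proper subset of $\Gow_{p,i}(V_p(\tilde M_r))$, which does not contradict $(N,V(M-r))$-irrationality; this irreducibility pass is exactly where the hypothesis $d\geq s^2-s+3$ is used, and it is the main technical innovation the paper needs beyond Green--Tao.

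A secondary issue is the order of parts (i) and (ii). The paper proves (ii) first and then deduces (i) from (ii) by a Fubini-style marginalization over the second half of $\Omega_I\times_0\Omega_I$ using Theorem~\ref{4:ct}; that marginalization is precisely what produces the $2\d$ constant in (i) from the $\d$ constant in (ii). Proving (i) directly by re-running the dichotomy on $\Omega_I$, as you propose, would in principle work (the dimension count in the lifting lemma is if anything easier), but it would give you a $\d$-equidistribution statement for (i), not $2\d$, and you would still have to prove (ii) separately; so you gain nothing, and you lose the clean derivation of the asymmetry in the constants. In any case this is a harmless detour compared to the missing irreducibility step, which must be supplied for the argument to close.
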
	

The rest of this section is devoted to the proof of Theorem \ref{4:orbitdesc2}. 
For convenience, when proving Part (ii) of Theorem \ref{4:orbitdesc2}, we only consider the case $j=0$ since the other cases are similar.
By Lemma \ref{4:lemgg} (ii), there exists a generating set $I$ of $X$ containing 0. We may assume without loss of generality that $I=\{0,\dots,k\}$.

		We may write $g=\tilde{g}\circ \tau$ for some $\tilde{g}\in\poly_{p}(\iota^{-1}(V(M-r))\to G_{\N}\vert\Gamma)$, and expand $\tilde{g}$ by its type-I Taylor expansion
	\begin{equation}\label{4:taylor4g}
	\tilde{g}(n)=\prod_{j\in\N^{d},0\leq \vert j\vert\leq s-1}\tilde{g}_{j}^{\binom{n}{j}}
	\end{equation}
		for some $\tilde{g}_{j}\in G_{j}$. 	It is convenience to rewrite $\tilde{\L}^{\pm}_{0\ca I}$ as 
	$$\L^{\pm}_{0\ca I}(\x):=(L'_{0}(\x),\dots,L'_{2k+2s-2}(\x))$$
	for all $\x\in(\F_{p}^{d})^{2k+1}$, and to 
	  rewrite $\tilde{\L}^{\pm}_{0\ca I}$ as 
	$$\tilde{\L}^{\pm}_{0\ca I}(\x):=(\tilde{L}'_{0}(\x),\dots,\tilde{L}'_{2k+2s-2}(\x))$$
	for all $\x\in(\R^{d})^{2k+1}$.
	 Then by (\ref{4:taylor4g}),
	\begin{equation}\label{4:taylor4g2}
	\tilde{g}^{\otimes (2k+2s-1)}\circ \tilde{\L}^{\pm}_{0\ca I}(\x)=\prod_{j\in\N^{d},0\leq \vert j\vert\leq s-1}\Bigl(\tilde{g}_{j}^{\binom{\tilde{L}'_{0}(\x)}{j}},\dots,\tilde{g}_{j}^{\binom{\tilde{L}'_{2k+2s-2}(\x)}{j}}\Bigr)
	\end{equation}
	for all $\x\in (\Z^{d})^{2k+1}$. We begin with some properties for the polynomial sequence described in (\ref{4:taylor4g2}).

		\begin{lem}\label{4:piscst2}
	 We have that $$\tilde{g}^{\otimes (2k+2s-1)}\circ\tilde{\L}^{\pm}_{0\ca I}\in\poly_{p}(\iota^{-1}(\Omega_{I}\times_{0}\Omega_{I})\to G^{2k+2s-1}_{\N}\vert\Gamma^{2k+2s-1}).$$ Moreover, we have that $$\tilde{g}^{\otimes (2k+2s-1)}\circ\tilde{\L}^{\pm}_{0\ca I}(\tau(\x))\Gamma^{2k+2s-1}=g^{\otimes (2k+2s-1)}\circ\L^{\pm}_{0\ca I}(\x)\Gamma^{2k+2s-1}$$ for all $\x\in \Omega_{I}\times_{0}\Omega_{I}$.
	\end{lem}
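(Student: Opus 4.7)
The plan is to unpack the two conclusions separately, both of which reduce to component-wise statements that follow from standard manipulations of the maps $\tau$, $\iota$, the integer lift $\tilde{L}^{\pm}_{0\ca I,j}$, and the $p$-periodicity property built into $\poly_p$. Throughout, fix $\x \in \Omega_I \times_0 \Omega_I$ and recall that by definition $\Omega_I \times_0 \Omega_I \subseteq V(M-r)^{2k+1}$, so that $\L^{\pm}_{0\ca I}(\x) \in \Omega \subseteq V(M-r)^{2k+2s-1}$ thanks to Lemma \ref{4:kk3d}(i), Lemma \ref{4:kk2d}(i), and Corollary \ref{4:lmp2} (applied with the center $z = \bold{0}$, using $M(v_i) = \l = r$).

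For the membership $\tilde{g}^{\otimes (2k+2s-1)}\circ \tilde{\L}^{\pm}_{0\ca I} \in \poly(\Z^{d(2k+1)} \to (G^{2k+2s-1})_{\N})$, I would argue in two steps. First, since $\tilde{g} \in \poly(\Z^d \to G_{\N})$, the Cartesian product sequence $\tilde{g}^{\otimes(2k+2s-1)}$ clearly belongs to $\poly(\Z^{d(2k+2s-1)} \to (G^{2k+2s-1})_{\N})$ with respect to the product filtration. Second, the map $\tilde{\L}^{\pm}_{0\ca I}$ is a linear map with integer coefficients (its entries are the integer lifts $\tilde{b}^{\pm}_{0\ca I,i',j}$), so composing with it preserves polynomiality with respect to the filtration (for instance by Corollary B.4 of \cite{GTZ12}).

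The $p$-periodicity modulo $\Gamma^{2k+2s-1}$ reduces to checking, for each $0 \leq j \leq 2k+2s-2$, every $\x \in \tau(\Omega_I \times_0 \Omega_I)$ and every $\m \in \Z^{d(2k+1)}$, that
\begin{equation}\nonumber
\tilde{g}\bigl(\tilde{L}^{\pm}_{0\ca I,j}(\x) + p\tilde{L}^{\pm}_{0\ca I,j}(\m)\bigr) \equiv \tilde{g}\bigl(\tilde{L}^{\pm}_{0\ca I,j}(\x)\bigr) \mod \Gamma.
\end{equation}
Writing $\x = \tau(\y)$ for some $\y \in \Omega_I \times_0 \Omega_I$ and using the identity $L^{\pm}_{0\ca I,j} = \iota \circ \tilde{L}^{\pm}_{0\ca I,j} \circ \tau$, the key observation is $\tilde{L}^{\pm}_{0\ca I,j}(\tau(\y)) \equiv \tau(L^{\pm}_{0\ca I,j}(\y)) \mod p\Z^d$ (since $\tau \circ \iota$ is the identity mod $p$). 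Because $L^{\pm}_{0\ca I,j}(\y) \in V(M-r)$ by the first paragraph, we conclude $\tilde{L}^{\pm}_{0\ca I,j}(\x) \in \tau(V(M-r)) + p\Z^d$, so the hypothesis $\tilde{g} \in \poly_p(\tau(V(M-r)) \to G_{\N}|\Gamma)$ yields the desired congruence (the $p$-periodicity trivially extends from $\tau(V(M-r))$ to $\tau(V(M-r)) + p\Z^d$).

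Finally, for the ``moreover'' statement, the same component-wise reduction gives the identity after observing that $g^{\otimes(2k+2s-1)}\circ \L^{\pm}_{0\ca I}(\x) = (\tilde{g}(\tau(L^{\pm}_{0\ca I,j}(\x))))_{j}$ by definition of $g = \tilde{g}\circ\tau$. Since $\tilde{L}^{\pm}_{0\ca I,j}(\tau(\x)) \equiv \tau(L^{\pm}_{0\ca I,j}(\x)) \mod p\Z^d$ and $\tau(L^{\pm}_{0\ca I,j}(\x)) \in \tau(V(M-r))$, the $p$-periodicity of $\tilde{g}$ mod $\Gamma$ gives
\begin{equation}\nonumber
\tilde{g}\bigl(\tilde{L}^{\pm}_{0\ca I,j}(\tau(\x))\bigr) \equiv \tilde{g}\bigl(\tau(L^{\pm}_{0\ca I,j}(\x))\bigr) \mod \Gamma,
\end{equation}
which, assembled over $j$, is exactly the claimed equality of cosets. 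The only genuine (but minor) subtlety is ensuring at each step that the relevant integer vector lies in $\tau(V(M-r)) + p\Z^d$ so that the $p$-periodicity hypothesis on $\tilde{g}$ is applicable, and this is precisely why one uses Corollary \ref{4:lmp2} to propagate the $M$-sphericity from the coordinates indexed by $I$ to the full configuration in $\L^{\pm}_{0\ca I}(\x)$.
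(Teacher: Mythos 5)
Your overall argument is sound and follows the same two-step structure as the paper for the $p$-periodicity and the ``moreover'' part: use $\tilde{L}^{\pm}_{0\ca I,j}\circ\tau \equiv \tau\circ L^{\pm}_{0\ca I,j} \pmod{p\Z^d}$ together with $L^{\pm}_{0\ca I,j}(\Omega_I\times_0\Omega_I)\subseteq V(M-r)$ (from Lemma \ref{4:kk3d}(i) and Lemma \ref{4:kk2d}(i)), then apply the partial $p$-periodicity of $\tilde{g}$ on $\tau(V(M-r))$ component by component. That is exactly what the paper does, and it suffices to prove the Lemma as stated.

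There is, however, one substantive difference worth flagging. For the first part you only show $\tilde{g}^{\otimes(2k+2s-1)}\circ\tilde{\L}^{\pm}_{0\ca I}\in\poly((\Z^d)^{2k+1}\to G^{2k+2s-1}_{\N})$ using the coarse product filtration. The paper does more: using the type-I Taylor expansion $\tilde{g}(n)=\prod_j\tilde{g}_j^{\binom{n}{j}}$ and the flag property of $V_X\times_0 V_X$, it shows the map lands in $\poly((\Z^d)^{2k+1}\to G^{V_X\times_0 V_X}_{\N})$, hence in $\poly_p(\tau(\Omega_I\times_0\Omega_I)\to G^{V_X\times_0 V_X}_{\N}\vert\Gamma^{V_X\times_0 V_X})$. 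The Lemma's written conclusion only mentions $G^{2k+2s-1}$, so your proof matches the statement literally; but the finer membership in the Leibman group is what the subsequent argument actually needs (the Leibman dichotomy of Lemma \ref{4:kk3d}(iii) and the contradiction hypothesis about equidistribution on $G^{V_X\times_0 V_X}/\Gamma^{V_X\times_0 V_X}$ both presuppose that the sequence takes values in that nilmanifold). If your proof were substituted for the paper's, the ensuing argument would have an undocumented gap. So the Taylor-expansion step is not dispensable, even though the Lemma statement makes it look that way.

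Two minor inaccuracies: you write $\L^{\pm}_{0\ca I}(\x)\in\Omega\subseteq V(M-r)^{2k+2s-1}$, but the image is $\Omega\times_0\Omega$, not $\Omega$; and the parenthetical ``using $M(v_i)=\l=r$'' wrongly conflates $\l$ with $r$. The sphericity propagation inside $\Omega_I$ is about $M(x_i)=r$ for points $x_i$ of the configuration in $V(M-r)$, not $M(v_i)$, and this is already packaged into Lemma \ref{4:kk2d}(i); the direct appeal to Corollary \ref{4:lmp2} with $z=\bold{0}$ is unnecessary and, as stated, not quite right.
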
 
	\begin{proof}	
		Using the flag property of $V_{X}\times_{0}V_{X}$ and expanding the linear transformations $\tilde{L}'_{j}$, it is not hard to see that the map $\x\mapsto \Bigl(\tilde{g}_{j}^{\binom{\tilde{L}'_{0}(\x)}{j}},\dots,\tilde{g}_{j}^{\binom{\tilde{L}'_{2k+2s-2}(\x)}{j}}\Bigr)$ belongs to $\poly((\Z^{d})^{2k+1}\to G_{\N}^{V_{X}\times_{0}V_{X}})$ for all $j\in\N^{d}$.  So by Corollary B.4 of \cite{GTZ12} and (\ref{4:taylor4g2}), $\tilde{g}^{\otimes (2k+2s-1)}\circ \tilde{\L}^{\pm}_{0\ca I}$ also belongs to $\poly((\Z^{d})^{2k+1}\to G_{\N}^{V_{X}\times_{0}V_{X}})$. 
		
		Now fix $0\leq i\leq 2k+2s-1$, $\x\in \iota^{-1}(\Omega_{I}\times_{0}\Omega_{I})$ and $\y\in (\Z^{d})^{2k+1}$. 
		Since $\Omega\times_{0}\Omega\subseteq V(M-r)^{2k+2s-1}$, it follows from Lemma \ref{4:kk3d} (i) that $L'_{i}(\Omega_{I}\times_{0}\Omega_{I})\subseteq V(M-r)$.
		Since $\tilde{L}'_{i}\circ\tau\equiv\tau\circ L'_{i} \mod p(\Z^{d})^{2k+1}$, we have that  
	$$\tilde{L}'_{i}(\x)\in \tilde{L}'_{i}(\iota^{-1}(\Omega_{I}\times_{0}\Omega_{I}))\subseteq \iota^{-1}\circ L'_{i}(\Omega_{I}\times_{0}\Omega_{I})\subseteq \iota^{-1}(V(M-r)).$$	
		  Since
		  $\tilde{L}'_{i}(\y)\in\Z^{d}$ and $\tilde{g}\in\poly_{p}(\iota^{-1}(V(M-r))\to G_{\N}\vert\Gamma)$, we have that 
		$$\tilde{g}(\tilde{L}'_{i}(\x+p\y))^{-1}\tilde{g}(\tilde{L}'_{i}(\x))=\tilde{g}(\tilde{L}'_{i}(\x)+p\tilde{L}'_{i}(\y))^{-1}\tilde{g}(\tilde{L}'_{i}(\x))\in\Gamma.$$
		Therefore, $$\tilde{g}^{\otimes (2k+2s-1)}(\tilde{\L}^{\pm}_{0\ca I}(\x+p\y))^{-1}\tilde{g}^{\otimes (2k+2s-1)}(\tilde{\L}^{\pm}_{0\ca I}(\x))\in\Gamma^{2k+2s-1}.$$ This implies that   $\tilde{g}^{\otimes (2k+2s-1)}\circ\tilde{\L}^{\pm}_{0\ca I}$ belongs to $\poly_{p}(\iota^{-1}(\Omega_{I}\times_{0}\Omega_{I})\to G^{2k+2s-1}_{\N}\vert\Gamma^{2k+2s-1})$.  Since we have shown that $\tilde{g}^{\otimes (2k+2s-1)}\circ \tilde{\L}^{\pm}_{0\ca I}\in\poly((\Z^{d})^{2k+1}\to G_{\N}^{V_{X}\times_{0}V_{X}})$, this implies that   $\tilde{g}^{\otimes (2k+2s-1)}\circ \tilde{\L}^{\pm}_{0\ca I}$ belongs to $$\poly_{p}(\iota^{-1}(\Omega_{I}\times_{0}\Omega_{I})\to G_{\N}^{V_{X}\times_{0}V_{X}}\vert \Gamma^{V_{X}\times_{0}V_{X}})\subseteq \poly_{p}(\iota^{-1}(\Omega_{I}\times_{0}\Omega_{I})\to G^{2k+2s-1}_{\N}\vert\Gamma^{2k+2s-1}).$$	
		
		\
		
		Now for any $0\leq i\leq 2k+2s-1$ and $\x\in \Omega_{I}\times_{0}\Omega_{I}$, since  
		$\tilde{L}'_{i}\circ \tau (\x)\equiv \tau\circ L'_{i}(\x) \mod p(\Z^{d})^{2k+1}$, $\tilde{g}\in\poly_{p}(\iota^{-1}(V(M-r))\to G_{\N}\vert\Gamma)$ and $\tau\circ L'_{i}(\x)\in \tau(L'_{i}(\Omega_{I}\times_{0}\Omega_{I}))\subseteq \tau(V(M-r))$ (as we have shown earlier in the proof), we have that 
		$$\tilde{g}\circ\tilde{L}'_{i}\circ \tau (\x)\Gamma=\tilde{g}\circ\tau\circ\tilde{L}_{i} (\x)\Gamma=g\circ\tilde{L}_{i} (\x)\Gamma.$$
		This implies that $$\tilde{g}^{\otimes (2k+2s-1)}\circ\tilde{\L}^{\pm}_{0\ca I}(\tau(\x))\Gamma^{2k+2s-1}=g^{\otimes (2k+2s-1)}\circ\L^{\pm}_{0\ca I}(\x)\Gamma^{2k+2s-1}$$ for all $\x\in \Omega_{I}\times_{0}\Omega_{I}$.	
		\end{proof}	

Assume that 		
the sequence
		\begin{equation}\nonumber
			(g(x_{0})\Gamma,\dots,g(x_{2k+2s-2})\Gamma)_{(x_{0},\dots,x_{2k+2s-2})\in\Omega\times_{0}\Omega}
		\end{equation}
		is not $\d$-equidistributed  on $G^{V_{X}\times_{0}V_{X}}/\Gamma^{V_{X}\times_{0}V_{X}}$. Then by 
		Lemma \ref{4:kk3d} (i), the sequence
		\begin{equation}\nonumber
			(g^{\otimes (2k+2s-1)}\circ\L^{\pm}_{0\ca I}(\x)\Gamma^{2k+2s-1})_{\x\in\Omega_{I}\times_{0}\Omega_{I}}
		\end{equation}
		is not $\d$-equidistributed  on $G^{V_{X}\times_{0}V_{X}}/\Gamma^{V_{X}\times_{0}V_{X}}$. By Lemma \ref{4:piscst2}, 
	 the sequence 
	 		\begin{equation}\nonumber
			(\tilde{g}^{\otimes (2k+2s-1)}\circ\tilde{\L}^{\pm}_{0\ca I}(\x)\Gamma^{2k+2s-1})_{\x\in\tau(\Omega_{I}\times_{0}\Omega_{I})}
		\end{equation}
		is not $\d$-equidistributed  on $G^{V_{X}\times_{0}V_{X}}/\Gamma^{V_{X}\times_{0}V_{X}}$, or equivalently, the sequence
		\begin{equation}\nonumber
			(\tilde{g}^{\otimes (2k+2s-1)}\circ\tilde{\L}^{\pm}_{0\ca I}\circ\tau(\x)\Gamma^{2k+2s-1})_{\x\in \Omega_{I}\times_{0}\Omega_{I}}
		\end{equation}
		is not $\d$-equidistributed  on $G^{V_{X}\times_{0}V_{X}}/\Gamma^{V_{X}\times_{0}V_{X}}$. Similar to Lemma 3.7 of \cite{GT10b}, our strategy is to obtain a contradiction by showing that $g$ is not sufficiently irrational via the construction of some suitable type-III horizontal character. However,  our case is more complicated, and so we need to use some  tools developed in the previous part of the series \cite{SunA}.  
		
		By Lemma \ref{4:piscst2}, we have that
$\tilde{g}^{\otimes (2k+2s-1)}\circ\tilde{\L}^{\pm}_{0\ca I}\in\poly_{p}(\iota^{-1}(\Omega_{I}\times_{0}\Omega_{I})\to G^{2k+2s-1}_{\N}\vert\Gamma^{2k+2s-1})$ and thus by definition $\tilde{g}^{\otimes (2k+2s-1)}\circ\tilde{\L}^{\pm}_{0\ca I}\circ\tau\in\poly_{p}(\Omega_{I}\times_{0}\Omega_{I}\to G^{2k+2s-1}_{\N}\vert\Gamma^{2k+2s-1})$.	
By Lemma \ref{4:vxprop} (iv) and Lemma \ref{4:vxxprop} (v),
		the complexities of $G^{V_{X}}/\Gamma^{V_{X}}$ and $G^{V_{X}\times_{0}V_{X}}/\Gamma^{V_{X}\times_{0}V_{X}}$ are bounded by some $C'=O_{C_{0},C,k,s}(1)$.  	
	Finally, since  $d\geq \max\{4k^{2}+12k+7, 4k+s+13\}$, it follows from Lemma \ref{4:kk3d} (iii) that  
	$\Omega_{I}\times_{0}\Omega_{I}$ admits a partially periodic  $(\d,O_{C_{0},C,\d,d}(1))$-Leibman dichotomy up to step $s$ and complexity at most $C'$ for all $0<\d<1/2$. 
	So there exists a nontrivial type-I horizontal character $\eta\colon G^{V_{X}\times_{0}V_{X}}\to\R$ of complexity at most $O_{C_{0},C,\d,d}(1)$ such that 
	$\eta\circ \tilde{g}^{\otimes (2k+2s-1)}\circ\tilde{\L}^{\pm}_{0\ca I}\circ\tau \mod\Z$
	is  a constant  on $\Omega_{I}\times_{0}\Omega_{I}$, or equivalently, 
	$\eta\circ \tilde{g}^{\otimes (2k+2s-1)}\circ\tilde{\L}^{\pm}_{0\ca I}\mod\Z$
	is  a constant  on $\tau(\Omega_{I}\times_{0}\Omega_{I})$. Since   $\tilde{g}^{\otimes (2k+2s-1)}\circ\tilde{\L}^{\pm}_{0\ca I}\in\poly_{p}(\tau(\Omega_{I}\times_{0}\Omega_{I})\to G^{2k+2s-1}_{\N}\vert\Gamma^{2k+2s-1})$, this implies that $\eta\circ \tilde{g}^{\otimes (2k+2s-1)}\circ\tilde{\L}^{\pm}_{0\ca I}\mod\Z$
	is  a constant  on $\iota^{-1}(\Omega_{I}\times_{0}\Omega_{I})$.
	 
	Denote
	$$P:=\eta\circ\tilde{g}^{\otimes (2k+2s-1)}\circ \tilde{\L}^{\pm}_{0\ca I}.$$

	By (\ref{4:taylor4g2}),
	\begin{equation}\nonumber
	P(\x)=\eta\circ\tilde{g}^{\otimes (2k+2s-1)}\circ \tilde{\L}^{\pm}_{0\ca I}(\x)=\prod_{j\in\N^{d},0\leq \vert j\vert\leq s-1}\eta\Bigl(\tilde{g}_{j}^{\binom{\tilde{L}'_{0}(\x)}{j}},\dots,\tilde{g}_{j}^{\binom{\tilde{L}'_{2k+2s-2}(\x)}{j}}\Bigr)
	\end{equation}
	for all $\x\in (\Z^{d})^{2k+1}$.  
Let $1\leq i\leq s-1$ be the largest integer such that $\eta\vert_{G_{(i)}^{V_{X}\times_{0}V_{X}}}$ is nontrivial. Then we may rewrite $P$ as 
	\begin{equation}\label{4:thisisp}
	P(\x)=\prod_{j\in\N^{d},0\leq \vert j\vert\leq i}\eta\Bigl(\tilde{g}_{j}^{\binom{\tilde{L}'_{0}(\x)}{j}},\dots,\tilde{g}_{j}^{\binom{\tilde{L}'_{2k+2s-2}(\x)}{j}}\Bigr).
	\end{equation}
  
 In order make use of the fact that $P \mod\Z$ is a  constant  on $\iota^{-1}(\Omega_{I}\times_{0}\Omega_{I})$, we compute the partial derivatives of $P$. 
 For any
	 $\bold{a}=(a_{0},\dots,a_{2k})\in\N^{2k+1}$ with  $a_{0}+\dots+a_{2k}=i$, let $W(\bold{a})$ denote the set of all 
	$(\x,(h_{t,t'})_{0\leq t\leq 2k, 1\leq t'\leq a_{t}})\in(\Z^{d})^{2k+i+1}, \x=(x_{0},\dots,x_{2k})\in (\Z^{d})^{2k+1}, h_{t,t'}\in \Z^{d}$ such that for all $\e_{t,t'}\in\{0,1\}$,
	$0\leq t\leq 2k$, $1\leq t'\leq a_{t}$, we have
	$$\Bigl(x_{0}+\sum_{t'=1}^{a_{0}}\e_{0,t'}h_{0,t'},\dots,x_{2k}+\sum_{t'=1}^{a_{2k}}\e_{2k,t'}h_{2k,t'}\Bigr)\in\iota^{-1}(\Omega_{I}\times_{0}\Omega_{I}).$$
	We first show that the ``projection" of the set $W(\bold{a})$ to the $h_{t,t'}$ coordinates has a nice algebraic description. With a slight abuse of notation, we use $A$ to denote the matrix associated to both $M-r$ and its regular lifting.

	\begin{lem} \label{4:lemhtt} 
	If $d\geq 3k+2s$, then
	for any $h_{t,t'}\in\Z^{d}, 0\leq t\leq 2k, 1\leq t'\leq a_{t}$ which are $p$-linearly independent (recall Section \ref{4:s:defn} for the definition) and pairwise $(M,p)$-orthogonal (meaning that $p\vert (h_{t,t'}A)\cdot h_{\tilde{t},\tilde{t}'}$ for $(t,t')\neq (\tilde{t},\tilde{t}')$),  there exists $\x\in (\Z^{d})^{2k+1}$ such that $(\x,(h_{t,t'})_{0\leq t\leq 2k, 1\leq t'\leq a_{t}})$ belongs to $W(\bold{a})$.
	\end{lem}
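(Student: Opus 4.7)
The plan is to expand the Gowers-type condition defining $W(\bold{a})$ into a polynomial system in $\x$ and then solve it modulo $p$ before lifting to $\Z$. For fixed $(h_{t,t'})$ satisfying the hypotheses, membership $(\x,(h_{t,t'}))\in W(\bold{a})$ means that every $\{0,1\}$-combination of the perturbations $\tilde{x}_{t}=x_{t}+\sum_{t'=1}^{a_{t}}\e_{t,t'}h_{t,t'}$ lies, modulo $p$, in $\Omega_{I}\times_{0}\Omega_{I}$. Expanding $M(\tilde{x}_{t})$ and $M(\tilde{x}_{t}-\tilde{x}_{t_{2}})$, the pairwise $(M,p)$-orthogonality annihilates every cross product $(h_{t,t'}A)\cdot h_{\tilde{t},\tilde{t}'}$ with distinct index pairs while the identity $\e_{t,t'}^{2}=\e_{t,t'}$ collapses the squares; matching coefficients in the $\e_{t,t'}$ then reduces the Gowers condition, modulo $p$, to the following three families: (Q) $\iota(\x)\in\Omega_{I}\times_{0}\Omega_{I}$; (L1) the ``sphere'' identities $2(\iota(x_{t})A)\cdot\iota(h_{t,t'})\equiv -M(\iota(h_{t,t'}))\pmod{p}$ for every $(t,t')$; and, after subtracting (L1), the ``transverse'' identities (L2) $(\iota(h_{t,t'})A)\cdot\iota(x_{t_{2}})\equiv 0\pmod{p}$ for every $t_{2}\neq t$ lying in the same block as $t$ in the product structure of $\Omega_{I}\times_{0}\Omega_{I}$ (i.e.\ $\{0,1,\dots,k\}$ if $t\in\{0,1,\dots,k\}$, and $\{0,k+1,\dots,2k\}$ otherwise).

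Next I solve (L1)+(L2) explicitly by the particular solution
$$\bar{x}_{t}^{\circ}:=-\tfrac{1}{2}\sum_{t'=1}^{a_{t}}\iota(h_{t,t'})\in\F_{p}^{d};$$
the orthogonality hypothesis yields $(\iota(h_{\tilde{t},\tilde{t}'})A)\cdot\bar{x}_{t}^{\circ}=-\tfrac{1}{2}M(\iota(h_{t,\tilde{t}'}))$ when $\tilde{t}=t$ and $0$ otherwise, so $(\bar{x}_{t}^{\circ})_{t}$ satisfies both (L1) and (L2). Writing $\iota(x_{t})=\bar{x}_{t}^{\circ}+z_{t}$ (using $p\neq 2$), the full linear system becomes $z_{t}\in W_{t}$, where $W_{t}\leq\F_{p}^{d}$ is the $M$-orthogonal complement of the span $H_{t}$ of those $\iota(h_{\tilde{t},\tilde{t}'})$ with $\tilde{t}$ in the same block as $t$; since $\dim H_{t}\leq\vert\bold{a}\vert\leq s-1$, each $W_{t}$ has dimension at least $d-s+1$.

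The remaining task is to find $(z_{t})\in\prod_{t}W_{t}$ satisfying the quadratic constraints (Q). Substituting $\iota(x_{t})=\bar{x}_{t}^{\circ}+z_{t}$, the cross terms $(\bar{x}_{t}^{\circ}A)\cdot z_{t_{2}}$ all vanish: for each related pair $(t,t_{2})$, every summand $\iota(h_{t,t'})$ of $\bar{x}_{t}^{\circ}$ lies in $H_{t_{2}}$ (because $t$ and $t_{2}$ share a block), and $z_{t_{2}}\in W_{t_{2}}$ is orthogonal to $H_{t_{2}}$ by construction. Hence (Q) transforms into the shifted system $M(z_{t})=r-M(\bar{x}_{t}^{\circ})$ for each $t$ and $(z_{t}A)\cdot z_{t_{2}}=r-M(v_{t}-v_{t_{2}})/2$ for each related pair, defining a pure consistent $M$-set on $\prod_{t}W_{t}$ of total codimension bounded by that of $\Omega_{I}\times_{0}\Omega_{I}$, namely $k^{2}+3k+1$ by Lemma~\ref{4:kk3d}~(ii). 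Since the radical of $M\vert_{W_{t}}$ has dimension equal to that of the radical of $M\vert_{H_{t}}$, hence at most $s-1$, the rank of $M\vert_{W_{t}}$ is at least $d-2(s-1)\geq 3k+2$ under the hypothesis $d\geq 3k+2s$; applying Theorem~\ref{4:ct} in the product $\prod_{t}W_{t}$ then produces a solution $(z_{t})$, and lifting via $\tau$ gives the desired $\x\in(\Z^{d})^{2k+1}$.

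The main obstacle will be the last step: verifying that the shifted quadratic system really inherits the pure consistent $M$-set structure on the product $\prod_{t}W_{t}$ so that the counting theorem applies cleanly, and checking that the total rank loss from restricting $M$ to each $W_{t}$ stays bounded by $O(s)$ independently of how many of the $h_{t,t'}$ happen to be $M$-isotropic.
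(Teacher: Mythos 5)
Your reduction to working modulo $p$ and the identification of the linear constraints (L1), (L2) and the quadratic constraints (Q) match the paper's expansion of the membership condition, and your particular solution $\bar{x}_t^\circ=-\tfrac12\sum_{t'}\iota(h_{t,t'})$ is a valid way to dispose of the linear system: the orthogonality computations you carry out are correct, and the vanishing of the cross terms $(\bar{x}_t^\circ A)\cdot z_{t_2}$ follows as you say. The obstacle you flag at the end, however, is fatal rather than technical.

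The residual quadratic system lives on $\prod_t W_t$, where the $W_t$ are in general different subspaces of $\V$, so it is not of the form $(\V)^{k'}$ and the $(M,k)$-family machinery behind Theorem~\ref{4:ct} does not apply to it. Even if you passed to the common subspace $W=\bigl(\operatorname{span}\{\iota(h_{t,t'})\}\bigr)^{\perp_M}\subseteq\bigcap_t W_t$, parametrized it by $\F_p^{d'}$, and set $M'=M\vert_W$, Theorem~\ref{4:ct} would require $\rank(M')\geq 2r+1$ where $r$ is the total co-dimension of the shifted system, which is still $k^2+3k+1$; that would force $\rank(M')\geq 2k^2+6k+3$, far beyond what the hypothesis $d\geq 3k+2s$ gives you (you correctly compute $\rank(M\vert_{W_t})\geq d-2(s-1)\geq 3k+2$, but $3k+2<2k^2+6k+3$ for every $k\geq1$). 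So the counting theorem genuinely cannot close the argument under the stated lower bound on $d$.

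The paper avoids solving the residual quadratic system simultaneously. It builds $y_0,\dots,y_{2k}$ one coordinate at a time: once $y_0,\dots,y_{j-1}$ are fixed, all constraints involving $y_j$ and $y_{j'}$, $j'<j$, or $y_j$ and $h_{t,t'}$, are \emph{linear} in $y_j$, cutting out an affine subspace $W_j\subseteq\V$ of co-dimension at most $k+s-1$; only the single quadratic constraint $M(y_j)=r$ remains. Solving that uses only Lemma~\ref{4:counting01}, which needs $\rank(M\vert_{W_j})\geq 3$, and Proposition~\ref{4:iissoo} gives $\rank(M\vert_{W_j})\geq d-2(k+s-1)\geq3$ under $d\geq 3k+2s$; one then checks that the remaining freedom ($d-k-s\geq 2k+s$) is large enough to also preserve linear independence of the constructed vectors. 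This inductive linearization is what lets the paper get away with the much weaker hypothesis on $d$. If you want to salvage your parallel approach, you would need either a replacement for Theorem~\ref{4:ct} that handles product spaces $\prod_t W_t$ with a codimension-independent rank requirement, or you should switch to the one-variable-at-a-time strategy.
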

	\begin{proof} 
	It is more convenient to prove Lemma \ref{4:lemhtt} under the $\V$-setting. Indeed,
	it suffices to show that for any $h_{t,t'}\in\V, 0\leq t\leq 2k, 1\leq t'\leq a_{t}$ which are  linearly independent and pairwise  $M$-orthogonal (meaning that $(h_{t,t'}A)\cdot h_{\tilde{t},\tilde{t}'}=0$ for $(t,t')\neq (\tilde{t},\tilde{t}')$),  there exists $\y\in(\V)^{k+1}$ such that $(\y,\h)$ belongs to $\iota(W(\bold{a}))$, where $\bold{h}:=(h_{t,t'})_{0\leq t\leq 2k, 1\leq t'\leq a_{t}}$. Write $\y=(y_{0},\dots,y_{k})$ for some $y_{0},\dots,y_{k}\in\V$.
	Note that $(\y,\h)$ belongs to $\iota(W(\bold{a}))$ if and only if 
	for all $\e_{t,t'}\in\{0,1\}$,
	$0\leq t\leq 2k$, $1\leq t'\leq a_{t}$, writing $$\w:=(w_{0},\dots,w_{2k}):=\Bigl(y_{0}+\sum_{t'=1}^{a_{0}}\e_{0,t'}h_{0,t'},\dots,y_{2k}+\sum_{t'=1}^{a_{2k}}\e_{2k,t'}h_{2k,t'}\Bigr)$$ we have that $\w\in \Omega_{I}\times_{0}\Omega_{I}$. Recall that $\Omega_{I}\times_{0}\Omega_{I}$ is the set $\Omega_{4}$ in Example 
	B.4 of \cite{SunA}, $M$ is homogeneous and $X=\{v_{0},\dots,v_{k+s-1}\}$.  So $(\y,\h)\in\iota(W(\bold{a}))$ if and only if for all the above mentioned $\w$, we have that
   \begin{itemize}
   	\item $M(w_{j})=r$  for all $0\leq j\leq 2k$;
   	\item $M(w_{0}-w_{j})=M(w_{0}-w_{k+j})=M(v_{0}-v_{j})$ for all $1\leq j\leq k$;
   	\item $M(w_{j}-w_{j'})=M(w_{k+j}-w_{k+j'})=M(v_{j}-v_{j'})$ for all $1\leq j,j'\leq k$.
   \end{itemize}
   
   This is equivalent of saying that 
  $$(w_{j}A)\cdot w_{j'}=c_{j,j'}$$
    for all $(j,j')\in \{0,\dots,k\}^{2}\cup \{0,k+1,\dots,2k\}^{2}$,
    where $c_{j,j'}\in\F_{p}$ are some constants depending only on $v_{0},\dots,v_{k}$    with $c_{j,j'}=c_{j',j}$ and $c_{j,j}=r$ (since $M$ is homogeneous).
By Lemma \ref{4:or}, this implies that  
$(h_{t,v}A)\cdot h_{t',v'}=0$ for all $0\leq t,t'\leq k$, $1\leq v\leq a_{t}$, $1\leq v'\leq a_{t'}$ with $(t,v)\neq (t',v')$.
It is then not hard to compute that $(\y,\h)$ belongs to $\iota(W(\bold{a}))$ if and only if 
 \begin{itemize}
 	\item $(y_{t}A)\cdot y_{t'}=c_{t,t'}$
 	 for all $(t,t')\in \{0,\dots,k\}^{2}\cup \{0,k+1,\dots,2k\}^{2}$;
 	 \item $(h_{t,v}A)\cdot h_{t',v'}=0$ for all $(t,t')\in \{0,\dots,k\}^{2}\cup \{0,k+1,\dots,2k\}^{2}$, $1\leq v\leq a_{t}$, $1\leq v'\leq a_{t'}$ with $(t,v)\neq (t',v')$;
 	 \item $(y_{t}A)\cdot h_{t',v'}=0$ for all $(t,t')\in \{0,\dots,k\}^{2}\cup \{0,k+1,\dots,2k\}^{2}$,  $1\leq v'\leq a_{t'}$ with $t\neq t'$;
 	 \item $2(y_{t}A)\cdot h_{t,v}+(h_{t,v}A)\cdot h_{t,v}=0$ for all  $0\leq t\leq 2k$,  $1\leq v\leq a_{t}$. 
 \end{itemize}

 Let $0\leq j\leq 2k$
  and suppose that we have constructed $y_{0},\dots,y_{j-1}$ such that all the above mentioned conditions which involve $y_{0},\dots,y_{j-1}$ hold, and that $y_{j'},h_{t,t'}, 0\leq j'\leq j-1, 0\leq t\leq 2k, 1\leq t'\leq a_{t}$ are linearly independent (in the special case $j=0$, there is nothing to construct). We now construct $y_{j}$.
  Let $W_{j}$ be the set of $y\in\V$ such that
  \begin{itemize}
 	\item $(y_{j}A)\cdot y_{j'}=c_{j,j'}$
 	 for all  $0\leq j'\leq j-1$ with $(j,j')\in \{0,\dots,k\}^{2}\cup \{0,k+1,\dots,2k\}^{2}$;
 	 \item $(y_{j}A)\cdot h_{j',v'}=0$ for all $j'\neq j$ with $(j,j')\in \{0,\dots,k\}^{2}\cup \{0,k+1,\dots,2k\}^{2}$ and $1\leq v'\leq a_{j'}$;
 	 \item $2(y_{j}A)\cdot h_{j,v}+(h_{j,v}A)\cdot h_{j,v}=0$ for all   $1\leq v\leq a_{j}$. 
 \end{itemize} 
 To complete the induction, we need to find some $y_{j}\in V(M-r)\cap W_{j}$ such that $y_{j'},h_{t,t'}, 0\leq j'\leq j, 0\leq t\leq 2k, 1\leq t'\leq a_{t}$ are linearly independent.
 
    Since $y_{j'},h_{t,t'}, 0\leq j'\leq j-1, 0\leq t\leq 2k, 1\leq t'\leq a_{t}$ are linearly independent and $A$ is invertible, $W_{j}$ is an affine subspace of $\V$ of co-dimension at most $k+\sum_{t=0}^{2k}a_{t}\leq k+s-1$. By Proposition \ref{4:iissoo}, $\rank(M\vert_{W_{j}})\geq d-2(k+s-1)$. Since $d\geq 2k+2s+1$, by Lemma \ref{4:counting01}, $\vert V(M-r)\cap W_{j}\vert\geq p^{d-k-s}/2$ if $p\gg_{d} 1$. 
  On the other hand, the number of $y_{j}\in\V$ such that $y_{j'},h_{t,t'}, 0\leq j'\leq j, 0\leq t\leq 2k, 1\leq t'\leq a_{t}$ are  linearly dependent is at most $p^{2k+s-1}$. Since $d-k-s\geq 2k+s$, there exists at least one $y_{j}$ in $V(M-r)\cap W_{j}$ such that $y_{j'},h_{t,t'}, 0\leq j'\leq j, 0\leq t\leq 2k, 1\leq t'\leq a_{t}$ are  linearly independent. This completes the construction of $y_{j}$.
 The claim follows by constructing $y_{0},\dots,y_{2k}$ inductively as described above. 
    \end{proof}

	Fix any $(\y,(h_{t,t'})_{0\leq t\leq 2k, 1\leq t'\leq a_{t}})\in W(\bold{a})$.
	Let $\tilde{h}_{t,t'}\in (\Z^{d})^{2k+1}$ be the vector whose $td+1$ to $(t+1)d$ coordinates equal to $h_{t,t'}$, and whose all other coefficients are zero. 
	Since $P$ is  a constant $\mod\Z$ on $\iota^{-1}(\Omega_{I}\times_{0}\Omega_{I})$, we have that 
	\begin{equation}\label{4:2declear1}
		\prod_{0\leq t\leq 2k, 1\leq t'\leq a_{t}}\Delta_{\tilde{h}_{t,t'}}P(\x)\in\Z \text{ for all } (\x,(h_{t,t'})_{0\leq t\leq k, 1\leq t'\leq a_{t}})\in\iota^{-1}(\Omega_{I}\times_{0}\Omega_{I}).
	\end{equation}
	
	We need to introduce some notations. Write
	$\tilde{L}'_{j}(x_{0},\dots,x_{2k}):=\sum_{i=0}^{2k} u_{i,j}x_{i}$
	for some $u_{i,j}\in\Z$ of complexity at most $O_{C_{0},k,s}(1)$ for all $0\leq j\leq 2k+2s-2$. Let $u_{i}:=(u_{i,0},\dots,u_{i,2k+2s-2})\in \Z^{2k+2s-1}$, and let $\bold{u}:=(u_{0},\dots,u_{2k})$. For any $\bold{a}=(a_{0},\dots,a_{2k})\in\N^{2k+1}$, denote $\bold{u}^{\bold{a}}:=u_{0}^{\ast a_{0}}\ast \ldots\ast u_{2k}^{\ast a_{2k}}$, where $u_{i}^{\ast a_{i}}:=u_{i}\ast\ldots\ast u_{i}$ with $u_{i}$ appearing $a_{i}$ times.
	
	After some computations, it follows from (\ref{4:thisisp}) that
	\begin{equation}\nonumber
		\begin{split}
			\prod_{0\leq t\leq k, 1\leq t'\leq a_{t}}\Delta_{\tilde{h}_{t,t'}}P(\x)
			=\sum_{j\in\N^{d},\vert j\vert=i}\eta(\tilde{g}_{j}^{\bold{u}^{\bold{a}}})\sum_{i_{0},\dots,i_{2k}\in\N^{d}, \vert i_{t}\vert=a_{t}, i_{0}+\dots+i_{2k}=j}\Bigl(\sum_{R\in\mathcal{G}(i_{0},\dots,i_{2k})}\prod_{t=0}^{2k}\prod_{t'=1}^{a_{t}}h_{t,t',R(t,t')}\Bigr),
		\end{split}
	\end{equation}
	where $h_{t,t'}=(h_{t,t',1},\dots,h_{t,t',d})$, $i_{t}=(i_{t,1},\dots,i_{t,d})$, $\mathcal{G}(i_{0},\dots,i_{k})$ is the collection of all functions $R\colon\{(t,t')\colon 0\leq t\leq 2k, 1\leq t'\leq a_{t}\}\to\{1,\dots,d\}$ such that for all $0\leq t\leq 2k$ and $1\leq t''\leq d$, the number of $R(t,t'), 1\leq t'\leq a_{t}$ which equals to $t''$ is $i_{t,t''}$.	
	Let $\phi$ be a bijection between $\{1,\dots,i\}$ and $\{(t,t')\colon 0\leq t\leq 2k, 1\leq t'\leq a_{t}\}$. Then
	\begin{equation}\nonumber
		\begin{split}
			\prod_{0\leq t\leq 2k, 1\leq t'\leq a_{t}}\Delta_{\tilde{h}_{t,t'}}P(\x)
			=\sum_{j\in\N^{d},\vert j\vert=i}\eta(\tilde{g}_{j}^{\bold{u}^{\bold{a}}})\sum_{i_{0},\dots,i_{2k}\in\N^{d}, \vert i_{t}\vert=a_{t}, i_{0}+\dots+i_{2k}=j}\Bigl(\sum_{R\in\mathcal{G}(i_{0},\dots,i_{2k})}\prod_{t=1}^{i}h_{\phi(t),R(\phi(t))}\Bigr),
		\end{split}
	\end{equation}
	
	Let $\mathcal{F}(j)$ be the collection of all maps $r\colon \{1,\dots,i\}\to \{1,\dots,d\}$ such that $\vert r^{-1}(t'')\vert=j_{t''}$ for all $1\leq t''\leq d$.
	Note that for all $r\in\mathcal{F}(j)$, there exist a unique choice of $(i_{0},\dots,i_{2k})\in(\N^{d})^{2k+1}$ 
	with $\vert i_{t}\vert=a_{t},i_{0}+\dots+i_{2k}=j$,
	and $R\in \mathcal{G}(i_{0},\dots,i_{2k})$ such that $r=R\circ\phi$. Moreover, this induces a bijection between $\mathcal{F}(j)$ and $\cup_{i_{0},\dots,i_{2k}\in\N^{d},\vert i_{t}\vert=a_{t},i_{0}+\dots+i_{2k}=j}\mathcal{G}(i_{0},\dots,i_{2k})$. In other words,		 
	\begin{equation}\label{4:2compare2}
		\begin{split}
			\prod_{0\leq t\leq 2k, 1\leq t'\leq a_{t}}\Delta_{\tilde{h}_{t,t'}}P(\x)
			=\sum_{j\in\N^{d},\vert j\vert=i}\eta(\tilde{g}_{j}^{\bold{u}^{\bold{a}}})\cdot\Bigl(\sum_{r\in\mathcal{F}(j)}\prod_{t=1}^{i}h_{\phi(t),r(\phi(t))}\Bigr).
		\end{split}
	\end{equation}		
	
		By (\ref{4:2declear1}), (\ref{4:2compare2}) and Lemma \ref{4:lemhtt},
	$\prod_{0\leq t\leq 2k, 1\leq t'\leq a_{t}}\Delta_{\tilde{h}_{t,t'}}P(\x)$ is independent of $\x$, and belongs to $\Z$
	whenever $h_{t,t'}, 0\leq t\leq 2k, 1\leq t'\leq a_{t}$ are $p$-linearly independent and pairwise $(M,p)$-orthogonal. 
	By a change of indices, 
	for all $h_{1},\dots,h_{i}\in\Z^{d}$ which are $p$-linearly independent and pairwise $(M,p)$-orthogonal, writing $h_{t}=(m_{t,1},\dots,m_{t_{d}})$,
		we have that
		\begin{equation}\label{4:egmmr}
		\sum_{j\in\N^{d},\vert j\vert=i}\eta(\tilde{g}_{j}^{\bold{u}^{\bold{a}}})\cdot\Bigl(\sum_{r\in\mathcal{F}(j)}\prod_{t=1}^{i}m_{t,r(t)}\Bigr)\in\Z.
		\end{equation}

\begin{lem}\label{4:rwpp}	
There exist $\bold{a}=(a_{0},\dots,a_{2k})\in\N^{2k+1}$ with  $a_{0}+\dots+a_{2k}=i$ such that the map $\xi_{\bold{a}}(h):=\eta(h^{\bold{u}^{\bold{a}}}), h\in G$ is a nontrivial $i$-th type-III horizontal character of $G/\Gamma$ of complexity at most $O_{C_{0},C,\d,d}(1)$.
\end{lem}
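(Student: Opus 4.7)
My plan is to show that for every $\bold{a}\in\N^{2k+1}$ with $\vert\bold{a}\vert=i$, the map $\xi_{\bold{a}}$ is automatically a type-III horizontal character of complexity $O_{C_{0},C,\d,d}(1)$, and then exploit the maximality of $i$ to produce one $\bold{a}$ for which $\xi_{\bold{a}}$ is nontrivial. Writing $W:=V_{X}\times_{0}V_{X}$ for brevity, the identification $W=\sp_{\R}\{u_{0},\dots,u_{2k}\}$ from Lemma \ref{4:vxxprop}(ii) together with the flag property of $W$ (Lemma \ref{4:vxxprop}(iv)) will be the recurring inputs; the argument parallels Lemma 3.7 of \cite{GT10b}.

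First I would verify that $\xi_{\bold{a}}$ is a well-defined homomorphism on $G_{i}$. Since each $u_{i'}\in W$, we have $\bold{u}^{\bold{a}}\in W^{[i]}$, so $h^{\bold{u}^{\bold{a}}}\in G^{W}_{(i)}$ for $h\in G_{i}$ and $\xi_{\bold{a}}(h):=\eta(h^{\bold{u}^{\bold{a}}})$ makes sense. For $h,h'\in G_{i}$, the Baker--Campbell--Hausdorff formula gives $(hh')^{\bold{u}^{\bold{a}}}\equiv h^{\bold{u}^{\bold{a}}}(h')^{\bold{u}^{\bold{a}}}$ modulo commutators in $G^{W}$, which $\eta$ annihilates; hence $\xi_{\bold{a}}$ is additive. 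To check vanishing on $G_{i}^{\nabla}$, the piece $G_{i+1}$ is killed because $h\in G_{i+1}$ forces $h^{\bold{u}^{\bold{a}}}\in G^{W}_{(i+1)}$ on which $\eta$ is trivial by the maximal choice of $i$; and for $[h_{1},h_{2}]$ with $h_{1}\in G_{j},h_{2}\in G_{i-j}$, any decomposition $\bold{a}=\bold{b}+\bold{c}$ with $\vert\bold{b}\vert=j$, $\vert\bold{c}\vert=i-j$ satisfies $\bold{u}^{\bold{a}}=\bold{u}^{\bold{b}}\ast \bold{u}^{\bold{c}}$ with $\bold{u}^{\bold{b}}\in W^{[j]}$ and $\bold{u}^{\bold{c}}\in W^{[i-j]}$, so a standard BCH computation yields $[h_{1},h_{2}]^{\bold{u}^{\bold{a}}}\equiv [h_{1}^{\bold{u}^{\bold{b}}},h_{2}^{\bold{u}^{\bold{c}}}]$ modulo $G^{W}_{(i+1)}$, and $\eta$ kills both the commutator and $G^{W}_{(i+1)}$. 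Integrality on $\Gamma\cap G_{i}$ uses that each $u_{i',j}\in\Z$, so $\gamma^{\bold{u}^{\bold{a}}}\in\Gamma^{2k+2s-1}\cap G^{W}=\Gamma^{W}$, which $\eta$ sends into $\Z$. The complexity bound is immediate: the entries of $\bold{u}^{\bold{a}}$ are products of at most $i\leq s-1$ of the integers $u_{i',j}$, each $O_{C_{0},k,s}(1)$, and $\eta$ has complexity $O_{C_{0},C,\d,d}(1)$ (inherited from the Leibman dichotomy step of Lemma \ref{4:kk3d}(iii)), so $\xi_{\bold{a}}$ has complexity $O_{C_{0},C,\d,d}(1)$.

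For nontriviality, I would argue as follows. The generators of $G^{W}_{(i)}$ are of the form $g^{w}$ where $g$ is an iterated commutator of elements in $G_{i_{1}},\dots,G_{i_{t}}$ with $i_{1}+\dots+i_{t}\geq i$ and $w=w_{1}\ast\ldots\ast w_{t}$, $w_{l}\in W^{[i_{l}]}$. When $i_{1}+\dots+i_{t}>i$ the generator lies in $G^{W}_{(i+1)}$, killed by $\eta$ by maximality of $i$; when $t>1$ and $i_{1}+\dots+i_{t}=i$ the generator is a commutator in $G^{W}$, also killed by $\eta$. Therefore the nontriviality of $\eta\vert_{G^{W}_{(i)}}$ forces some generator with $t=1$, i.e.\ some $g\in G_{i}$ and $w\in W^{[i]}$, to satisfy $\eta(g^{w})\neq 0$. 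Since $W^{[i]}=\sp_{\R}\{\bold{u}^{\bold{a}}:\vert\bold{a}\vert=i\}$ and the map $w\mapsto \eta(g^{w})$ is $\R$-linear (because $t\mapsto g^{t}=\exp(t\log g)$ is a one-parameter subgroup and $\eta$ is a continuous homomorphism to $\R$), expanding $w=\sum_{\vert\bold{a}\vert=i}c_{\bold{a}}\bold{u}^{\bold{a}}$ gives $\eta(g^{w})=\sum c_{\bold{a}}\xi_{\bold{a}}(g)\neq 0$, so at least one $\xi_{\bold{a}}$ is nontrivial.

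The main delicate point will be the homomorphism and commutator-annihilation checks in the second paragraph, which are BCH-type manipulations in the Leibman group. The only genuinely new feature compared to the classical setting of \cite{GT10b} is the ``doubled'' subspace $W=V_{X}\times_{0}V_{X}$ parametrising $G^{W}$ instead of a single rational subspace, but thanks to the flag property (Lemma \ref{4:vxxprop}(iv)) the decomposition $\bold{u}^{\bold{a}}=\bold{u}^{\bold{b}}\ast\bold{u}^{\bold{c}}$ always produces factors in the correct $W^{[j]}$ and $W^{[i-j]}$, so the standard BCH congruences remain valid in our setting.
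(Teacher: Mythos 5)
Your proposal is correct and follows essentially the same approach as the paper: verify the defining properties of an $i$-th type-III horizontal character via the flag property of $V_X\times_0 V_X$ and Baker--Campbell--Hausdorff manipulations, and then obtain nontriviality of some $\xi_{\bold{a}}$ from the nontriviality of $\eta$ on $G_{(i)}^{V_X\times_0 V_X}$ together with the fact that $\{\bold{u}^{\bold{a}}:\vert\bold{a}\vert=i\}$ spans $(V_X\times_0 V_X)^{[i]}$. The only cosmetic difference is that you check the character axioms for every $\bold{a}$ before deducing nontriviality, whereas the paper produces the nontrivial $\bold{a}$ first and then checks the axioms for it; you also make the nontriviality step slightly more explicit (noting that generators with $t>1$ or $i_1+\dots+i_t>i$ are annihilated modulo $G^{V_X\times_0 V_X}_{(i+1)}$ and commutators), which the paper leaves as a one-line appeal to the spanning property.
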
	
\begin{proof}	
	The proof is similar to the argument on pages 29--30 of \cite{GT10b}.
	Since $\eta\vert_{G_{(i)}^{V_{X}\times_{0}V_{X}}}$ is nontrivial, and the vectors $\bold{u}^{\bold{a}},\bold{a}=(a_{0},\dots,a_{2k})\in\N^{2k+1},a_{0}+\dots+a_{2k}=i$ span $(V_{X}\times_{0}V_{X})^{[i]}$,
	there exist $\bold{a}=(a_{0},\dots,a_{2k})\in\N^{2k+1}$ with  $a_{0}+\dots+a_{2k}=i$ such that $\xi_{\bold{a}}$ is nontrivial.  Fix such an $\bold{a}$.
	
	For all $\gamma\in\Gamma_{i}$, since $\gamma^{\bold{u}^{\bold{a}}}\in\Gamma_{(i)}^{V_{X}\times_{0}V_{X}}$, we have that $\xi_{\bold{a}}(\gamma)\in\Z$.
	
	Moreover, by the flag property of $V_{X}\times_{0} V_{X}$,  $\bold{u}^{\bold{a}}\in (V_{X}\times_{0}V_{X})^{[i]}\subseteq (V_{X}\times_{0}V_{X})^{[i+1]}$. So for all $h_{i+1}\in G_{i+1}$, we have $h_{i+1}^{\bold{u}^{\bold{a}}}\in G_{(i+1)}^{V_{X}\times_{0}V_{X}}$. By the maximality of $i$, we have that $\eta(h_{i+1}^{\bold{u}^{\bold{a}}})\in\Z$ and so $\xi_{\bold{a}}$ annihilates $G_{i+1}$.
	
	On the other hand, for any $0\leq j\leq i$, there exist $0\leq a'_{0}\leq a_{0},\dots,0\leq a'_{2k}\leq a_{2k}$ such that $a'_{0}+\dots+a'_{2k}=j$. Write $\bold{a}':=(a'_{0},\dots,a'_{2k})$. Then
	we may write
	$\bold{u}^{\bold{a}}=w\ast w',$
	where 
	$w:=\bold{u}^{\bold{a}'}$ and $w':=\bold{u}^{\bold{a}-\bold{a}'}.$
	It is clear that $w\in (V_{X}\times_{0}V_{X})^{[j]}$ and $w'\in (V_{X}\times_{0}V_{X})^{[i-j]}$.
	If $h\in G_{j}$ and $h'\in G_{i-j}$, then
	$$[h^{w},{h'}^{w'}]\equiv [h,h']^{ww'} \mod G^{V_{X}\times_{0}V_{X}}_{(i+1)}$$
	by the Baker-Campbell-Hausdorff formula. Since $\eta$ is trivial on $G^{V_{X}\times_{0}V_{X}}_{(i+1)}$ and $[G^{V_{X}\times_{0}V_{X}},$ $G^{V_{X}\times_{0}V_{X}}]$, we have that
	$$\xi_{\bold{a}}([h,h'])\equiv \eta([h,h']^{ww'})\equiv \eta([h^{w},h^{w'}])=0 \mod \Z.$$
	So $\xi_{\bold{a}}$ vanishes on $[G_{j},G_{i-j}]$ for all $0\leq j\leq i$.
	In conclusion, we have that $\xi_{\bold{a}}$ is an $i$-th type-III horizontal character. It is clear to see that the complexity of $\xi_{\bold{a}}$ is $O_{C_{0},C,\d,d}(1)$.
\end{proof}

 			Fix such an $\bold{a}$ and $\xi:=\xi_{\bold{a}}$ given by Lemma \ref{4:rwpp}, and denote 
	$$Q(n,h_{1},\dots,h_{i}):=\xi\circ \Delta_{h_{i}}\dots \Delta_{h_{1}}\tilde{g}(n).$$		
	Since $\xi$ is a $i$-th type-III horizontal character, similar to (\ref{4:fxi}),
	one can  compute that 
	\begin{equation}\label{4:2compare1}
		Q(n,h_{1},\dots,h_{i})=\sum_{j\in\N^{d},\vert j\vert=i}\xi(\tilde{g}_{j})\sum_{r\in \mathcal{F}(j)}\prod_{t=1}^{i}m_{t,r(t)},
	\end{equation}
	where $h_{t}=(m_{t,1},\dots,m_{t,d})$, and recall that $\mathcal{F}(j)$ is the collection of all maps $r\colon \{1,\dots,i\}\to \{1,\dots,d\}$ such that $\vert r^{-1}(t'')\vert=j_{t''}$ for all $1\leq t''\leq d$.
	So $Q$
	is a polynomial in $\poly((\Z^{d})^{i+1}\to\R)$ which is independent of $n$.
	Let $\tilde{M}_{r}\colon\Z^{d}\to\Z/p$ be the regular lifting of $M-r$.
	Since $\tilde{g}\in\poly_{p}(\iota^{-1}(V(M-r))\to G_{\N}\vert\Gamma)=\poly_{p}(V_{p}(\tilde{M}_{r})\to G_{\N}\vert\Gamma)$ , by Lemma \ref{4:goodcoordinates}, $p^{s-1}\eta(g_{i})\in\Z$ for all $i\in\N^{d}, 0<\vert i\vert\leq s-1$. 
	So $Q\in \poly((\Z^{d})^{i+1}\to\Z/p^{s-1})$.

	Combining (\ref{4:2compare1}) with (\ref{4:egmmr}), we have that 
	\begin{equation}\label{4:qisconstant}
		Q(n,h_{1},\dots,h_{i})\in\Z \text{ for all $(n,h_{1},\dots,h_{i})\in W_{i}$,}
	\end{equation}	 
	where $W_{i}$ is the set of
	$(n,h_{1},\dots,h_{i})\in \Gow_{p,i}(V_{p}(\tilde{M}_{r}))$ with $h_{1},\dots,h_{i}$ being $p$-linearly independent. Here we used the fact that 
	for all $h_{1},\dots,h_{i}\in\Z^{d}$ which are $p$-linearly independent and pairwise $(M,p)$-orthogonal, there exists $n\in\Z^{d}$ with $(n,h_{1},\dots,h_{i})\in \Gow_{p,i}(V_{p}(\tilde{M}_{r}))$ (which follows from the $\Z^{d}$-version of Lemmas \ref{4:counting02} and  \ref{4:changeh} since $d\geq 2s+1$).
 	In other words,
	$W_{i}\subseteq V_{p}(Q)\cap \Gow_{p,i}(V_{p}(\tilde{M}_{r}))$ (note that $W_{i}+p(\Z^{d})^{i+1}=W_{i}$).

	 Next we use the irreducibility property  of $\Gow_{p,i}(V_{p}(\tilde{M}_{r}))$ to remove the $p$-linearly independence restriction in (\ref{4:qisconstant}).
	By Lemma \ref{4:countingh}, since $d\geq s^{2}-s+3$, $$\vert \Gow_{p,i}(V_{p}(\tilde{M}_{r}))\cap[p]^{d(i+1)}\vert=\vert \Gow_{i}(V(M-r))\vert=p^{d(i+1)-(\frac{i(i+1)}{2}+1)}(1+O_{s}(p^{-1/2})).$$   On the other hand, by Lemma \ref{4:iiddpp}, $$\vert [p]^{d(i+1)}\backslash W_{i}\vert\leq  ip^{d+(d+1)(i-1)}=ip^{di+(i-1)}.$$ 
	Since $d\geq s^{2}-s+3$ and $p\gg_{d,k,s} 1$,   this implies that 
	  \begin{equation}\nonumber
		\vert (\Gow_{p,i}(V_{p}(\tilde{M}_{r}))\cap[p]^{d(i+1)})\backslash W_{i}\vert<\vert \Gow_{p,i}(V_{p}(\tilde{M}_{r}))\cap[p]^{d(i+1)}\vert/2.
	\end{equation}	
	Since $W_{i}\subseteq V_{p}(Q)\cap \Gow_{p,i}(V_{p}(\tilde{M}_{r}))$, we have that
	 \begin{equation}\label{4:shit2}
		\vert V_{p}(Q)\cap \Gow_{p,i}(V_{p}(\tilde{M}_{r}))\cap[p]^{d(i+1)}\vert>\vert \Gow_{p,i}(V_{p}(\tilde{M}_{r}))\cap[p]^{d(i+1)}\vert/2.
	\end{equation}

	By Example 
	B.4 of \cite{SunA}, $\Gow_{i}(V(M-r))$ is a nice and consistent $M$-set of total co-dimension $\binom{i+1}{2}+1$. So by Theorem \ref{4:irrr},
	since $d\geq s^{2}-s+3$, we have that
	$\\iota^{-1}(\Gow_{i}(V(M-r)))=\Gow_{p,i}(V_{p}(\tilde{M}_{p}))$ is strongly  $(1/2,p)$-irreducible up to degree $s-1$ (see Appendix \ref{4:s:ir} for definitions). Since $Q\in \poly((\Z^{d})^{i+1}\to\Z/p^{s-1})$,  (\ref{4:shit2}) implies that 
	$\Gow_{p,i}(V_{p}(\tilde{M}_{r}))\subseteq V_{p}(Q)$. 
		Therefore,
	$$\text{$Q(n,h_{1},\dots,h_{i})\in\Z$	 for all 
	$(n,h_{1},\dots,h_{i})\in \Gow_{p,i}(V_{p}(\tilde{M}_{r}))$.  }$$

	By the definition of $Q$, this means that
	  $\tilde{g}$ is not $(O_{C_{0},C,\d,d}(1),\iota^{-1}(V(M-r)),p)$-irrational and thus $g$ is not $(O_{C_{0},C,\d,d}(1),V(M-r))$-irrational. We arrive at contradiction if we pick some $N_{0}\gg_{C_{0},C,\d,d} 1$. This completes the proof of Part (ii) of Theorem \ref{4:orbitdesc2}.

	\

		So now it suffices to prove Part (i) of Theorem \ref{4:orbitdesc2}. We refer the readers to Appendices \ref{4:s:AppB3} and \ref{4:s:AppB4} for the terminologies to be used in the discussion below.	
	By Lemma \ref{4:kk3d} (ii),   $\Omega_{I}\times_{0}\Omega_{I}$  is a nice and consistent $M$-set of total co-dimension at most $k^{2}+3k+1$. 
So we may write $\Omega_{I}\times_{0}\Omega_{I}=V(\mathcal{J})$ for some consistent $(M,2k+1)$-family $\mathcal{J}\subseteq \F_{p}[x_{0},\dots,x_{2k}]$ of total dimension at most $k^{2}+3k+1$. Let $(\mathcal{J}',\mathcal{J}'')$ be an  $\{x_{0},\dots,x_{k}\}$-decomposition of $\mathcal{J}$. 
	Note that the set of $(x_{0},\dots,x_{k})\in(\V)^{2k+1}$  such that $(x_{0},\dots,x_{2k})\in V(\mathcal{J}')$ for all $x_{k+1},\dots,x_{2k}\in \V$ is equal to $\Omega_{I}$.

	We remark that $G^{V_{X}}$ is the projection of $G^{V_{X}\times_{0}V_{X}}$ onto the first $(k+s)d$ coordinates.
	Let $F\in\Lip(G^{V_{X}}/\Gamma^{V_{X}}\to\C)$ with Lipschitz norm bounded by 1. Define $\tilde{F}\colon G^{V_{X}\times_{0}V_{X}}/\Gamma^{V_{X}\times_{0}V_{X}}\to\C$, $\tilde{F}(x_{0},\dots,x_{2k+2s-2}):=F(x_{0},\dots,x_{k+s-1})$.  
	Since Part (ii) holds, by Theorem \ref{4:ct}, Lemma \ref{4:kk2d}, and the observation that $L'_{j}(\x,\x')=L_{I,j}(\x)$ for all $(\x,\x')\in(\V)^{k+1}\times (\V)^{k}$ and $0\leq j\leq k+s-1$, we have that
\begin{equation}\nonumber
\begin{split}
&\quad \Bigl\vert\E_{(x_{0},\dots,x_{k+s-1})\in\Omega}F((g(x_{0}),\dots,g(x_{k+s-1}))\Gamma^{k+s})-\int_{G^{V_{X}}/\Gamma^{V_{X}}} F\,dm\Bigr\vert
\\&=\Bigl\vert\E_{\x\in\Omega_{I}}F((g(L_{I,0}(\x)),\dots,g(L_{I,k+s-1}(\x)))\Gamma^{k+s})-\int_{G^{V_{X}}/\Gamma^{V_{X}}} F\,dm\Bigr\vert
\\&=\Bigl\vert\E_{\x\in\Omega_{I}}\E_{\x'\in (\V)^{k}\colon (\x,\x')\in V(\mathcal{J}'')}
 F((g(L_{I,0}(\x)),\dots,g(L_{I,k+s-1}(\x)))\Gamma^{k+s})-\int_{G^{V_{X}\times_{0}V_{X}}/\Gamma^{V_{X}\times_{0}V_{X}}} \tilde{F}\,dm\Bigr\vert
 \\&=\Bigl\vert\E_{(\x,\x')\in\Omega_{I}\times_{0}\Omega_{I}}F((g(L_{I,0}(\x)),\dots,g(L_{I,k+s-1}(\x)))\Gamma^{k+s})-\int_{G^{V_{X}\times_{0}V_{X}}/\Gamma^{V_{X}\times_{0}V_{X}}} \tilde{F}\,dm\Bigr\vert+O_{k,s}(p^{-1/2})
\\&=\Bigl\vert\E_{\y\in\Omega_{I}\times_{0}\Omega_{I}}\tilde{F}((g(L'_{0}(\y)),\dots,g(L'_{2k+2s-2}(\y)))\Gamma^{k+s})-\int_{G^{V_{X}\times_{0}V_{X}}/\Gamma^{V_{X}\times_{0}V_{X}}} \tilde{F}\,dm\Bigr\vert+O_{k,s}(p^{-1/2})
\\&\leq \d \Vert\tilde{F}\Vert_{\Lip(G^{V_{X}\times_{0}V_{X}}/\Gamma^{V_{X}\times_{0}V_{X}})}+O_{k,s}(p^{-1/2})
=\d \Vert F\Vert_{\Lip(G^{V_{X}}/\Gamma^{V_{X}})}+O_{k,s}(p^{-1/2})\leq \d +O_{k,s}(p^{-1/2}),
\end{split}
\end{equation}
where $m$ is the Haar measure of $G^{V_{X}\times_{0}V_{X}}/\Gamma^{V_{X}\times_{0}V_{X}}.$
This completes the proof of Theorem \ref{4:orbitdesc2}
  by taking $p\gg_{C_{0},C,\d,d} 1$.

\section{Proof of Theorem \ref{4:mainmain2}}\label{4:s:7rr}

We are now ready to complete the proof of Theorem \ref{4:mainmain2}. The outline of the proof is similar to Section 6 of \cite{GT10b}.
By Lemma \ref{4:kk2d} (ii) and (iii),
it suffices to show that if 
$d\geq d_{0}(X),$
 and $p\gg_{C_{0},d,\e} 1$, then  
\begin{equation}\label{4:mmm1}
\begin{split}
\E_{(x_{0},\dots,x_{k+s-1})\in \Omega}\bold{1}_{E}(x_{0})\bold{1}_{E}(x_{1})\dots \bold{1}_{E}(x_{k+s-1})\gg_{C_{0},d,\e} 1
\end{split}
\end{equation}
for all $E\subseteq V(M-r)$ with $\vert E\vert>\e p^{d-1}$. Let $\mathcal{F}\colon\R_{+}\to\R_{+}$ be a growth function and $\e'>0$ to be chosen later, which depend only on  $C_{0},d,\e$.
By Theorem \ref{4:st3}, if $p\gg_{d,\e',\mathcal{F}} 1$  and $d\geq (2s+12)(15s+423)$, then there exists $C:=C(d,\e',\mathcal{F})>0$ such that we may write
\begin{equation}\label{4:mm2}
\begin{split}
\bold{1}_{E}=f_{\str}+f_{\uni}+f_{\err}
\end{split}
\end{equation}
 such that  
\begin{itemize}
\item $f_{\str}$ and $f_{\str}+f_{\err}$ take values in $[0,1]$;
\item we may write $f_{\str}$ as $$f_{\str}(n)=F(g(n)\Gamma)  \text{ for all }  n\in V(M-r)$$
	for some   $\N$-filtered nilmanifold $G/\Gamma$  of degree at most $s-1$ and complexity at most $C$, some $(\mathcal{F}(C),V(M-r))$-irrational  $g\in\poly_{p}(V(M)\to G_{\N}\vert \Gamma)$, and some    $F\in\Lip(G/\Gamma\to\C)$  of Lipschitz norm at most $C$;
	\item  $\Vert f_{\uni}\Vert_{U^{s}(V(M-r))}\leq 1/\mathcal{F}(C)$;
	\item $\Vert f_{\err}\Vert_{L^{2}(V(M-r))}\leq \e'$.
\end{itemize}
We remark that $C$ will be dependent only on $C_{0},d,\e$ since we will choose $\mathcal{F}$ and $\e'$ to be dependent only on  $C_{0},d,\e$.

  We need to first construct a good weight function for our average:

\begin{prop}\label{4:recset}
	If the growth function $\mathcal{F}$ grows sufficiently fast depending on $C_{0},d,\e'$ and  $p\gg_{C_{0},C,d,\e',\mathcal{F}} 1$, then there exists  a function $\omega\colon \Omega\to \mathbb{R}$ such that the following holds:
	\begin{enumerate}[(i)]
		\item $0\leq\omega(\x)\leq O_{C_{0},C,d,\e'}(1)$ for all $\x\in\Omega$;
		\item $\E_{\x\in\Omega}\omega(\x)=1$; 
		\item $\vert\E_{\x\in \Omega\times_{i}\Omega}(\omega\times_{i}\omega)(\x)-1\vert\leq O_{C_{0},d}(\e')$ for all $0\leq i\leq k+s-1$, where  $\omega\times_{i}\omega\colon \Omega\times_{i}\Omega\to \C$ is the function given by
$$\omega\times_{i}\omega(x,\x^{+},\x^{-}):=\omega(x\da_{i} \x^{+})\overline{\omega}(x\da_{i} \x^{+})$$
for all $(x,\x^{+},\x^{-})\in \Omega\times_{i}\Omega$;\footnote{It is crutial that the implicit constants in Condition (iii) is independent of $C$.}  
		\item 	$\vert f_{\str}(x_{i})-f_{\str}(x_{i'})\vert\leq \e'$ for all $0\leq i,i'\leq k+s-1$ if $\omega(x_{0},\dots,x_{k+s-1})>0$.
	\end{enumerate}	
\end{prop}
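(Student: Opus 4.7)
The strategy is to adapt the weight-function approach of Green and Tao \cite{GT10b} (Section 6), adding an averaging step over the nilmanifold to ensure that the delicate property (iii) holds with error depending only on $C_0$ and $d$ (and not on $C$).

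I would choose $\d := \e'/(4C)$ and a non-negative Lipschitz bump $\psi\colon G/\Gamma\to[0,1]$ supported in the $\d$-ball around $e\Gamma$, with $\int_{G/\Gamma}\psi\,dm=1$ and both $\|\psi\|_{\Lip}$ and $\|\psi\|_\infty$ of size $O_{C,\e'}(1)$. Then I would define
\[
    \omega(\x) := c\int_{G/\Gamma}\prod_{j=0}^{k+s-1}\psi(y_0^{-1}g(x_j)\Gamma)\,dm(y_0),
\]
with $c$ a normalizing constant determined below. Property (i) will follow from the $L^\infty$-bound on $\psi$ and a positive lower bound on $c^{-1}$ established below. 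Property (iv) is immediate: if $\omega(\x)>0$ then for some $y_0$ all $g(x_j)\Gamma$ lie in a common $\d$-ball, so the Lipschitz bound on $F$ gives $|f_\str(x_j)-f_\str(x_{j'})|\leq 2C\d\leq \e'/2<\e'$.

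For (ii) and (iii) I plan to apply the joint equidistribution theorem (Theorem \ref{4:orbitdesc2}) to the sequence $g^{\otimes(k+s)}$, choosing the growth function $\mathcal{F}$ of Theorem \ref{4:st3} fast enough that $\mathcal{F}(C)$ exceeds $N_0(C_0,C,\d_0,d)$ for $\d_0$ small. Write $Y := G^{V_X}/\Gamma^{V_X}$, $Y_i := G/\Gamma$ for the $i$-th coordinate projection (surjective by Lemma \ref{4:vxprop}(iii)), $Y|v$ for the fibre of $Y$ over $v\in Y_i$, and
$F_0(v):=\int_{Y|v}\prod_{j\neq i}\psi(u_j)\,dm_{Y|v}(\u)$.
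The diagonal $G$-action preserves $Y$ (again since $(1,\dots,1)\in V_X$), and the change of variable $u_j=y_0^{-1}y_j$ gives
\[
    \int_{Y|y_i}\prod_{j\neq i}\psi(y_0^{-1}y_j)\,dm_{Y|y_i}(\y) \;=\; F_0(y_0^{-1}y_i).
\]
Integrating over $y_0$ and $y_i$ and using left-invariance on $Y_i$, I get $\E_\Omega\omega \approx c\int\psi F_0\,dm$, so setting $c := 1/\int\psi F_0\,dm$ secures (ii) up to an equidistribution error bounded by $\e'$.

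The heart of the argument is (iii). The key structural observation I will use is that $Y' := G^{V_X\times_i V_X}/\Gamma^{V_X\times_i V_X}$ is the fibre product $Y\times_{Y_i}Y$ over the shared $i$-th coordinate, so that Haar measure disintegrates as $dm_{Y'} = \int_{Y_i}(dm_{Y|v}\otimes dm_{Y|v})\,dm(v)$. This structure is forced by the definition of $V_X\times_i V_X$ as two copies of $V_X$ glued at coordinate $i$, together with the bracket-flag identity $(V_X\times_i V_X)^{[j]}=V_X^{[j]}\times_i V_X^{[j]}$ of Lemma \ref{4:vxxprop}(iii). Applying Theorem \ref{4:orbitdesc2}(ii) and the same change of variables for $y_0^+$ and $y_0^-$ separately (which decouple thanks to the outer averaging over $y_0$), I obtain
\[
    \E_{\Omega\times_i\Omega}(\omega\times_i\omega) \;\approx\; c^2\int_{Y_i}dm(v)\Bigl(\int\psi F_0\,dm\Bigr)^2 \;=\; \Bigl(c\int\psi F_0\,dm\Bigr)^2 \;=\; 1,
\]
up to an equidistribution error that is $O_{C_0,d}(\e')$ after choosing $\mathcal{F}$ suitably fast-growing. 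The main obstacle will be rigorously verifying the fibre-product identification of $Y'$, which requires matching the Leibman-group construction $G^{V_X\times_i V_X}$ with $G^{V_X}\times_G G^{V_X}$ at the Lie algebra level via the Baker--Campbell--Hausdorff formula. The outer averaging over $y_0$ is essential: without it, the naive $\omega(\x) = c'\prod_j \psi(g(x_j)\Gamma)$ would give $\E(\omega\times_i\omega) = \int\psi^2 F_0^2\,dm/(\int\psi F_0\,dm)^2$, which by Cauchy--Schwarz exceeds $1$ strictly whenever $\psi$ is concentrated, and so would fail (iii).
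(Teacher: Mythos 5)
Your construction of $\omega$ by an outer average over a translate $y_0$ is a genuinely different route from the paper's, which instead takes a \emph{pairwise} kernel $\omega(\x)=\frac{1}{W}\prod_{i<i'}\Phi_{\rho,\e''}(g(x_i)\Gamma,g(x_{i'})\Gamma)$ with no averaging parameter at all. You have correctly identified the two essential structural ingredients: Theorem~\ref{4:orbitdesc2} and the fibre-product identification of $G^{V_X\times_i V_X}/\Gamma^{V_X\times_i V_X}$ with the self-product of $G^{V_X}/\Gamma^{V_X}$ over the $i$-th coordinate. The paper also uses exactly this identification (via the group $H$ in the proof of claim (\ref{4:WS4})), and treats it as routine.

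However, there is a genuine gap in the crucial step (iii). Your change-of-variable argument implicitly relies on the bump kernel $\psi(y_0^{-1}u)$ being \emph{left-invariant}, i.e.\ unchanged under $(y_0,u)\mapsto(hy_0,hu)$. This would give $\Phi(hy_0,hu)=\Phi(y_0,u)$, which in turn requires $\phi(h_0 h' h_0^{-1})=\phi(h')$ --- that is, $\phi$ must be \emph{conjugation-invariant}. For a concentrated bump on a nonabelian nilpotent Lie group this fails, which is precisely why the paper only has the \emph{approximate} shift-invariance (\ref{4:3e}), valid only for $h$ in a bounded ball $B_R$ and only up to a $(1\pm3\e'')\rho$ dilation of the scale. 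The paper's hard work --- the dyadic ``good $\rho$'' pigeonholing in the claim around (\ref{4:epe}), and the volume-packing argument counting $\Gamma\cap x_0^{-1}B_R x_0'$ --- is exactly what controls the resulting Cauchy--Schwarz slack. Your computation that $\E(\omega\times_i\omega)$ collapses exactly to $\bigl(c\int\psi F_0\,dm\bigr)^2=1$ is valid for abelian $G$ (step $1$), but for step $\geq 2$ the self-correlation $\sigma(w)=\int\rho(y_0)\rho(y_0 w)\,dy_0$ is not identically $1$ on the support of the kernel, and bounding $\int I(w)\sigma(w)\,dw-S^2$ with a $C$-independent constant needs the same $\e''$-tolerant bump and good-scale argument as the paper.

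There is a second, more minor issue: $\psi(y_0^{-1}g(x_j)\Gamma)$ as written does not descend to a function of $y_0\in G/\Gamma$, since replacing $y_0$ by $y_0\gamma$ changes the value. You would need to replace this by the paper's kernel $\Phi_{\rho,\e''}(y_0,g(x_j)\Gamma)=\sum_{h:hy_0=g(x_j)\Gamma}\phi_{\rho,\e''}(h)$, which is genuinely a function on $(G/\Gamma)^2$, and at that point your ``outer-averaged'' $\omega$ becomes a cousin of the paper's pairwise kernel rather than a cleaner alternative. Your Cauchy--Schwarz critique of the ``naive'' $\omega=\prod_j\psi(g(x_j)\Gamma)$ is correct, but it misattributes that construction to the paper: the paper's $\omega$ uses pairwise kernels and confronts (rather than sidesteps) the Cauchy--Schwarz slack, which is the content of claim (\ref{4:WS4}).
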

\begin{proof}
The idea of the proof comes from Proposition 6.2 of \cite{GT10b}. However, we need to tailor the approach of  \cite{GT10b} in many aspects for our purposes.
 Let $\e'',R,\rho_{0},\rho,\d>0$ 	to be chosen later. We will chose $\e'',R,\rho_{0},\rho,\d$ in a way such that the later parameter in the list below
	$$1, \e'', R^{-1}, \rho_{0}, \rho, \d$$
	is sufficiently smaller than the former one, depending only on $C_{0},C,d,\e'$. 
	 
	 For the moment write the quantity $\d:=\d_{C_{0},d,\e'}(C)$ to be chosen laster as a function of $C$.
 		Since $g$ is $(\mathcal{F}(C),V(M-r))$-irrational, by Theorem \ref{4:orbitdesc2},
			 if $\mathcal{F}$ is a growth function such that 
			$\mathcal{F}(x)>N_{0}(\max\{C_{0},x\},\d_{C_{0},d,\e'}(x),d,k,s)$ (where $N_{0}$ is defined in Theorem \ref{4:orbitdesc2}) for all $x>0$ and $p\gg_{C_{0},C,\d,d,\mathcal{F}} 1$, then 
	 	\begin{itemize}
	 		\item the sequence
	 		\begin{equation}\nonumber
	 			(g(x_{0})\Gamma,\dots,g(x_{k+s-1})\Gamma)_{(x_{0},\dots,x_{k+s-1})\in\Omega}
	 		\end{equation}
	 		is $2\d$-equidistributed  on $G^{V_{X}}/\Gamma^{V_{X}}$; 
	 		\item the sequence
	 		\begin{equation}\nonumber
	 			(g(x_{0})\Gamma,\dots,g(x_{2k+2s-1})\Gamma)_{(x_{0},\dots,x_{2k+2s-1})\in \Omega\times_{i}\Omega}
	 		\end{equation}
	 		is $\d$-equidistributed  on $G^{V_{X}\times_{i}V_{X}}/\Gamma^{V_{X}\times_{i}V_{X}}$ for all $0\leq i\leq k+s-1$.  
	 	\end{itemize}

	Let $\{X_{1},\dots,X_{\dim(G)}\}$ be the Mal'cev basis of $G/\Gamma$.
	For $\rho>0$, let $B_{\rho}$ denote the set of elements in $G$ of the form
	$$\exp\Bigl(\sum_{j=1}^{\dim(G)}t_{j}X_{j}\Bigr)$$
	for some $\vert t_{j}\vert\leq \rho^{s-i}$ whenever $1\leq i\leq s-1$ and $j\leq \dim(G)-\dim(G_{i})$.
	 For all $\e'',\rho>0$, let $\phi_{\rho,\e''}\colon G\to \R$ be a non-negative function which is supported on $B_{\rho}$ and equals to 1 on $B_{(1-\e'')\rho}$. We may further require that the Lipschitz norm of $\phi_{\rho,\e''}$ is $O_{C,\e'',\rho}(1)$ and that $\phi_{\rho',\e''}\leq \phi_{\rho,\e''}$ pointwise for all $0<\rho'<\rho\leq 1$ and $\e''>0$. Let $\Phi_{\rho,\e''}\colon (G/\Gamma)^{2}\to\R$ be the function given by
	$$\Phi_{\rho,\e''}(x,x'):=\sum_{h\in G\colon hx=x'}\phi_{\rho,\e''}(h).$$
	The function $\Phi_{\rho,\e''}$ is supported near the diagonal of $(G/\Gamma)^{2}$ and $\Phi_{\rho,\e''}(x,x')\neq 0$ only if $x'\in B_{\rho}x$. Moreover, if $x'\in B_{(1-\e'')\rho}x$, then $\Phi_{\rho,\e''}(x,x')=1$.
	By Lemma 6.3 of \cite{GT10b}, if $\rho_{0}$ is chosen sufficiently small depending on $C,\e'',R$, then we have the approximate shift-invariance 
	\begin{equation}\label{4:3e}
	\begin{split}
	\Phi_{(1-3\e'')\rho,\e''}(x,x')\leq \Phi_{\rho,\e''}(hx,hx')\leq \Phi_{(1+3\e'')\rho,\e''}(x,x')
	\end{split}
	\end{equation}
	for all $0<\rho<\rho_{0}$, $x,x'\in G/\Gamma$ and $h\in G$ such that $d_{G}(h,id_{G})\leq R$.

	 Let 
	 $$W_{\rho,\e''}:=\E_{(x_{0},\dots,x_{k+s-1})\in\Omega}\prod_{0\leq i<i'\leq k+s-1}\Phi_{\rho,\e''}(g(x_{i})\Gamma,g(x_{i'})\Gamma)$$
	 and
	\begin{equation}\nonumber
	\begin{split}
	\omega(x_{0},\dots,x_{k+s-1}):=\omega_{\rho,\e''}(x_{0},\dots,x_{k+s-1}):=\frac{1}{W_{\rho,\e''}}\prod_{0\leq i<i'\leq k+s-1}\Phi_{\rho,\e''}(g(x_{i})\Gamma,g(x_{i'})\Gamma)
	\end{split}
	\end{equation}
	for some $\rho$ and $\e''$  to be chosen later.	
	 By definition, it is clear that $\E_{\x\in\Omega}\omega(\x)=1$.
    So Condition (ii) holds.
	
	If $\omega(x_{0},\dots,x_{k+s-1})>0$, then for all $0\leq i<i'\leq k+s-1$,
	$$\vert f_{\str}(x_{i})-f_{\str}(x_{i'})\vert\leq \Vert F\Vert_{\Lip(G/\Gamma)}\cdot d_{G/\Gamma}(g(x_{i})\Gamma,g(x_{i'})\Gamma)\leq C\rho_{0}<\e'.$$
	So Condition (iv) is satisfied if we choose $\rho_{0}\leq \e' C^{-1}$.

	 Since $V_{X}$    is of complexity at most $O_{C_{0},d}(1)$, by Lemma \ref{4:vxprop} (iv), we have that $G^{V_{X}}/\Gamma^{V_{X}}$ is of complexity at most $O_{C_{0},C,d}(1)$. 
	Let $\tilde{\Phi}_{\rho,\e''}\colon (G/\Gamma)^{k+s}\to\R$ be the function given by
	\begin{equation}\nonumber
	\begin{split}
	\tilde{\Phi}_{\rho,\e''}(u_{0},\dots,u_{k+s-1}):=\prod_{0\leq i<i'\leq k+s-1}\Phi_{\rho,\e''}(u_{i},u_{i'})
	\end{split}
	\end{equation}	
	Clearly $\Vert\tilde{\Phi}_{\rho,\e''}\Vert_{\Lip((G/\Gamma)^{k+s})}=O_{C_{0},C,d,\e'',\rho}(1)$ and
	$\Vert\tilde{\Phi}_{\rho,\e''}\Vert_{\Lip(G^{V_{X}}/\Gamma^{V_{X}})}=O_{C_{0},C,d,\e'',\rho}(1).$

		Define
		$$S_{\rho,\e''}:=\int_{G^{V_{X}}/\Gamma^{V_{X}}}\tilde{\Phi}_{\rho,\e''}\,dm_{G^{V_{X}}/\Gamma^{V_{X}}},
		$$				
		where $m_{G^{V_{X}}/\Gamma^{V_{X}}}$ is the Haar measure of $G^{V_{X}}/\Gamma^{V_{X}}$. 
	Then 
		\begin{equation}\label{4:WS}
			\begin{split}
				\vert W_{\rho,\e''}-S_{\rho,\e''}\vert\leq 2\d\Vert\tilde{\Phi}_{\rho,\e''}\Vert_{\Lip(G^{V_{X}}/\Gamma^{V_{X}})}=O_{C_{0},C,d,\e'',\rho}(\d).
			\end{split}
		\end{equation}

  Since $\tilde{\Phi}_{\rho,\e''}$ equals to 1 on a ball of radius $\rho^{O_{C_{0},C,d}(1)}$ centered at the identity and is bounded by 1 throughout. We have that
   	\begin{equation}\label{4:WS2}
   		\begin{split}
   			O_{C_{0},C,d}(\rho^{O_{C_{0},C,d}(1)})\leq S_{\rho,\e''}\leq 1.
   		\end{split}
   	\end{equation}	
   Moreover, by the property of $\phi_{\rho,\e''}$, we have that $S_{\rho',\e''}\leq S_{\rho,\e''}$ for all $0<\rho'<\rho<\rho_{0}$. 
   
   \textbf{Claim.} If $\rho_{0}$ and $\e''$ sufficiently small depending only on $C_{0},C,d,\e'$, then there exists    
 $\rho_{0}>\rho\gg_{C_{0},C,d,\e',\e'',\rho_{0}} 1\gg_{C_{0},C,d,\e'} 1$ such that
	\begin{equation}\label{4:epe}
	\begin{split}
	(1-\e')S_{\rho,\e''}\leq S_{(1-3\e'')\rho,\e''}\leq S_{(1+3\e'')\rho,\e''}\leq (1+\e')S_{\rho,\e''}.
	\end{split}
	\end{equation}		
	
		To see this,
	 we say that $0<\rho<\rho_{0}$ is \emph{good} if 
	 $S_{\rho,\e''}<(1+\e')S_{(1-3\e'')\rho,\e''}.$ 
	 Denote $\rho_{n}:=(1-3\e'')^{n}\rho_{0}$. Suppose that for some $S\in\N_{+}$, there exist $n_{0}:=0<n_{1}<n_{2}<\dots<n_{S}$ with $1\leq n_{i}-n_{i-1}\leq 2$ such that $\rho_{n_{i}}$ is bad for all $1\leq i\leq S$.
	 Then 
	 $$S_{\rho_{n_{i}},\e''}-S_{\rho_{n_{i+1}},\e''}\geq S_{\rho_{n_{i}},\e''}-S_{(1-3\e'')\rho_{n_{i}},\e''}\geq\e' S_{(1-3\e'')\rho_{n_{i}},\e''}\geq \e'S_{\rho_{n_{i+1}},\e''}$$
	 for all $1\leq i\leq S-1$.
	 Summing over $i$, it follows from (\ref{4:WS2}) that
	 \begin{equation}\nonumber
	 \begin{split}
	&\quad 1\geq S_{\rho_{n_{1}},\e''}\geq \sum_{i=2}^{S-1}\e' S_{\rho_{n_{i}},\e''}\geq \sum_{i=2}^{S-1}\e' S_{(1-3\e'')^{2i+2}\rho_{0},\e''}
	 \\&=O_{C_{0},C,d,\e'}\Bigl(\rho_{0}^{O_{C_{0},C,d}(1)}\frac{(1-3\e'')^{4}(1-(1-3\e'')^{2S-2})}{1-(1-3\e'')^{2}}\Bigr).
	 \end{split}
	 \end{equation}
	 If we  choose $\rho_{0}$ and $\e''$ sufficiently small depending only on $C_{0},C,d,\e'$,  and then choose $S$ to be sufficiently large depending only on $\rho_{0},C_{0},C,d,\e',\e''$, we arrive at a contradiction.
	 In other words, there exist $S=O_{\rho_{0},C_{0},C,d,\e',\e''}(1)$ and some $0<n\leq 2S$ such that both $\rho_{n}$ and $\rho_{n+1}$ are good.
	 
	  Let $\rho:=\rho_{n+1}$. Then $\rho\gg_{\rho_{0},C_{0},C,d,\e',\e''} 1$.
	 Since $\rho$ and $(1-3\e'')^{-1}\rho$ are good, we have that
	 $$(1-\e')S_{\rho,\e''}<(1+\e') S_{(1-3\e'')\rho,\e''}-\e'S_{\rho,\e''}\leq S_{(1-3\e'')\rho,\e''}$$
	 and
	 $$S_{(1+3\e'')\rho,\e''}<S_{(1-3\e'')^{-1}\rho,\e''}<(1+\e')S_{\rho,\e''}.$$ 
	This completes the proof of the claim.

		\

	 We now fix such $\rho$ given by the claim. 	
	We remark that  $1\ll_{C_{0},C,d,\e'}\rho\leq O_{C_{0},C,d,\e'}(1)$ since eventually we will pick  $\rho_{0}$ to be dependent only on $C_{0},C,d,\e'$.
		It then follows from (\ref{4:WS}) and (\ref{4:WS2}) that 
	$$W_{\rho,\e''}\geq 	O_{C_{0},C,d}(\rho^{O_{C_{0},C,d}(1)})+O_{C_{0},C,d,\e'',\rho}(\d)\gg_{C_{0},C,d,\e'} 1$$	
	if $\d$ is sufficiently small depending on $C_{0},C,d,\e'',\rho$.	
		So Condition (i) holds.

	It remains to check Condition (iii). For convenience we only consider the case $i=0$ as the other cases are similar. By Lemma \ref{4:lemgg}, we may assume without loss of generality that $I=\{0,\dots,k\}$ is a generating set of $X$.
	Let $\tilde{\Phi}_{\rho,\e''}\times_{0}\tilde{\Phi}_{\rho,\e''}\colon (G/\Gamma)^{2k+2s-1}\to\R$ be the function given by
	\begin{equation}\nonumber
		\begin{split}
			\tilde{\Phi}_{\rho,\e''}\times_{0}\tilde{\Phi}_{\rho,\e''}(u_{0},\dots,u_{2k+2s-1}):=\tilde{\Phi}_{\rho,\e''}(u_{0},\dots,u_{k+s-1})\tilde{\Phi}_{\rho,\e''}(u_{k+s},\dots,u_{2k+2s-1})
		\end{split}
	\end{equation}	
	for all $u_{0},\dots,u_{2k+2s-1}\in G/\Gamma$.
	Clearly
	$\Vert\tilde{\Phi}_{\rho,\e''}\times_{0}\tilde{\Phi}_{\rho,\e''}\Vert_{\Lip(G^{V_{X}\times_{0}V_{X}}/\Gamma^{V_{X}\times_{0}V_{X}})}=O_{C_{0},C,d,\e'',\rho}(1).$
	Define
	$$S'_{\rho,\e''}:=\int_{G^{V_{X}\times_{0}V_{X}}/{\Gamma}^{V_{X}\times_{0}V_{X}}}\tilde{\Phi}_{\rho,\e''}\times_{0}\tilde{\Phi}_{\rho,\e''}\,dm_{G^{V_{X}\times_{0}V_{X}}/{\Gamma}^{V_{X}\times_{0}V_{X}}},$$
	where $m_{G^{V_{X}\times_{0}V_{X}}/{\Gamma}^{V_{X}\times_{0}V_{X}}}$ is the Haar measure of $G^{V_{X}\times_{0}V_{X}}/{\Gamma}^{V_{X}\times_{0}V_{X}}$.		
	 By  (\ref{4:WS}),
	\begin{equation}\label{4:WS3}
		\begin{split}
		&\quad \E_{\x\in \Omega\times_{i}\Omega}(\omega\times_{i}\omega)(\x)
		=\frac{1}{W_{\rho,\e''}^{2}}\E_{\x\in \Omega\times_{i}\Omega}\tilde{\Phi}_{\rho,\e''}\times_{0}\tilde{\Phi}_{\rho,\e''}(\x)
		\\&=\frac{1}{(S_{\rho,\e''}+O_{C_{0},C,d,\e'',\rho}(\d))^{2}}(S'_{\rho,\e''}+\d\Vert\tilde{\Phi}_{\rho,\e''}\times_{0}\tilde{\Phi}_{\rho,\e''}\Vert_{\Lip(G^{V_{X}\times_{0}V_{X}}/\Gamma^{V_{X}\times_{0}V_{X}})})
		\\&=(S'_{\rho,\e''}+O_{C_{0},C,d,\e'',\rho}(\d))(S_{\rho,\e''}+O_{C_{0},C,d,\e'',\rho}(\d))^{-2}.
		\end{split}
	\end{equation}

		For convenience we write $A=o_{c\to\infty;\mathcal{C}}(B)$ if for any $\e>0$, there exists $K>0$ depending only on $\e$ and  the parameters in $\mathcal{C}$ such that $\vert A\vert\leq \e\vert B\vert$ for all $c\geq K$.
If we can show that
	\begin{equation}\label{4:WS4}
		\begin{split}
		(1+O(\e'))(1+o_{R\to\infty;C_{0},C,d}(1))S_{\rho,\e''}^{2}=S'_{\rho,\e''},
		\end{split}
	\end{equation}
then by choosing first $R$ being sufficiently small depending on $C_{0},C,d,\e,\e''$, then $\rho$ being sufficiently small depending on $C_{0},C,d,\e,\e'',R$, and then $\d$ being sufficiently small depending on $C_{0},C,d,\e,\e'',R,\rho$, and finally $p$ being sufficiently large depending on $C_{0},C,d,$ $\e,\e'',\mathcal{F},R,\rho,\d$,
we deduce from (\ref{4:WS2}), (\ref{4:WS3})  and (\ref{4:WS4}) that Condition (iii) holds.

So it suffices to prove (\ref{4:WS4}). The argument is similar to  Claim 6.4 of \cite{GT10b}. For $i\in\N$,
Denote $H_{i}:=\{(g_{0},\dots,g_{2k+2s-1})\in G_{(i)}^{V_{X}}\times G_{(i)}^{V_{X}}\colon g_{0}=g_{k+s}\}$
and let $H:=H_{1}$. Then $(H_{i})_{i\in\N}$ is an $\N$-filtration of $H$.
It is not hard to see that 
there exists a natural $\N$-filtered isomorphism $\psi\colon G^{V_{X}\times_{0} V_{X}}/\Gamma^{V_{X}\times_{0} V_{X}}\to H/((\Gamma^{V_{X}}\times \Gamma^{V_{X}})\cap H)$ induced by 
$$\psi(g_{0},\dots,g_{2k+2s-2}):=(g_{0},\dots,g_{k+s-1},g_{0},g_{k+s},\dots,g_{2k+2s-2})$$
for all $(g_{0},\dots,g_{2k+2s-2})\in G^{V_{X}\times_{0} V_{X}}$.
For the rest of the proof, for convenience all the integrals are assumed to be taken with respect to the Haar measure of space.
Then we may rewrite $S'_{\rho,\e''}$ as 
$$S'_{\rho,\e''}=\int_{(\x,\x')\in (G^{V_{X}}\times G^{V_{X}})/(\Gamma^{V_{X}}\times \Gamma^{V_{X}}), x_{0}=x'_{0}}\tilde{\Phi}_{\rho,\e''}(\x)\tilde{\Phi}_{\rho,\e''}(\x'),$$
where we write $\x=(x_{0},\dots,x_{k+s-1})$ and $\x'=(x'_{0},\dots,x'_{k+s-1})$.
For any $h$ in $B_{R}:=\{h\in G\colon d_{G}(h,id_{G})\leq R\}$, by (\ref{4:3e}),  we have that 
 $$\int_{(\x,\x')\in (G^{V_{X}}\times G^{V_{X}})/(\Gamma^{V_{X}}\times \Gamma^{V_{X}}), x_{0}=hx'_{0}}\tilde{\Phi}_{(1+3\e'')\rho,\e''}(\x)\tilde{\Phi}_{(1+3\e'')\rho,\e''}(\x')\geq S'_{\rho',\e''}.$$ 
Integrating both side over $B_{R}$, we deduce that
$$\int_{(\x,\x')\in (G^{V_{X}}\times G^{V_{X}})/(\Gamma^{V_{X}}\times \Gamma^{V_{X}})}\lambda(\x,\x')\tilde{\Phi}_{(1+3\e'')\rho,\e''}(\x)\tilde{\Phi}_{(1+3\e'')\rho,\e''}(\x')\geq m_{G}(B_{R})S'_{\rho',\e''},$$
where  $\lambda(\x,\x')$ is the number of $h\in B_{R}$ such that $x_{0}\Gamma=hx'_{0}\Gamma$ and $m_{G}$ is the Haar measure of $G$. In other words, $\lambda(\x,\x')$ equals to the cardinality of the set $\Gamma\cap x_{0}^{-1}B_{R}x'_{0}$. Choose any representatives of $x_{0}$ and $x_{0}'$ in some fundamental domain of complexity $O_{C_{0},C,d}(1)$, and use a  volume-packing argument and some simple geometry, one can show that 
$$\lambda(\x,\x')=m_{G}(B_{R})(1+o_{R\to\infty;C_{0},C,d}(1)).$$
Comparing with the above we have
$$(1+o_{R\to\infty;C_{0},C,k,s}(1))S_{(1+3\e'')\rho,\e''}^{2}\geq S'_{\rho,\e''}.$$
So by (\ref{4:epe}),
we have that 
$$(1+\e')^{2}(1+o_{R\to\infty;C_{0},C,d}(1))S_{\rho,\e''}^{2}\geq S'_{\rho,\e''}.$$
This proves one side of (\ref{4:WS4}), and the proof for the other side is similar.
We are done.
\end{proof}

We now continue to prove Theorem \ref{4:mainmain2}.
First note that 
\begin{equation}\label{4:mm0}
\begin{split}
&\quad\Vert f_{\str}\Vert_{L^{1}(V(M-r))}\geq \Vert \bold{1}_{E}\Vert_{L^{1}(V(M-r))}-\Vert f_{\uni}\Vert_{L^{1}(V(M-r))}-\Vert f_{\err}\Vert_{L^{1}(V(M-r))}
\\&\geq \Vert \bold{1}_{E}\Vert_{L^{1}(V(M-r))}-\Vert f_{\uni}\Vert_{U^{s}(V(M-r))}-\Vert f_{\err}\Vert_{L^{2}(V(M-r))}\geq \e-\mathcal{F}(C)^{-1}-\e'.
\end{split}
\end{equation}
Let $f':=f_{\str}+f_{\err}$.
By Theorem \ref{4:vdcc} and the triangle inequality, if $d\geq d_{0}(X)$,   then 
\begin{equation}\label{4:mm3}
\begin{split}
\vert\E_{(x_{0},\dots,x_{k+s-1})\in \Omega}\bold{1}_{E}^{\otimes (k+s)}(x_{0},\dots,x_{k+s-1})-\E_{(x_{0},\dots,x_{k+s-1})\in \Omega}{f'}^{\otimes (k+s)}(x_{0},\dots,x_{k+s-1})\vert
 \leq 2^{k+s}\mathcal{F}(C)^{-1}
\end{split}
\end{equation} 
provided that $p\gg_{C,d,\mathcal{F}} 1$. 

By Proposition \ref{4:recset}, if $\mathcal{F}$ grows sufficiently fast depending only on $C_{0},d,\e'$ (which will be eventually dependent only on $C_{0},d,\e$ since we will choose $\e'$ to be dependent only on these quantities) and  $p\gg_{C_{0},C,d,\e',\mathcal{F}} 1$, then there exists  a function $\omega\colon \Omega\to \mathbb{R}$ such that the following holds:
	\begin{enumerate}[(i)]
		\item $0\leq\omega(\x)\leq O_{C_{0},C,d,\e'}(1)$ for all $\x\in\Omega$;
		\item $\E_{\x\in\Omega}\omega(\x)=1$; 
		\item $\vert\E_{\x\in \Omega\times_{i}\Omega}(\omega\times_{i}\omega)(\x)-1\vert\leq O_{C_{0},d}(\e')$ for all $0\leq i\leq k+s-1$; 
		\item 	$\vert f_{\str}(x_{i})-f_{\str}(x_{i'})\vert\leq \e'$ for all $0\leq i,i'\leq k+s-1$ if $\omega(x_{0},\dots,x_{k+s-1})>0$.
	\end{enumerate}
Since $0\leq f'\leq 1$ and $0\leq\omega(\x)\leq O_{C_{0},C,d,\e'}(1)$ for all $\x\in\Omega$,
\begin{equation}\label{4:mm4}
\begin{split}
\E_{\x\in \Omega}{f'}^{\otimes (k+s)}(\x)
\geq O_{C_{0},C,d,\e'}(1)^{-1}\E_{\x\in \Omega}{f'}^{\otimes (k+s)}(\x)\omega(\x).
\end{split}
\end{equation}

We need some definitions in order the apply Theorem \ref{4:ct} to the sets $\Omega$ and $\Omega\times_{i}\Omega$.
For any generating set $I$ of $X$,
	by Lemma \ref{4:kk2d} (ii),   $\Omega_{I}$  is a nice and consistent $M$-set of total co-dimension   $\frac{(k+2)(k+1)}{2}$. 
So we may write $\Omega_{I}=V(\mathcal{J}_{I})$ for some consistent $(M,k+1)$-family $\mathcal{J}_{I}\subseteq \F_{p}[(x_{i})_{i\in I}]$ of total dimension $\frac{(k+2)(k+1)}{2}$. For any $i\in I$, let $(\mathcal{J}'_{I,i},\mathcal{J}''_{I,i})$ be an  $\{x_{i}\}$-decomposition of $\mathcal{J}_{I}$. It is clear that  
\begin{equation}\label{4:mmm1}
\begin{split}
\text{the set of $x_{i}\in\V$  such that $(x_{i})_{i\in I}\in V(\mathcal{J}'_{I,i})$ for all $(x_{i'})_{i'\in I\backslash\{i\}}\in (\V)^{k}$ is equal to $V(M-r)$.}
\end{split}
\end{equation} 

Similarly, for any generating set $I$ of $X$ and any $i\in I$,
	by Lemma \ref{4:kk3d} (ii),   $\Omega_{I}\times_{i}\Omega_{I}$  is a nice and consistent $M$-set of total co-dimension   $k^{2}+3k+1$. 
So we may write $\Omega_{I}\times_{i}\Omega_{I}=V(\tilde{\mathcal{J}}_{I,i})$ for some consistent $(M,2k+1)$-family $\tilde{\mathcal{J}}_{I,i}\subseteq \F_{p}[x_{i},(x^{+}_{i'})_{i'\in I\backslash\{i\}},(x^{-}_{i'})_{i'\in I\backslash\{i\}}]$ of total dimension $k^{2}+3k+1$. Let $(\tilde{\mathcal{J}}'_{I,i},\tilde{\mathcal{J}}''_{I,i})$ be an  $\{x_{i}\}$-decomposition of $\tilde{\mathcal{J}}_{I,i}$. It is clear that  
\begin{equation}\label{4:mmm2}
\begin{split}
\text{the set of $x_{i}\in\V$  such that $(x_{i},\x^{+},\x^{-})\in V(\tilde{\mathcal{J}}'_{I,i})$ for all $\x^{+},\x^{-}\in (\V)^{k}$ is equal to $V(M-r)$,}
\end{split}
\end{equation}
and that for all $x_{i}\in V(M-r)$, $\x^{+},\x^{-}\in (\V)^{k}$,
\begin{equation}\label{4:mmm3}
\begin{split}
(x,\x^{+},\x^{-})\in V(\tilde{\mathcal{J}}''_{I,i}) \Leftrightarrow (x\da_{i\ca I}\x^{+}),(x\da_{i\ca I}\x^{-})\in V(\mathcal{J}''_{I,i}).
\end{split}
\end{equation}

For any functions $f_{0},\dots,f_{k+s-1}$ taking values in $[0,1]$ with $f_{i}=f_{\err}$ for some $0\leq i\leq k+s-1$,  by Theorem \ref{4:ct}, Lemmas \ref{4:kk2d} and \ref{4:kk3d},  relations (\ref{4:mmm1}), (\ref{4:mmm2}), (\ref{4:mmm3}), and the Cauchy-Schwartz inequality, we have that (taking $I$ to be any generating set containing $i$)
\begin{equation}\label{4:mm5}
\begin{split}
&\quad\vert\E_{(x_{0},\dots,x_{k+s-1})\in \Omega}f_{0}(x_{0})f_{1}(x_{1})\dots f_{k+s-1}(x_{k+s-1})\omega(x_{0},\dots,x_{k+s-1})\vert
\\&\leq \E_{(x_{0},\dots,x_{k+s-1})\in \Omega}\vert f_{\err}(x_{i})\vert\cdot\omega(x_{0},\dots,x_{k+s-1})
\\&=\E_{(x_{i'})_{i'\in I}\in \Omega_{I}}\vert f_{\err}(x_{i})\vert\cdot\omega(\L_{I}((x_{i'})_{i'\in I}))
\\&=\E_{x_{i}\in V(M-r)}\vert f_{\err}(x_{i})\vert\cdot \E_{\x'\in (\V)^{k}\colon (x_{i}\da_{i\ca I}\x')\in V(\mathcal{J}''_{I,i})}\omega(\L_{I}(x_{i}\da_{i\ca I}\x'))+O_{C_{0},C,d,\e'}(p^{-1/2})
\\&\leq (\vert\E_{x_{i}\in V(M-r)}\vert f_{\err}(x_{i})\vert^{2})^{1/2}\cdot(\vert\E_{x_{i}\in V(M-r)}\vert\E_{\x'\in (\V)^{k}\colon (x_{i}\da_{i\ca I}\x')\in V(\mathcal{J}''_{I,i})}\omega(\L_{I}(x_{i}\da_{i\ca I}\x'))\vert^{2})^{1/2}+\e'
\\&\leq \e'(\E_{x_{i}\in V(M-r)}\E_{\x',\x''\in (\V)^{k}\colon (x_{i}\da_{i\ca I}\x'),(x_{i}\da_{i\ca I}\x'')\in V(\mathcal{J}''_{I,i})}\omega(\L_{I}(x_{i}\da_{i\ca I}\x'))\omega(\L_{I}(x_{i}\da_{i\ca I}\x'')))^{1/2}+\e'
\\&=\e' (\E_{\x\in \Omega_{I}\times_{i}\Omega_{I}}(\omega\times_{i}\omega)(\L^{\pm}_{i\ca I}(\x))+O_{C_{0},C,d,\e'}(p^{-1/2}))^{1/2}+\e'
\\&=\e' (\E_{\x\in \Omega\times_{i}\Omega}(\omega\times_{i}\omega)(\x)+O_{C_{0},C,d,\e'}(p^{-1/2}))^{1/2}+\e'
\\&\leq \e' (1+O_{C_{0},d}(\e')+O_{C_{0},C,d,\e'}(p^{-1/2}))^{1/2}+\e'\leq 3\e'
\end{split}
\end{equation} 
provided that $\e'$ is sufficiently small depending on $C_{0},d$ and that $p\gg_{C_{0},C,d,\e'} 1$. 

Let $I$ be any generating set of $X$ containing 0. From now on we assume without loss of generality that $I=\{0,\dots,k\}$.
Since $\omega(\x)>0$ implies that $\vert f_{\str}(x_{i})-f_{\str}(x_{i'})\vert\leq \e'$ for all $0\leq i,i'\leq k+s-1$,  it follows from Theorem \ref{4:ct}, Lemma \ref{4:kk2d} and (\ref{4:mmm1}) that 
\begin{equation}\label{4:mm6}
\begin{split}
&\quad\vert\E_{(x_{0},\dots,x_{k+s-1})\in \Omega}f_{\str}(x_{0})f_{\str}(x_{1})\dots f_{\str}(x_{k+s-1})\omega(x_{0},\dots,x_{k+s-1})\vert
\\&\geq  \E_{(x_{0},\dots,x_{k+s-1})\in \Omega}(f_{\str}(x_{0})^{k+s}-(k+s)\e')\omega(x_{0},\dots,x_{k+s-1})
\\&=\E_{(x_{0},\dots,x_{k})\in \Omega_{I}}f_{\str}(x_{0})^{k+s}\omega(\L_{I}(x_{0},\dots,x_{k}))-(k+s)\e'
\\&=\E_{x_{0}\in V(M-r)}f_{\str}(x_{0})^{k+s}\E_{\x'\in(\V)^{k}\colon (x_{0},\x')\in V(\mathcal{J}''_{I,0})}\omega(\L_{I}(x_{0},\x'))-(k+s)\e'+O_{C_{0},C,d,\e'}(p^{-1/2}).
\end{split}
\end{equation}

By the construction of $\omega$, Theorem \ref{4:ct} and Lemma \ref{4:kk2d}, we have
\begin{equation}\label{4:tteemmm1}
\begin{split}
&\quad\E_{x_{0}\in V(M-r)} \E_{\x'\in(\V)^{k}\colon (x_{0},\x')\in V(\mathcal{J}''_{I,0})}\omega(\L_{I}(x_{0},\x'))
 =\E_{\x\in\Omega_{I}}\omega(\L_{I}(\x))+O_{C_{0},C,d,\e'}(p^{-1/2})
\\&=\E_{\x\in\Omega}\omega(\x)+O_{C_{0},C,d,\e'}(p^{-1/2})
=1+O_{C_{0},C,d,\e'}(p^{-1/2}).
\end{split}
\end{equation} 
So by the construction of $\omega$, Theorem \ref{4:ct}, Lemmas \ref{4:kk2d} and \ref{4:kk3d}, and relations (\ref{4:mmm1}), (\ref{4:mmm2}), (\ref{4:mmm3}), (\ref{4:tteemmm1}), we have that 
\begin{equation}\nonumber
\begin{split}
&\quad\E_{x_{0}\in V(M-r)}\vert\E_{\x'\in(\V)^{k}\colon (x_{0},\x')\in V(\mathcal{J}''_{I,0})}\omega(\L_{I}(x_{0},\x'))-1\vert^{2}
\\&=\E_{x_{0}\in V(M-r)}\vert\E_{\x'\in(\V)^{k}\colon (x_{0},\x')\in V(\mathcal{J}''_{I,0})}\omega(\L_{I}(x_{0},\x'))\vert^{2}+1-2\E_{x_{0}\in V(M-r)} \E_{\x'\colon (x_{0},\x')\in V(\mathcal{J}''_{I,0})}\omega(\L_{I}(x_{0},\x'))
\\&=\E_{x_{0}\in V(M-r)}\E_{\x',\x''\in(\V)^{k}\colon (x_{0},\x')(x_{0},\x'')\in V(\mathcal{J}''_{I,0})}\omega(\L_{I}(x_{0},\x'))\omega(\L_{I}(x_{0},\x''))-1+O_{C_{0},C,d,\e'}(p^{-1/2})
\\&=\E_{(x_{0},\x',\x'')\in \Omega_{I}\times_{0}\Omega_{I}}(\omega\times_{0}\omega)(\L^{\pm}_{0\ca I}(x_{0},\x',\x''))-1+O_{C_{0},C,d,\e'}(p^{-1/2})
\\&=\E_{\x\in \Omega\times_{0}\Omega}(\omega\times_{0}\omega)(\x)-1+O_{C_{0},C,d,\e'}(p^{-1/2})
\\&=O_{C_{0},d}(\e')+O_{C_{0},C,d,\e'}(p^{-1/2})=O_{C_{0},d}(\e').
\end{split}
\end{equation} 
So by the Pigeonhole Principle, there exists a subset $U$ of $V(M-r)$ with $\vert U\vert=O_{C_{0},d}({\e'}^{1/3}\vert V(M-r)\vert)$ such that for all $x_{0}\in V(M-r)\backslash U$, 
\begin{equation}\label{4:mm65s}
\begin{split}
\vert \E_{\x'\in (\V)^{k}\colon (x_{0},\x')\in V(\mathcal{J}''_{I,0})}\omega(\L_{I}(x_{0},\x'))-1\vert=O_{C_{0},d}({\e'}^{1/3}).
\end{split}
\end{equation} 
  It then follows from (\ref{4:mm0}), (\ref{4:mm6}), (\ref{4:mm65s}) and the Cauchy-Schwartz inequality that
\begin{equation}\label{4:mm65}
\begin{split}
&\quad\vert\E_{(x_{0},\dots,x_{k+s-1})\in \Omega}f_{\str}(x_{0})f_{\str}(x_{1})\dots f_{\str}(x_{k+s-1})\omega(x_{0},\dots,x_{k+s-1})\vert
\\&\geq  \E_{x_{0}\in V(M-r)}\bold{1}_{V(M-r)\backslash U}(x_{0})f_{\str}(x_{0})^{k+s}(1+O_{C_{0},d}({\e'}^{1/3}))+O_{C_{0},d}({\e'})
\\&=\E_{x_{0}\in V(M-r)}f_{\str}(x_{0})^{k+s}+O_{C_{0},d}({\e'}^{1/3})
\\&\geq (\E_{x_{0}\in V(M-r)}f_{\str}(x_{0}))^{k+s}+O_{C_{0},d}({\e'}^{1/3})
\\&\geq (\e-\mathcal{F}(C)^{-1}-\e')^{k+s}+O_{C_{0},d}({\e'}^{1/3})
\end{split}
\end{equation} 

Combining (\ref{4:mm3}), (\ref{4:mm4}), (\ref{4:mm5}) and (\ref{4:mm65}), we have that 
\begin{equation}\nonumber
\begin{split}
&\quad\E_{(x_{0},\dots,x_{k+s-1})\in \Omega}\bold{1}_{E}(x_{0})\bold{1}_{E}(x_{1})\dots \bold{1}_{E}(x_{k+s-1})
\\&\geq
O_{C_{0},C,d,\e'}(1)^{-1}\Bigl((\e-\mathcal{F}(C)^{-1}-\e')^{k+s}+O_{C_{0},d}({\e'}^{1/3})\Bigr)
-2^{k+s}\mathcal{F}(C)^{-1}.
\end{split}
\end{equation} 
If we first chose $\e'$ to be sufficiently small depending on $C_{0},d,\e$, then $\mathcal{F}$ grow sufficiently fast depending on $C_{0},d,\e$, and 
 then $p$ sufficiently large depending on $C_{0},C,d,\e,\mathcal{F}$,  we will derive a lower bound
\begin{equation}\nonumber
\begin{split}
\E_{(x_{0},\dots,x_{k+s-1})\in \Omega}\bold{1}_{E}(x_{0})\bold{1}_{E}(x_{1})\dots \bold{1}_{E}(x_{k+s-1})>\d(C_{0},C,d,\e)>0.
\end{split}
\end{equation} 
This proves (\ref{4:mmm1}) since $C$ is dependent on $d,\e',\mathcal{F}$ and thus on $C_{0},d,\e$. This finishes the proof of Theorem \ref{4:mainmain2}.

\appendix

\section{Background materials for polynomials and nilmanifolds}\label{4:s:AppA}

In this appendix, we recall the definitions in \cite{SunA} on polynomials and nilmanifolds.

 \subsection{Polynomials}\label{4:s:AppA1}

\begin{defn}[Polynomials in finite field]
	Let $\poly(\V\to\F^{d'}_{p})$ be the collection of all functions $f\colon \V\to\F_{p}$ of the form
	\begin{equation}\label{1:p1}
	f(n_{1},\dots,n_{d})=\sum_{0\leq a_{1},\dots,a_{d}\leq p-1, a_{i}\in\N}C_{a_{1},\dots,a_{d}}n^{a_{1}}_{1}\dots n^{a_{d-1}}_{d}
	\end{equation}	
	for some $C_{a_{1},\dots,a_{d}}\in \F_{p}^{d'}$. 
	Let $f\in\poly(\V\to\F^{d'}_{p})$ be a function which is not constant zero.
	The \emph{degree of $f$} (denoted as $\deg(f)$) is the largest $r\in\N$ such that $C_{a_{1},\dots,a_{d}}\neq \bold{0}$ for some $a_{1}+\dots+a_{d}=r$. We say that $f$ is \emph{homogeneous of degree $r$} if $C_{a_{1},\dots,a_{d}}\neq \bold{0}$ implies that $a_{1}+\dots+a_{d}=r$.	 We say that $f$ is a \emph{linear transformation} if $f$ is homogeneous of degree 1.
\end{defn}	

\begin{conv}\label{1:c00}
       For convenience, the \emph{degree} of the constant zero function is allowed to take any integer value. For example, 0 can be regarded as a homogeneous polynomial of degree 10, as a linear transformation, or as a polynomial of degree -1. Here we also adopt the convention the   only  polynomial of negative degree is 0.  
\end{conv}

 \begin{defn}[Partially $p$-periodic polynomials]\label{1:polygeneral}
     For any subset $R$ of $\R$,
     let $\poly(\Z^{d}\to R)$  be the collection of all polynomials in $\Z^{d}$ taking values in $R$. For any prime $p$, subset $\Omega$ of $\Z^{d}$, and sets  $R'\subseteq R\subseteq \R$, let $\poly(\Omega\to R\vert R')$ denote the set of all $f\in\poly(\Z^{d}\to R)$ such that $f(n)\in R$ for all $n\in\Z^{d}$ and that $f(n)\in R'$ for all $n\in\Omega+p\Z^{d}$.
     Let $\poly_{p}(\Omega\to R\vert R')$ denote the set of all $f\in\poly(\Z^{d}\to R)$ such that $f(n)\in R$ for all $n\in\Z^{d}$ and that $f(n+pm)-f(n)\in R'$ for all $n\in\Omega+p\Z^{d}$ and $m\in\Z^{d}$.  
     
     A polynomial $f$ in $\poly(\Omega\to \R\vert\Z)$ is called a \emph{partially $p$-periodic polynomial on $\Omega$}.
 \end{defn}

\subsection{Nilmanifolds, filtrations and polynomial sequences}\label{4:s:AppA2}

Let $G$ be a group and $g,h\in G$. Denote $[g,h]:=g^{-1}h^{-1}gh$. For subgroups $H,H'$ of $G$, let $[H,H']$ denote the group generated by $[h,h']$ for all $h\in H$ and $h'\in H'$.

\begin{defn}[Filtered group]
	Let $G$ be a group. An \emph{$\N$-filtration} on $G$ is a collection $G_{\N}=(G_{i})_{i\in \N}$ of subgroups of $G$ indexed by $\N$ with $G_{0}=G$  such that  the following holds:
	\begin{enumerate}[(i)]
		\item for all $i,j\in \N$ with $i\leq j$, we have that $G_{i}\supseteq G_{j}$;
		\item  for all $i,j\in \N$, we have $[G_{i},G_{j}]\subseteq G_{i+j}$.
	\end{enumerate}	
	For $s\in \N$, we say that $G$ is an \emph{($\N$-filtered) nilpotent group} of \emph{degree} at most $s$ (or of \emph{degree} $\leq s$) with respect to some $\N$-filtration  $(G_{i})_{i\in \N}$ if $G_{i}$ is trivial whenever $i>s$.\footnote{Unlike the convention in literature, for our convenience, we do not require $G_{0}=G$ to be the same as $G_{1}$.}
 \end{defn}

\begin{defn}[Nilmanifold]
	Let $\Gamma$ be a discrete and cocompact subgroup of a connected, simply-connected nilpotent Lie group $G$ with filtration $G_{\N}=(G_{i})_{i\in \N}$ such that $\Gamma_{i}:=\Gamma\cap G_{i}$ is a cocompact subgroup of $G_{i}$\footnote{In some papers, such $\Gamma_{i}$ is called a \emph{rational} subgroup of $G$.} for all $i\in \N$.
	Then we say that $G/\Gamma$ is an \emph{($\N$-filtered) nilmanifold}, and we use $(G/\Gamma)_{\N}$ to denote the collection $(G_{i}/\Gamma_{i})_{i\in \N}$ (which is called the \emph{$\N$-filtration} of $G/\Gamma$). We say that $G/\Gamma$ has degree $\leq s$  with respect to $(G/\Gamma)_{\N}$ if $G$ has degree $\leq s$  with respect to $G_{\N}$.
\end{defn}

\begin{defn}[Sub-nilmanifold]\label{1:id1}
	Let $G/\Gamma$ be an $\N$-filtered nilmanifold of degree $\leq s$ with filtration $G_{\N}$ and $H$ be a rational subgroup of $G$. Then $H/(H\cap \Gamma)$ is also  an $\N$-filtered nilmanifold of degree $\leq s$ with the filtration $H_{\N}$ given by $H_{i}:=G_{i}\cap H$ for all $i\in \N$ (see Example 6.14 of \cite{GTZ12}). We say that  $H/(H\cap \Gamma)$ is a \emph{sub-nilmanifold} of $G/\Gamma$, $H_{\N}$ (or $(H/(H\cap \Gamma))_{\N}$) is the filtration \emph{induced by} $G_{\N}$ (or $(G/\Gamma)_{\N}$).
\end{defn}	

\begin{defn}[Quotient nilmanifold]\label{1:id2}
	Let $G/\Gamma$ be an $\N$-filtered nilmanifold of degree $\leq s$ with filtration $G_{\N}$ and $H$ be a normal subgroup of $G$. Then $(G/H)/(\Gamma/(\Gamma\cap H))$ is also  an $\N$-filtered nilmanifold of most $\leq s$ with the filtration $(G/H)_{\N}$ given by $(G/H)_{i}:=G_{i}/(H\cap G_{i})$ for all $i\in \N$. We say that  $(G/H)/(\Gamma/(\Gamma\cap H))$ is the \emph{quotient nilmanifold} of $G/\Gamma$ by $H$ and that $(G/H)_{\N}$ is the filtration \emph{induced by} $G_{\N}$.
\end{defn}	

\begin{defn}[Product nilmanifold]\label{1:id4}
	Let $G/\Gamma$ and  $G'/\Gamma'$ be  $\N$-filtered nilmanifolds of degree $\leq s$ with filtration $G_{\N}$ and $G'_{\N}$. Then $G\times G'/\Gamma\times\Gamma'$ is also  an $\N$-filtered nilmanifold of most $\subseteq J$ with the filtration $(G\times G'/\Gamma\times\Gamma')_{\N}$ given by $(G\times G'/\Gamma\times\Gamma')_{i}:=G_{i}\times G'_{i}/\Gamma_{i}\times \Gamma'_{i}$ for all $i\in \N$. We say that  $G\times G'/\Gamma\times\Gamma'$ is the \emph{product nilmanifold} of $G/\Gamma$ and $G'/\Gamma'$ and that $(G\times G'/\Gamma\times\Gamma')_{\N}$ is the filtration \emph{induced by} $G_{\N}$ and $G'_{\N}$.
\end{defn}

Every nilmanifold has an explicit algebraic description by using the Mal'cev basis:

\begin{defn} [Mal'cev basis]\label{1:Mal}
	Let $s\in\N_{+}$, $G/\Gamma$ be a  nilmanifold  of step at most $s$ with the $\N$-filtration  $(G_{i})_{i\in\N}$. Let $\dim(G)=m$ and $\dim(G_{i})=m_{i}$ for all $0\leq i\leq s$. A basis $\mathcal{X}:=\{X_{1},\dots,X_{m}\}$ for the Lie algebra $\log G$ of $G$ (over $\mathbb{R}$) is a \emph{Mal'cev basis} for $G/\Gamma$ adapted to the filtration $G_{\N}$ if
	\begin{itemize}
		\item for all $0\leq j\leq m-1$, $\log H_{j}:=\text{Span}_{\mathbb{R}}\{\xi_{j+1},\dots,\xi_{m}\}$ is a Lie algebra ideal of $\log G$ and so $H_{j}:=\exp(\log H_{j})$  is a normal Lie subgroup of $G;$
		\item $G_{i}=H_{m-m_{i}}$ for all $0\leq i\leq s$;
		\item the map $\psi^{-1}\colon \mathbb{R}^{m}\to G$ given by
		\begin{equation}\nonumber
		\psi^{-1}(t_{1},\dots,t_{m})=\exp(t_{1}X_{1})\dots\exp(t_{m}X_{m})
		\end{equation}	
		is a bijection;
		\item $\Gamma=\psi^{-1}(\Z^{m})$.
	\end{itemize}	
	We call $\psi$ the  \emph{Mal'cev coordinate map} with respect to the Mal'cev basis $\mathcal{X}$.
	If $g=\psi^{-1}(t_{1},\dots,t_{m})$, we say that $(t_{1},\dots,t_{m})$ are the \emph{Mal'cev coordinates} of $g$ with respect to $\mathcal{X}$. 
	
	We say that the Mal'cev basis $\mathcal{X}$  is \emph{$C$-rational} (or of \emph{complexity} at most $C$)  if all the structure constants $c_{i,j,k}$ in the relations
	   	$$[X_{i},X_{j}]=\sum_{k}c_{i,j,k}X_{k}$$
	   	are rational with complexity at most $C$.
\end{defn}

It is known that for every filtration $G_{\bullet}$ which is rational for $\Gamma$, there exists a Mal'cev basis adapted to it. See for example the discussion on pages 11--12 of \cite{GT12b}.

We use the following quantities to describe the complexities of the objected defined above.

 \begin{defn}[Notions of complexities for nilmanifolds]
Let $G/\Gamma$ be a nilmanifold with filtration $G_{I}$ and a Mal'cev basis $\mathcal{X}=\{X_{1},\dots,X_{D}\}$ adapted to it. 
We say that  $G/\Gamma$ is of \emph{complexity} at most $C$ if the  Mal'cev basis $\mathcal{X}$ is $C$-rational and $\dim(G)\leq C$. 

 An element $g\in G$ is of \emph{complexity} at most $C$ (with respect to the Mal'cev coordinate map $\psi\colon G/\Gamma\to\R^{m}$) if $\psi(g)\in [-C,C]^{m}$.

Let $G'/\Gamma'$ be a nilmanifold  endowed with the Mal'cev basis  $\mathcal{X}'=\{X'_{1},\dots,X'_{D'}\}$ respectively. Let $\phi\colon G/\Gamma\to G'/\Gamma'$ be a filtered homomorphism, we say that $\phi$ is of  \emph{complexity} at most $C$ if the map $X_{i}\to\sum_{j}a_{i,j}X'_{j}$ induced by $\phi$ is such that all $a_{i,j}$ are of complexity at most $C$.

 Let $G'\subseteq G$  be a closed connected subgroup. We say that  $G'$ is \emph{$C$-rational} (or of \emph{complexity} at most $C$) relative to $\mathcal{X}$ if the Lie algebra $\log G$ has a basis consisting of linear combinations $\sum_{i}a_{i}X_{i}$ such that $a_{i}$ are rational numbers of complexity at most $C$.
\end{defn}

\begin{conv}
 Throughout this paper, all nilmanifolds are assumed to have a fixed filtration, Mal'cev basis and a smooth Riemannian metric induced by the Mal'cev basis. Therefore, we will simply say that a nilmanifold, Lipschitz function, sub-nilmanifold etc. is of complexity $C$ without mentioning the reference filtration and Mal'cev basis.
\end{conv}

\begin{defn}[Polynomial sequences]
	Let $d, k\in\N_{+}$ and $G$ be a connected simply-connected nilpotent Lie group. Let    $(G_{i})_{i\in\N}$ be an $\N$-filtration of $G$. A map $g\colon \Z^{d}\to G$ is a \emph{($\N$-filtered) $d$-integral polynomial sequence} if
		$$\Delta_{h_{m}}\dots \Delta_{h_{1}} g(n)\in G_{m}$$
		 for all $m\in\N$ and $n,h_{1},\dots,h_{m}\in \Z^{d}$.\footnote{Recall that $\Delta_{h}g(n):=g(n+h)g(n)^{-1}$
 for all $n, h\in H$.}	
	
    The set of all $\N$-filtered polynomial sequences is denoted by $\poly(\Z^{d}\to G_{\N})$.
  \end{defn}

By Corollary B.4 of \cite{GTZ12}, $\poly(\Z^{k}\to G_{\N})$ is a  group with respect to the pointwise multiplicative operation.
We refer the readers to Appendix B of \cite{GTZ12} for more properties for   polynomial sequences.

\begin{defn}[Partially periodic polynomial sequences]
	Let  $k\in\N_{+}$, $G/\Gamma$ be an $\N$-filtered nilmanifold, $p$ be a prime, and $\Omega$ be a subset of $\Z^{k}$.
	Let $\poly(\Omega\to G_{\N}\vert\Gamma)$ denote the set of all $g\in \poly(\Z^{k}\to G_{\N})$ such that $g(n)\in \Gamma$ for all $n\in\Omega$, and $\poly_{p}(\Omega\to G_{\N}\vert\Gamma)$ denote the set of all $g\in \poly(\Z^{k}\to G_{\N})$ such that $g(n+pm)^{-1}g(n)\in \Gamma$ for all $n\in\Omega$ and $m\in\Z^{k}$.

Let
 $\poly(\F_{p}^{k}\to G_{\N})$ denote the set of all functions of the form $f\circ\tau$ for some $f\in \poly(\Z^{k}\to G_{\N})$.
 For a subset $\Omega$ of $\F_{p}^{k}$, let
 $\poly_{p}(\Omega\to G_{\N}\vert\Gamma)$ denote the set of all functions of the form $f\circ\tau$ for some $f\in \poly_{p}(\iota^{-1}(\Omega)\to G_{\N}\vert\Gamma)$. 
 We define $\poly(\Omega\to G_{\N}\vert\Gamma)$  similarly.

 For $\Omega\subseteq\Z^{k}$
 or $\F_{p}^{k}$, we call functions in $\poly_{p}(\Omega\to G_{\N}\vert\Gamma)$ \emph{partially $p$-periodic polynomial sequences on $\Omega$.}
 \end{defn}

Recall from \cite{SunA} that $\poly_{p}(\Omega\to G_{\N}\vert \Gamma)$ is not closed under multiplication. However, we have

\begin{lem}[Lemma 
3.17 of \cite{SunA}]\label{4:grg}
	Let  $k\in\N_{+}$, $G/\Gamma$ be an $\N$-filtered nilmanifold, $p$ be a prime, and  $\Omega$ be a subset of $\Z^{k}$. For all $f\in\poly_{p}(\Omega\to G_{\N}\vert \Gamma)$ and $g\in\poly(\Omega\to G_{\N}\vert \Gamma)$, we have that $fg\in \poly_{p}(\Omega\to G_{\N}\vert \Gamma)$.
\end{lem}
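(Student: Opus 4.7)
The plan is straightforward: the statement reduces to a direct computation using the defining properties, so I would simply check that $fg$ satisfies both conditions in the definition of $\poly_{p}(\Omega\to G_{\N}\vert \Gamma)$.

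First, by Corollary B.4 of \cite{GTZ12}, the set $\poly(\Z^{k}\to G_{\N})$ is a group under pointwise multiplication. Since both $f$ and $g$ belong to $\poly(\Z^{k}\to G_{\N})$ (by the definitions of $\poly_{p}(\Omega\to G_{\N}\vert \Gamma)$ and $\poly(\Omega\to G_{\N}\vert \Gamma)$), we have $fg\in\poly(\Z^{k}\to G_{\N})$. So the polynomial sequence part of the definition is automatic.

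Next I would verify the partial periodicity. Fix $n\in\Omega+p\Z^{k}$ and $m\in\Z^{k}$, so that $n+pm\in\Omega+p\Z^{k}$ as well. A direct computation gives
$$(fg)(n+pm)^{-1}(fg)(n)=g(n+pm)^{-1}\bigl(f(n+pm)^{-1}f(n)\bigr)g(n).$$
Now $g(n)\in\Gamma$ and $g(n+pm)\in\Gamma$ because $g\in\poly(\Omega\to G_{\N}\vert \Gamma)$, and $f(n+pm)^{-1}f(n)\in\Gamma$ because $f\in\poly_{p}(\Omega\to G_{\N}\vert \Gamma)$. Since $\Gamma$ is a subgroup of $G$, the product above lies in $\Gamma$, which is exactly the condition $fg\in\poly_{p}(\Omega\to G_{\N}\vert \Gamma)$.

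There is no real obstacle here: the whole point of the lemma is to formalise the mild asymmetry between $\poly_{p}$ and $\poly$, namely that multiplying a partially $p$-periodic sequence by a sequence which is genuinely $\Gamma$-valued on $\Omega+p\Z^{k}$ preserves partial $p$-periodicity, whereas multiplying two $\poly_{p}$-sequences need not. The computation above makes this transparent, and no input beyond the group property of $\poly(\Z^{k}\to G_{\N})$ and the two definitions is required.
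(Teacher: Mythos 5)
Your proof is correct, and it is the natural direct computation one would expect: the group property of $\poly(\Z^{k}\to G_{\N})$ from Corollary B.4 of \cite{GTZ12} handles the polynomial-sequence condition, and the conjugation identity $(fg)(n+pm)^{-1}(fg)(n)=g(n+pm)^{-1}\bigl(f(n+pm)^{-1}f(n)\bigr)g(n)$ together with closure of $\Gamma$ under products handles the partial $p$-periodicity. The paper itself imports this lemma from \cite{SunA} without reproducing the argument, so there is no in-paper proof to compare against, but your argument is complete and matches the straightforward route the cited lemma is expected to take.
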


\subsection{The Baker-Campbell-Hausdorff formula}\label{4:s:AppA3}	

The material of this section comes from Appendix C of \cite{GT10b}. We write it down for completeness.

Let $G$ be a group, $t\in\N_{+}$ and $g_{1},\dots,g_{t}\in G$. The \emph{iterated commutator} of $g_{1}$ is defined to be $g_{1}$ itself. Iteratively, we define an \emph{iterated commutator} of $g_{1},\dots,g_{t}$ to be an element of the form $[w,w']$, where $w$ is an iterated commutator of $g_{i_{1}},\dots,g_{i_{r}}$, $w'$ is an iterated commutator of $g_{i'_{1}},\dots,g_{i'_{r'}}$ for some $1\leq r,r'\leq t-1$ with $r+r'=t$ and $\{i_{1},\dots,i_{r}\}\cup \{i'_{1},\dots,i'_{r'}\}=\{1,\dots,t\}$.

Similarly, let $X_{1},\dots,X_{t}$ be elements of a Lie algebra. The \emph{iterated Lie bracket} of $X_{1}$ is defined to be $X_{1}$ itself. Iteratively, we define an \emph{iterated Lie bracket} of $X_{1},\dots,X_{t}$ to be an element of the form $[w,w']$, where $w$ is an iterated Lie bracket of $X_{i_{1}},\dots,X_{i_{r}}$, $w'$ is an iterated Lie bracket of $X_{i'_{1}},\dots,X_{i'_{r'}}$ for some $1\leq r,r'\leq t-1$ with $r+r'=t$ and $\{i_{1},\dots,i_{r}\}\cup \{i'_{1},\dots,i'_{r'}\}=\{1,\dots,t\}$.

Let $G$ be a connected and simply connected nilpotent Lie group. The \emph{Baker-Campbell-Hausdorff formula} asserts that for all $X_{1},X_{2}\in\log G$, we have
$$\exp(X_{1})\exp(X_{2})=\exp\Bigl(X_{1}+X_{2}+\frac{1}{2}[X_{1},X_{2}]+\prod_{\alpha}c_{\alpha}X_{\alpha}\Bigr),$$	
where $\alpha$ is a finite set of labels, $c_{\alpha}$ are real constants, and $X_{\alpha}$ are iterated Lie brackets with $k_{1,\alpha}$ copies of $X_{1}$  and $k_{2,\alpha}$ copies of $X_{2}$ for some $k_{1,\alpha},k_{2,\alpha}\geq 1$ and $k_{1,\alpha}+k_{2,\alpha}\geq 3$. One may use this formula to show that for all $g_{1},g_{2}\in G$ and $x\in \R$, we have that
\begin{equation}\nonumber\label{1:C1}
(g_{1}g_{2})^{x}=g_{1}^{x}g_{2}^{x}\prod_{\alpha}g_{\alpha}^{Q_{\alpha}(x)},
\end{equation}	
where $\alpha$ is a finite set of labels,   $g_{\alpha}$ are iterated commutators with $k_{1,\alpha}$ copies of $g_{1}$  and $k_{2,\alpha}$ copies of $X_{2}$ for some $k_{1,\alpha},k_{2,\alpha}\geq 1$, and $Q_{\alpha}\colon\R\to\R$ are polynomials of degrees at most $k_{1,\alpha}+k_{2,\alpha}$ without constant terms.

Similarly, one can show that for any $g_{1},g_{2}\in G$ and $x_{1},x_{2}\in \R$, we have that	
\begin{equation}\label{4:C2}
[g_{1}^{x_{1}},g_{2}^{x_{2}}]=[g_{1},g_{2}]^{x_{1}x_{2}}\prod_{\alpha}g_{\alpha}^{P_{\alpha}(x_{1},x_{2})},
\end{equation}	
where $\alpha$ is a finite set of labels,   $g_{\alpha}$ are iterated commutators with $k_{1,\alpha}$ copies of $g_{1}$  and $g_{2,\alpha}$ copies of $X_{2}$ for some $k_{1,\alpha},k_{2,\alpha}\geq 1$, $k_{1,\alpha}+k_{2,\alpha}\geq 3$, and $P_{\alpha}\colon\R^{2}\to\R$ are polynomials of degrees at most $k_{1,\alpha}$ in $x_{1}$ and at most $k_{2,\alpha}$ in $x_{2}$ which vanishes when $x_{1}x_{2}=0$.
	
\subsection{Type-I horizontal  torus and character}

\begin{defn}[Type-I horizontal torus and character]
Let $G/\Gamma$ be a nilmanifold endowed with a Mal'cev basis $\mathcal{X}$.  The \emph{type-I horizontal torus} of $G/\Gamma$ is $G/[G,G]\Gamma$.
	A \emph{type-I horizontal character} is a continuous homomorphism $\eta\colon G\to \R$ such that $\eta(\Gamma)\subseteq \Z$.
	When written in the coordinates relative to $\mathcal{X}$, we may write $\eta(g)=k\cdot \psi(g)$ for some unique $k=(k_{1},\dots,k_{m})\in\Z^{m}$, where $\psi\colon G\to \R^{m}$ is the coordinate map with respect to the Mal'cev basis  $\mathcal{X}$. We call the quantity $\Vert\eta\Vert:=\vert k\vert=\vert k_{1}\vert+\dots+\vert k_{m}\vert$ the \emph{complexity} of $\eta$ (with respect to $\mathcal{X}$). 
\end{defn}

It is not hard to see that any type-I horizontal character mod $\Z$ vanishes on $[G,G]\Gamma$ and thus descent to a continuous homomorphism between the type-I horizontal torus $G/[G,G]\Gamma$ and $\R/\Z$. Moreover, $\eta \mod \Z$ is a well defined map from $G/\Gamma$ to $\R/\Z$. 

It follows from Lemma 
3.21 of \cite{SunA} that 
  a function $g\colon \Z^{k}\to G$ belongs to $\poly(\Z^{k}\to G_{\N})$  if and only if 
\begin{equation}\label{4:tl}
g(n)=\prod_{i\in\N^{k}, \vert i\vert\leq s}g_{i}^{\binom{n}{i}}
\end{equation}
for some $g_{i}\in G_{\vert i\vert}$ (with any fixed order in $\N^{k}$), where $s$ is the degree of $G$.
We call $g_{i}$ the \emph{($i$-th) type-I Taylor  coefficient} of $g$ (which is uniquely determined by $g$ if $g\in\poly(\Z^{k}\to G_{\N})$ thanks to Lemma 
3.21 of \cite{SunA}), and (\ref{4:tl}) the \emph{type-I Taylor expansion} of $g$.

  
\begin{lem}[Lemma 
   3.28 of \cite{SunA}]\label{4:goodcoordinates}
Let $d,s\in\N_{+}$, $p\gg_{d,s} 1$ be a prime, $\Omega$ be a non-empty $p$-periodic subset of $\Z^{d}$, and $\eta$ be a type-I horizontal character on some $\N$-filtered degree $\leq s$ nilmanifold $G/\Gamma$.
Then for all $g\in\poly_{p}(\Omega\to G_{\N}\vert\Gamma)$, $p^{s}\eta\circ g(n)$  takes values in $\Z+C$ for some $C\in\R$.
\end{lem}

\section{Results for $M$-sets from previous papers}\label{4:s:AppB}

In this appendix, we collect some results proved in previous parts of the series \cite{SunA,SunC} which are used in this paper. We refer the readers to Section \ref{4:s:defn} for the notation used in this section.

\subsection{The rank of quadratic forms}\label{4:s:AppB1}

 \begin{lem}[Lemma 
 4.2 of \cite{SunA}]\label{4:or}
Let $M\colon\V\to\F_{p}$ be a quadratic form associated with the matrix $A$, and $x,y,z\in \V$. Suppose that $M(x)=M(x+y)=M(x+z)=0$. Then $M(x+y+z)=0$ if and only if $(yA)\cdot z=0$.
\end{lem}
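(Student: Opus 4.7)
The plan is a direct computation using the polarization identity for the quadratic form $M$. Since $M$ is associated with the symmetric matrix $A$, we can write $M(n)=(nA)\cdot n+n\cdot u+v$ for some $u\in\F_p^d$ and $v\in\F_p$, and the symmetry of $A$ gives $(xA)\cdot y=(yA)\cdot x$ for all $x,y\in\V$.

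First I would expand $M(x+y+z)$ by trilinearity, collecting all the cross terms. Grouping them appropriately, I would rewrite
\[
M(x+y+z)=M(x)+\bigl[M(x+y)-M(x)\bigr]+\bigl[M(x+z)-M(x)\bigr]+2(yA)\cdot z,
\]
which is just the observation that the ``second derivative'' of $M$ in the directions $y$ and $z$ equals $2(yA)\cdot z$. Substituting the hypotheses $M(x)=M(x+y)=M(x+z)=0$ immediately collapses this to $M(x+y+z)=2(yA)\cdot z$.

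From this identity the conclusion is automatic: $M(x+y+z)=0$ if and only if $2(yA)\cdot z=0$, which (since we are working with $p$ odd, as is implicit throughout the paper where one routinely divides by $2$) is equivalent to $(yA)\cdot z=0$. There is no real obstacle here; the only thing to be careful about is bookkeeping the cross terms correctly and invoking the symmetry of $A$ to pair $(xA)\cdot y$ with $(yA)\cdot x$.
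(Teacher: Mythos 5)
Your proof is correct: the identity $M(x+y+z)=M(x)+[M(x+y)-M(x)]+[M(x+z)-M(x)]+2(yA)\cdot z$ follows by direct expansion using the symmetry of $A$, and the hypotheses collapse it to $M(x+y+z)=2(yA)\cdot z$, which for $p$ odd is equivalent to the claim. This is the natural (and essentially the only) route; the cited Lemma 4.2 of \cite{SunA} is proved the same way.
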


Let $M\colon\V\to\F_{p}$ be a quadratic form associated with the matrix $A$ and $V$ be a subspace of $\V$. Let $V^{\pp}$ denote the set of $\{n\in\V\colon (mA)\cdot n=0 \text{ for all } m\in V\}$.
		A subspace $V$ of $\V$ is \emph{$M$-isotropic} if $V\cap V^{\pp}\neq\{\bold{0}\}$. 
	We say that a tuple $(h_{1},\dots,h_{k})$ of vectors in $\V$ is \emph{$M$-isotropic} if the span of $h_{1},\dots,h_{k}$ is an $M$-isotropic subspace.
	We say that a subspace or tuple of vectors is \emph{$M$-non-isotropic} if it is not $M$-isotropic.
 
\begin{prop}[Proposition 
4.8 of \cite{SunA}]\label{4:iissoo}
Let $M\colon \V\to\F_{p}$ be a quadratic form and $V$ be a subspace of $\V$ of co-dimension $r$, and $c\in\V$.
	\begin{enumerate}[(i)]
		\item We have $\dim(V\cap V^{\pp})\leq \min\{d-\rank(M)+r,d-r\}$.
		\item The rank of $M\vert_{V+c}$ equals to $d-r-\dim(V\cap V^{\pp})$ (i.e. $\dim(V)-\dim(V\cap V^{\pp})$). 
		\item The rank of $M\vert_{V+c}$ is at most $d-r$ and at least $\rank(M)-2r$.
		\item $M\vert_{V+c}$ is non-degenerate (i.e. $\rank(M\vert_{V+c})=d-r$) if and only if $V$ is not an $M$-isotropic subspace.
	\end{enumerate}	
\end{prop}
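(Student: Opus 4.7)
The plan is to recast the entire proposition in the language of the symmetric bilinear form $B(x,y):=(xA)\cdot y$ associated to (the quadratic part of) $M$. Under this reformulation, $V^{\pp}$ is precisely the orthogonal complement of $V$ with respect to $B$, and $V\cap V^{\pp}$ is precisely the radical of $B$ restricted to $V$. The symmetry of $A$ will be used throughout (and we may assume $p$ is odd, as is standard in this paper).

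First I would prove Part (ii), since Parts (iii) and (iv) follow from it combined with Part (i). Fix a bijective linear transformation $\phi\colon\F_p^{d-r}\to V$ and expand
\[
M(\phi(x)+c)=B(\phi(x),\phi(x))+\phi(x)(2Ac^T+u^T)+\bigl(cAc^T+c\cdot u+v\bigr).
\]
The linear and constant terms do not affect the rank of the quadratic form $x\mapsto M(\phi(x)+c)$, so its rank equals the rank of the pure quadratic $x\mapsto B(\phi(x),\phi(x))$. This is $\dim(\F_p^{d-r})$ minus the dimension of the radical, and the radical is exactly $\phi^{-1}(V\cap V^{\pp})$. Hence $\rank(M\vert_{V+c})=(d-r)-\dim(V\cap V^{\pp})$, independent of $\phi$ and $c$, which simultaneously confirms the well-definedness asserted in Section~\ref{4:s:defn}.

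Next I would establish Part (i). The bound $\dim(V\cap V^{\pp})\leq d-r$ is immediate from $V\cap V^{\pp}\subseteq V$. For the bound $\dim(V\cap V^{\pp})\leq d-\rank(M)+r$, I would express $V^{\pp}$ as the preimage of an annihilator: since $A$ is symmetric,
\[
V^{\pp}=\{n\in\V\colon nA\in V^{\perp}\},
\]
where $V^{\perp}$ is the annihilator of $V$ with respect to the standard dot product, of dimension $r$. The linear map $n\mapsto nA$ has kernel of dimension $d-\rank(M)$, so the standard preimage-dimension formula yields $\dim V^{\pp}\leq r+(d-\rank(M))$, and hence $\dim(V\cap V^{\pp})\leq d-\rank(M)+r$.

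Part (iii) then drops out: the upper bound $\rank(M\vert_{V+c})\leq d-r$ is (ii) plus nonnegativity of $\dim(V\cap V^{\pp})$, and the lower bound is (ii) combined with the first half of (i). Part (iv) is just the equality case of the upper bound in (iii): non-degeneracy of $M\vert_{V+c}$ means rank $=d-r$, equivalently $\dim(V\cap V^{\pp})=0$, which by definition is $V$ being $M$-non-isotropic. There is no serious obstacle here; the only subtlety is keeping track of the fact that $V^{\pp}$ is defined via the symmetric bilinear form and so the orthogonality is not with respect to the usual inner product, which is why the factorization $V^{\pp}=(n\mapsto nA)^{-1}(V^{\perp})$ is the right identity to invoke for the sharp bound in Part (i).
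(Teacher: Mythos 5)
Your proof is correct. Since the paper only cites this proposition from the first part of the series \cite{SunA} and does not reproduce its proof here, there is no in-paper argument to compare against; your route — reducing $\rank(M\vert_{V+c})$ to the rank of the symmetric matrix $\Phi A \Phi^{T}$ (where $\Phi$ represents $\phi$), identifying its kernel with $\phi^{-1}(V\cap V^{\pp})$ to get (ii), and then obtaining (i) from the factorization $V^{\pp}=(n\mapsto nA)^{-1}(V^{\perp})$ together with rank–nullity — is the natural linear-algebraic argument and almost certainly the one \cite{SunA} uses, with (iii) and (iv) then being formal consequences exactly as you state.
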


\begin{lem}[Lemma 
4.11 of \cite{SunA}]\label{4:iiddpp}
Let $d,k\in\N_{+}$, $p$ be a prime, and $M\colon\V\to\F_{p}$ be a non-degenerate quadratic form.
	\begin{enumerate}[(i)]
		\item The number of tuples $(h_{1},\dots,h_{k})\in (\V)^{k}$ such that $h_{1},\dots,h_{k}$ are linearly  dependent is at most $kp^{(d+1)(k-1)}$.
		\item The number of $M$-isotropic tuples $(h_{1},\dots,h_{k})\in (\V)^{k}$ is at most $O_{d,k}(p^{kd-1})$.
	\end{enumerate}	
\end{lem}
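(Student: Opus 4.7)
The plan is to prove Part (i) by a direct union bound, and Part (ii) by reducing the isotropy condition to the vanishing of a single polynomial of bounded degree and then invoking the Schwartz--Zippel lemma.

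For Part (i), the idea will be that any linearly dependent $k$-tuple $(h_1,\dots,h_k)$ must have some coordinate $h_j$ lying in the span of the remaining $k-1$ vectors. Summing over the $k$ possible choices of the index $j$, freely choosing the other $k-1$ coordinates in $\V$ (contributing at most $p^{d(k-1)}$ tuples), and then restricting $h_j$ to a subspace of dimension at most $k-1$ (contributing at most $p^{k-1}$ choices), one obtains the claimed bound $kp^{(d+1)(k-1)}$.

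For Part (ii), I would introduce the Gram polynomial
$$P(h_1,\dots,h_k):=\det\bigl(((h_i A)\cdot h_j)_{i,j=1}^k\bigr),$$
which has total degree at most $2k$ in the $kd$ coordinates of the $h_i$'s. The key observation is that every $M$-isotropic tuple satisfies $P=0$: if the $h_i$'s are linearly dependent, the Gram matrix has rank strictly less than $k$ and its determinant vanishes; if they are linearly independent, the existence of a nonzero $c\in\F_p^k$ with $Gc=0$ is equivalent to the existence of a nonzero vector $v:=\sum_j c_j h_j\in V\cap V^{\pp}$, i.e.\ to $M$-isotropy of the span $V=\sp_{\F_p}\{h_1,\dots,h_k\}$. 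For $k\leq d$, the non-degeneracy of $M$ guarantees the existence of a non-$M$-isotropic $k$-dimensional subspace (for instance, by diagonalizing the bilinear form associated to $A$), so $P$ is not the zero polynomial over $\F_p$; the Schwartz--Zippel lemma then bounds the number of its zeros in $(\V)^k$ by $2k\cdot p^{kd-1}$, which is $O_{d,k}(p^{kd-1})$ as desired.

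The regime $k>d$ requires a separate treatment, since in that range $P$ vanishes identically and Schwartz--Zippel is useless. Here I would argue instead that any $M$-isotropic tuple must fail to span $\V$: if $V=\V$, then $V^{\pp}=\{\bold{0}\}$ because $A$ is invertible, so $V\cap V^{\pp}=\{\bold{0}\}$. Hence every $M$-isotropic tuple is contained in some hyperplane of $\V$, and summing over the $O(p^{d-1})$ hyperplanes and the $p^{k(d-1)}$ tuples within each one yields an upper bound of order $p^{(d-1)(k+1)}=p^{kd-k+d-1}$, which is $\leq p^{kd-1}$ since $k\geq d$. The main obstacle will be the verification that $P$ is not identically zero as a polynomial over $\F_p$ when $k\leq d$; this is exactly where the hypothesis of non-degeneracy of $M$ is essential, and everything else in the argument is routine counting.
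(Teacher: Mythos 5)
This lemma is quoted from \cite{SunA} and not proved in the present paper, so there is no in-text proof to compare against; I can only evaluate your argument directly, and it is correct. Part (i) is the standard union bound and works exactly as written. For Part (ii), the reduction to the Gram determinant $P(h_1,\dots,h_k)=\det\bigl(((h_iA)\cdot h_j)_{i,j}\bigr)$ is sound: a nontrivial kernel vector $c$ of the Gram matrix maps to $\sum_j c_j h_j$, which is either a nontrivial linear dependence or a nonzero element of $V\cap V^{\pp}$, so every $M$-isotropic tuple lies in $V(P)$. Your case split is also the right one. For $k\leq d$ and $p$ odd, diagonalizing $A=\operatorname{diag}(a_1,\dots,a_d)$ with all $a_i\neq 0$ shows $\sp_{\F_{p}}\{e_1,\dots,e_k\}$ has $M$-orthocomplement $\sp_{\F_{p}}\{e_{k+1},\dots,e_d\}$ and hence is non-$M$-isotropic, so $P\not\equiv 0$ and Schwartz--Zippel (essentially Lemma \ref{4:ns} applied in $kd$ variables) gives the bound, with $p=2$ and $p\leq 2k$ harmlessly absorbed into the $O_{d,k}$ constant. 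For $k>d$, non-degeneracy of $M$ forces any $M$-isotropic span to be a proper subspace, hence contained in a hyperplane, and the count $O(p^{d-1})\cdot p^{k(d-1)}=O(p^{(k+1)(d-1)})\leq O(p^{kd-1})$ holds precisely because $k\geq d$. The only step you left somewhat informal is the exhibition of a non-$M$-isotropic $k$-subspace after diagonalization, but as spelled out above that is routine; there is no gap.
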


\subsection{Some basic counting properties}

 \begin{lem}[Lemma 
 4.10 of \cite{SunA}]\label{4:ns}
Let $P\in\poly(\V\to\F_{p})$ be  of degree at most $r$.
	Then $\vert V(P)\vert\leq O_{d,r}(p^{d-1})$ unless $P\equiv 0$.
\end{lem}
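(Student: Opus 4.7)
The plan is to establish this by a standard Schwartz--Zippel-type induction on $d$, exploiting the fact that in the convention of the paper, the exponents $a_1,\dots,a_d$ in the representation (\ref{1:p1}) of $P$ are bounded by $p-1$. Under this normalization, the map from polynomials (as formal expressions) to functions $\V\to\F_p$ is injective, so the hypothesis $P\not\equiv 0$ as a function is equivalent to $P$ being nonzero as a formal polynomial. This observation is what lets the counting bound survive in the finite-field setting, where otherwise $x^p = x$ would cause trouble.

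For the base case $d=1$, a nonzero polynomial in $\poly(\F_p\to\F_p)$ of degree at most $r$ (necessarily with $r\leq p-1$ under the convention) has at most $r$ roots in $\F_p$, giving $\vert V(P)\vert \leq r = O_r(1)$.

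For the inductive step with $d\geq 2$, I would expand
\[
P(x_1,\dots,x_d)=\sum_{i=0}^{\min(r,p-1)}Q_i(x_1,\dots,x_{d-1})\,x_d^{\,i},
\]
and let $j$ be the largest index with $Q_j\not\equiv 0$. Then $Q_j\in\poly(\F_p^{d-1}\to\F_p)$ has degree at most $r-j$, so by the inductive hypothesis $\vert V(Q_j)\vert\leq O_{d-1,r-j}(p^{d-2})$. For any $(x_1,\dots,x_{d-1})\notin V(Q_j)$, the one-variable polynomial $x_d\mapsto P(x_1,\dots,x_{d-1},x_d)$ is nonzero of degree exactly $j\leq p-1$, so has at most $j$ roots in $\F_p$. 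Splitting the count according to whether $(x_1,\dots,x_{d-1})$ lies in $V(Q_j)$ or not,
\[
\vert V(P)\vert \leq \vert V(Q_j)\vert\cdot p + \bigl(p^{d-1}-\vert V(Q_j)\vert\bigr)\cdot j \leq j\,p^{d-1}+O_{d-1,r-j}(p^{d-1})=O_{d,r}(p^{d-1}),
\]
completing the induction.

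There is essentially no obstacle here; the only subtlety worth flagging is confirming that $P\equiv 0$ in the statement refers to vanishing as a function, together with the remark that the uniqueness of the reduced representation in (\ref{1:p1}) reconciles this with the inductive argument's need for $P$ to be nonzero as a formal polynomial. Once that is noted, the proof is a direct application of the classical Schwartz--Zippel bound adapted to the reduced polynomial representation used throughout the paper.
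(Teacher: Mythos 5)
Your proof is correct and is the standard Schwartz--Zippel induction on the number of variables. The paper does not reprove this lemma here---it is imported by citation from Lemma 4.10 of the first paper in the series---so there is no in-text argument to compare against, but this is precisely the expected route for such a statement. Two small remarks: the parenthetical ``necessarily with $r\leq p-1$'' in your base case is not quite right (the hypothesis ``degree at most $r$'' allows $r$ to exceed $p-1$; what is true is that the \emph{actual} degree of $P$ in the reduced representation is at most $\min(r,p-1)$, which is all the argument needs); and a cleaner bookkeeping of the induction, namely running the inductive hypothesis as $\vert V(Q_j)\vert\leq (r-j)p^{d-2}$, yields the sharper bound $\vert V(P)\vert\leq r\,p^{d-1}$ with no $d$-dependence in the constant, though of course $O_{d,r}(p^{d-1})$ is all the lemma demands. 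Your observation that the reduced representation makes ``nonzero as a function'' equivalent to ``nonzero as a formal polynomial'' is exactly the point that makes the argument go through over $\F_p$, and you are right to flag it.
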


 \begin{lem}[Corollary 
 4.14 of \cite{SunA}]\label{4:counting01}
Let $d\in\N_{+},r\in\N$ and $p$ be a prime number. Let  $M\colon\V\to\F_{p}$ be a   quadratic form   and $V+c$ be an affine subspace of $\V$ of co-dimension $r$.   If either $s:=\rank(M\vert_{V+c})\geq 3$ or $\rank(M)-2r\geq 3$, then $$\vert V(M)\cap (V+c)\vert=p^{d-r-1}(1+O(p^{-\frac{1}{2}})).$$	
	\end{lem}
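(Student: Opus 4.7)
The plan is to reduce the count to the classical question of enumerating zeros of a quadratic form on a finite-dimensional $\F_p$-vector space, and then to invoke Gauss-sum estimates. First I would observe that when $\rank(M)-2r\geq 3$, Proposition \ref{4:iissoo}(iii) already gives $\rank(M|_{V+c})\geq \rank(M)-2r\geq 3$; hence in both stated alternatives it suffices to assume $s:=\rank(M|_{V+c})\geq 3$.

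Next, pick any bijective linear map $\phi\colon \F_p^{d-r}\to V$ and define the quadratic form $Q(x):=M(\phi(x)+c)$ on $\F_p^{d-r}$. Its associated symmetric matrix has rank $s$ by the definition of restricted rank (the independence from $\phi$ having been verified in \cite{SunA}), and since $\phi$ is a bijection onto $V$ we have $|V(M)\cap(V+c)|=|V(Q)|$. Diagonalizing the associated matrix of $Q$, one may write in suitable coordinates $(y,z)\in\F_p^s\times \F_p^{d-r-s}$
$$Q(y,z)=Q_0(y)+w\cdot z+c_1$$
for some non-degenerate quadratic form $Q_0$ on $\F_p^s$, some $w\in\F_p^{d-r-s}$, and some constant $c_1\in\F_p$. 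Here the linear contribution in the $y$-direction lies in the image of the associated matrix of $Q$ and has been absorbed into $Q_0$ by completing the square, while $w\cdot z$ captures the linear part living on the radical.

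Finally, split into two cases. If $w\neq \bold{0}$, then for each fixed $y$ the equation $Q(y,z)=0$ is a single nonzero linear equation in $z$ and has exactly $p^{d-r-s-1}$ solutions, yielding $|V(Q)|=p^{s}\cdot p^{d-r-s-1}=p^{d-r-1}$ on the nose. If $w=\bold{0}$, then $|V(Q)|=p^{d-r-s}\cdot|\{y\in\F_p^s\colon Q_0(y)=-c_1\}|$, and the classical Gauss-sum count for a non-degenerate quadratic form on $\F_p^s$ yields $|\{y\colon Q_0(y)=-c_1\}|=p^{s-1}+O(p^{s/2})=p^{s-1}(1+O(p^{-1/2}))$ precisely because $s\geq 3$. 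Combining the two cases gives the claimed asymptotic. The main technical input is the standard Gauss-sum evaluation (which splits into $s$ odd versus even and $c_1=0$ versus $c_1\neq 0$); the threshold $s\geq 3$ is tight, since at $s=2$ the count for $\{Q_0=0\}$ is $2p-1$ or $1$ depending on the discriminant class, already violating the desired $p(1+O(p^{-1/2}))$ asymptotic.
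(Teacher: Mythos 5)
Your proof is correct. Since the statement is imported as Corollary~4.14 of \cite{SunA} the present paper contains no proof to compare against, but your argument is the canonical one: reduce via Proposition~\ref{4:iissoo}(iii) to $s=\rank(M|_{V+c})\geq 3$, identify $|V(M)\cap(V+c)|$ with $|V(Q)|$ for $Q=M\circ(\phi(\cdot)+c)$, diagonalize and complete the square to write $Q(y,z)=Q_{0}(y)+w\cdot z+c_{1}$ with $Q_{0}$ non-degenerate of rank $s$, and then split on whether $w$ vanishes. The case $w\neq\bold{0}$ gives exactly $p^{d-r-1}$ by linearity in $z$, and the case $w=\bold{0}$ reduces to the Gauss-sum count for a non-degenerate form on $\F_{p}^{s}$, whose error term $O(p^{s/2})$ is absorbed into $p^{s-1}(1+O(p^{-1/2}))$ precisely when $s\geq 3$ — so your observation that the threshold $s\geq 3$ is sharp (already failing at $s=2$ where the deviation is $\pm(p-1)$) is also right.
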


 \begin{lem}[Corollary 
 4.16 of \cite{SunA}]\label{4:counting02}
Let $d,r\in\N_{+}$ and $p$ be a prime number. Let  $M\colon\V\to\F_{p}$ be a non-degenerate   quadratic form  and $h_{1},\dots,h_{r}$ be linearly independent vectors.  
		If $d-2r\geq 3$, then $$\vert V(M)^{h_{1},\dots,h_{r}}\vert=p^{d-r-1}(1+O(p^{-\frac{1}{2}})).$$	
\end{lem}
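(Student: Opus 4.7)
The plan is to rewrite the set $V(M)^{h_{1},\dots,h_{r}}$ as the intersection of $V(M)$ with an affine subspace of codimension $r$, and then invoke Lemma \ref{4:counting01}.

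First, since $M$ is a homogeneous quadratic form with associated matrix $A$, for any $n,h\in\V$ one has the identity
\[
M(n+h)-M(n)=2(nA)\cdot h+M(h).
\]
Consequently the condition $n\in V(M)^{h_{1},\dots,h_{r}}$ is equivalent to $M(n)=0$ together with the $r$ affine linear constraints
\[
2(nA)\cdot h_{i}=-M(h_{i}),\qquad i=1,\dots,r.
\]
Let $W+c$ denote the affine subspace of $\V$ cut out by these $r$ linear equations, so that $V(M)^{h_{1},\dots,h_{r}}=V(M)\cap(W+c)$.

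Next I would verify that $W$ has codimension exactly $r$. The covectors $n\mapsto (nA)\cdot h_{i}$ are linearly dependent if and only if $h_{1}A,\dots,h_{r}A$ are linearly dependent over $\F_{p}$. But $M$ is non-degenerate, so $A$ is invertible, and therefore the linear independence of $h_{1},\dots,h_{r}$ forces the linear independence of $h_{1}A,\dots,h_{r}A$. Hence $W$ has codimension exactly $r$, and the system defining $W+c$ is consistent (so the affine subspace is non-empty of dimension $d-r$).

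Finally I apply Lemma \ref{4:counting01} to $V(M)\cap(W+c)$. The hypothesis of that lemma requires either $\rank(M|_{W+c})\geq 3$ or $\rank(M)-2r\geq 3$. Since $M$ is non-degenerate, $\rank(M)=d$, so by assumption $\rank(M)-2r=d-2r\geq 3$. Lemma \ref{4:counting01} therefore yields
\[
\vert V(M)\cap(W+c)\vert=p^{d-r-1}(1+O(p^{-1/2})),
\]
which is the desired estimate. The only subtlety is confirming that the codimension drop is exactly $r$ (handled by non-degeneracy of $A$) and that the rank hypothesis for Lemma \ref{4:counting01} is met; both are essentially immediate from the assumption $d-2r\geq 3$ and the non-degeneracy of $M$, so there is no real obstacle to the argument.
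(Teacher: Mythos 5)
Your reduction to Lemma \ref{4:counting01} is the right strategy, and since the paper cites this as Corollary 4.16 of \cite{SunA} without reproducing the proof, your argument is essentially a reconstruction of the intended one. One small slip: the lemma does not assume $M$ is homogeneous, only non-degenerate, so the identity should read $M(n+h)-M(n)=2(nA)\cdot h+M(h)-v$ where $v$ is the constant term of $M$; this changes nothing of substance, since the right-hand side is still a constant in $n$ and the membership condition $n\in V(M)^{h_1,\dots,h_r}$ is still cut out by $M(n)=0$ together with $r$ affine linear equations, the codimension count and the application of Lemma \ref{4:counting01} going through exactly as you wrote.
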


 \begin{lem}[Corollary 
 C.7 of \cite{SunA}]\label{4:countingh}
Let $d\in \N_{+}, r,s\in\N$ and $p$ be a prime. Let $M\colon\V\to\F_{p}$ be a quadratic form and $V+c$ be an affine subspace of $\V$ of co-dimension $r$. If either $\rank(M\vert_{V+c})$ or $\rank(M)-2r$ is at least $s^{2}+s+3$, then
	$$\vert \Gow_{s}(V(M)\cap (V+c))\vert=p^{(s+1)(d-r)-(\frac{s(s+1)}{2}+1)}(1+O_{s}(p^{-1/2})).$$
\end{lem}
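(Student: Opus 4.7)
The plan is to reduce the counting to a system of quadratic and linear constraints on $(n,h_1,\dots,h_s)$ and then apply Lemmas~\ref{4:counting01} and~\ref{4:iissoo} iteratively. Writing $M(x) = (xA)\cdot x + u\cdot x + v$ with $A$ symmetric and expanding $M(n+\varepsilon_1 h_1 + \dots + \varepsilon_s h_s)$ for each $\varepsilon\in\{0,1\}^s$ via finite differences, the first-order differences in a single $h_i$ are linear in $n$, the second-order mixed differences $2(h_i A)\cdot h_j$ are independent of $n$, and higher-order differences vanish. Thus $(n,h_1,\dots,h_s)\in \Gow_s(V(M)\cap(V+c))$ is equivalent to the system: $n\in V+c$ and each $h_i\in V$; the single quadratic equation $M(n)=0$; the $s$ equations $2(nA)\cdot h_i + (h_i A)\cdot h_i + u\cdot h_i = 0$ which are linear in $n$; and the $\binom{s}{2}$ equations $(h_i A)\cdot h_j = 0$ purely in the $h$'s (working in characteristic $\ne 2$, which is harmless since the statement is asymptotic in $p$).

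I would then compute the count in two stages. First, for a generic tuple $(h_1,\dots,h_s)\in V^s$ satisfying the $\binom{s}{2}$ orthogonality relations and for which the functionals $\{h_iA\}_{i=1}^s$ restrict to linearly independent maps on $V$, the $s$ linear equations cut out an affine subspace $W_{\mathbf h}\subseteq V+c$ of codimension exactly $s$. By Lemma~\ref{4:iissoo}(iii), $\rank(M|_{W_{\mathbf h}})\ge \rank(M|_{V+c})-2s$, and since the hypothesis $\rank(M|_{V+c})\ge s^2+s+3$ provides a generous margin, Lemma~\ref{4:counting01} applied to $W_{\mathbf h}$ yields $p^{d-r-s-1}(1+O_s(p^{-1/2}))$ solutions in $n$. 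Second, the number of generic orthogonal tuples can be counted inductively: given $h_1,\dots,h_{i-1}$ whose $A$-images are linearly independent on $V$, the set of $h_i\in V$ with $(h_jA)\cdot h_i = 0$ for $j<i$ is precisely a linear subspace of $V$ of codimension $i-1$, contributing $p^{d-r-(i-1)}$ exactly. Chaining this over $i=1,\dots,s$ gives the main term $p^{s(d-r)-s(s-1)/2}$, so the product is $p^{(s+1)(d-r)-s(s+1)/2 - 1}(1+O_s(p^{-1/2}))$, matching the claimed exponent.

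The principal difficulty lies in absorbing the non-generic contributions into the error term. Degeneracy can occur in two ways: either the functionals $h_iA$ become linearly dependent on $V$ (so that the slicing on $n$ is carried out in the wrong codimension), or the tuple $(h_1,\dots,h_s)$ is $M$-isotropic so that Lemma~\ref{4:iissoo}(iv) fails and the restricted form on $W_{\mathbf h}$ does not have full rank. Lemma~\ref{4:iiddpp} bounds the number of linearly dependent or $M$-isotropic tuples by a factor $p^{-1}$ smaller than the generic count; to handle the corresponding $n$-contribution one must replace the trivial bound by a more careful rank-based estimate exploiting the hypothesis $\rank(M|_{V+c})\ge s^2+s+3$, which is precisely the budget that allows Lemma~\ref{4:counting01} to remain valid after any admissible combination of up to $\binom{s+1}{2}$ rank-reducing slices encountered during the iteration, leaving a residual rank of at least $3$ and confining the error to $O_s(p^{-1/2})$.
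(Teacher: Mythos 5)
This lemma is quoted verbatim from Corollary~C.7 of \cite{SunA} and carries no proof in the present paper, so there is no ``paper's own proof'' to compare against. The route implicit in the paper's toolkit is: use Lemma~\ref{4:changeh} to transport everything to $\F_p^{d-r}$ via a bijection $\phi\colon\F_p^{d-r}\to V$, so that $\Gow_s(V(M)\cap(V+c))$ becomes $\Gow_s(V(M'))$ for $M'=M(\phi(\cdot)+c)$; observe (as is invoked after (\ref{4:shit2}) via Example~B.4 of \cite{SunA}) that $\Gow_s(V(M'))$ is a nice and consistent $M'$-set of total co-dimension $\binom{s+1}{2}+1$; and then read off the count from Theorem~\ref{4:ct}(i). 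The rank hypothesis of Theorem~\ref{4:ct} then reads $\rank(M')\geq 2\bigl(\binom{s+1}{2}+1\bigr)+1=s^2+s+3$, which is exactly the lemma's hypothesis once one notes (Proposition~\ref{4:iissoo}(iii)) that $\rank(M|_{V+c})\geq\rank(M)-2r$, so the two alternatives collapse to the single condition $\rank(M')\geq s^2+s+3$.

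Your proposal takes a genuinely different and more elementary route: the finite-difference characterization (which is precisely the content of Lemma~\ref{4:changeh}) followed by a direct two-stage count. The main term is correctly computed, and the structure of the argument is sound. Two remarks on the error analysis, which you flag as ``the principal difficulty'' but which is in fact more forgiving than you suggest. First, the $M$-isotropy of the tuple $(h_1,\dots,h_s)$ is a red herring for the $n$-count: applying Proposition~\ref{4:iissoo}(iii) to the form $M|_{V+c}$ on $\F_p^{d-r}$ gives $\rank(M|_{W_{\mathbf h}})\geq\rank(M|_{V+c})-2s\geq s^2-s+3\geq 3$ for \emph{every} admissible $\mathbf h$ with the linear constraints cutting codimension $s$, independently of whether the tuple is isotropic, so Lemma~\ref{4:counting01} applies uniformly; you never need part (iv) of Proposition~\ref{4:iissoo}. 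Second, for the degenerate tuples --- those where the functionals $(h_iA)|_V$ are linearly dependent, equivalently where $h_1,\dots,h_s$ are dependent modulo the radical $V\cap V^{\perp_M}$ --- the trivial bound $|V(M)\cap(V+c)|\leq 2p^{d-r-1}$ already suffices: writing $q:=\rank(M|_{V+c})$ and $m:=\dim(V\cap V^{\perp_M})=d-r-q$, Lemma~\ref{4:iiddpp}(i) applied in the $q$-dimensional quotient bounds the number of such tuples by $O_s(p^{(q+1)(s-1)+ms})$, and the exponent of the resulting contribution falls short of the main exponent by $q-s(s+1)/2-s+1\geq(s^2-s+8)/2\geq 4$. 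So no refined rank-based slicing is needed for the bad tuples; your overall argument goes through as written, with a slightly generous margin, once one works modulo the radical of $M|_V$ so that Lemma~\ref{4:iiddpp}(i) (which makes no use of non-degeneracy) is applicable. What the $M$-set route buys over your direct count is exactly this bookkeeping: Theorem~\ref{4:ct} packages the generic/degenerate decomposition once and for all, so it can be reused for the more elaborate sets $\Omega_I$ and $\Omega_I\times_i\Omega_I$ appearing later without repeating the inductive analysis each time.
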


 \begin{lem}[Lemma 
 4.18 of \cite{SunA}]\label{4:changeh}
Let $s\in\N$, $M\colon\V\to\F_{p}$ be a quadratic form associated with the matrix $A$, and $V+c$ be an affine subspace of $\V$  of dimension $r$. For $n,h_{1},\dots,h_{s}\in\V$, we have that $(n,h_{1},\dots,h_{s})\in \Gow_{s}(V(M)\cap (V+c))$ if and only if 
	\begin{itemize}
		\item $n\in V+c$, $h_{1},\dots,h_{s}\in V$;
		\item $n\in V(M)^{h_{1},\dots,h_{s}}$;
		\item $(h_{i}A)\cdot h_{j}=0$ for all $1\leq i,j\leq s, i\neq j$.
	\end{itemize}	
	
	In particular, let $\phi\colon\F_{p}^{r}\to V$ be any bijective linear transformation, $M'(m):=M(\phi(m)+c)$. We have that $(n,h_{1},\dots,h_{s})\in \Gow_{s}(V(M)\cap (V+c))$ if and only if 
	$(n,h_{1},\dots,h_{s})=(\phi(n')+c,\phi(h'_{1}),\dots,\phi(h'_{s}))$
	for some $(n',h'_{1},\dots,h'_{s})\in \Gow_{s}(V(M'))$.
\end{lem}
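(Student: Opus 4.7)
The plan is to unpack the definition of the Gowers set directly and read off the two conditions (affine and quadratic) separately. Recall that $(n,h_1,\dots,h_s)\in\Gow_s(V(M)\cap(V+c))$ means $n+\e_1 h_1+\dots+\e_s h_s\in V(M)\cap(V+c)$ for every $\e=(\e_1,\dots,\e_s)\in\{0,1\}^s$. The affine half is immediate: taking $\e=\bold{0}$ gives $n\in V+c$, taking $\e=e_i$ gives $h_i\in V$, and conversely these two conditions imply $n+\sum_i\e_i h_i\in V+c$ for every $\e$ by linearity. So the real content is the quadratic part, and the two pieces to prove are the equivalence between $M(n+\sum_i\e_ih_i)=0$ for all $\e\in\{0,1\}^s$ on the one hand, and the triple of conditions $M(n)=0$, $M(n+h_i)=0$ for all $i$, and $(h_iA)\cdot h_j=0$ for all $i\neq j$ on the other hand.

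For the forward direction, the cases $\e=\bold{0}$ and $\e=e_i$ immediately give $n\in V(M)^{h_1,\dots,h_s}$. To extract the orthogonality condition, fix $i\neq j$ and take $\e=e_i+e_j$: we then have $M(n)=M(n+h_i)=M(n+h_j)=M(n+h_i+h_j)=0$, and Lemma~\ref{4:or} applied with $x=n, y=h_i, z=h_j$ yields $(h_iA)\cdot h_j=0$. For the converse, I would prove by induction on $k=\e_1+\dots+\e_s$ that $M(n+\sum_i\e_i h_i)=0$. The cases $k=0,1$ are the assumption $n\in V(M)^{h_1,\dots,h_s}$. For $k\geq 2$, pick any two indices $i,j$ with $\e_i=\e_j=1$, write $\e'=\e-e_i-e_j$, and set $x=n+\sum_\ell \e'_\ell h_\ell$, $y=h_i$, $z=h_j$. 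By the inductive hypothesis $M(x)=M(x+y)=M(x+z)=0$ (these correspond to sums of sizes $k-2,k-1,k-1$), so Lemma~\ref{4:or} combined with $(yA)\cdot z=(h_iA)\cdot h_j=0$ gives $M(x+y+z)=0$, which is exactly $M(n+\sum_i\e_ih_i)=0$.

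For the ``in particular'' part, writing the linear map $\phi$ as $\phi(m)=mB$ for some matrix $B\in\F_p^{r\times d}$ of full row rank and expanding $M'(m)=M(\phi(m)+c)$ shows that the matrix associated to $M'$ is $BAB^T$ (plus lower order terms we do not need here). Then $n=\phi(n')+c\in V+c$ iff $n'\in\F_p^r$ (automatic), $h_i=\phi(h'_i)\in V$ iff $h'_i\in\F_p^r$ (automatic), and the three conditions above for $M$ translate term by term into the corresponding conditions for $M'$ on $(n',h'_1,\dots,h'_s)$: $M(n+\e\cdot h)=M'(n'+\e\cdot h')$, $M(n+h_i)=M'(n'+h'_i)$, and $(h_iA)\cdot h_j=(h'_iB A B^T)\cdot h'_j=(h'_i\, BAB^T)\cdot h'_j$ which is the bilinear form for $M'$. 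Since $\phi$ is bijective, the correspondence $(n',h'_1,\dots,h'_s)\leftrightarrow(n,h_1,\dots,h_s)$ is a bijection between $\Gow_s(V(M'))$ and $\Gow_s(V(M)\cap(V+c))$.

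I do not anticipate any genuine obstacle: the argument is a routine induction pivoting on Lemma~\ref{4:or}. The only mild care is in the inductive step, where one must ensure that the two ``neighboring'' subsets used in the induction both have size strictly less than $k$, which is the reason for peeling off \emph{two} coordinates at once rather than one.
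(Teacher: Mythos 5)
Your proof is correct. This lemma is imported from \cite{SunA} and its proof does not appear in the present paper, so a line-by-line comparison is not possible; nevertheless your argument is the natural one (extract the affine half by taking $\e=\bold{0}$ and $\e=e_{i}$, get $n\in V(M)^{h_{1},\dots,h_{s}}$ and $(h_{i}A)\cdot h_{j}=0$ from the small corners of the cube, and then run a strong induction on $k=\sum_{i}\e_{i}$ pivoting on Lemma~\ref{4:or}) and I see no gap in it: for $k\geq 2$ the three subcubes you invoke have weights $k-2$, $k-1$, $k-1$, all strictly below $k$, so the induction closes.

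A minor stylistic remark on the ``in particular'' part: you route through the three-bullet characterization by computing the matrix $BAB^{T}$ of $M'$ and translating each bullet. This is valid, but it is cleaner (and avoids the first half of the lemma entirely) to observe that since $\phi$ is a bijection onto $V$ and $M'(m)=M(\phi(m)+c)$ by definition, the affine substitution $n=\phi(n')+c$, $h_{i}=\phi(h'_{i})$ carries $n+\sum_{i}\e_{i}h_{i}$ to $\phi\bigl(n'+\sum_{i}\e_{i}h'_{i}\bigr)+c$ for every $\e\in\{0,1\}^{s}$, so membership in $V(M)\cap(V+c)$ for every corner of the cube is literally equivalent to membership of the corresponding corner in $V(M')$; this gives the bijection $\Gow_{s}(V(M)\cap(V+c))\leftrightarrow\Gow_{s}(V(M'))$ directly from the definition of the Gowers set. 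Either way the conclusion is the same, so this is a matter of taste rather than a defect.
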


 \subsection{$M$-families and $M$-sets}\label{4:s:AppB3}

We say that a linear transformation  $L\colon(\V)^{k}\to (\V)^{k'}$ is \emph{$d$-integral} if there exist $a_{i,j}\in\F_{p}$ for $1\leq i\leq k$ and $1\leq j\leq k'$ such that 
$$L(n_{1},\dots,n_{k})=\Bigl(\sum_{i=1}^{k}a_{i,1}n_{i},\dots,\sum_{i=1}^{k}a_{i,k'}n_{i}\Bigr)$$
for all $n_{1},\dots,n_{k}\in \V$. 
Let $L\colon(\V)^{k}\to \V$ be a $d$-integral linear transformation given by
$L(n_{1},\dots,n_{k})=\sum_{i=1}^{k}a_{i}n_{i}$ for some  $a_{i}\in\F_{p},1\leq i\leq k$. We say that $L$ is the $d$-integral linear transformation \emph{induced} by $(a_{1},\dots,a_{k})\in\F_{p}^{k}$.

Similarly, we say that a linear transformation  $L\colon(\Z^{d})^{k}\to (\frac{1}{p}\Z^{d})^{k'}$ is \emph{$d$-integral} if there exist $a_{i,j}\in\Z/p$ for $1\leq i\leq k$ and $1\leq j\leq k'$ such that 
$$L(n_{1},\dots,n_{k})=\Bigl(\sum_{i=1}^{k}a_{i,1}n_{i},\dots,\sum_{i=1}^{k}a_{i,k'}n_{i}\Bigr)$$
for all $n_{1},\dots,n_{k}\in \Z^{d}$.

Let $d\in\N_{+}$, $p$  be a prime, $M\colon\V\to\F_{p}$ be a quadratic form with $A$ being the associated matrix.  We say that a function $F\colon(\V)^{k}\to\F_{p}$ is an \emph{$(M,k)$-integral quadratic function} 
if 
\begin{equation}\label{4:thisisf2}
F(n_{1},\dots,n_{k})=\sum_{1\leq i\leq j\leq k}b_{i,j}(n_{i}A)\cdot n_{j}+\sum_{1\leq i\leq k} v_{i}\cdot n_{i}+u
\end{equation}
for some $b_{i,j}, u\in \F_{p}$ and $v_{i}\in\F_{p}^{d}$.
We say that an $(M,k)$-integral quadratic function $F$ is \emph{pure}
if $F$ can be written in the form of (\ref{4:thisisf2}) with $v_{1}=\dots=v_{k}=\bold{0}$. 
We say that  an $(M,k)$-integral quadratic function $F\colon(\V)^{k}\to\F_{p}$ is \emph{nice}
if 
\begin{equation}\nonumber
F(n_{1},\dots,n_{k})=\sum_{1\leq i\leq k'}b_{i}(n_{k'}A)\cdot n_{i}+u
\end{equation}
for some $0\leq k'\leq k$, $b_{i}, u\in \F_{p}$.
 
For $F$ given in (\ref{4:thisisf2}), denote 
$$v_{M}(F):=(b_{k,k},b_{k,k-1},\dots,b_{k,1},v_{k},b_{k-1,k-1},\dots,b_{k-1,1},v_{k-1},\dots,b_{1,1},v_{1},u)\in\F_{p}^{\binom{k+1}{2}+kd+1},$$
and
$$v'_{M}(F):=(b_{k,k},b_{k,k-1},\dots,b_{k,1},v_{k},b_{k-1,k-1},\dots,b_{k-1,1},v_{k-1},\dots,b_{1,1},v_{1})\in\F_{p}^{\binom{k+1}{2}+kd}.$$
Informally, we say that $b_{i,j}$ is the $n_{i}n_{j}$-coefficient, $v_{i}$ is the $n_{i}$-coefficient, and $u$ is the constant term coefficient for these vectors.

An \emph{$(M,k)$-family} is a collections of $(M,k)$-integral quadratic functions.
Let $\mathcal{J}=\{F_{1},\dots$, $F_{r}\}$ be an $(M,k)$-family.
\begin{itemize}
    \item We say that $\mathcal{J}$ is \emph{pure} if all of $F_{1},\dots,F_{r}$ are pure.
    \item We say that $\mathcal{J}$ is \emph{consistent} if $(0,\dots,0,1)$ does not belong to the span of $v_{M}(F_{1}),$ $\dots,$ $v_{M}(F_{r})$, or equivalently, there is no linear combination of $F_{1},\dots,F_{r}$ which is a constant nonzero function, or equivalently, for all $c_{1},\dots,c_{r}\in\F_{p}$, we have
    $$c_{1}v'_{M}(F_{1})+\dots+c_{r}v'_{M}(F_{r})=\bold{0}\Rightarrow c_{1}v_{M}(F_{1})+\dots+c_{r}v_{M}(F_{r})=\bold{0}.$$
    \item We say that $\mathcal{J}$ is \emph{independent} if $v'_{M}(F_{1}),\dots,v'_{M}(F_{r})$ are linearly independent, or equivalently, there is no nontrivial linear combination of $F_{1},\dots,F_{r}$ which is a constant function, or equivalently, for all $c_{1},\dots,c_{r}\in\F_{p}$, we have
    $$c_{1}v'_{M}(F_{1})+\dots+c_{r}v'_{M}(F_{r})=\bold{0}\Rightarrow c_{1}=\dots=c_{r}=0.$$
\item We say that $\mathcal{J}$ is \emph{nice} if there exist some bijective $d$-integral  linear transformation $L\colon(\V)^{k}\to(\V)^{k}$ and some $v\in(\V)^{k}$ such that $F_{i}(L(\cdot)+v)$ is nice for all $1\leq i\leq r$.
\end{itemize}

The dimension of the span of $v'_{M}(F_{1}),\dots,v'_{M}(F_{r})$ is called the \emph{dimension} of an $(M,k)$-family $\{F_{1},\dots,F_{r}\}$.

When there is no confusion, we call an $(M,k)$-family to be an \emph{$M$-family} for short.

We say that a subset $\Omega$ of $(\V)^{k}$ is an  \emph{$M$-set} if there exists an $(M,k)$-family $\{F_{i}\colon (\V)^{k}\to\V\colon 1\leq i\leq r\}$
 such that $\Omega=\cap_{i=1}^{r}V(F_{i})$. 
 We call either $\{F_{1},\dots,F_{r}\}$ or the ordered set $(F_{1},\dots,F_{r})$ an \emph{$M$-representation} of $\Omega$. 
 
  Let $\P\in\CP$.
We say that   $\Omega$  is $\P$  if one can choose the  $M$-family $\{F_{1},\dots,F_{r}\}$ to be $\P$.
 We say that the $M$-representation $(F_{1},\dots,F_{r})$ is $\P$ if $\{F_{i}\colon 1\leq i\leq r\}$ is $\P$.
  We say that $r$ is the \emph{dimension} of the $M$-representation $(F_{1},\dots,F_{r})$.
  The \emph{total co-dimension} of a consistent $M$-set $\Omega$, denoted by $r_{M}(\Omega)$, is the minimum of the dimension of the independent $M$-representations of $\Omega$. It was shown in Proposition 
  C.8 of \cite{SunA} that $r_{M}(\Omega)$ is independent of the choice of the independent $M$-representation if $d$ and $p$ are sufficiently large.

  	We say that an independent $M$-representation $(F_{1},\dots,F_{r})$ of $\Omega$ is \emph{standard} if
	the matrix 
		$\begin{bmatrix}
		v_{M}(F_{1})\\
		\dots \\
		v_{M}(F_{r})
		\end{bmatrix}$ is in the reduced row echelon form (or equivalently, the matrix 
		$\begin{bmatrix}
		v'_{M}(F_{1})\\
		\dots \\
		v'_{M}(F_{r})
		\end{bmatrix}$ is in the reduced row echelon form).
	If $(F_{1},\dots,F_{r})$ is a standard $M$-representation of $\Omega$, then we may relabeling $(F_{1},\dots,F_{r})$ as $$(F_{k,1},\dots,F_{k,r_{k}},F_{k-1,1},\dots,F_{k-1,r_{k-1}},\dots,F_{1,1},\dots,F_{1,r_{1}})$$
  for some $r_{1},\dots,r_{k}\in \N$ such that $F_{i,j}$ is non-constant with respect to $n_{i}$ and is independent of $n_{i+1},\dots,n_{k}$.
	We also call $(F_{i,j}\colon 1\leq i\leq k, 1\leq j\leq r_{i})$	a \emph{standard $M$-representation} of $\Omega$. The vector $(r_{1},\dots,r_{k})$ is called the \emph{dimension vector} of this representation. 
	
	\begin{conv}\label{1:fpism}
	We allow $(M,k)$-families, $M$-representations and   dimension vectors to be empty. In particular, $(\V)^{k}$ is considered as a nice and consistent $M$-set with total co-dimension zero. 
	\end{conv}

\begin{prop}[Proposition 
B.3 of \cite{SunA}]\label{4:yy3}
Let $d,k,r\in\N_{+}$, $p$ be a prime, $M\colon\V\to\F_{p}$ be a quadratic form and $\mathcal{J}=\{F_{1},\dots,F_{r}\}, F_{i}\colon(\V)^{k}\to \F_{p}, 1\leq i\leq r$ be an $(M,k)$-family.
	\begin{enumerate}[(i)]
	\item For any subset $I\subseteq \{1,\dots,r\}$, $\{F_{i}\colon i\in I\}$ is an    $(M,k)$-family.
	\item Let $I\subseteq \{1,\dots,r\}$ be a subset such that all of $F_{i},i\in I$ are independent of the last $d$-variables. Then writing $G_{i}(n_{1},\dots,n_{k-1}):=F_{i}(n_{1},\dots,n_{k-1})$, we have that $\{G_{i}\colon i\in I\}$ is an $(M,k-1)$-family.  
	\item For any bijective $d$-integral linear transformation  $L\colon (\V)^{k}\to(\V)^{k}$ and any $v\in(\V)^{k}$, $\{F_{1}(L(\cdot)+v),\dots,F_{r}(L(\cdot)+v)\}, F_{i}\colon(\V)^{k}\to \F_{p}, 1\leq i\leq r$ is an $(M,k)$-family.
		\item Let $1\leq k'\leq k$ and $1\leq r'\leq r$ be such that $F_{1},\dots,F_{r'}$ are independent of $n_{k-k'+1},\dots,n_{r}$ and that every nontrival linear combination of $F_{r'+1},\dots,F_{r}$ dependents nontrivially on some of $n_{k-k'+1},\dots,n_{r}$. 
		Define $G_{i},H_{i}\colon (\V)^{k+k'}\to \F_{p}, 1\leq i\leq r$ by 
		$$\text{$G_{i}(n_{1},\dots,n_{k+k'}):=F_{i}(n_{1},\dots,n_{k})$ and $H_{i}(n_{1},\dots,n_{k+k'}):=F_{i}(n_{1},\dots,n_{k-k'},n_{k+1},\dots,n_{k+k'})$}.$$ Then $\{F_{1},\dots,F_{r'},G_{r'+1},\dots,G_{r},H_{r'+1},\dots,H_{r}\}$\footnote{Here we regard $F_{1},\dots,F_{r'}$ as functions of $n_{1},\dots,n_{k+k'}$, which acutally depends only on $n_{1},\dots,n_{k-k'}$.} is an $(M,k+k')$-family. 
	\end{enumerate}
Moreover, if $\mathcal{J}$ is nice, then so are the sets mentioned in Parts (i)-(iii);
if $\mathcal{J}$ is 
consistent/independent, then so are the sets mentioned in Parts (i)-(iv);  
if $\mathcal{J}$ is pure, then so are the sets mentioned in Parts (i), (ii), (iv), and so is the set mentioned in Part (iii) when $v=\bold{0}$.
\end{prop}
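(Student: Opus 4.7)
My plan is to verify the four parts in order by inspecting the defining expansion
$$F(n_{1},\dots,n_{k})=\sum_{1\leq i\leq j\leq k}b_{i,j}(n_{i}A)\cdot n_{j}+\sum_{i=1}^{k}v_{i}\cdot n_{i}+u,$$
and checking how the operations in each part act on the coefficients $(b_{i,j},v_{i},u)$, and hence on the vectors $v_{M}(F)$ and $v'_{M}(F)$ that govern consistency, independence, purity and niceness.

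Part (i) is immediate: restricting to a sub-collection $\{F_{i}\colon i\in I\}$ keeps every function $(M,k)$-integral, and $v_{M}$, $v'_{M}$ are unchanged, so every listed property is inherited. Part (ii) is also essentially by inspection: if an $(M,k)$-integral $F_{i}$ does not involve $n_{k}$, then the coefficients $b_{i,k},b_{k,k},v_{k}$ in its expansion vanish, so $G_{i}(n_{1},\dots,n_{k-1})$ has the required form of an $(M,k-1)$-integral quadratic function. Moreover, dropping the zero entries from $v_{M}(F_{i})$ gives $v_{M}(G_{i})$ with the same non-zero pattern, so consistency, independence and purity are all preserved; niceness is preserved because the same affine change $(L,v)$ (suitably restricted to the first $k-1$ coordinates) still works.

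For Part (iii), I would expand $F_{i}(L(n)+v)$ using bilinearity of $(\cdot A)\cdot(\cdot)$. Writing $L(n)_{\ell}=\sum_{j}a_{\ell,j}n_{j}$, the quadratic term $b_{i,j}(L(n)_{i}+v_{i})A\cdot(L(n)_{j}+v_{j})$ becomes a new linear combination of $(n_{a}A)\cdot n_{b}$ terms plus linear and constant terms in $n$, which stay in the $(M,k)$-integral form. The same argument applied to the linear and constant terms shows $F_{i}\circ(L(\cdot)+v)$ is again $(M,k)$-integral. Niceness is preserved because we may compose affine changes; consistency and independence are preserved because the map $F\mapsto F\circ(L(\cdot)+v)$ is invertible and sends constant functions to constant functions (so it induces a linear automorphism on the span of the $v'_{M}$'s with the same image in the constant-coordinate direction). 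Purity is preserved only when $v=\bold{0}$, since nonzero $v$ generically pushes quadratic coefficients into linear ones via the cross term $2b_{i,j}(v_{i}A)\cdot n_{j}$. For Part (iv), the functions $G_{i},H_{i}$ are obtained by relabelling the variables $n_{k-k'+1},\dots,n_{k}$ to $n_{k+1},\dots,n_{k+k'}$ in $F_{i}$ (respectively keeping them), which is a linear substitution, so they are $(M,k+k')$-integral; the main combinatorial task is verifying consistency and independence. I would package the family as a block: the first $r'$ functions involve only $n_{1},\dots,n_{k-k'}$, the $G_{r'+1},\dots,G_{r}$ block involves the coordinates $n_{1},\dots,n_{k}$, and the $H_{r'+1},\dots,H_{r}$ block involves $n_{1},\dots,n_{k-k'},n_{k+1},\dots,n_{k+k'}$; the block-triangular structure of the corresponding matrix of $v'_{M}$'s together with the hypothesis on nontrivial dependence shows any linear dependence forces trivial coefficients in each block separately.

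The step I expect to take the most care is the bookkeeping in Part (iv): showing that any linear combination of the three blocks that is constant (for consistency) or zero (for independence) must already be constant or zero on each of the two ``copy'' blocks separately, using the non-degeneracy assumption on the $F_{r'+1},\dots,F_{r}$ with respect to $n_{k-k'+1},\dots,n_{k}$. The rest is routine verification of how the quadratic form expansion behaves under each operation.
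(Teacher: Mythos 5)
Your proposal is correct, and it takes the only natural route: unwind the defining expansion $(\ref{4:thisisf2})$ under each operation, track the coefficient vectors $v_{M}$ and $v'_{M}$, and observe that the required properties (consistency, independence, purity, niceness) are preserved. This is the same direct-verification argument as in the cited reference; the only place requiring care is exactly the one you flagged, namely Part (iv), where the key observation is that among the concatenated family only the $G_{i}$'s depend on $n_{k-k'+1},\dots,n_{k}$ and only the $H_{i}$'s on $n_{k+1},\dots,n_{k+k'}$, so any relation among the $v'_{M}$'s forces the coefficients of the $G$-block and $H$-block to vanish (by the non-degeneracy hypothesis on $F_{r'+1},\dots,F_{r}$), after which consistency or independence of the residual $F_{1},\dots,F_{r'}$ part is inherited from the original family via Part (i).
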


\begin{prop}[Proposition 
C.8 of \cite{SunA}]\label{4:yy33}
Let $d,k,r\in\N_{+}$, $p$ be a prime, $M\colon\V\to\F_{p}$ be a quadratic form and $\Omega\subseteq (\V)^{k}$ be a consistent $M$-set. Suppose that $\Omega=\cap_{i=1}^{r}V(F_{i})$ for some consistent $(M,k)$-family $\{F_{1},\dots,F_{r}\}$. 
Let $1\leq k'\leq k$.
Suppose that $\rank(M)\geq 2r+1$ and $p\gg_{k,r} 1$.
\begin{enumerate}[(i)]
\item The dimension of all independent $M$-representations of $\Omega$ equals to $r_{M}(\Omega)$. 
\item We have that $r_{M}(\Omega)\leq r$, and that $r_{M}(\Omega)=r$ if
the $(M,k)$-family $\{F_{1},\dots,F_{r}\}$ is independent.
\item For all $I\subseteq\{1,\dots,r\}$, the set $\Omega':=\cap_{i\in I}V(F_{i})$ is a consistent  $M$-set such that $r_{M}(\Omega')\leq r_{M}(\Omega)$. Moreover, if  the $(M,k)$-family $\{F_{1},\dots,F_{r}\}$ is independent, then $r_{M}(\Omega')=\vert I\vert$.
\item For any bijective $d$-integral linear transformation $L\colon(\V)^{k}\to(\V)^{k}$ and $v\in(\V)^{k}$, we have that $L(\Omega)+v$ is a consistent $M$-set and that $r_{M}(L(\Omega)+v)=r_{M}(\Omega)$.   
\item Assume that $\Omega$ admits a standard $M$-representation with dimension vector $(r_{1},\dots,$ $r_{k})$, Then for all $1\leq k'\leq k$, the set $$\{(n_{1},\dots,n_{k+k'})\in(\V)^{k+k'}\colon (n_{1},\dots,n_{k}), (n_{1},\dots,n_{k-k'},n_{k+1},\dots,n_{k+k'})\in\Omega\}$$ admits a standard $M$-representation with dimension vector $(r_{1},\dots,r_{k'},r_{k+1},\dots,r_{k'})$. 
\item Assume that $\Omega$ admits a standard $M$-representation with dimension vector $(r_{1},\dots,$ $r_{k})$.
For $I=\{1,\dots,k'\}$ for some $1\leq k'\leq k$, the $I$-projection $\Omega_{I}$ of $\Omega$ 
admits a standard $M$-representation with dimension vector $(r_{1},\dots,r_{k'})$. 
\item If $\Omega$ is a nice and consistent $M$-set or a  pure and consistent $M$-set, then $r_{M}(\Omega)\leq \binom{k+1}{2}$.
\end{enumerate}
\end{prop}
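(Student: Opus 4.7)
The plan is to reduce essentially all of Proposition~\ref{4:yy33} to a single key lemma, which I will call the \emph{rigidity lemma} for independent $M$-representations: if $\mathcal{J}=\{F_{1},\dots,F_{r}\}$ is a consistent and independent $(M,k)$-family with $\Omega=\cap_{i}V(F_{i})$, and if $F$ is any $(M,k)$-integral quadratic function vanishing on $\Omega$, then $v_{M}(F)$ lies in the $\F_{p}$-span of $v_{M}(F_{1}),\dots,v_{M}(F_{r})$. Once this is established, parts (i)--(iv) of the proposition become statements of elementary linear algebra on the coefficient vectors $v_{M}(\cdot)\in\F_{p}^{\binom{k+1}{2}+kd+1}$.

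To prove the rigidity lemma, I would argue contrapositively: suppose $v_{M}(F)$ is not in the span. By independence of $\mathcal{J}$, one can extend $\{F_{1},\dots,F_{r}\}$ to an independent $(M,k)$-family by adjoining $F$, and then $\Omega\cap V(F)=\Omega$, which means that $F$ contributes no new constraint. The heart of the matter is then to show that adjoining a genuinely new constraint must strictly shrink the variety. I would do this by induction on $r$ via the Lang--Weil-style counting estimates already available in the paper: iterate Lemma~\ref{4:counting01} and Lemma~\ref{4:ns} to obtain the lower bound $|\Omega|=p^{kd-r}(1+O_{k,r}(p^{-1/2}))$ whenever $\rank(M)\geq 2r+1$ and $p\gg_{k,r}1$, exploiting the consistency hypothesis (no linear combination of the $F_{i}$'s is a nonzero constant) to rule out the empty set. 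Applying the same estimate to the family $\{F_{1},\dots,F_{r},F\}$ would yield a strictly smaller count, contradicting $V(F)\supseteq\Omega$. This counting step is where I expect the main technical obstacle: one must be careful in the induction to ensure that removing or adding a single $F_{i}$ keeps the relevant quadratic restriction of rank at least $3$, which is where the hypothesis $\rank(M)\geq 2r+1$ is used (via Proposition~\ref{4:iissoo}(iii)).

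Granted the rigidity lemma, parts (i)--(iv) follow quickly. For (i), any two independent $M$-representations of $\Omega$ have the same coefficient span (namely, the space of all $v_{M}(F)$ with $F$ vanishing on $\Omega$, modulo the consistency constraint), so they have the same cardinality; this also gives (ii). For (iii), restricting to a subfamily $\{F_{i}:i\in I\}$ clearly produces a larger variety, and independence is preserved under subfamilies, so the corresponding coefficient span has dimension $|I|$. For (iv), a bijective $d$-integral linear transformation $L$ of $(\V)^{k}$ induces an invertible linear map on the coefficient space $\F_{p}^{\binom{k+1}{2}+kd+1}$ preserving consistency and independence, so $r_{M}$ is invariant.

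Parts (v) and (vi) I would prove by direct construction: given a standard $M$-representation of $\Omega$ with dimension vector $(r_{1},\dots,r_{k})$, the natural ``duplication'' and ``projection'' operations on the $n_{i}$-variables produce $M$-families whose standard form can be read off by simply retaining or duplicating the $F_{i,j}$'s that depend on the appropriate variables; consistency and niceness/purity are preserved because the $v'_{M}$ rows remain in reduced row echelon form. Finally, for (vii), I note that a pure $(M,k)$-family lies in the subspace of the coefficient space of dimension $\binom{k+1}{2}+1$ (spanned by the $b_{i,j}$-coefficients and the constant $u$), and consistency removes the constant direction, giving independence dimension at most $\binom{k+1}{2}$. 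For a nice family, the same bound applies after the distinguished change of variables $L$ and shift $v$, since by the nice normal form each $F_{i}(L(\cdot)+v)$ involves only $(n_{k'}A)\cdot n_{j}$-type terms and a constant; combined with (iv) this gives $r_{M}(\Omega)\leq\binom{k+1}{2}$.
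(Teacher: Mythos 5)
The statement you are asked to prove is cited by the paper from Proposition C.8 of the companion paper \cite{SunA}; its proof is not reproduced here, so a line-by-line comparison is not possible. That said, your organization around a Nullstellensatz-type ``rigidity lemma'' for independent $(M,k)$-families, combined with Lang--Weil counting via Theorem~\ref{4:ct}, is the natural route given the machinery the paper makes available, and the derivations of parts (i), (ii), (iv) and (vii) from the rigidity lemma are correct modulo routine linear algebra (note that $v_M(F_1),\dots,v_M(F_r)$ are linearly independent whenever $v'_M(F_1),\dots,v'_M(F_r)$ are, by projecting off the constant coordinate, which you implicitly use when equating the span dimension with $r$). For part (iii), your sketch proves only the ``Moreover'' clause; the inequality $r_M(\Omega')\leq r_M(\Omega)$ does follow from the rigidity lemma, but you need to make the extra step explicit: take a minimal independent $M$-representation $\{H_m\}$ of $\Omega'$, observe that each $H_m$ vanishes on $\Omega\subseteq\Omega'$, apply rigidity to place each $v_M(H_m)$ in the span of a minimal representation of $\Omega$, and then compare dimensions.

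The genuine gap is the rank bookkeeping in your contradiction argument for the rigidity lemma. You propose: if $F$ vanishes on $\Omega$ but $v'_M(F)$ lies outside $\operatorname{span}\{v'_M(F_i)\}$, adjoin $F$ to obtain an independent (and, after checking nonemptiness of $\Omega$, still consistent) $(r+1)$-element representation of the \emph{same} set $\Omega$, then compare the counting formula for $r$ and $r+1$ constraints. But Theorem~\ref{4:ct} with $r+1$ constraints requires $\rank(M)\geq 2(r+1)+1=2r+3$, whereas the proposition's hypothesis provides only $\rank(M)\geq 2r+1$. When $r_M(\Omega)<r$ you can pass to a smaller independent representation and the slack absorbs the $+1$, but precisely in the case you most need (the original family already independent, so $r_M(\Omega)=r$), the rank hypothesis is short by two. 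You need either to replace the exact asymptotic for the $(r+1)$-family with a weaker upper bound that still separates $p^{dk-r}$ from $p^{dk-r-1}$ under $\rank(M)\geq 2r+1$ (e.g.\ by fibering $\Omega$ via Theorem~\ref{4:ct}(iii) over the first $k-1$ coordinates and showing that the restriction of $F$ to a typical fiber $V(M)\cap(V+c)$ is not identically zero, which only requires rank control on a single coordinate slice), or to establish the rigidity lemma by a more structural argument that avoids counting the extended family outright. As written, the step ``applying the same estimate to the family $\{F_1,\dots,F_r,F\}$ would yield a strictly smaller count'' does not go through under the stated hypotheses.

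Parts (v) and (vi) really are purely combinatorial manipulations of reduced-row-echelon-form coefficient matrices and do not require counting; your ``direct construction'' is the right idea, though one should verify that duplication preserves consistency by inspecting the constant column of the augmented matrix, and note that the dimension vector the paper prints for part (v) is almost certainly a typographical slip for $(r_1,\dots,r_k,r_{k-k'+1},\dots,r_k)$ in light of what the doubled set actually is.
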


\subsection{Fubini's theorem for $M$-sets}\label{4:s:AppB4}		
Let $n_{i}=(n_{i,1},\dots,n_{i,d})\in\V$ denote a $d$-dimensional variable for $1\leq i\leq k$. For convenience we denote $\F^{d}_{p}[n_{1},\dots,n_{k}]$ to be the polynomial ring $\F_{p}[n_{1,1},\dots,n_{1,d},\dots,n_{k,1},\dots,n_{k,d}]$.
Let $J,J',J''$ be  finitely generated ideals of the polynomial ring $\F^{d}_{p}[n_{1},\dots,n_{k}]$ and $I\subseteq \{n_{1},\dots,n_{k}\}$. 
Suppose that $J=J'+J''$, $J'\cap J''=\{0\}$, all the polynomials in $J'$ are independent of  $\{n_{1},\dots,n_{k}\}\backslash I$, and all the non-zero polynomials in $J''$ depend nontrivially on $\{n_{1},\dots,n_{k}\}\backslash I$. Then we say that $J'$ is an  \emph{$I$-projection} of $J$ and that $(J',J'')$ is an \emph{$I$-decomposition} of $J$.
 It was proved in Proposition 
 C.1 of \cite{SunA} that the $I$-projection of  $J$ exists and is unique.

Let $\mathcal{J}$, $\mathcal{J}'$ and $\mathcal{J}''$ be  finite subsets of $\F^{d}_{p}[n_{1},\dots,n_{k}]$, and $I\subseteq \{n_{1},\dots,n_{k}\}$. 
Let $J,J'$, and $J''$ be the ideals generated by $\mathcal{J}$, $\mathcal{J}'$ and $\mathcal{J}''$ respectively. If $J'$ is an $I$-projection of $J$ with $(J',J'')$ being an $I$-decomposition of $J$, then we say that $\mathcal{J}'$ is an \emph{$I$-projection} of $\mathcal{J}$ and that $(\mathcal{J}',\mathcal{J}'')$ is an \emph{$I$-decomposition} of $\mathcal{J}$.
Note that the $I$-projection $\mathcal{J}'$ of $\mathcal{J}$ is not unique. However, by Proposition 
C.1 of \cite{SunA}, the ideal generated by $\mathcal{J}'$ and the set of zeros $V(\mathcal{J}')$ are unique.

If $I$ is a subset of $\{1,\dots,k\}$, for convenience we say that $J'$ is an \emph{$I$-projection} of $J$ if $J'$ is an $\{n_{i}\colon i\in I\}$-projection of $J$. Similarly, we say that $(J',J'')$ is an \emph{$I$-decomposition} of $J$ if $(J',J'')$ is an $\{n_{i}\colon i\in I\}$-decomposition of $J$. Here $J,J',J''$ are either ideals or finite subsets of the polynomial ring.

We now define the projections of $M$-sets.
Let $\mathcal{J}\subseteq \F_{p}[n_{1},\dots,n_{k}]$ be a consistent $(M,k)$-family and 
let $\Omega=V(\mathcal{J})\subseteq(\V)^{k}$.
Let $I\cup J$ be a partition of $\{1,\dots,k\}$ (where $I$ and $J$ are non-empty), and $(\mathcal{J}_{I},\mathcal{J}'_{I})$ be an $I$-decomposition of $\mathcal{J}$.
Let $\Omega_{I}$ denote the set of $(n_{i})_{i\in I}\in(\V)^{\vert I\vert}$ such that $f(n_{1},\dots,n_{k})=0$ for all $f\in \mathcal{J}_{I}$ and $(n_{j})_{j\in J}\in(\V)^{\vert J\vert}$. 
Note that all $f\in \mathcal{J}_{I}$ are independent of $(n_{j})_{j\in J}$, and that $\Omega_{I}$ is independent of the choice of the $I$-decomposition.
We say that $\Omega_{I}$ is an \emph{$I$-projection} of $\Omega$.

 For $(n_{i})_{i\in I}\in (\V)^{\vert I\vert}$, let $\Omega_{I}((n_{i})_{i\in I})$ denote the set of $(n_{j})_{j\in J}\in(\V)^{\vert J\vert}$ such that $f(n_{1},\dots,n_{k})=0$ for all $f\in \mathcal{J}'_{I}$. By construction, for any $(n_{i})_{i\in I}\in \Omega_{I}$, we have that $(n_{j})_{j\in J}\in\Omega_{I}((n_{i})_{i\in I})$ if and only if $(n_{1},\dots,n_{k})\in\Omega$. So for all  $(n_{i})_{i\in I}\in \Omega_{I}$, $\Omega_{I}((n_{i})_{i\in I})$ is independent of the choice of the $I$-decomposition.

\begin{thm}[Theorem 
C.3 of \cite{SunA}]\label{4:ct}
	Let $d,k\in\N_{+}$, $r_{1},\dots,r_{k}\in\N$ and $p$ be a prime.  Set $r:=r_{1}+\dots+r_{k}$.  
	Let  $M\colon\V\to\F_{p}$ be a  quadratic form with $\rank(M)\geq 2r+1$, and $\Omega\subseteq (\V)^{k}$ be a consistent $M$-set admitting a  standard $M$-representation of $\Omega$ with dimension vector $(r_{1},\dots,r_{k})$. 
		\begin{enumerate}[(i)]
		\item We have $\vert\Omega\vert=p^{dk-r}(1+O_{k,r}(p^{-1/2}))$;
		\item If $I=\{1,\dots,k'\}$ for some $1\leq k'\leq k-1$, then 
		$\Omega_{I}$ is a consistent $M$-set admitting a  standard  $M$-representation with dimension vector $(r_{1},\dots,r_{k'})$. Moreover,
		for all but at most $(k-k')r'_{k'}p^{dk'+r'_{k'}-1-\rank(M)}$ many $(n_{i})_{i\in I}\in(\V)^{k'}$, we have that $\Omega_{I}((n_{i})_{i\in I})$ has a standard $M$-representation with dimension vector $(r_{k'+1},\dots,r_{k})$  and that $\vert\Omega_{I}((n_{i})_{i\in I})\vert=p^{d(k-k')-(r_{k'+1}+\dots+r_{k})}(1+O_{k,r}(p^{-1/2}))$, where $r'_{k'}:=\max_{k'+1\leq i\leq k}r_{i}$. 
		\item 
		If $k\geq 2$, and $I\cup J$ is a partition of $\{1,\dots,k\}$ (where  $I$ and $J$ are non-empty), then for any function $f\colon \Omega\to\C$ with norm bounded by 1,   we have that
		\begin{equation}\nonumber
		\E_{(n_{1},\dots,n_{k})\in\Omega}f(n_{1},\dots,n_{k})=\E_{(n_{i})_{i\in I}\in\Omega_{I}}\E_{(n_{j})_{j\in J}\in\Omega_{I}((n_{i})_{i\in I})}f(n_{1},\dots,n_{k})+O_{k,r}(p^{-1/2}).
		\end{equation}
	\end{enumerate} 
\end{thm}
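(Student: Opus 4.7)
The plan is to prove Theorem \ref{4:ct} by induction on $k$, exploiting the triangular structure of a standard $M$-representation in which the block $F_{k,1},\dots,F_{k,r_k}$ depends on $n_k$ (and possibly on $n_1,\dots,n_{k-1}$), the block $F_{k-1,1},\dots,F_{k-1,r_{k-1}}$ depends on $n_{k-1}$ but is independent of $n_k$, and so on down to $F_{1,1},\dots,F_{1,r_1}$, which depend only on $n_1$. The base case $k=1$ reduces to counting the common zeros of $r_1$ independent $(M,1)$-integral quadratic functions $F_{1,1},\dots,F_{1,r_1}$ in a single $d$-dimensional variable. Writing the partial intersections $\Omega^{(j)}:=\bigcap_{i\leq j}V(F_{1,i})$, I would peel off one equation at a time: the restriction $F_{1,j+1}\vert_{\Omega^{(j)}}$ is a quadratic form on the (affine) solution set of the previous equations, and the rank assumption $\rank(M)\geq 2r+1\geq 2(j+1)+1$ together with Proposition \ref{4:iissoo} guarantees that its rank stays at least $3$. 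Lemma \ref{4:counting01} then gives each intersection step a multiplicative factor $p^{-1}(1+O(p^{-1/2}))$, yielding $\vert\Omega\vert=p^{d-r_1}(1+O_{r_1}(p^{-1/2}))$.

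For the inductive step, I would take $I=\{1,\dots,k-1\}$ and study the fibration $\Omega\to\Omega_I$. The $I$-projection $\Omega_I$ inherits a standard $M$-representation with dimension vector $(r_1,\dots,r_{k-1})$ by Proposition \ref{4:yy33} (vi), so by induction $\vert\Omega_I\vert=p^{d(k-1)-(r-r_k)}(1+O_{k,r}(p^{-1/2}))$. For each fixed $(n_1,\dots,n_{k-1})$, the fiber is cut out in $n_k\in\V$ by the $r_k$ equations $F_{k,j}(n_1,\dots,n_{k-1},\cdot)$; written as functions of $n_k$, these are quadratic forms whose leading matrices are $b^{(j)}_{k,k}A$ and whose linear parts are $\sum_{i<k}b^{(j)}_{i,k}(n_iA)+v^{(j)}_k$. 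The standard form ensures that the vectors $v'_M$ restricted to the $n_k$-coefficients are linearly independent, so generically the $r_k$ equations are independent in $n_k$ and the base case applies to give a fiber of size $p^{d-r_k}(1+O(p^{-1/2}))$. The exceptional $(n_1,\dots,n_{k-1})$ are those where this independence or rank condition fails; this failure is governed by the vanishing of certain determinants that are polynomials of degree at most $r'_{k'}$ in $(n_1,\dots,n_{k-1})$, so by Lemma \ref{4:ns} the exceptional set has the stated cardinality bound $(k-k')r'_{k'}p^{dk'+r'_{k'}-1-\rank(M)}$ (the factor $p^{-\rank(M)}$ comes from the rank condition forcing a codimension-$\rank(M)$ singular locus for each degeneracy type). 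Summing over $\Omega_I$ then yields Part (i).

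Part (iii) follows formally from (i) and (ii): the main term splits into an iterated average over $\Omega_I$ and its fibers, while the exceptional contributions—both from atypical fibers and from the $O_{k,r}(p^{-1/2})$ main-term errors—combine to a uniform $O_{k,r}(p^{-1/2})$ since $\vert f\vert\leq 1$ and the exceptional fiber count is subleading thanks to $\rank(M)\geq 2r+1$. The extension to general $I=\{1,\dots,k'\}$ with $1\leq k'\leq k-1$ is then immediate by iterating the $k\to k-1$ reduction $k-k'$ times, carefully tracking that the error bound for the exceptional parameter set at each step is subsumed by the stated polynomial bound with $r'_{k'}=\max_{k'+1\leq i\leq k}r_i$.

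The main obstacle I anticipate is bookkeeping the exceptional set at the inductive step. Concretely, one must show that the locus of $(n_1,\dots,n_{k-1})\in\Omega_I$ where the fiber equations in $n_k$ fail to form an independent $(M,1)$-family is contained in the vanishing set of a non-trivial polynomial of controlled degree, and that this polynomial is non-trivial \emph{because} $\Omega$ itself has a standard $M$-representation (so the coefficients $b^{(j)}_{k,k}$ and the vectors $v^{(j)}_k$ cannot all be related in a degenerate way). This requires a careful linear-algebra argument that the reduced row-echelon form of the $v_M(F)$ vectors prevents simultaneous degeneration of the $n_k$-leading parts across the $r_k$ equations; getting the explicit exceptional-set bound $(k-k')r'_{k'}p^{dk'+r'_{k'}-1-\rank(M)}$ with the correct power of $p$ and the factor $r'_{k'}$ is the technical heart of the argument.
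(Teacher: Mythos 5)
This theorem appears in Appendix~\ref{4:s:AppB}, which explicitly collects results \emph{proved elsewhere} in the series; the theorem is cited as Theorem~C.3 of~\cite{SunA} and is not reproved in this paper. So there is no proof here to compare against, and I am evaluating your sketch on its own terms.

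Your overall strategy---induction on $k$ along the triangular structure of a standard $M$-representation, fibering $\Omega$ over $\Omega_I$, and invoking Proposition~\ref{4:iissoo} together with Lemma~\ref{4:counting01}---is the natural one and is very likely in the right spirit. But there are two genuine gaps. First, in the base case $k=1$ you claim each intermediate set $\Omega^{(j)}=\bigcap_{i\le j}V(F_{1,i})$ is an ``(affine) solution set.'' That is false once the single genuinely quadratic equation (in reduced row echelon form at most one of the $F_{1,i}$ has a nonzero $b_{1,1}$-coefficient) has been absorbed, because its zero set is a quadric, not an affine subspace, and Lemma~\ref{4:counting01} requires an affine ambient set. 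The fix is to intersect all the affine-linear equations first, getting an affine subspace of codimension $r_1$ or $r_1-1$, and then apply Lemma~\ref{4:counting01} once to the remaining quadratic, using Proposition~\ref{4:iissoo}(iii) and $\rank(M)\ge 2r+1$ to control the restricted rank. Your formulation doesn't specify this ordering and, as written, the step ``apply Lemma~\ref{4:counting01} to $F_{1,j+1}$ restricted to $\Omega^{(j)}$'' does not make sense for general $j$.

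Second, and more importantly, the exceptional set bound $(k-k')r'_{k'}p^{dk'+r'_{k'}-1-\rank(M)}$ in part (ii) is not actually derived. Lemma~\ref{4:ns} applied to a nonzero polynomial of degree $r'_{k'}$ in $(n_1,\dots,n_{k'})$ only gives a bound $O_{d,r}(p^{dk'-1})$; the additional saving of $p^{\rank(M)-r'_{k'}}$ is a real strengthening and is exactly what the rank hypothesis is buying. You acknowledge this is ``the technical heart of the argument,'' but the phrase ``codimension-$\rank(M)$ singular locus for each degeneracy type'' is not a proof: one must show concretely that the bad locus for each of the $(k-k')$ coordinate blocks is cut out by a system whose effective codimension is $\rank(M)-r'_{k'}+1$, and that this system is non-degenerate because the row-echelon structure prevents all the $n_k$-leading coefficients $b^{(j)}_{k,k}$ and $v^{(j)}_k$ from degenerating simultaneously. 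Without that argument, the stated exponent is an assertion, not a consequence, and part (iii) (which you correctly observe is formal given (i)--(ii)) inherits the gap.
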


\subsection{Intrinsic definitions for polynomials}\label{4:s:ipp}

\begin{prop}[A special case of Proposition 
7.12 of \cite{SunA}]\label{4:att3}
 	Let $d,s\in\N_{+}$, $p\gg_{d} 1$ be a prime, and $M\colon\Z^{d}\to\Z/p$ be a  quadratic form associated with the matrix $A$ of $p$-rank at least $s^{2}+s+3$.
 	Denote $M'\colon\Z^{d}\to\Z/p$, $M'(n):=\frac{1}{p}((nA)\cdot n)$.
	Then for any  $g\in\poly(\Z^{d}\to \Q)$  of degree at most $s$ taking values in $\Q$, the following are equivalent:
 	\begin{enumerate}[(i)]
 		\item $\Delta_{h_{s}}\dots\Delta_{h_{1}}g(n)\in \frac{1}{q}\Z$ for some $q\in\N,p\nmid q$ for all $(n,h_{1},\dots,h_{s})\in \Gow_{p,s}(V_{p}(M))$;
 		\item  we have $$g=\frac{1}{q}g_{1}+g_{2}$$ for some $q\in\N,p\nmid q$,
 		some $g_{1}\in \poly(V_{p}(M)\to \R\vert\Z)$ of degree at most $s$ and some $g_{2}\in\poly(\Z^{d}\to \R)$ of degree at most $s-1$;
 		\item we have $$g=\frac{1}{q}g_{1}+g_{2}$$ for some $q\in\N,p\nmid q$,
 		some $g_{1}\in \poly(V_{p}(M')\to \R\vert\Z)$ of degree at most $s$, and some $g_{2}\in\poly(\Z^{d}\to \R)$ of degree at most $s-1$.
 	\end{enumerate}	 
 \end{prop}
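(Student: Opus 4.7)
\textbf{Proof plan for Proposition \ref{4:att3}.}

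My strategy is to prove (ii) $\Rightarrow$ (i) and (iii) $\Rightarrow$ (i) by direct computation, establish the equivalence (ii) $\Leftrightarrow$ (iii) by comparing $M$ and $M'$, and then reduce the whole proposition to showing (i) $\Rightarrow$ (iii).

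The implications (ii) $\Rightarrow$ (i) and (iii) $\Rightarrow$ (i) are a straightforward unpacking of definitions: since $\deg g_{2}\leq s-1$, the $s$-th iterated difference $\Delta_{h_{s}}\dots\Delta_{h_{1}}g_{2}$ vanishes identically, while for the $g_{1}$ piece one expands
$$\Delta_{h_{s}}\dots\Delta_{h_{1}}g_{1}(n)=\sum_{\e\in\{0,1\}^{s}}(-1)^{s-\vert\e\vert}g_{1}\Bigl(n+\sum_{i=1}^{s}\e_{i}h_{i}\Bigr),$$
and each corner lies in $V_{p}(M)+p\Z^{d}$ (resp.\ $V_{p}(M')+p\Z^{d}$) by definition of the Gowers set, so every term is an integer. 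Dividing by $q$ gives (i). For (ii) $\Leftrightarrow$ (iii), I will use that $M-M'=\frac{1}{p}(n\cdot u+v)$ has degree $\leq 1$, and that $V_{p}(M)$ is obtained from $V_{p}(M')$ by a translation depending on $A,u,v$; any discrepancy between polynomials in $\poly(V_{p}(M)\to\R\vert\Z)$ and $\poly(V_{p}(M')\to\R\vert\Z)$ can be absorbed into the lower-degree remainder $g_{2}$.

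The main work is (i) $\Rightarrow$ (iii), which I will prove by induction on $s$. The base case $s=0$ is immediate. For the inductive step, expand $g$ by its type-I Taylor expansion $g(n)=\sum_{\vert i\vert\leq s}c_{i}\binom{n}{i}$ with $c_{i}\in\Q$ and isolate the top-degree piece. Exactly as in the computation (\ref{4:2compare1}) in Section \ref{4:slast}, the iterated derivative
$$Q(h_{1},\dots,h_{s}):=\Delta_{h_{s}}\dots\Delta_{h_{1}}g(n)=\sum_{\vert i\vert=s}c_{i}\sum_{r\in\mathcal{F}(i)}\prod_{t=1}^{s}h_{t,r(t)}$$
is a multilinear form of degree $s$ depending only on the top-degree coefficients. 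Hypothesis (i) says $Q\in\frac{1}{q}\Z$ whenever $(n,h_{1},\dots,h_{s})\in\Gow_{p,s}(V_{p}(M))$ for some choice of $n$. Using the $\Z^{d}$-version of Lemma \ref{4:changeh}, this condition on $(h_{1},\dots,h_{s})$ amounts to pairwise $(M,p)$-orthogonality together with the existence of $n\in V_{p}(M)^{h_{1},\dots,h_{s}}$, which is guaranteed by the counting lemma \ref{4:counting02} since $\rank_{p}(M)\geq s^{2}+s+3\geq 2s+3$. Thus $qQ(h_{1},\dots,h_{s})\in\Z$ on the set of $p$-linearly independent, pairwise $(M,p)$-orthogonal tuples.

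Next I will apply an irreducibility argument of the same flavor as the one used at the end of the proof of Theorem \ref{4:orbitdesc2}: by Lemmas \ref{4:countingh} and \ref{4:iiddpp} and the rank assumption, the Gowers set $\Gow_{p,s}(V_{p}(M'))$ is strongly $(1/2,p)$-irreducible up to degree $s$, so the condition ``$qQ\in\Z$ on pairwise $(M,p)$-orthogonal $p$-linearly independent tuples'' propagates to ``$qQ\in\Z$ on all of $\Gow_{p,s}(V_{p}(M'))$''. Concretely, this lets me lift the top-degree part to a polynomial $g_{1}^{\text{top}}$ of degree $s$ taking integer values on $V_{p}(M')+p\Z^{d}$ such that $g-\frac{1}{q}g_{1}^{\text{top}}$ has degree $\leq s-1$. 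The inductive hypothesis applied to this remainder then produces the required decomposition $g=\frac{1}{q}g_{1}+g_{2}$ (possibly after enlarging $q$ by a factor coprime to $p$ and absorbing the change into $g_{2}$).

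The principal obstacle will be the construction of $g_{1}^{\text{top}}$ from the multilinear form $Q$: one must verify that a symmetric multilinear form with $qQ$ integer-valued on $\Gow_{p,s}(V_{p}(M'))$ can always be realized as the $s$-th iterated difference of a polynomial that is integer-valued on $V_{p}(M')+p\Z^{d}$, rather than merely on Gowers configurations. This is where the quadratic structure of $V_{p}(M')$ enters: the module of relations among the monomials in the top-degree part modulo ``integer on $V_{p}(M')$'' is generated by the defining equation $(nA)\cdot n\equiv 0\mod p$, so the required lift can be produced by subtracting an appropriate $\Q$-multiple of $M'$ times a polynomial of degree $s-2$, provided $p$ is large enough that the rational coefficients involved have denominators coprime to $p$; this is where the hypothesis $p\gg_{d}1$ is consumed.
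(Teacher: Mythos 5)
The proposition is imported from Proposition 7.12 of \cite{SunA} without proof in the present paper, so I can only assess your argument on its own terms. The core of your (i)$\Rightarrow$(iii) --- pass to the top-degree multilinear form $Q$, observe that the $\mathbf{h}$-constraint on $p$-linearly independent tuples reduces to pairwise $(M,p)$-orthogonality plus an existence statement supplied by Lemma~\ref{4:counting02}, extend by irreducibility, and lift back to a polynomial using the defining relation of $V_p(M')$ --- is the right idea and fits the toolkit of the series. However, the ``easy'' implications do not hold as you have set them up, and the translation claim is false.

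For (iii)$\Rightarrow$(i), condition (i) concerns $\Gow_{p,s}(V_p(M))$, whose corners lie in $V_p(M)+p\Z^d$, whereas $g_1$ from (iii) is only known to be integer-valued on $V_p(M')+p\Z^d$. These are genuinely different sets: completing the square shows $V_p(M)$ is a translate of a level set $\{(nA)\cdot n\equiv c_0 \bmod p\}$ of the leading form, where the residual constant $c_0$ need not vanish modulo $p$ (and the equation $2Ac\equiv u \bmod p$ may not even be solvable when $A$ is singular modulo $p$, which the hypothesis $\rank_p A\ge s^2+s+3$ permits for $d$ large). Hence the term-by-term integrality check fails, and (iii)$\Rightarrow$(i) in fact requires the same machinery as (i)$\Rightarrow$(iii): use that $\Delta_{h_s}\dots\Delta_{h_1}g_1$ depends only on $(h_1,\dots,h_s)$, that $(M,p)$- and $(M',p)$-orthogonality coincide (both are $p\mid(h_iA)\cdot h_j$ since $M$ and $M'$ share the matrix $A$), and then run the counting/irreducibility argument. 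The same observation invalidates your translation-based route to (ii)$\Leftrightarrow$(iii); that equivalence should be obtained by passing through (i), i.e.\ proving (i)$\Leftrightarrow$(ii) and (i)$\Leftrightarrow$(iii) separately. Finally, before you can invoke Theorem~\ref{4:irrr} you must show that $qQ$ lies in $\poly(\Z^{d(s+1)}\to\Z/p^s)$; this $p$-adic denominator bound has to be extracted from hypothesis (i) in the spirit of Lemma~\ref{4:goodcoordinates2}, and is not supplied by $p\gg_d 1$, which does not control the $p$-part of the denominators of $g$.
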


\subsection{An irreducible property for $M$-sets}\label{4:s:ir}


Let $k\in\N_{+},s\in\N, \d>0$ and $p$ be a prime.  
	We say that a $p$-periodic subset   $\Omega$ of $\Z^{k}$  is   \emph{strongly $(\d,p)$-irreducible up to degree $s$} if for all $P\in\poly(\Z^{k}\to\Z/p^{s})$ of degree at most $s$, either
	$\vert V_{p}(P)\cap \Omega\cap [p]^{d}\vert\leq \d\vert\Omega\cap [p]^{d}\vert$ or $\Omega\subseteq V_{p}(P).$

\begin{thm}[A special case of Theorem 
9.14 of \cite{SunA}]\label{4:irrr}
Let $d,K\in\N_{+},s\in\N$, $p$ be a prime, and $\d>0$. Let $M\colon\V\to \F_{p}$ be a  non-degenerate quadratic form and  $\Omega\subseteq (\V)^{K}$ be a  nice and consistent $M$-set. If 	$d\geq \max\{2r_{M}(\Omega)+1,4K-1\}$  and $p\gg_{d,s} \d^{-O_{d,s}(1)}$, then 
	 $\iota^{-1}(\Omega)$ is strongly $(\d,p)$-irreducible up to degree $s$.
\end{thm}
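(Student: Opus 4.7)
The plan is to deduce Theorem \ref{4:irrr} directly from Theorem 9.14 of \cite{SunA}, of which the present statement is explicitly labeled a special case. The bulk of the work is therefore bookkeeping: I would pin down the precise general version in \cite{SunA} and verify that each of its hypotheses is implied by the (typically stronger or cleaner) hypotheses stated here.

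Concretely, the first step is to match the ambient setting. The general theorem in \cite{SunA} is formulated for a broader class of $M$-sets, specified by some combination of purity, niceness, consistency, and independence, and applies to polynomials in $\poly(\Z^{k}\to\Z/p^{t})$ across a range of $t$; here I restrict attention to nice and consistent $M$-sets and to polynomials of degree at most $s$ taking values in $\Z/p^{s}$. I would then verify the quantitative hypotheses. The rank condition $\rank(M) = d \geq \max\{2r_{M}(\Omega)+1, 4K-1\}$ supplies enough non-degeneracy both to invoke the counting machinery of Theorem \ref{4:ct} (so that $\vert\Omega'\vert = p^{dK - r_{M}(\Omega)}(1+O_{K,r_{M}(\Omega)}(p^{-1/2}))$ has the expected main term, making the comparison $\vert V_{p}(P)\cap \Omega'\vert \leq \d\vert\Omega'\vert$ meaningful) and to underpin the dichotomy argument in \cite{SunA}. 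Finally, the scaling $p \gg_{d,s} \d^{-O_{d,s}(1)}$ should match the effective lower bound on $p$ required by the cited result.

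The remaining step is to translate the conclusion. I would confirm that the dichotomy in our definition of strong $(\d,p)$-irreducibility, namely that either $\vert V_{p}(P)\cap \Omega'\vert \leq \d\vert \Omega'\vert$ or $\Omega'\subseteq V_{p}(P)$, coincides with the conclusion of Theorem 9.14 of \cite{SunA}. The main potential obstacle is a notational gap between the $p$-setting used here (with $V_{p}(P)$ and $\Omega' = (\Omega + p\Z^{k})\cap [p]^{k}$) and the conventions adopted in \cite{SunA}; this is bridged by the definitions recalled in Appendix \ref{4:s:ir} together with the standard $M$-representation reduction afforded by Proposition \ref{4:yy33}. Once these translations are made, the present theorem follows verbatim, with no new analytic content required beyond what is already established in \cite{SunA}.
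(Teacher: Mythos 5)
Your proposal matches the paper's treatment exactly: the statement is presented in Appendix \ref{4:s:AppB} purely as a specialization of Theorem 9.14 of \cite{SunA}, with no additional argument in this paper beyond checking that the stated hypotheses (niceness, consistency, the rank bound $d\geq\max\{2r_{M}(\Omega)+1,4K-1\}$, and the size condition on $p$) instantiate the general result and that the definition of strong $(\d,p)$-irreducibility recalled in Appendix \ref{4:s:ir} lines up with the conclusion there. Since you identify precisely this reduction and the relevant translation points, your proposal is correct and takes the same route.
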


\subsection{Leibman dichotomy for $M$-sets}\label{4:s:vr}

Let $d\in\N_{+},s\in\N$, $C,\d,K>0$,   $p$ be a prime, and $\Omega$ be a non-empty subset of $\V$. We say that $\Omega$ admits a \emph{partially periodic $(\d,K)$-Leibman dichotomy up to step $s$ and complexity $C$}  
if 	for any   $\N$-filtered nilmanifold   $G/\Gamma$  of degree at most $s$ and complexity at most $C$, and any $g(n)\in \poly_{p}(\Omega\to G_{\N}\vert\Gamma)$, either $(g(n)\Gamma)_{n\in \Omega}$ is  $\d$-equidistributed on $G/\Gamma$, or  there exists a nontrivial type-I horizontal character $\eta$ with $0<\Vert\eta\Vert\leq K$  such that $\eta\circ g\vert_{\Omega}\colon \Omega\to\R$  is a constant mod $\Z$. 

\begin{thm}[Theorem 
10.6 of \cite{SunA}]\label{4:veryr}
	Let $d,k\in\N_{+},s,r\in\N$ with $d\geq \max\{4r+1,4k+3,2k+s+11\}$, $C>0$ and $p$ be a prime. 
	There exists  $K:=O_{C,d}(1)$ such that for any  non-degenerate quadratic form $M\colon\V\to \F_{p}$, any nice and consistent $M$-set $\Omega\subseteq (\V)^{k}$ of total co-dimension $r$, and any $0<\d<1/2$,  if $p\gg_{C,d} \d^{-O_{C,d}(1)}$, 
	then  $\Omega$ admits a partially periodic $(\d,K\d^{-K})$-Leibman dichotomy up to step $s$ and complexity $C$.	
\end{thm}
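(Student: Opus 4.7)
The plan is to follow the template of the Green--Tao quantitative Leibman theorem (Theorem 2.9 of \cite{GT12}), modified so that all averages and van der Corput shifts respect the $M$-set structure of $\Omega$. I would argue by contrapositive: assuming $(g(n)\Gamma)_{n\in\Omega}$ is not $\d$-equidistributed on $G/\Gamma$, I want to produce a nontrivial type-I horizontal character $\eta$ of complexity $O_{C,d}(\d^{-O_{C,d}(1)})$ such that $\eta\circ g\vert_{\Omega}\equiv\text{const}\mod\Z$. First, using a Fourier decomposition of a Lipschitz test function on the horizontal torus $G/[G,G]\Gamma$, together with standard smooth cutoff estimates, I reduce to a one-dimensional correlation statement: there exists a nontrivial type-I horizontal character $\xi$ of polynomially bounded complexity for which $\bigl|\E_{n\in\Omega}e(\xi\circ g(n))\bigr|\gg \d^{O_{C,d}(1)}$.

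Next I would run an induction on the degree $s$ of $G/\Gamma$ using a van der Corput scheme adapted to $\Omega$. The key identity is that for a nice consistent $M$-set $\Omega$ of total co-dimension $r$, the shift set $\{h\in(\V)^k\colon n,n+h\in\Omega\}$ is itself, by Proposition \ref{4:yy3} and Proposition \ref{4:yy33}, a consistent $M$-set of total co-dimension at most $O(r)$ on the product space, whose fibers have predictable size by Theorem \ref{4:ct}. Applying Cauchy--Schwarz and this Fubini-type decomposition, a correlation lower bound for $e(\xi\circ g(n))$ on $\Omega$ transfers to a correlation lower bound for $e(\xi\circ \Delta_h g(n))$ averaged over valid $(n,h)$, where $\Delta_h g$ is a polynomial sequence in $G_\N$ of one lower ``effective'' degree. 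Iterating $s$ times, and combining the Baker--Campbell--Hausdorff formula with Lemma \ref{4:goodcoordinates2} to track how type-I Taylor coefficients evolve, one reduces to the case where the polynomial $\xi\circ g$ has degree zero modulo lower-order corrections, giving quantitative rationality of the top-degree Taylor coefficient evaluated against the span of admissible shifts.

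The final and hardest step is to upgrade this ``top-degree rationality on a Zariski-generic portion of $\Omega$'' to the conclusion ``$\eta\circ g$ is globally constant $\mod\Z$ on $\Omega$.'' Here I would use Proposition \ref{4:att3} to reinterpret the rationality condition on the iterated derivative as the existence of a decomposition $\xi\circ g=\frac{1}{q}g_1+g_2$ with $g_1\in\poly(V_p(M)\to\R\vert\Z)$ and $\deg g_2<\deg g$, which feeds back into the induction; and I would invoke the strong $(\tfrac{1}{2},p)$-irreducibility of $\Omega$ up to degree $s$ provided by Theorem \ref{4:irrr} to promote the rationality condition from a positive-density subset of $\Omega$ to all of $\Omega$. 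Peeling off this rational part reduces the problem to a sub-nilmanifold $G'\subset G$ of strictly smaller total dimension, whose Mal'cev basis is rational relative to $\mathcal{X}$ with controlled complexity, closing the induction.

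The main obstacle is quantitative bookkeeping: each Cauchy--Schwarz step squares the error, each Fourier expansion costs a factor polynomial in the complexity of $G/\Gamma$, and each passage to a sub-nilmanifold worsens the complexity by a constant depending on $C,d$. To obtain the claimed bound $K\d^{-K}$ with $K=O_{C,d}(1)$ (rather than a tower in $s$), I would arrange the induction so that the degree $s$ is treated as an ``internal'' parameter absorbed into the implicit constant $K$ from the start --- using the fact that the step is bounded by the fixed complexity $C$ --- and so that the irreducibility threshold from Theorem \ref{4:irrr} is applied exactly once per layer of the induction, with the $p$-largeness condition $p\gg_{C,d}\d^{-O_{C,d}(1)}$ providing the uniform slack needed to close the estimates.
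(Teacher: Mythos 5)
This theorem is not proved in the present paper: it is quoted verbatim from the first part of the series (Theorem 10.5 of \cite{SunA}) and collected in Appendix \ref{4:s:AppB} as an imported tool, so there is no in-paper proof to compare your proposal against. That said, your roadmap --- contrapositive, Fourier reduction on the type-I horizontal torus to a single character correlation, a van der Corput/Cauchy--Schwarz induction on the degree in which the shift sets are again consistent $M$-sets controlled by Propositions \ref{4:yy3}, \ref{4:yy33} and Theorem \ref{4:ct}, and promotion from a dense subset to all of $\Omega$ via the strong irreducibility of Theorem \ref{4:irrr} --- is exactly the Green--Tao template that the series adapts, and the observation that $s\leq d$ (from $d\geq 2k+s+11$) legitimately lets you absorb the degree into $K=O_{C,d}(1)$.

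The genuine gap is in the base case of your induction, which you treat as routine. After the differencing scheme you arrive at an exponential sum of an (essentially abelian) polynomial phase over $\Omega$, or over the set of admissible shift tuples, and you need a Weyl-type statement: if $\bigl\vert\E_{n\in\Omega}e(P(n))\bigr\vert\gg\d^{O(1)}$ then the coefficients of $P$ are major-arc \emph{relative to the variety} $\Omega$, i.e.\ $P$ agrees modulo $\Z$ with a low-complexity rational polynomial plus an element of the ideal cutting out $\Omega$. Over all of $\F_p^d$ this is classical; over a quadratic variety it is a substantive theorem (a ``Weyl theorem for spheres''), and it is precisely where the hypotheses $d\geq 4r+1$, $d\geq 4k+3$ and the non-degeneracy of $M$ must be spent --- via Gauss-sum estimates and the counting results of Lemmas \ref{4:counting01}--\ref{4:countingh}. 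Your sketch never explains how a large correlation forces the coefficients into the structured form needed to build the horizontal character $\eta$, nor how Proposition \ref{4:att3} is triggered (it requires the rationality of iterated derivatives on $\Gow_{p,s}(V_p(M))$ as input, which is an output of the Weyl step, not a substitute for it). Without this ingredient the induction does not close, so the proposal is a plausible outline rather than a proof.
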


\bibliographystyle{plain}
\bibliography{swb}
\end{document}